\newtheorem{theorem}{Theorem}[section]
\newtheorem{observation}[theorem]{Observation}
\newtheorem{lemma}[theorem]{Lemma}
\newtheorem{proposition}[theorem]{Proposition}
\newtheorem{corollary}[theorem]{Corollary}
\numberwithin{equation}{section}
\theoremstyle{definition}
\newtheorem{definition}[theorem]{Definition}
\newtheorem{remark}[theorem]{Remark}
\newtheorem{example}[theorem]{Example}
\newcommand\R{\mathbb{R}}
\newcommand\C{\mathbb{C}}
\newcommand\Q{\mathbb{Q}}
\newcommand\N{\mathbb{N}}
\newcommand{\cA}{\mathcal{A}}
\newcommand{\cB}{\mathcal{B}}
\newcommand{\cC}{\mathcal{C}}
\newcommand{\cD}{\mathcal{D}}
\newcommand{\cF}{\mathcal{F}}
\newcommand{\cK}{\mathcal{K}}
\newcommand{\cL}{\mathcal{L}}
\newcommand{\cM}{\mathcal{M}}
\newcommand{\cN}{\mathcal{N}}
\newcommand{\cP}{\mathcal{P}}
\newcommand{\cQ}{\mathcal{Q}}
\newcommand{\cR}{\mathcal{R}}
\newcommand{\cU}{\mathcal{U}}
\newcommand{\cV}{\mathcal{V}}
\newcommand{\cO}{\mathcal{O}}
\newcommand{\bA}{\mathbf{A}}
\newcommand{\bD}{\mathbf{D}}
\newcommand{\bS}{\mathbf{S}}
\newcommand{\bW}{\mathbf{W}}
\newcommand{\bX}{\mathbf{X}}
\newcommand{\bY}{\mathbf{Y}}
\newcommand{\bZ}{\mathbf{Z}}
\newcommand{\rT}{\mathrm{T}}
\DeclareMathOperator{\id}{id}
\DeclareMathOperator{\re}{Re}
\DeclareMathOperator{\im}{Im}
\DeclareMathOperator{\tr}{tr}
\DeclareMathOperator{\sa}{sa}
\DeclareMathOperator{\qf}{qf}
\DeclareMathOperator{\tp}{tp}
\DeclareMathOperator{\orb}{orb}
\DeclareMathOperator{\Ent}{Ent}
\DeclareMathOperator{\Th}{Th}
\DeclareMathOperator{\acl}{acl}
\DeclarePairedDelimiter{\norm}{\lVert}{\rVert}
\DeclarePairedDelimiter{\ip}{\langle}{\rangle}
\begin{document}
	
	\title{Covering entropy for types in tracial $\mathrm{W}^*$-algebras}
	\author{David Jekel}
	
	\maketitle	
	
	\begin{abstract}    
		We study embeddings of tracial $\mathrm{W}^*$-algebras into a ultraproduct of matrix algebras through an amalgamation of free probabilistic and model-theoretic techniques.  Jung implicitly and Hayes explicitly defined \emph{$1$-bounded entropy} $h$ through the asymptotic covering numbers of Voiculescu's microstate spaces, that is, spaces of matrix tuples $(X_1^{(N)},X_2^{(N)},\dots)$ having approximately the same $*$-moments as the generators $(X_1,X_2,\dots)$ of a given tracial $\mathrm{W}^*$-algebra.  We study the analogous covering entropy for microstate spaces defined through formulas that use suprema and infima, not only $*$-algebra operations and the trace--formulas which arise in the model theory of tracial $\mathrm{W}^*$-algebras initiated by Farah, Hart, and Sherman.  By relating the new theory with the original $1$-bounded entropy, we show that if $\mathcal{M}$ is a separable tracial $\mathrm{W}^*$-algebra with $h(\cN:\cM) \geq 0$, then there exists an embedding of $\cM$ into a matrix ultraproduct $\cQ = \prod_{n \to \cU} M_n(\C)$ such that $h(\cN:\cQ)$ is arbitrarily close to $h(\cN:\cM)$.  We deduce that if all embeddings of $\cM$ into $\cQ$ are automorphically equivalent, then $\cM$ is strongly $1$-bounded and in fact has $h(\cM) \leq 0$.
	\end{abstract}
	
	\newpage
	
	\tableofcontents
	
	\newpage
	
	
	\section{Introduction}
	
	\subsection{Overview}
	
	The study of $\mathrm{W}^*$-algebras or von Neumann algebras is a deep and challenging subject with many connections to fields as diverse as ergodic theory, geometric group theory, random matrix theory, quantum information, and model theory.  Our present goal is to bring together two of these facets---the model theory of tracial $\mathrm{W}^*$-algebras studied in \cite{FHS2013,FHS2014,FHS2014b,AGKE2022} and Voiculescu's free entropy theory which, roughly speaking, quantifies the amount of matrix approximations for the generators of $\mathrm{W}^*$-algebra (see e.g.\ \cite{VoiculescuFE2,VoiculescuFE3,GePrime,Jung2007S1B,Hayes2018}).  On the free entropy side, we will work in the framework of Hayes' $1$-bounded entropy $h$ \cite{Hayes2018} which arose out of the work of Jung \cite{Jung2007S1B}; for history and motivation, refer to \cite[\S 2]{HJNS2021}.  The sibling paper \cite{JekelModelEntropy} develops the analog of Voiculescu's free microstate entropy for the setting of model-theoretic types.
	
	We adapt the framework of $1$-bounded entropy \cite{Jung2007S1B,Hayes2018} to capture data about the generators' model-theoretic \emph{type} and not only their \emph{non-commutative law}.  The non-commutative law of a tuple $(X_j)_{j \in \N}$ from $\cM = (M,\tau)$ encodes the joint moments $\tau(p(X))$ for non-commutative $*$-polynomials $p$; laws thus describe tracial von Neumann algebras with chosen generators up to generator-preserving isomorphism.  The \emph{type} includes the values of more complicated formulas that involve not only the addition, adjoint, product, and trace operations, but also taking suprema and infima in auxiliary variables over an operator norm ball in $\cM$.  For instance, the type would include the value of the formula
	\[
	\sup_{Z \in D_1^{\cM}} \inf_{Y \in \overline{B}_{\cM}(0,1)} [\tau(X_1YX_2Z)^2 + \tau(X_2^2Y^5)],
	\]
	where $D_1^{\cM}$ denotes the closed unit ball with respect to operator norm.  The entropy $\Ent^{\cU}(\mu)$ of a type $\mu$ is defined in terms of the exponential growth rates of the covering numbers of microstate spaces (spaces of matrix tuples with approximately the same type as our chosen generators, as in Voiculescu's work), just like Jung and Hayes' $1$-bounded entropy except with types instead of laws.  However, we prefer the term ``covering entropy'' rather than ``$1$-bounded entropy'' as a more intrinsic description of the definition.  The superscript $\cU$ denotes the fact that we take limits with respect to a fixed non-principal ultrafilter $\cU$ on $\N$.
	
	Just as in the original definition of the $1$-bounded entropy, a key property of the covering entropy $\Ent^{\cU}(\mu)$ is that it is invariant under change of coordinates (see \S \ref{subsec:invariance}).  More precisely, if $\bX$ and $\bY$ are tuples from $\cM$ with $\mathrm{W}^*(\bX) = \mathrm{W}^*(\bY)$, then their types $\tp^{\cM}(\bX)$ and $\tp^{\cM}(\bY)$ have the same covering entropy (Corollary \ref{cor:invariance}).  This allows us to define the entropy $\Ent^{\cU}(\cN:\cM)$ of a separable tracial $\mathrm{W}^*$-algebra $\cN \subseteq \cM$ as the entropy of the type of any generating set.  As suggested in \cite{HJNS2021}, we streamline the proof of this invariance property using the result that every tuple $\bY$ from $\mathrm{W}^*(\bX)$ can be expressed as $\mathbf{f}(\bX)$ for some quantifier-free definable function (see \cite[\S 13]{JekelThesis}).  More generally, we can extend the definition of $\Ent^{\cU}(\cN:\cM)$ to the case where $\cN \subseteq \cM$ is not separable by setting it to be the supremum of $\Ent^{\cU}(\cN_0:\cM)$ over separable $\mathrm{W}^*$-subalgebras $\cN_0 \subseteq \cN$ or equivalently, the supremum of $\Ent^{\cU}(\tp^{\cM}(\bX))$ over all tuples $\bX \in L^\infty(\cN)^{\N}$ (see Definition \ref{def:W*entropy}).
	
	The covering entropy $\Ent^{\cU}(\cN: \cM)$ can be viewed intuitively as a measurement of the amount of tracial $\mathrm{W}^*$-embeddings of $\cN$ into the matrix ultraproduct $\cQ = \prod_{n \to \cU} M_n(\C)$ that extend to elementary embeddings of $\cM$ (compare \S \ref{subsec:embeddings1}).  This is the analog of the idea that the $1$-bounded entropy $h(\cN:\cM)$ of $\cN$ in the presence of $\cM$ quantifies the amount of $\mathrm{W}^*$-embeddings of $\cN$ into $\cQ$ that extend to any embedding of $\cM$.  Thus, our work is motivated in part by the study of embeddings into ultraproducts, which is one theme of recent work on von Neumann algebras  \cite{Popa2019,Goldbring2020,IS2021,AKE2021,AGKE2022,Gao2020}.
	
	We make a precise connection between $\Ent^{\cU}(\cN:\cM)$ and $1$-bounded entropy as follows.  There is a canonical projection $\pi_{\qf}$ from the space of types to the space of non-commutative laws, since a non-commutative law describes the evaluation of quantifier-free formulas (rather than all logical formulas) in a tuple $\bX$.  Given a non-commutative law (or quantifier-free type) $\mu$, the $1$-bounded entropy $h^{\cU}(\mu)$ can be expressed through the following variational principle (Corollary \ref{cor:qfvariational}):
	\begin{equation} \label{eq:introvariational}
		h^{\cU}(\mu) = \sup_{\nu \in \pi_{\qf}^{-1}(\mu)} \Ent^{\cU}(\nu).
	\end{equation}
	Thus, the $1$-bounded entropy is the quantifier-free version of the entropy for types.
	
	In a similar way, the $1$-bounded entropy of $\cN$ in the presence of $\cM$ is the version using existential types.  Entropy in the presence is described using microstates for a tuple $\bX$ in $\cN$ such that there exist compatible microstates for a tuple $\bY$ that generates $\cM$.  In the model-theoretic framework, the existence of such microstates for $\cM$ is described through the evaluation of existential formulas in the original generators and their microstates (see \S \ref{subsec:inthepresence}).  Similar to the quantifier-free setting, there is a projection $\pi_{\exists}$ from the space of types into the space of existential types, and a similar variational principle expressing the covering entropy of an existential type $\mu$ as the supremum of $\Ent^{\cU}(\nu)$ over full types $\nu \in \pi_{\exists}^{-1}(\mu)$ (Lemma \ref{lem:existentialversusfull}).
	
	Altogether these ingredients allow us to prove the following result about ultraproduct embeddings, which is restated and proved in Theorem \ref{thm:main2}:
	
	\begin{theorem} \label{thm:main}
		Let $c \in \R$.  Let $\cN \subseteq \cM$ be an inclusion of separable tracial $\mathrm{W}^*$-algebras $h^{\cU}(\cN:\cM) > c$.  Then there exists an embedding $\iota$ of $\cM$ into the matrix ultraproduct $\cQ = \prod_{n \to \cU} M_n(\C)$ such that $\Ent^{\cU}(\iota(\cN): \cQ) > c$, hence also $h^{\cU}(\iota(\cN): \cQ) > c$.
	\end{theorem}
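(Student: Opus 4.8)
The plan is to build the embedding $\iota$ so that the image $\iota(\cN)$ realizes, inside $\cQ$, a \emph{full} type whose covering entropy exceeds $c$. The existence of such a type is exactly what the variational principle for entropy "in the presence" provides, and the passage from a realized type to an honest embedding of all of $\cM$ is a countable-saturation argument.

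First I would reduce the statement to one about a single full type. Fix a generating tuple $\bX \in L^\infty(\cN)^{\N}$ of $\cN$ and enlarge it to a generating tuple $(\bX,\bZ) \in L^\infty(\cM)^{\N}$ of $\cM$, so that $\bX$ is a subtuple. Writing $\mu_{\exists} = \pi_{\exists}(\tp^{\cM}(\bX))$ for the existential type of $\bX$ computed in $\cM$, the identification of $1$-bounded entropy in the presence with the covering entropy of an existential type ($\S$\ref{subsec:inthepresence}), together with the variational principle of Lemma~\ref{lem:existentialversusfull}, gives $h^{\cU}(\cN:\cM) = \sup_{\nu \in \pi_{\exists}^{-1}(\mu_{\exists})} \Ent^{\cU}(\nu)$. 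Since $h^{\cU}(\cN:\cM) > c$, I fix a full type $\nu$ with $\pi_{\exists}(\nu) = \mu_{\exists}$ and $\Ent^{\cU}(\nu) > c$.

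Next I would realize $\nu$ in $\cQ$ and extend to an embedding of $\cM$. Because $\Ent^{\cU}(\nu) > c > -\infty$, the microstate spaces of $\nu$ are nonempty along $\cU$ for every neighborhood of $\nu$, so assembling microstates of increasing precision and applying {\L}o{\'s}'s theorem produces $\hat\bX \in \cQ^{\N}$ with $\tp^{\cQ}(\hat\bX) = \nu$. In particular $\hat\bX$ has the same non-commutative law as $\bX$ (quantifier-free formulas being a special case of existential ones), so $\bX \mapsto \hat\bX$ extends to a trace-preserving isomorphism of $\cN = \mathrm{W}^*(\bX)$ onto $\mathrm{W}^*(\hat\bX) \subseteq \cQ$. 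The key point is that, since $\pi_{\exists}(\tp^{\cQ}(\hat\bX)) = \mu_{\exists} = \pi_{\exists}(\tp^{\cM}(\bX))$, for every quantifier-free formula $\psi$ in the variables of $(\bX,\bZ)$ (with coordinates confined to the appropriate operator-norm domains) we have $\inf_{\bW}\psi(\hat\bX,\bW) = \inf_{\bW}\psi(\bX,\bW) \le \psi^{\cM}(\bX,\bZ)$. Applying this to formulas of the form $\max_{i\le m}\bigl|\psi_i(\bX,\bW) - \psi_i^{\cM}(\bX,\bZ)\bigr|$ shows that the quantifier-free type $\tp^{\cM}_{\qf}(\bX,\bZ)$, viewed as a type over the parameters $\hat\bX$, is approximately finitely satisfiable in $\cQ$; since $\cM$ is separable and $\cQ$ is countably saturated, it is realized by some $\hat\bZ \in \cQ^{\N}$ with $\tp^{\cQ}_{\qf}(\hat\bX,\hat\bZ) = \tp^{\cM}_{\qf}(\bX,\bZ)$. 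As $(\bX,\bZ) \mapsto (\hat\bX,\hat\bZ)$ then preserves the non-commutative law, it extends to a $\mathrm{W}^*$-embedding $\iota \colon \cM = \mathrm{W}^*(\bX,\bZ) \hookrightarrow \cQ$ with $\iota(\bX) = \hat\bX$, hence $\iota(\cN) = \mathrm{W}^*(\hat\bX)$.

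Finally I would read off the conclusion: $\hat\bX$ generates $\iota(\cN)$ inside $\cQ$ and has type $\nu$, so change-of-coordinates invariance (Corollary~\ref{cor:invariance}) and the definition of $\Ent^{\cU}(\,\cdot:\cQ)$ (Definition~\ref{def:W*entropy}) give $\Ent^{\cU}(\iota(\cN):\cQ) = \Ent^{\cU}(\nu) > c$, and comparing existential and full types once more (Lemma~\ref{lem:existentialversusfull}, cf.\ \eqref{eq:introvariational}) yields $h^{\cU}(\iota(\cN):\cQ) \ge \Ent^{\cU}(\iota(\cN):\cQ) > c$. The parts I expect to be the main obstacle all lie in the middle paragraph: matching up the operator-norm domains of quantification between $\cM$ and $\cQ$ so that the displayed infima genuinely witness approximate satisfiability, and checking that countable saturation delivers a $\hat\bZ$ whose \emph{joint law} with $\hat\bX$ is exactly $\tp^{\cM}_{\qf}(\bX,\bZ)$ rather than merely agreeing on finitely many $\ast$-moments. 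The conceptual content—that transporting the \emph{existential} type of $\bX$ into $\cQ$ carries precisely the information needed to reconstruct, via saturation, an embedding of the whole of $\cM$—is exactly the bridge between entropy "in the presence" and the covering entropy of types.
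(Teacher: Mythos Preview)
Your approach is correct and essentially the same as the paper's (Theorem~\ref{thm:main2}): both pass through Lemma~\ref{lem:existentialversusfull} to find a full type $\nu$ of covering entropy $>c$, realize it in $\cQ$ via Lemma~\ref{lem:ultraproductrealization}, and then use the existential-type comparison to manufacture a companion tuple in $\cQ$ realizing $\tp_{\qf}^{\cM}(\bX,\bZ)$. One small correction: Lemma~\ref{lem:existentialversusfull} only yields $\pi_{\exists}(\nu) \in \cK_{\mu_{\exists}}$, i.e.\ $\nu(\phi) \le \mu_{\exists}(\phi)$ for all existential $\phi$, not the equality $\pi_{\exists}(\nu) = \mu_{\exists}$ you assert---but only this inequality is needed for your displayed $\inf$ comparison, so the argument is unaffected; where you invoke countable saturation of $\cQ$ as a black box, the paper does the same thing concretely by choosing a single quantifier-free predicate vanishing exactly at $\tp_{\qf}^{\cM}(\bX,\bZ)$ and lifting witnesses to $M_n(\C)$ along $\cU$.
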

	
	The hypotheses of the theorem hold for instance when $\cN = \cM$ is a nontrivial free product by \cite[Proposition 6.8]{VoiculescuFE3} and \cite[Corollary 3.5]{Jung2007S1B} and \cite[Proposition A.16]{Hayes2018} since $h^{\cU}(\cM:\cM) = \infty$.  They also hold for the von Neumann algebras of groups with non-approximately-inner cocycles by \cite[Theorem 3]{Shlyakhtenko2009} and \cite[Corollary 3.5]{Jung2007S1B} and \cite[Proposition A.16]{Hayes2018}.
	
	In particular, since there exists $\cM$ with $h^{\cU}(\cM:\cM) = \infty$, the theorem implies that there exist types in $\cQ$ with arbitrarily large covering entropy, and therefore, $h^{\cU}(\cQ:\cQ) = \infty$.  Similarly, the entropy $\Ent^{\cU}(\cQ:\cQ)$ given by Definition \ref{def:W*entropy} is infinite.
	
	\begin{corollary}
		Let $\cU$ be a free ultrafilter on $\N$ and let $\cQ = \prod_{n \to \cU} M_n(\C)$.  Then $h^{\cU}(\cQ:\cQ) = \infty$ and $\Ent^{\cU}(\cQ:\cQ) = \infty$.  Hence, $\cQ$ is not strongly $1$-bounded.
	\end{corollary}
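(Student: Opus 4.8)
The plan is to deduce the corollary directly from Theorem~\ref{thm:main} together with a couple of facts about free products quoted in the excerpt. First I would recall that there exists a separable tracial $\mathrm{W}^*$-algebra $\cM_0$ with $h^{\cU}(\cM_0:\cM_0) = \infty$; the excerpt itself points to $\cM_0 = L(\F_2)$ or any nontrivial free product, via \cite[Proposition 6.8]{VoiculescuFE3}, \cite[Corollary 3.5]{Jung2007S1B}, and \cite[Proposition A.16]{Hayes2018}. Since $h^{\cU}(\cM_0:\cM_0) = \infty$, for every $c \in \R$ we have $h^{\cU}(\cM_0:\cM_0) > c$, so taking $\cN = \cM = \cM_0$ in Theorem~\ref{thm:main} produces an embedding $\iota_c$ of $\cM_0$ into $\cQ$ with $h^{\cU}(\iota_c(\cM_0):\cQ) > c$.

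Next I would invoke monotonicity of $1$-bounded entropy in the ambient algebra: since $\iota_c(\cM_0) \subseteq \cQ$, we have $h^{\cU}(\cQ:\cQ) \geq h^{\cU}(\iota_c(\cM_0):\cQ) > c$. As $c \in \R$ was arbitrary, $h^{\cU}(\cQ:\cQ) = \infty$. The same argument applies to the covering entropy $\Ent^{\cU}$: Theorem~\ref{thm:main} in fact gives $\Ent^{\cU}(\iota_c(\cM_0):\cQ) > c$, and by Definition~\ref{def:W*entropy} the quantity $\Ent^{\cU}(\cQ:\cQ)$ is the supremum of $\Ent^{\cU}(\cN_0:\cQ)$ over separable subalgebras $\cN_0 \subseteq \cQ$, so it too is $\geq c$ for every $c$, hence infinite. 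Finally, an algebra $\cM$ is \emph{strongly $1$-bounded} only if $h^{\cU}(\cM:\cM)$ (equivalently $h(\cM)$) is finite; since $h^{\cU}(\cQ:\cQ) = \infty$, $\cQ$ cannot be strongly $1$-bounded. (One should note that $\cQ$ is not separable, so ``strongly $1$-bounded'' here is understood in the sense appropriate to general tracial $\mathrm{W}^*$-algebras; alternatively one restricts attention to the existence of a separable subalgebra witnessing infinite entropy.)

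The only subtlety I anticipate is bookkeeping about which normalization of entropy ($h$ versus $h^{\cU}$, and the $\Ent^{\cU}$ of Definition~\ref{def:W*entropy}) is being used, and making sure the monotonicity statement $h^{\cU}(\cP:\cR) \geq h^{\cU}(\cP:\cS)$ for $\cR \subseteq \cS$ — or rather the version with the ambient algebra growing — is available; this is standard for $1$-bounded entropy (larger ambient algebra can only decrease entropy, and here we are comparing $\iota_c(\cM_0)$ inside $\cQ$ with $\cQ$ inside $\cQ$, so we need that $h^{\cU}(\cN:\cM)$ is monotone increasing as $\cN$ grows with $\cM$ fixed, which is immediate from the definition as a supremum/the fact that microstate spaces for a larger tuple project onto those for a sub-tuple). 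There is no real obstacle here — the substantive content is entirely in Theorem~\ref{thm:main}; the corollary is a one-line consequence once that theorem and the known example of an algebra with infinite self-entropy are in hand.
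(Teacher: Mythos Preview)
Your proposal is correct and follows essentially the same route as the paper: take a separable $\cM_0$ (e.g.\ $L(\F_2)$) with $h^{\cU}(\cM_0:\cM_0)=\infty$, apply Theorem~\ref{thm:main} to produce embeddings into $\cQ$ with arbitrarily large $\Ent^{\cU}(\iota_c(\cM_0):\cQ)$ and $h^{\cU}(\iota_c(\cM_0):\cQ)$, then use monotonicity in the first argument (Observation~\ref{obs:monotonicity2} for $\Ent^{\cU}$, and the corresponding property for $h^{\cU}$) to conclude. The paper's proof is just the one-sentence version of this.
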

	
	The following corollary of Theorem \ref{thm:main} was communicated to me by Ben Hayes.
	
	\begin{corollary} \label{cor:irreducibleembedding}
		Let $\cN \subseteq \cM$ be an inclusion of separable tracial $\mathrm{W}^*$-algebras $h(\cM) > 0$ such that $\cN$ is a $\mathrm{II}_1$ factor (it has trivial center).  Then there exists a free ultrafilter $\cV$ and an embedding $\iota: \cM \to \prod_{n \to \cV} M_n(\C)$ such that $\cN' \cap \prod_{n \to \cV} M_n(\C) = \C$.
	\end{corollary}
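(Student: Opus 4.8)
The plan is to combine Theorem \ref{thm:main} with a now-standard argument of Popa-type: a subalgebra with large $1$-bounded entropy in the presence cannot be ``small'' relative to its ambient ultraproduct, and irreducibility of $\cN$ inside $\cM$ lets us upgrade this to triviality of the relative commutant. First I would note that since $h(\cM) = h^{\cU}(\cM:\cM) > 0$ for any free ultrafilter (the value is independent of $\cU$ by the usual arguments, or one simply picks $\cU$), we have in particular $h^{\cU}(\cN:\cM) \geq \ldots$ well, actually we need $h^{\cU}(\cN:\cM) > 0$; this follows because $h^{\cU}(\cN:\cM) \geq h^{\cU}(\cN:\cN)$ is not quite what we want, but rather from monotonicity $h^{\cU}(\cM:\cM) \leq h^{\cU}(\cN:\cM)$ when $\cN \subseteq \cM$ — the algebra in the presence of a larger algebra has smaller entropy, so $h^{\cU}(\cN : \cM) \geq h^{\cU}(\cM:\cM) = h(\cM) > 0$. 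Then Theorem \ref{thm:main} (with $c = 0$) produces a free ultrafilter $\cV$ — here we take $\cV = \cU$ — and an embedding $\iota : \cM \to \cQ_{\cV} := \prod_{n \to \cV} M_n(\C)$ with $h^{\cV}(\iota(\cN) : \cQ_{\cV}) > 0$, hence in particular $h^{\cV}(\iota(\cN):\cQ_{\cV}) \geq 0$ so that $\iota(\cN)$ is not strongly $1$-bounded relative to $\cQ_{\cV}$.

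The key structural input is the following: if $\cN$ is a $\mathrm{II}_1$ factor and $h^{\cV}(\iota(\cN):\cQ_{\cV}) > 0$, then the relative commutant $\iota(\cN)' \cap \cQ_{\cV}$ must be trivial. I would deduce this from Hayes' results on $1$-bounded entropy in the presence: if $P = \iota(\cN)' \cap \cQ_{\cV}$ were non-trivial, then since $\cN$ is a factor, $\iota(\cN)$ and $P$ generate a von Neumann subalgebra of $\cQ_{\cV}$ containing $\iota(\cN) \otimes (\text{something})$, or more precisely $\iota(\cN) \vee P$ is a diffuse amplification-type extension; the point is that a non-trivial relative commutant in a $\mathrm{II}_1$ factor produces (via a central projection or a diffuse abelian subalgebra of $P$, or a copy of $\iota(\cN)$ with diffuse commutant) a situation where the entropy in the presence is forced to be $-\infty$ or at least $\leq 0$, by the same mechanism used to show that subalgebras with diffuse relative commutant inside $\cN$ kill the entropy. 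In the literature this is the statement that $h(\cN : \cM) = -\infty$ whenever $\cN$ has a non-trivial relative commutant that is ``spread out'' — concretely, one uses \cite{Hayes2018} that $h(\cP \vee \cR : \cM) \leq h(\cP:\cM) + h(\cR:\cM)$-style subadditivity together with the vanishing $h(\cR:\cM) = -\infty$ when $\cR$ has diffuse relative commutant in $\cM$. Applying this with $\cP = \iota(\cN)$ contradicts $h^{\cV}(\iota(\cN):\cQ_{\cV}) > 0$ unless $P = \C$.

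The main obstacle, and the step requiring the most care, is the entropy-vanishing argument of the previous paragraph: one must verify that a nontrivial relative commutant of a $\mathrm{II}_1$ factor inside the matrix ultraproduct really does force $h^{\cV}(\iota(\cN):\cQ_{\cV}) \leq 0$, and for this the cleanest route is to invoke Hayes' structural theorem (see \cite{Hayes2018,HJNS2021}) that if $\cN \subseteq \cM$ and there is a diffuse abelian (or just infinite-dimensional) subalgebra of $\cN' \cap \cM$, or more precisely if $\cN \vee (\cN' \cap \cM)$ is strictly larger than $\cN$ in an amenability-compatible way, then $h(\cN : \cM) \leq h(\cN : \cN \vee (\cN'\cap \cM)) = -\infty$. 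Since $\cQ_{\cV}$ has Property Gamma and a nontrivial relative commutant of a factor inside it is always diffuse, this applies. A secondary subtlety is the dependence on the ultrafilter: Theorem \ref{thm:main} as stated produces the ultrafilter, but since we may take $\cU$ itself (the theorem is proved for an arbitrary fixed free $\cU$), the ``$\cV$'' in the corollary statement is just $\cU$ relabelled; I would simply note this and absorb it. The remaining steps — monotonicity of entropy in the presence, and the identification $h^{\cU}(\cM:\cM) = h(\cM)$ — are routine consequences of the definitions and of results recalled earlier in the paper.
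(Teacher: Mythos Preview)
Your proposal has a genuine gap at the heart of the argument. The entropy-vanishing input from \cite{Hayes2018} says that if $\cA \subseteq \cB$ and $\cA' \cap \cB$ is \emph{diffuse}, then $h^{\cU}(\cA:\cB) \leq 0$; it does \emph{not} say that a merely nontrivial relative commutant forces the entropy to vanish. Your claim that ``a nontrivial relative commutant of a factor inside $\cQ_{\cV}$ is always diffuse'' is unjustified (property Gamma of $\cQ_{\cV}$, even if it held, would not imply this), and in fact the paper's proof explicitly contends with the possibility that the relative commutant is nontrivial but atomic. So from $h^{\cU}(\iota_0(\cM):\cQ) > 0$ one can only conclude that $\iota_0(\cM)' \cap \cQ$ is \emph{not diffuse}, hence contains a minimal projection $p$.

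The paper then performs a compression step you are missing entirely: one passes to $\iota(x) = p\,\iota_0(x)\,p$ inside the corner $p\cQ p$, uses the factoriality hypothesis to see that $\iota$ is trace-preserving, and observes that minimality of $p$ forces $\iota(\cM)' \cap p\cQ p = \C$. The corner $p\cQ p$ is then identified with a matrix ultraproduct $\prod_{n \to \cV} M_n(\C)$ for a \emph{different} ultrafilter $\cV$ (coming from the ranks of the projections $p_n$ lifting $p$). This is precisely why the corollary produces a new $\cV$ rather than reusing $\cU$; your assertion that ``the $\cV$ in the corollary statement is just $\cU$ relabelled'' is wrong and reflects the missing compression. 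As a secondary point, your monotonicity inequality $h^{\cU}(\cM:\cM) \leq h^{\cU}(\cN:\cM)$ is backwards: enlarging the first argument increases entropy in the presence, so $h^{\cU}(\cN:\cM) \leq h^{\cU}(\cM:\cM)$. The paper sidesteps this by applying Theorem~\ref{thm:main} directly to $\cM$ (using $h^{\cU}(\cM) > 0$) rather than to $\cN$.
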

	
	\begin{proof}
		The $1$-bounded entropy $h(\cM)$ is the supremum of $h^{\cV}(\cM)$ over free ultrafilters $\cV$.  Hence, there exists some free ultrafilter $\cU$ such that $h^{\cV}(\cM) > 0$ and by Theorem \ref{thm:main} there is an embedding $\iota_0: \cM \to \cQ = \prod_{n \to \cU} M_n(\C)$ with $h^{\cU}(\iota_0(\cM):\cQ) > 0$.
		
		A general fact about $1$-bounded entropy is that if $\cA \subseteq \cB$ and $\cA' \cap \cB$ is diffuse, then $h(\cA:\cB) \leq 0$.  Indeed, if $\cA' \cap \cB$ is diffuse, it contains a diffuse amenable subalgebra $\cC$.  Let
		\[
		\cN = \mathrm{W}^*(u \in \cQ \text{ unitary: } u \cC u^* \cap \cC \text{ is diffuse}),
		\]
		(this is known as the step 1 wq-normalizer of $\cC$ and was introduced in \cite{GP2015}).  Note that $\cA \subseteq \cN$.  Hence, by \cite[Property 1, p.\ 10]{Hayes2018}
		\[
		h^{\cU}(\cA:\cB) \leq h^{\cU}(\cN: \cB).
		\]
		By \cite[Theorem 2.8 and Proposition 3.2]{Hayes2018},
		\[
		h^{\cU}(\cN: \cB) = h^{\cU}(\cC: \cB).
		\]
		Then using \cite[Property 1, p.\ 10]{Hayes2018} again,
		\[
		h^{\cU}(\cC: \cB) \leq h^{\cU}(\cC:\cC) = h^{\cU}(\cC),
		\]
		which is zero since $\cC$ is amenable.  Hence, $h(\cA:\cB) \leq 0$.
		
		By contrapositive, since in our case $h(\iota(\cM):\cQ) \geq h^{\cU}(\iota_0(\cM):\cQ) > 0$, then $\iota_0(\cM)' \cap \cQ$ is not diffuse.  Therefore, it contains a minimal projection $p$.  Let $p \cQ p$ be the compression of $\cQ$ by $p$ equipped with the trace $\tau_{p\cQ p}(x) = \tau_{\cQ}(pxp)/\tau(p)$, and let $\iota: \cM \to p\cQ p$ be the map $\iota(x) = p\iota_0(x)p = \iota_0(x)$.  Since $p$ commutes with $\iota_0(\cM)$, it follows that $\iota_0$ is a $*$-homomorphism, and since $\cM$ is a II$_1$ factor and hence has a unique trace, the map $\iota$ must be trace-preserving.  Because $p$ was a minimal projection in $\iota_0(\cM)' \cap \cQ$, we know $\iota(\cM)' \cap p\cQ p$ has no nontrivial projections and hence is $\C$.
		
		Finally, note that $p \cQ p$ is a matrix ultraproduct $\prod_{n \to \cV} M_n(\C)$ for some ultrafilter $\cV$.  Indeed, by stability of projections there exist projections $p_n$ in $M_n(\C)$ such that $p$ is the equivalence class of $(p_n)_{n \in \N}$ in $\cQ$.  Let $k(n)$ be the rank of $p_n$.  One can check that $p\cQ p = \prod_{n \to \cU} p_n M_n(\C) p_n \cong \prod_{n \to \cU} M_{k(n)}(\C)$, which is simply a matrix ultraproduct for a different ultrafilter $\cV$.
	\end{proof}
	
	As shown in \cite[Theorem 1.2]{JekelModelEntropy}, the analogous result holds for free entropy rather than $1$-bounded entropy \emph{without} having to change the ultrafilter $\cU$ to the ultrafilter $\cV$.

	\subsection{Embeddings into Ultraproducts}
	
	Our results relate to recent work and questions about embeddings into ultraproducts.  Jung \cite{Jung2007embeddings} used the study of microstates to show that a separable tracial $\mathrm{W}^*$-algebra $\cA$ is amenable if and only if all embeddings of $\cA$ into $\cR^{\cU}$ are unitarily conjugate.  Atkinson and Kunnawalkam Elayavalli \cite{AKE2021} strengthened this result by showing that $\cA$ is amenable if and only if all embeddings of $\cA$ into $\cR^{\cU}$ are ucp-conjugate (meaning they are conjugate by an automorphism of $\cR^{\cU}$ that lifts to a sequence of unital completely positive maps $\cR \to \cR$).  Atkinson, Goldbring, and Kunnawalkam Elayavalli \cite{AGKE2022} later showed that if a separable $\mathrm{II}_1$ factor $\cM$ is Connes-embeddable and all embeddings of $\cM$ into $\cM^{\cU}$ are automorphically conjugate, then $\cM \cong \cR$.
	
	One can ask similar questions for embeddings into the ultraproduct $\cQ = \prod_{n \to \cU} M_n(\C)$ for some fixed free ultrafilter $\cU$.  In \cite{AKE2021}, the authors showed that if $\cA$ is a separable Connes-embeddable tracial $\mathrm{W}^*$-algebra and the space of unitary orbits of embeddings $\cA \to \cQ$ is separable in a certain metric, then $\cA$ must be amenable.  In particular, if all embeddings $\cA \to \cQ$ are unitarily conjugate, then $\cA$ is amenable.  It is an open question whether this result still holds when ``unitarily conjugate'' is replaced by ``automorphically conjugate.''  However, Theorem \ref{thm:main} implies the following result, which was pointed out to me by Srivatsav Kunnawalkam Elayavalli:
	
	\begin{corollary} \label{cor:Jungproperty}
		Let $\cA$ be a tracial $\mathrm{W}^*$-algebra.  Suppose that any two embeddings $\cA \to \cQ = \prod_{n \to \cU} M_n(\C)$ are conjugate by an automorphism of $\cQ$.  Then $h^{\cU}(\cA) \leq 0$.
	\end{corollary}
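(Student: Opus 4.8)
The plan is a proof by contradiction built on Theorem \ref{thm:main}. We may assume $\cA$ is separable (the general case is obtained by working with separable W$^*$-subalgebras, though this reduction needs a little care; see below). If $\cA$ is not Connes-embeddable, then there are no embeddings $\cA \to \cQ$, the hypothesis is vacuous, and $h^{\cU}(\cA) = -\infty$, so there is nothing to prove; hence assume in addition that $\cA$ is Connes-embeddable and, toward a contradiction, that $h^{\cU}(\cA) = h^{\cU}(\cA:\cA) > 0$. Fix $c$ with $0 < c < h^{\cU}(\cA:\cA)$. The strategy is to produce two embeddings of $\cA$ into $\cQ$---one with covering entropy $> c$ in the presence of $\cQ$, and one with $1$-bounded entropy $\le 0$ in the presence of $\cQ$---and then to derive a contradiction from the assumption that these two embeddings are conjugate by an automorphism of $\cQ$.

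For the first embedding, apply Theorem \ref{thm:main} with $\cN = \cM = \cA$: since $h^{\cU}(\cA:\cA) > c$, there is an embedding $\iota_1 : \cA \to \cQ$ with $\Ent^{\cU}(\iota_1(\cA):\cQ) > c$, and therefore $h^{\cU}(\iota_1(\cA):\cQ) > c > 0$.

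For the second embedding, consider the W$^*$-tensor product $\cA \mathbin{\overline{\otimes}} \cR$, with $\cR$ the hyperfinite $\mathrm{II}_1$ factor. It is again separable, and it is Connes-embeddable (the class of Connes-embeddable tracial W$^*$-algebras is closed under W$^*$-tensor products; alternatively $\cA \mathbin{\overline{\otimes}} \cR$ embeds into $\cQ \mathbin{\overline{\otimes}} \cR \cong \cQ$ since $\cQ$ is McDuff), so it embeds into $\cQ$. Let $\iota_2 : \cA \to \cQ$ be the restriction of such an embedding to $\cA \mathbin{\overline{\otimes}} 1 \cong \cA$. Then $\iota_2(\cA)' \cap \cQ$ contains the image of the diffuse algebra $1 \mathbin{\overline{\otimes}} \cR$, so by the general fact used in the proof of Corollary \ref{cor:irreducibleembedding}---that $h^{\cU}(\cP_1:\cP_2) \le 0$ whenever $\cP_1 \subseteq \cP_2$ and $\cP_1' \cap \cP_2$ is diffuse---we obtain $h^{\cU}(\iota_2(\cA):\cQ) \le 0$. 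Finally, by hypothesis there is $\theta \in \mathrm{Aut}(\cQ)$ with $\iota_1 = \theta \circ \iota_2$, whence $\iota_1(\cA) = \theta(\iota_2(\cA))$. Since $h^{\cU}(\cP_1:\cP_2)$ depends only on the isomorphism class of the inclusion $\cP_1 \subseteq \cP_2$---equivalently, only on the joint law of generating tuples, which the trace-preserving automorphism $\theta$ preserves---it follows that $h^{\cU}(\iota_1(\cA):\cQ) = h^{\cU}(\iota_2(\cA):\cQ) \le 0$, contradicting the previous paragraph. Hence $h^{\cU}(\cA) \le 0$.

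The serious input here is Theorem \ref{thm:main}, which supplies the high-entropy embedding; everything else is comparatively soft. The genuinely new ingredient is the construction of a ``tame'' embedding with diffuse relative commutant by tensoring with $\cR$, and the main things to be careful about are that the auxiliary facts---closure of Connes-embeddability under W$^*$-tensor products, ``diffuse relative commutant $\Rightarrow h^{\cU}(\cdot:\cdot) \le 0$,'' and invariance of $h^{\cU}(\cdot:\cdot)$ under automorphisms of the ambient algebra---are all available at the level of the fixed ultrafilter $\cU$ (they follow from the proof of Corollary \ref{cor:irreducibleembedding} and the definitions), together with the reduction of the non-separable case to the separable one, which is the only genuinely delicate bookkeeping point.
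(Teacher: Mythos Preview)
Your proof is correct and follows essentially the same approach as the paper's: both proceed by contradiction, use Theorem \ref{thm:main} to produce a high-entropy embedding, build a low-entropy embedding by restricting an embedding of $\cA \mathbin{\overline{\otimes}} \cR$, and derive the contradiction from the diffuse relative commutant forcing $h^{\cU}(\cdot:\cQ) \le 0$. The only cosmetic difference is that the paper transports the copy of $\cR$ into the commutant of the high-entropy image via the automorphism and then applies the ``diffuse commutant $\Rightarrow h \le 0$'' fact there, whereas you compute the entropy of the low-entropy embedding first and invoke automorphism-invariance of $h^{\cU}(\cdot:\cQ)$; your added care about separability and the vacuous non-Connes-embeddable case is a welcome bit of bookkeeping that the paper leaves implicit.
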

	
	\begin{proof}
		We proceed by contradiction.  Suppose that $h^{\cU}(\cA) > 0$.  By Theorem \ref{thm:main}, there exists an embedding $\alpha: \cA \to \cQ$ with $h^{\cU}(\alpha(\cA): \cQ) > 0$.  Moreover, since $\cA$ is Connes-embeddable, so is $\cR \overline{\otimes} \cA$, so there exists some embedding $\beta: \cR \overline{\otimes} \cA \to \cQ$.  In particular, $\beta(\cR) \subseteq \beta(\cA)' \cap \cQ$.  If we assume for contradiction that $\alpha$ and $\beta|_{\cA}$ are conjugate by an automorphism, then $\alpha(\cA)' \cap \cQ$ also contains a copy of $\cR$, so in particular, $\alpha(\cA)' \cap \cQ$ is diffuse.  As pointed out in the proof of Corollary \ref{cor:irreducibleembedding}, this implies that $h(\alpha(\cA):\cQ) = 0$, which contradicts our choice of $\alpha$.
	\end{proof}
	
	Intuitively, the corollary says that if the space of embeddings modulo automorphic conjugacy is trivial, then the space of embeddings modulo unitary conjugacy is not too large, since $h^{\cU}(\cA)$ quantifies the ``amount'' of embeddings $\cA \to \cQ$ up to unitary conjugacy.  The conclusion that $h^{\cU}(\cA) = 0$ is a weakening of amenability since by Jung's theorem \cite{Jung2007embeddings} amenability is equivalent to the space of embeddings modulo unitary conjugacy being trivial.
	
	We remark that the free entropy techniques used here to study embeddings into $\cQ$ cannot be directly applied to study embeddings into $\cR^{\cU}$.  For instance, Theorem \ref{thm:main} does not make sense with $\cQ$ replaced by $\cR^{\cU}$.  Indeed, $\cR^{\cU}$ has property Gamma by \cite[\S 3.2.2]{FHS2014b}, and every tracial $\mathrm{W}^*$-algebra with property Gamma has $1$-bounded entropy zero (this is a special case of \cite[Corollary 4.6]{Hayes2018} and it is shown explicitly in \cite[\S 1.2, Example 4]{HJNS2021}).  Thus, $h(\cR^{\cU}) = 0$ and therefore, for any subalgebra $\cM$ of $\cR^{\cU}$, we also have $h^{\cU}(\cM: \cR^{\cU}) = 0$ by \cite[\S 2, Property 1]{Hayes2018}.  Hence, Theorem \ref{thm:main} would not hold with $\cR^{\cU}$ instead of $\cQ$.  By contrast, many other operator-theoretic and model-theoretic techniques are more easily applied to $\cR^{\cU}$ than to $\cQ$ since $\cR^{\cU}$ is an ultrapower; see for instance \cite{Goldbring2020,Goldbring2021enforceable,Goldbring2021nonembeddable}.
	
	\subsection{Outline}
	
	In large part, our goal is to establish communication between the free probabilistic and model theoretic subgroups of operator algebras, and to show that many of the notions in free probability (such as non-commutative laws, microstates spaces in the presence, and relative microstate spaces) arise naturally from the model-theoretic framework.  Therefore, we strive to make the exposition largely self-contained and use model-theoretic language throughout.
	
	We start out by explaining the model-theoretic framework for operator algebras in \S \ref{sec:modeltheory}.  In particular, we give a more detailed explanation than current literature of the languages and structures for multiple sorts and multiple domains of quantification for each.  Next, in \S \ref{sec:predicates}, we give a self-contained development of definable predicates and functions of infinite tuples, including the result that every element of a tracial $\mathrm{W}^*$-algebra can be realized by applying a quantifier-free definable function to the generators which was observed in \cite{JekelThesis,HJNS2021}.
	
	\S \ref{sec:entropy} develops the framework of covering entropy for types.  We show the invariance of entropy under change of coordinates in \S \ref{subsec:invariance}, describe the relationship with ultraproduct embeddings in \S \ref{subsec:embeddings1}, and finally show that adding variables in the (model-theoretic) algebraic closure of given tuple $\bX$ does not change its entropy in \S \ref{subsec:algebraic}.
	
	In \S \ref{sec:qfentropy}, we describe the quantifier-free and existential versions of entropy, showing that they agree with the $1$-bounded entropy of Hayes.  We conclude the proof Theorem \ref{thm:main} there.
	
	In the appendix \S \ref{sec:conditional}, we describe a generalization to conditional (or ``relative'') entropy, which focuses on quantifying the embeddings $\cN \to \cQ$ which restrict to a fixed embedding $\iota: \cA \to \cQ$ on a given $\mathrm{W}^*$-subalgebra $\cA$.  The existential version of the conditional covering entropy was studied by Hayes explicitly for $\cA$ diffuse abelian and implicitly for $\cA$ diffuse amenable \cite{Hayes2018}, in which case it agrees with the unconditional version.  However, the conditional covering entropy (for full, quantifier-free, or existential types) makes sense for any diffuse $\cA$ with a specified embedding $\alpha: \cA \to \cQ$ (though, as far as we know, it may depend on the embedding $\alpha$).  Moreover, conditional entropy is natural from the model-theoretic perspective, since it arises from replacing formulas in the original language with formulas that have coefficients from the subalgebra $\cA$.

	\subsection{Acknowledgements}
	
	This was work was partially funded by the NSF postdoc grant DMS-2002826.  I thank the organizers and hosts of the 2017 workshop on the model theory of operator algebras at University of California, Irvine, and of the 2018 long program Quantitative Linear Algebra at the Institute of Pure and Applied Mathematics at UCLA for conferences that greatly contributed to my knowledge of operator algebras, entropy, and model theory.  Special thanks to Ben Hayes for pointing out Corollary \ref{cor:irreducibleembedding}, Srivatsav Kunnawalkam Elayavalli for pointing out Corollary \ref{cor:Jungproperty}, and Jennifer Pi for reading the paper in detail and finding many typos.  Thanks to Isaac Goldbring, Ben Hayes, and Adrian Ioana for their comments on the paper and advice on references.  Finally, thanks to the anonymous referees for numerous emendations to the paper.
	
	\section{Continuous model theory for tracial $\mathrm{W}^*$-algebras} \label{sec:modeltheory}
	
	This section sketches the setup of continuous model theory, or model theory for metric structures \cite{BYBHU2008,BYU2010} and its application to operator algebras in \cite{FHS2013,FHS2014,FHS2014b}.  We strive to present a self-contained exposition for two reasons:  First, some readers may not be familiar with the model-theoretic terminology.  Second, we are following the treatment in \cite{FHS2014} which introduces ``domains of quantification'' to cut down on the number of ``sorts,'' which means that some of the statements need to modified from their original form in \cite{BYBHU2008}.
	
	\subsection{Background on operator algebras} \label{subsec:operatoralgebrasbackground}
	
	We start by giving some basic terminology and background on operator algebras.  For further detail and history, we suggest consulting the references \cite{KadisonRingroseI,Dixmier1969,Sakai1971,TakesakiI,Blackadar2006,Zhu1993}.
	
	\textbf{$\mathrm{C}^*$-algebras:} 
	
	\begin{enumerate}[(1)]
		\item A (unital) algebra over $\C$ is a unital ring $A$ with a unital inclusion map $\C \to A$.
		\item A (unital) $*$-algebra is an algebra $A$ equipped with a conjugate linear involution $*$ such that $(ab)^* = b^* a^*$.
		\item A unital $\mathrm{C}^*$-algebra is a $*$-algebra $A$ equipped with a complete norm $\norm{\cdot}$ such that $\norm{ab} \leq \norm{a} \norm{b}$ and $\norm{a^*a} = \norm{a}^2$ for $a, b \in A$.
	\end{enumerate}
	
	A collection of fundamental results in $\mathrm{C}^*$-algebra theory establishes that $\mathrm{C}^*$-algebras can always be represented as algebras of operators on Hilbert spaces.  If $H$ is a Hilbert space, the algebra of bounded operators $B(H)$ is a $\mathrm{C}^*$-algebra.  Conversely, every unital $\mathrm{C}^*$-algebra can be embedded into $B(H)$ by some unital and isometric $*$-homomorphism $\rho$.  By isometric, we mean that $\norm{\rho(a)} = \norm{a}$, where $\norm{\rho(a)}$ is the operator norm on $B(H)$ and $\norm{a}$ is the given norm on the $\mathrm{C}^*$-algebra $A$.
	
	\textbf{$\mathrm{W}^*$-algebras:} A \emph{von Neumann algebra} is a $*$-subalgebra of $B(H)$ (for some Hilbert space $H$) that is closed in the \emph{strong operator topology}, the topology of pointwise convergence as functions on $H$.  A \emph{$\mathrm{W}^*$-algebra} is a $\mathrm{C}^*$-algebra that admits a predual (that is, it is the dual of some Banach space).  A deep result of Sakai showed that for a $\mathrm{C}^*$-algebra $A$ is a $\mathrm{W}^*$-algebra if and only if it is isomorphic to a von Neumann algebra; moreover, the weak-$\star$ topology on a $\mathrm{W}^*$-algebra $A$ is uniquely determined by its $\mathrm{C}^*$-algebra structure \cite[Corollary 1.13.3]{Sakai1971}.
	
	\textbf{Tracial $\mathrm{W}^*$-algebras:} A \emph{tracial $\mathrm{W}^*$-algebra} is a $\mathrm{W}^*$-algebra $M$ together with a linear map $\tau: M \to \C$ satisfying:
	\begin{itemize}
		\item \emph{positivity}: $\tau(x^*x) \geq 0$ for all $x \in M$
		\item \emph{unitality}: $\tau(1) = 1$
		\item \emph{traciality}: $\tau(xy) = \tau(yx)$ for $x, y \in A$
		\item \emph{faithfulness}:  $\tau(x^*x) = 0$ implies $x = 0$ for $x \in A$.
		\item \emph{weak-$\star$ continuity}:  $\tau: M \to \C$ is weak-$\star$ continuous.
	\end{itemize}
	We call $\tau$ a \emph{faithful normal tracial state}.
	
	\textbf{The standard representation:}  Given a tracial $\mathrm{W}^*$-algebra $(M,\tau)$, we can form a Hilbert space $L^2(M,\tau)$ as the completion of $M$ with respect to the inner product $\ip{x,y}_\tau = \tau(x^*y)$; if $x \in M$, then we denote the corresponding element of $L^2(M,\tau)$ by $\widehat{x}$.  There is a unique unital $*$-homomorphism $\pi_\tau: M \to B(L^2(M,\tau))$ satisfying $\pi_\tau(x) \widehat{y} = \widehat{xy}$ for $x, y \in M$.  Now $\pi_\tau$ is a $*$-homomorphism isometric with respect to the operator norm, and its image is a von Neumann algebra.  The construction of $L^2(M,\tau)$ and $\pi_\tau$ is a special case of the \emph{GNS (Gelfand-Naimark-Segal) construction} and $\pi_\tau$ is also known as \emph{the standard representation} of $(M,\tau)$.  Note the convergence of a net $x_i$ to $x$ in $M$ with respect to the strong operator topology in $B(L^2(M,\tau))$ implies convergence of $\widehat{x}_i = \pi_\tau(x_i) \widehat{1}$ to $\widehat{x} =\pi_\tau(x) \widehat{1}$ in $L^2(M,\tau)$.  (It turns out that the converse is true if $(x_i)_{i \in I}$ is bounded in operator norm, but we will not need to use this fact directly.)
	
	\textbf{$*$-polynomials and generators:} Given an index set $I$, we denote by $\C\ip{x_i,x_i^*: i \in I}$ the free unital algebra (or non-commutative polynomial algebra) generated by indeterminates $x_i$ and $x_i^*$ for $i \in I$.  We equip $\C\ip{x_i,x_i^*: i \in I}$ with the unique $*$-operation sending $x_i$ to $x_i^*$, thus making it into a $*$-algebra.  If $A$ is a unital $*$-algebra and $(a_i)_{i \in I}$ a collection of elements, there is a unique unital $*$-homomorphism $\rho: \C\ip{x_i,x_i^*: i \in I} \to A$ mapping $x_i$ to $a_i$ for each $i \in I$.  We refer to the elements of $\C\ip{x_i,x_i^*: i \in I}$ as \emph{non-commutative $*$-polynomials}, and if $p \in \C\ip{x_i,x_i^*: i \in I}$ and $\rho: \C\ip{x_i,x_i^*: i \in I} \to A$ is as above, we denote $\rho(p)$ by $p(a_i: i \in I)$.  Moreover, the image of $\rho$ is the \emph{$*$-algebra generated by} $(a_i)_{i \in I}$.
	
	If $A$ is a $\mathrm{C}^*$-algebra and $(a_i)_{i \in I}$ is a collection of elements of $I$, then the \emph{$\mathrm{C}^*$-algebra generated by $(a_i)_{i \in I}$} is the norm-closure of the $*$-algebra generated by $(a_i)_{i \in I}$.  Similarly, if $M$ is a von Neumann algebra and $(a_i)_{i \in I}$ is a collection of elements of $M$, then the \emph{von Neumann subalgebra} or \emph{$\mathrm{W}^*$-subalgebra generated by $(a_i)_{i \in I}$} is the strong operator topology closure of the $*$-algebra generated by $(a_i)_{i \in I}$.  In particular, we say that $(a_i)_{i \in I}$ \emph{generates $M$} if the strong operator topology closure is all of $M$.
	
	\subsection{Languages and structures}
	
	Next, let us sketch the setup of continuous model theory, or model theory for metric structures \cite{BYBHU2008,BYU2010}.  We will follow the treatment in \cite{FHS2014} which introduces ``domains of quantification'' to cut down on the number of ``sorts'' neeeded.
	
	A \emph{language} $\mathcal{L}$ consists of:
	\begin{itemize}
		\item A set $\mathcal{S}$ whose elements are called \emph{sorts}.
		\item For each $S \in \mathcal{S}$, a privileged relation symbol $d_S$ (which will represent a metric) and a set $\mathcal{D}_S$ whose elements are called \emph{domains of quantification for $S$}.
		\item For each $S \in \mathcal{S}$ and $D, D' \in \mathcal{D}_S$ an assigned constant $C_{D,D'}$.
		\item A countably infinite set of \emph{variable symbols} for each sort $S$.  We denote the variables by $(x_i)_{i \in \N}$.
		\item A set of \emph{function symbols}.
		\item For each function symbol $f$, an assigned tuple $(S_1,\dots,S_n)$ of sorts called the \emph{domain}, another sort $S$ called the \emph{codomain}.  We call $n$ the \emph{arity of $f$}.
		\item For each function symbol $f$ with domain $(S_1,\dots,S_n)$ and codomain $S$, and for every $\mathbf{D} = (D_1,\dots,D_n) \in \mathcal{D}_{S_1} \times \dots \times \mathcal{D}_{S_n}$, there is an assigned $D_{f,\mathbf{D}} \in \mathcal{D}_S$ (representing a range bound), and assigned moduli of continuity $\omega_{f,\mathbf{D},1}$, \dots, $\omega_{f,\mathcal{D},n}$. (Here ``modulus of coninuity'' means a continuous increasing, zero-preserving function $[0,\infty) \to [0,\infty)$).
		\item A set of \emph{relation symbols}.
		\item For each relation symbol $R$, an assigned domain $(S_1,\dots,S_n)$ as in the case of function symbols.
		\item For each relation symbol $R$ and for every $\mathbf{D} = (D_1,\dots,D_n) \in \mathcal{D}_{S_1} \times \dots \times \mathcal{D}_{S_n}$, an assigned bound $N_{R,\mathbf{D}} \in [0,\infty)$ and assigned moduli of continuity $\omega_{R,\mathbf{D},1}$, \dots, $\omega_{R,\mathcal{D},n}$.
	\end{itemize}
	
	Given a language $\mathcal{L}$, an \emph{$\mathcal{L}$-structure} $\cM$ assigns an object to each symbol in $\mathcal{L}$, called the \emph{interpretation} of that symbol, in the following manner:
	\begin{itemize}
		\item Each sort $S \in \mathcal{S}$ is assigned a metric space $S^{\cM}$, and the symbol $d_S$ is interpreted as the metric $d_S^{\cM}$ on $S^{\cM}$.
		\item Each domain of quantification $D \in \mathcal{D}_S$ is assigned a subset $D^{\cM} \subseteq S^{\cM}$, such that $D^{\cM}$ is complete for each $D$, $S^{\cM} = \bigcup_{D \in \mathcal{D}_S} D^{\cM}$, and $\sup_{X \in D, Y \in D'} d_S^{\cM}(X,Y) \leq C_{D,D'}$.
		\item Each function symbol $f$ with domain $(S_1,\dots,S_n)$ and codomain $S$ is interpreted as a function $f^{\mathcal{M}}: S_1^{\cM} \times \dots \times S_n^{\cM} \to S^{\cM}$.  Moreover, for each $\mathbf{D} = (D_1,\dots,D_n) \in \mathcal{D}_{S_1} \times \dots \times \mathcal{D}_{S_n}$, the function $f^{\cM}$ maps $D_1^{\cM} \times \dots \times D_n^{\cM}$ into $D_{f,\mathbf{D}}^{\cM}$.  Finally, $f^{\cM}$ restricted to $D_1^{\cM} \times \dots \times D_n^{\cM}$ is uniformly continuous in the $i$th variable with modulus of continuity of $\omega_{f,\mathbf{D},i}$.
		\item Each relation symbol $R$ with domain $(S_1,\dots,S_n)$ is interpreted as a function $R^{\cM}: S_1^{\cM} \times \dots \times S_n^{\cM} \to \R$.  Moreover, for each $\mathbf{D} = (D_1,\dots,D_n) \in \mathcal{D}_{S_1} \times \dots \times \mathcal{D}_{S_n}$, $f^{\cM}$ is bounded by $N_{R,\mathbf{D}}$ on $M(D_1) \times \dots \times M(D_n)$ and uniformly continuous in the $i$th argument with modulus of continuity of $\omega_{R,\mathbf{D},i}$.
	\end{itemize}
	
	The \emph{language $\mathcal{L}_{\tr}$ of tracial $\mathrm{W}^*$-algebras} can be described as follows.  We will also simultaneously describe how a tracial $\mathrm{W}^*$-algebra $(M,\tau)$ gives rise to an $\mathcal{L}_{\tr}$-structure $\mathcal{M}$, that is, how each symbol will be interpreted.
	\begin{itemize}
		\item A single sort, to be interpreted as the $\mathrm{W}^*$-algebra $M$.  If $\cM = (M,\tau)$ is a tracial $\mathrm{W}^*$-algebra, we denote the interpretation of this sort by $L^\infty(\cM)$ because of the intuition of tracial $\mathrm{W}^*$-algebras as non-commutative measure spaces.
		\item Domains of quantification $\{D_r\}_{r \in (0,\infty)}$, to be interpreted as the operator norm balls of radius $r$ in $M$.
		\item The metric symbol $d$, to be interpreted as the metric induced by $\norm{\cdot}_{2,\tau}$.
		\item A binary function symbol $+$, to be interpreted as addition.
		\item A binary function symbol $\cdot$, to be interpreted as multiplication.
		\item A unary function symbol $*$, to be interpreted as the adjoint operation.
		\item For each $\lambda \in \C$, a unary function symbol, to be interpreted as multiplication by $\lambda$.
		\item Function symbols of arity $0$ (in other words constants) $0$ and $1$, to be interpreted as additive and multiplicative identity elements.
		\item Two unary relation symbols $\re \tr$ and $\im \tr$, to be interpreted the real and imaginary parts of the trace $\tau$.
		\item For technical reasons explained in \cite{FHS2014}, we also introduce for each $d$-variable non-commutative polynomial $p$ a symbol $t_p: L^\infty(\cM)^d$ representing the evaluation of $p$, along with the appropriate range bounds $N_{t_p,\mathbf{r}}$ given by the supremum of $\norm{p(X_1,\dots,X_d)}$ over all $(X_1,\dots,X_d)$ in a tracial $\mathrm{W}^*$-algebra $\cM$.
	\end{itemize}
	Each function and relation symbol is assigned range bounds and moduli of continuity that one would expect, e.g.\ multiplication is supposed to map $D_r \times D_{r'}$ into $D_{rr'}$ with $\omega_{(D_r,D_r'),1}^{\cdot}(t) = r't$ and $\omega_{(D_r,D_r'),2}^{\cdot} = rt$.
	
	Although not every $\mathcal{L}_{\tr}$-structure comes from a tracial $\mathrm{W}^*$-algebra, one can formulate axioms in the language such that any structure satisfying these axioms comes from a tracial $\mathrm{W}^*$-algebra \cite[\S 3.2]{FHS2014}.  In order to state this result precisely, we first have to explain formulas and sentences.
	
	\subsection{Syntax: Terms, formulas, conditions, and sentences}
	
	\emph{Terms} in a language $\mathcal{L}$ are expressions obtained by iteratively composing the function symbols and variables.  For example, if $x_1$, $x_2$, \dots are variables in a sort $S$ and $f: S \times S \to S$ 
	and $g: S \times S \to S$ are function symbols, then $f(g(x_1,x_2),x_1)$ is a term.  Each term has assigned range bounds and moduli of continuity in each variable which are the natural ones computed from those of the individual function symbols making up the composition.  Any term $f$ with variables $x_1 \in S_1$, \dots, $x_k \in S_k$ and output in $S$ can be interpreted in an $\mathcal{L}$-structure as a function $S_1^{\cM} \times \dots \times S_k^{\cM} \to S^{\cM}$. For example, in the language $\mathcal{L}_{\tr}$, the terms are expressions obtained from iterating scalar multiplication, addition, multiplication, and the $*$-operation on variables and the unit symbol $1$.  If $(M,\tau)$ is a tracial $\mathrm{W}^*$-algebra, then the interpretation of a term in $\mathcal{M}$ is a function represented by a $*$-polynomial.
	
	\emph{Basic formulas} in a language are obtained by evaluating relation symbols on terms.  In other words, if $T_1$, \dots, $T_k$ are terms valued in sorts $S_1$, \dots, $S_k$, and $R$ is a relation $S_1 \times \dots \times S_k \to \R$, then $R(T_1,\dots,T_n)$ is a basic formula.  The basic formulas have assigned range bounds and moduli of continuity similar to the function symbols.  In an $\mathcal{L}$-structure $\cM$, a basic formula $\phi$ is interpreted as a function $\phi^{\cM}: S_1^{\cM} \times \dots \times S_k^{\cM} \to \R$.  In $\mathcal{L}_{\tr}$, a basic formula can take the form $\re \tr(f)$ or $\im \tr(f)$ where $f$ is an expression obtained by iterating the algebraic operations.  Thus, when evaluated in a tracial $\mathrm{W}^*$-algebra, it corresponds to the real or imaginary part of the trace of a non-commutative $*$-polynomial.
	
	\emph{Formulas} are obtained from basic formulas by iterating several operations:
	\begin{itemize}
		\item Given a formulas $\phi_1$, \dots, $\phi_n$ and $F: \R^n \to \R$ continuous, $F(\phi_1,\dots,\phi_n)$ is a formula.
		\item If $\phi$ is a formula, $D$ is a domain of quantification for some sort $S$, and $x$ is one of our variables in $S$, then $\inf_{x \in D} \phi$ and $\sup_{x \in D} \phi$ are formulas.
	\end{itemize}
	Each occurrence of a variable in $\phi$ is either \emph{bound} to a quanitifer $\sup_{x \in D}$ or $\inf_{x \in D}$, or else it is \emph{free}.  We will often write $\phi(x_1,\dots,x_n)$ for a formula to indicate that the free variables are $x_1$, \dots, $x_n$.
	
	All these formulas also have assigned range bounds and moduli of continuity. The moduli of continuity of $F(\phi_1,\dots,\phi_n)$ are obtained by composition from the moduli of continuity of $F$ and $\phi_j$ as in \cite[\S 2 appendix and Theorem 3.5]{BYBHU2008}.  Next, if $\phi: S_1 \times \dots \times S_n \to S$ and $D \in \mathcal{D}_{S_n}$
	\[
	\psi(x_1,\dots,x_{n-1}) = \sup_{x_n \in D} \phi(x_1,\dots,x_{n-1},x_n),
	\]
	then
	\[
	\omega_{\psi,(D_1,\dots,D_{n-1}),j} = \omega_{\phi,(D_1,\dots,D_{n-1},D),j}.
	\]
	Each formula has an interpretation in every $\mathcal{L}$-structure $\mathcal{M}$, defined by induction on the complexity of the formula.  If $\phi = F(\phi_1,\dots,\phi_n)$, then $\phi^{\cM} = F(\phi_1^{\cM}, \dots, \phi_n^{\cM})$.  Similarly, if $\psi(x_1,\dots,x_{n-1}) = \sup_{x_n \in D} \psi(x_1,\dots,x_n)$, then
	\[
	\psi^{\cM}(X_1,X_2,\dots,X_n) = \sup_{X_n \in D^{\cM}} \phi^{\cM}(X_1,\dots,X_{n-1}).
	\]
	Here $X_1$, \dots, $X_n$ are elements of the sorts in the $\mathcal{L}$-structure $\cM$, rather than formal variables.
	
	\begin{example}
		In $\mathcal{L}_{\tr}$, some terms are
		\[
		x_1 x_2, \quad (x_1 x_2) x_3 + (x_2^* x_3)(x_1 x_3^*)^*.
		\]
		A basic formula is
		\[
		\re \tr(x_1 x_2 + x_3^*(x_2x_1)^*).
		\]
		Another formula is
		\[
		\re \tr(x_1 x_2) + e^{\im \tr(x_1^*(x_2 x_3^*) \re \tr(x_4)}.
		\]
		We can also write a formula
		\[
		\sup_{x_1 \in D_2} [\re \tr(x_1 x_2) + e^{\im \tr(x_1^*(x_2 x_3^*) \re \tr(x_4)}],
		\]
		which will be interpreted as the supremum of the previous formula over $x_1$ in the ball of radius $2$.  In this formula, $x_1$ is bound to the quantifier $\sup_{x_1 \in D_2}$ and the variables $x_2$ and $x_3$ are free.
	\end{example}
	
	For convenience, we will assume that our formulas do not have two copies of the same variable (i.e. if a variable is bound to a quantifier, there is no other variable of the same name that is free or bound to a different quantifier).  For instance, in the formula
	\[
	\im \tr(x_1) \sup_{x_1 \in D_1} \re \tr(x_1x_2 + x_3 x_1^*),
	\]
	the first occurrence of $x_1$ is free while the latter two occurrences are bound to the quantifier $\sup_{x_1 \in D_1}$, but we can rewrite this formula equivalently as
	\[
	\im \tr(x_1) \sup_{y_1 \in D_1} \re \tr(y_1x_2 + x_3 y_1^*).
	\]
	We will typically denote the free variables by $(x_i)_{i \in \N}$ and the bound variables by $(y_i)_{i \in \N}$.  Lowercase letters will be used for formal variables while uppercase letters will be used for individual operators in operator algebras (or more generally elements of an $
	\cL$-structure).
	
	
	\subsection{Theories, models, and axioms}
	
	A \emph{sentence} is a formula with no free variables.  If $\phi$ is a sentence, then the interpretation $\phi^{\cM}$ in an $\mathcal{L}$-structure is simply a real number.
	
	A \emph{theory} $\rT$ in a language $\mathcal{L}$ is a set of sentences.  We say that an $\mathcal{L}$-structure $\mathcal{M}$ \emph{models} the theory $\rT$, or $\mathcal{M} \models \rT$ if $\phi^{\cM} = 0$ for all $\phi \in \rT$.
	
	If $\mathcal{M}$ is an $\mathcal{L}$-structure, then the \emph{theory of $\cM$}, denoted $\Th(\mathcal{M})$ is the set of sentences $\phi$ such that $\phi^{\cM} = 0$.  As observed in \cite{FHS2014}, the theory of $\cM$ also uniquely determines the values $\phi^{\cM}$ of all sentences $\phi$ since $\phi - c$ is a sentence for every constant $c \in \R$.
	
	More generally, if $\mathcal{C}$ is a class of $\mathcal{L}$-structures, then $\Th(\cC)$ is the set of all sentences $\phi$ such that $\phi^{\cM} = 0$ for all $\cM$ in $\cC$.  The class $\cC$ is said to be \emph{axiomatizable} if every $\mathcal{L}$-structure that models $\Th(\cC)$ is actually in $\cC$.
	
	It is shown in \cite[\S 3.2]{FHS2014} that the class of $\cL_{\tr}$-structures that represent actual tracial $\mathrm{W}^*$-algebras is axiomatizable.  The axioms, roughly speaking, encode the fact that $\cM$ is a $*$-algebra, the fact that $\tau$ is a tracial state, the fact that $\norm{xy}_{L^2(\cM)} \leq r \norm{y}_{L^2(\cM)}$ for $x \in D_r^{\cM}$, the relationship between the distance and the trace, the fact that $D_r^{\cM}$ is contained in $D_{r'}^{\cM}$ for $r < r'$ (that is, $\sup_{x \in D_r} \inf_{y \in D_{r'}} d(x,y) = 0$), and the fact that $t_p$ agrees with the evaluation of the non-commutative polynomial $p$.
	
	The theory of tracial $\mathrm{W}^*$-algebras will be denoted $\rT_{\tr}$.  It is also shown in \cite{FHS2014} that $\mathrm{II}_1$ factors (infinite-dimensional tracial $\mathrm{W}^*$-algebras with trivial center) are axiomatizable by a theory $\rT_{\mathrm{II}_1}$.
	
	\subsection{Ultraproducts}
	
	An important construction for continuous model theory and for $\mathrm{W}^*$-algebras is the ultraproduct.  Ultraproducts are a way of constructing a limiting object out of arbitrary sequences (or more generally indexed families) of objects.  In order to force limits to exist, one uses a device called an ultrafilter.
	
	Let $I$ be an index set.  An \emph{ultrafilter} $\cU$ on $I$ is a collection of subsets of $I$ such that
	\begin{itemize}
		\item $\varnothing \not \in \cU$.
		\item If $A \subseteq B \subseteq I$ and $A \in \cU$, then $B \in \cU$.
		\item If $A$, $B \in \cU$, then $A \cap B \in \cU$.
		\item For each $A \subseteq I$, either $A \in \cU$ or $A^c \in \cU$.
	\end{itemize}
	If $\cU$ is an ultrafilter on $I$, $\Omega$ is a topological space, and $f: I \to \Omega$ is a function, then we say that
	\[
	\lim_{i \to \cU} f(i) = w
	\]
	if for every neighborhood $O \ni w$ in $\Omega$, the preimage $f^{-1}(O)$ is an element of $\cU$.  Now if $i \in I$, there is an ultrafilter $\cU_i := \{A \subseteq I: i \in A\}$, which is called a \emph{principal ultrafilter}.  All other ultrafilters are called \emph{non-principal} or \emph{free ultrafilters}.
	
	The set of all ultrafilters on $I$ can be identified with the Stone-{\v C}ech compactification of $I$, where $I$ is given the discrete topology (see e.g.\ \cite[\S 3]{HS2011}).  The principal ultrafilters correspond to the points of the original space $I$.  In particular, this means that if $\Omega$ is a compact Hausdorff space and $f: I \to \Omega$ is a function, then $\lim_{i \to \cU} f(i)$ exists in $\Omega$.
	
	Now consider a language $\cL$.  Let $I$ be an index set, $\cU$ an ultrafilter on $I$, and for each $i \in I$, let $\cM$ be an $\cL$-structure.  The ultraproduct $\prod_{i \to \cU} \cM_i$ is the $\cL$-structure $\cM$ defined as follows (see \cite[\S 5]{BYBHU2008}):  For each sort $S$, consider tuples $(X_i)_{i \in I}$ where $X_i \in S^{\cM_i}$.
	\begin{itemize}
		\item Let's call $(X_i)_{i \in I}$ \emph{confined} if there exists $D \in \cD_S$ such that $X_i \in D^{\cM_i}$ for all $i$.
		\item Let's call $(X_i)_{i \in I}$ and $(Y_i)_{i \in I}$ \emph{equivalent} if $\lim_{i \to \cU} d_S^{\cM_i}(X_i,Y_i) = 0$.
		\item For a confined tuple $(X_i)_{i \in I}$, let $[X_i]_{i \in I}$ denote its equivalence class.
	\end{itemize}
	We define $S^{\cM}$ to be the set of equivalence classes of confined tuples $(X_i)_{i \in I}$.  The metric $d_S^{\cM}$ on $S^{\cM}$ is then given by
	\[
	d_S^{\cM}([X_i]_{i \in I}, [Y_i]_{i \in I}) = \lim_{i \to \cU} d_S^{\cM_i}(X_i,Y_i).
	\]
	This is independent of the choice of representative for the equivalence classes because of the triangle inequality, and it is finite because if $X_i \in D^{\cM_i}$ and $Y_i \in (D')^{\cM_i}$ for all $i$, then $d_S^{\cM_i}(X_i,Y_i) \leq C_{D,D'}$.  Then $S^{\cM}$ is a metric space and $S^{\cM} = \bigcup_{D \in \cD_S} D^{\cM}$, where $D^{\cM}$ is the set of classes $[X_i]_{i \in I}$ with $X_i \in D^{\cM_i}$ for all $i$.  Moreover, $D^{\cM}$ is automatically complete \cite[Proposition 5.3]{BYBHU2008}.
	
	Each function symbol $f: S_1 \times \dots \times S_n \to S$ receives its interpretation $f^{\cM}$ through
	\[
	f^{\cM}([X_{1,i}]_{i \in I},\dots,[X_{n,i}]_{i \in I}) = [f^{\cM_i}(X_{1,i},\dots,X_{n,i})]_{i \in I},
	\]
	which is well-defined because of the uniform continuity of $f$ on each domain of quantification, and similarly, each relation receives its interpretation in $\cM$ through
	\[
	R^{\cM}([X_{1,i}]_{i \in I},\dots,[X_{n,i}]_{i \in I}) = \lim_{i \to \cU} R^{\cM_i}(X_{1,i},\dots,X_{n,i}).
	\]
	One can verify by the same reasoning as \cite[\S 5]{BYBHU2008} that $\cM$ is indeed an $\cL$-structure.
	
	One of the reasons ultraproducts are so important is because of the following result, known as (the continuous analog of) \L os's theorem.  See \cite[Theorem 5.4]{BYBHU2008}.
	
	\begin{theorem} \label{thm:Los}
		Let $\cL$ be a language, $\cU$ an ultrafilter on an index set $I$, and $\cM_i$ an $\cL$-structure for each $i \in I$.  Let $\cM = \prod_{i \to \cU} \cM_i$.  If $\phi$ is a formula with free variables $x_1 \in S_1$, \dots, $x_n \in S_n$, then for any $[X_{1,i}]_{i \in I} \in S_1^{\cM}$, \dots, $[X_{n,i}]_{i \in I} \in S_n^{\cM}$, we have
		\[
		\phi^{\cM}([X_{1,i}]_{i \in I}, \dots, [X_{n,i}]_{i \in I}) = \lim_{i \to \cU} \phi^{\cM_i}(X_{1,i},\dots,X_{n,i}).
		\]
	\end{theorem}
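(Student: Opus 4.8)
The plan is to prove Theorem~\ref{thm:Los} by induction on the complexity of the formula $\phi$, following the classical proof of \L os's theorem but keeping track of the range bounds and moduli of continuity attached to the symbols of $\cL$, which are exactly what makes the ultraproduct construction legitimate. We take for granted, as noted just above the statement, that $\cM = \prod_{i\to\cU}\cM_i$ is a well-defined $\cL$-structure, so that $f^{\cM}$ and $R^{\cM}$ do not depend on the choice of representatives.

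First I would prove the analogous identity for \emph{terms}: by induction on the structure of a term $T$ valued in a sort $S$ with free variables in sorts $S_1,\dots,S_k$, one shows that for confined tuples $(X_{1,i})_{i\in I},\dots,(X_{k,i})_{i\in I}$ the tuple $(T^{\cM_i}(X_{1,i},\dots,X_{k,i}))_{i\in I}$ is again confined (this uses the range bounds $D_{f,\mathbf{D}}$ carried by each function symbol) and
\[
T^{\cM}([X_{1,i}]_{i\in I},\dots,[X_{k,i}]_{i\in I}) = \bigl[T^{\cM_i}(X_{1,i},\dots,X_{k,i})\bigr]_{i\in I}.
\]
The base case (a variable, or a nullary function symbol) is immediate and the inductive step is just the definition of $f^{\cM}$ in the ultraproduct. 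This yields the base case of the main induction: for a basic formula $\phi = R(T_1,\dots,T_k)$, combine the term identity with the definition $R^{\cM}([Y_{1,i}]_{i\in I},\dots,[Y_{k,i}]_{i\in I}) = \lim_{i\to\cU} R^{\cM_i}(Y_{1,i},\dots,Y_{k,i})$. For the connective step $\phi = F(\phi_1,\dots,\phi_m)$ with $F\colon\R^m\to\R$ continuous, I would use that each $\phi_j^{\cM_i}(\bar X_i)$ stays in a fixed bounded interval (the range bound of $\phi_j$ on the relevant domains), so the ultralimits $\lim_{i\to\cU}\phi_j^{\cM_i}(\bar X_i)$ exist, together with the elementary fact that ultralimits commute with continuous functions: if $a_{j,i}\to b_j$ along $\cU$ for each $j$, then $F(a_{1,i},\dots,a_{m,i})\to F(b_1,\dots,b_m)$ along $\cU$. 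Combined with the inductive hypothesis $\phi_j^{\cM}(\bar X) = \lim_{i\to\cU}\phi_j^{\cM_i}(\bar X_i)$ and the definition $\phi^{\cM}=F(\phi_1^{\cM},\dots,\phi_m^{\cM})$, this closes the step.

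The quantifier step is the crux. Consider $\phi(x_1,\dots,x_{n-1}) = \sup_{x_n\in D}\psi(x_1,\dots,x_n)$; the $\inf$ case is symmetric (or reduces to this one via $\inf_{x_n\in D}\psi = -\sup_{x_n\in D}(-\psi)$). Fix $\bar X = ([X_{1,i}]_{i\in I},\dots,[X_{n-1,i}]_{i\in I})$ and write $L = \lim_{i\to\cU}\sup_{Y\in D^{\cM_i}}\psi^{\cM_i}(\bar X_i,Y)$, an ultralimit that exists because the inner suprema are bounded by the range bound of $\psi$. For the inequality $\phi^{\cM}(\bar X)\le L$: any witness $[X_{n,i}]_{i\in I}\in D^{\cM}$ satisfies, by the inductive hypothesis, $\psi^{\cM}(\bar X,[X_{n,i}]_{i\in I}) = \lim_{i\to\cU}\psi^{\cM_i}(\bar X_i,X_{n,i}) \le L$, and taking the supremum over such witnesses gives $\le L$. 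For the reverse inequality: fix $\varepsilon>0$; the set $A = \{\,i : \sup_{Y\in D^{\cM_i}}\psi^{\cM_i}(\bar X_i,Y) > L-\varepsilon\,\}$ lies in $\cU$; for $i\in A$ choose $X_{n,i}\in D^{\cM_i}$ with $\psi^{\cM_i}(\bar X_i,X_{n,i}) > L-\varepsilon$, and for $i\notin A$ pick any $X_{n,i}\in D^{\cM_i}$. Then $(X_{n,i})_{i\in I}$ is confined to $D$, so $[X_{n,i}]_{i\in I}\in D^{\cM}$, and the inductive hypothesis gives $\psi^{\cM}(\bar X,[X_{n,i}]_{i\in I}) = \lim_{i\to\cU}\psi^{\cM_i}(\bar X_i,X_{n,i}) \ge L-\varepsilon$, whence $\phi^{\cM}(\bar X)\ge L-\varepsilon$; letting $\varepsilon\downarrow 0$ finishes.

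The main obstacle is precisely this last step: one must manufacture a single element of the ultraproduct domain $D^{\cM}$ out of a $\cU$-large family of approximate maximizers, and check both that the resulting tuple is confined to $D$ and that its value recovers $L$ in the ultralimit — this is where the uniform range bounds built into the language (and, in the background, the completeness of the interpretations $D^{\cM_i}$) are genuinely used. (A minor edge case: if some $D^{\cM_i}$ were empty one would use the convention $\sup_\varnothing = -\infty$; for the $\cL_{\tr}$-structures of interest the domains $D_r$ are always nonempty, so this does not arise.)
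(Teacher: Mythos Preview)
Your proof is correct and follows the standard induction-on-complexity argument for \L o\'s's theorem in continuous logic. The paper itself does not give a proof of this statement but simply refers the reader to \cite[Theorem 5.4]{BYBHU2008}, whose proof is exactly the approach you outline: handle terms by induction on their structure, basic formulas via the definition of $R^{\cM}$, connectives via continuity and commutation with ultralimits, and quantifiers by the two-sided estimate using a $\cU$-large choice of near-maximizers. So there is nothing to compare beyond noting that you have reproduced the classical argument, correctly adapted to the setting with domains of quantification and explicit range bounds.
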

	
	\begin{corollary}
		In the situation of the previous theorem, if $\rT$ is an $\cL$-theory, and if $\cM_i \models \rT$ for all $i$, then $\cM \models \rT$.
	\end{corollary}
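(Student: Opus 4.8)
The plan is to deduce this immediately from \L os's theorem (Theorem \ref{thm:Los}). Fix an arbitrary sentence $\phi \in \rT$. By the definition of $\cM_i \models \rT$, we have $\phi^{\cM_i} = 0$ for every $i \in I$. Since a sentence is precisely a formula with no free variables, Theorem \ref{thm:Los} specializes (the case $n = 0$) to the equality $\phi^{\cM} = \lim_{i \to \cU} \phi^{\cM_i}$, whose right-hand side is $\lim_{i \to \cU} 0 = 0$. As $\phi$ was an arbitrary element of $\rT$, this shows $\phi^{\cM} = 0$ for all $\phi \in \rT$, which is exactly the assertion $\cM \models \rT$.

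The only point worth a sentence of care is that Theorem \ref{thm:Los} is phrased for formulas with free variables $x_1 \in S_1, \dots, x_n \in S_n$, and one must observe that the base case $n = 0$ is legitimate. This is not an obstacle: the proof of Theorem \ref{thm:Los} proceeds by induction on the complexity of the formula (as in \cite[\S 5]{BYBHU2008}), the step for $F(\phi_1,\dots,\phi_n)$ uses only continuity of $F$ together with the fact that $\cU$-limits commute with continuous functions, and the step for the quantifiers $\sup_{x \in D}$ and $\inf_{x \in D}$ is where the ultrafilter axioms are used; none of these steps requires the presence of free variables, and the interpretation of a sentence in each $\cM_i$ and in $\cM$ is simply a real number.

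Accordingly, I would present the corollary with a two-line proof: quote Theorem \ref{thm:Los} for $\phi$, note $\phi^{\cM_i} = 0$ for all $i$, and conclude $\phi^{\cM} = \lim_{i \to \cU} 0 = 0$. There is genuinely no hard part here; the content lives entirely in Theorem \ref{thm:Los}, and this corollary is only the routine specialization to the case of theories.
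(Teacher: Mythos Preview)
Your proof is correct and takes essentially the same approach as the paper, which in fact states the corollary without proof, treating it as immediate from Theorem~\ref{thm:Los}. Your two-line version is exactly what is intended.
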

	
	In particular, this shows that an ultraproduct of $\cL_{\tr}$-structures that are tracial $\mathrm{W}^*$-algebras will also be an $\cL_{\tr}$-structure that is a tracial $\mathrm{W}^*$-algebra.  One can verify that the model-theoretic ultraproduct agrees in this case with the ultraproduct of tracial $\mathrm{W}^*$-algebras.
	
	\section{Definable predicates and functions} \label{sec:predicates}
	
	This section describes types, definable predicates, and definable functions.  The material in \S \ref{subsec:types} - \S \ref{subsec:qftypes} is largely a mixture of folklore and adaptations of \cite{BYBHU2008}; our main contribution is to write down the results in the setting of \emph{infinite tuples} and \emph{domains of quantification}.  In \S \ref{subsec:qfdefinablefunctions}, we give a characterization of quantifier-free definable functions in $\cL_{\tr}$ based on \cite[\S 13]{JekelThesis} and \cite[\S 2]{HJNS2021}.
	
	\subsection{Types} \label{subsec:types}
	
	\begin{definition}
		Let $\mathbf{S} = (S_j)_{j \in \N}$ be an $\N$-tuple of sorts in $\mathcal{L}$.  Let $\cF_{\mathbf{S}}$ be the space of $\cL$-formulas with free variables $(x_j)_{j \in \N}$ with $x_j$ from the sort $S_j$.  If $\cM$ is an $\cL$-structure and $\mathbf{X} \in \prod_{j \in \N} S_j^{\cM}$, then the \emph{type} of $\mathbf{X}$ is the map
		\[
		\tp^{\cM}(\mathbf{X}): \quad \cF_{\mathbf{S}} \to \R: \quad \phi \mapsto \phi^{\cM}(\mathbf{X}).
		\]
	\end{definition}
	
	\begin{definition}
		Let $\mathbf{S} = (S_j)_{j \in \N}$ be an $\N$-tuple of sorts in $\mathcal{L}$, and let $\rT$ be an $\cL$-theory.  If $\mathbf{D} \in \prod_{j \in \N} \mathcal{D}_{S_j}$, then we denote by $\mathbb{S}_{\bD}(\rT)$ the set of types $\tp^{\cM}(\bX)$ of all $\bX \in \prod_{j \in \N} D_j^{\cM}$ for all $\cM \models \rT$.
	\end{definition}
	
	\begin{definition}
		If $\bS$ is an $\N$-tuple of $\cL$-sorts, the set $\cF_{\mathbb{S}}$ of formulas defines a real vector space.  For each $\cL$-structure $\cM$ and $\bX \in \prod_{j \in \N} S_j^{\cM}$, the type $\tp^{\cM}(\bX)$ is a (real) linear map $\cF_{\bS} \to \R$.  Thus, for each $\cL$-theory $\rT$ and $\bD \in \prod_{j \in \N} \cD_{S_j}$, the space $\mathbb{S}_{\bD}(\rT)$ is a subset of the dual $\cF_{\mathbb{S}}^\dagger$.  We equip $\mathbb{S}_{\bD}(\rT)$ with the weak-$\star$ topology (also known as the \emph{logic topology}).
	\end{definition}
	
	The following observation is well known; see \cite[Corollary 5.12, Proposition 8.6]{BYBHU2008}
	
	\begin{observation}
		$\mathbb{S}_{\bD}(\rT)$ is compact in the weak-$\star$ topology.
	\end{observation}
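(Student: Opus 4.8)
The plan is to show $\mathbb{S}_{\bD}(\rT)$ is a closed subset of a compact space, namely a product of compact intervals. First I would observe that each formula $\phi \in \cF_{\bS}$ carries, when restricted to tuples lying in the domains $\bD$, a range bound: by the inductive construction of formulas in the excerpt, $\phi$ has an assigned range bound $N_{\phi,\bD}$ (built up from the range bounds of the relation symbols and the continuous connectives $F$), so that $|\phi^{\cM}(\bX)| \leq N_{\phi,\bD}$ whenever $\cM$ is any $\cL$-structure and $\bX \in \prod_j D_j^{\cM}$. Hence for every $\phi$ the evaluation functional $\mu \mapsto \mu(\phi)$ takes values in the compact interval $[-N_{\phi,\bD}, N_{\phi,\bD}]$. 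Therefore the assignment $\mu \mapsto (\mu(\phi))_{\phi \in \cF_{\bS}}$ embeds $\mathbb{S}_{\bD}(\rT)$ into the product space $K := \prod_{\phi \in \cF_{\bS}} [-N_{\phi,\bD}, N_{\phi,\bD}]$, which is compact by Tychonoff, and the weak-$\star$ (logic) topology on $\mathbb{S}_{\bD}(\rT)$ is by definition the subspace topology induced from this embedding.

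Next I would argue that the image of $\mathbb{S}_{\bD}(\rT)$ in $K$ is closed. The key point is that the constraints cutting out $\mathbb{S}_{\bD}(\rT)$ inside $K$ are all closed conditions: (i) the "algebraic" relations among formulas — e.g.\ $(\alpha\phi + \beta\psi)$ evaluates to $\alpha\mu(\phi) + \beta\mu(\psi)$, $F(\phi_1,\dots,\phi_n)$ evaluates to $F(\mu(\phi_1),\dots,\mu(\phi_n))$ for continuous $F$, and the constant sentence $c$ evaluates to $c$ — are each closed since $F$ is continuous; (ii) the theory constraint $\mu(\sigma) = 0$ for every $\sigma \in \rT$ is closed. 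So the set $\Sigma$ of all $\mu \in K$ satisfying (i) and (ii) is an intersection of closed sets, hence compact. It remains to show $\mathbb{S}_{\bD}(\rT) = \Sigma$, i.e.\ that every $\mu \in \Sigma$ is actually realized as $\tp^{\cM}(\bX)$ for some $\cM \models \rT$ and $\bX \in \prod_j D_j^{\cM}$. This is the only substantive step, and it is the analog of the classical realization/compactness theorem. I would prove it by the standard ultraproduct argument: each $\mu \in \Sigma$ is finitely approximable — for any finite set of formulas $\phi_1,\dots,\phi_k$ and $\varepsilon > 0$ there is a model $\cM$ and tuple $\bX$ with $|\phi_i^{\cM}(\bX) - \mu(\phi_i)| < \varepsilon$ (this is where one uses that $\mu$ lies in the closure of the genuinely realized types, which follows from the algebraic closure conditions and a Hahn–Banach / finitary consistency argument, exactly as in \cite[\S 8]{BYBHU2008}) — and then one takes an ultraproduct over the directed set of (finite set, $\varepsilon$) pairs and invokes {\L}o{\'s}'s theorem (Theorem \ref{thm:Los}) to produce a single model realizing $\mu$ exactly; the resulting model satisfies $\rT$ because each approximating model does and $\rT$-sentences pass to ultraproducts.

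The main obstacle is precisely this last realization step — verifying that the "syntactic" closed set $\Sigma$ coincides with the set of genuinely realized types, rather than being strictly larger. The subtlety particular to this setup, as flagged in the excerpt, is the presence of \emph{infinitely many free variables} $(x_j)_{j\in\N}$ and the \emph{domains of quantification} $\bD$: one must check that the ultraproduct construction respects the requirement $\bX \in \prod_j D_j^{\cM}$ coordinatewise (each coordinate stays confined to its prescribed domain $D_j$, which it does since confinement is preserved under ultraproducts) and that infinitely many coordinates cause no trouble, since {\L}o{\'s}'s theorem as stated handles formulas with finitely many free variables and any formula mentions only finitely many variables anyway. Once these bookkeeping points are checked, compactness of $\mathbb{S}_{\bD}(\rT)$ follows immediately as the continuous image (indeed, homeomorphic copy) of the compact set $\Sigma$. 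I would cite \cite[Corollary 5.12, Proposition 8.6]{BYBHU2008} for the template and simply indicate the modifications needed for infinite tuples and domains.
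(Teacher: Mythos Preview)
Your Tychonoff embedding step is exactly what the paper does. The divergence is in how you establish closedness, and there your approach introduces a detour with a real gap.

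You define a syntactic set $\Sigma$ by closed conditions (i) and (ii), but condition (i) as you state it only handles continuous connectives $F(\phi_1,\dots,\phi_n)$ and linear combinations; it says nothing about how $\mu$ should behave on quantified formulas $\sup_{y\in D}\psi$ or $\inf_{y\in D}\psi$. There is no obvious closed pointwise condition you can impose on $\mu$ that captures ``$\mu(\sup_y\psi)$ equals the supremum of $\psi$ over some actual structure,'' so your $\Sigma$ is either under-specified or, if you take it literally, strictly larger than $\mathbb{S}_{\bD}(\rT)$. This makes the equality $\Sigma = \mathbb{S}_{\bD}(\rT)$ genuinely problematic rather than just bookkeeping.

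Worse, your proposed fix --- arguing that each $\mu\in\Sigma$ is finitely approximable by realized types ``because $\mu$ lies in the closure of the genuinely realized types'' --- is circular: the closure of $\mathbb{S}_{\bD}(\rT)$ is exactly what you are trying to identify. A Hahn--Banach argument does not help here, since the obstruction is semantic (quantifier behavior), not linear-algebraic.

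The paper bypasses all of this. It uses the characterization of closedness in a compact Hausdorff space via ultralimits: a subset $A$ is closed iff for every family $(\mu_i)_{i\in I}$ in $A$ and every ultrafilter $\cU$ on $I$, the ultralimit $\lim_{i\to\cU}\mu_i$ lies in $A$. Given such a family with $\mu_i = \tp^{\cM_i}(\bX_i)$, one simply forms the ultraproduct $\cM = \prod_{i\to\cU}\cM_i$ (which models $\rT$ by {\L}o\'s) and the tuple $\bX = [\bX_i]_{i\in I} \in \prod_j D_j^{\cM}$; then {\L}o\'s's theorem gives $\tp^{\cM}(\bX) = \lim_{i\to\cU}\mu_i$ directly. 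No syntactic intermediary $\Sigma$, no separate finitary approximation step --- the ultraproduct \emph{is} the realization of the limit type in one stroke. Your instinct to use ultraproducts and {\L}o\'s is right; you just do not need to route it through a syntactic description of the closure.
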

	
	\begin{proof}
		Each formula $\phi$ has a range bound $N_{\phi,\bD}$ such that $|\phi^{\cM}(\bX)| \leq N_{\phi,\bD}$ for all $\cL$-structures $\cM$ and all $\bX \in \prod_{j \in \N} D_j$.  Thus, $\mathbb{S}_{\bD}(\rT)$ is a subset of $\prod_{\phi \in \cF_{\cD}} [-N_{\phi,\bD},N_{\phi,\bD}]$ with the product topology, which is compact by Tychonoff's theorem.
		
		Moreover, $\mathbb{S}_{\bD}(\rT)$ is a closed subset.  While closedness can be expressed in terms of nets, it can also be expressed in terms of ultralimits.  A set $A$ is closed if and only if for every $I$ and $f: I \to A$ and ultrafilter $\cU$, the limit $\lim_{i \to \cU} f(i)$ exists in $A$.  It then follows from Theorem \ref{thm:Los} that if $[X_i]_{i \in I}$ is an element of an ultraproduct $\cM$ of $\cL$-structures $\cM_i$, then $\tp^{\cM}([X_i]_{i \in I}) = \lim_{i \to \cU} \tp^{\cM_i}(X_i)$.
	\end{proof}
	
	Although many times authors choose to work with $\mathbb{S}_{\bD}(\rT)$ for each $\bD$, we find it convenient to specify a topology on the entire space of types $\mathbb{S}_{\bS}(\rT)$ that extends the topology on each $\mathbb{S}_{\bD}(\rT)$, so that our later results can be stated about $\mathbb{S}_{\bS}(\rT)$ globally.  The topology on $\mathbb{S}_{\bD}(\rT)$ is given by a categorical colimit of the topologies on $\mathbb{S}_{\bD}(\rT)$.
	
	\begin{definition}
		For a language $\cL$, tuple $\bS$ of sorts, and theory $\rT$, let $\mathbb{S}_{\bS}(\rT)$ denote the space of $\bS$-types for all $\cM \models \rT$.  Note that $\mathbb{S}_{\bS}(\rT)$ is the union of all $\mathbb{S}_{\bD}(\rT)$ for all $\bD \in \prod_{j \in \N} \cD_{S_j}$.  We say that $\cO \subseteq \mathbb{S}_{\bS}(\rT)$ is open if $\cO \cap \mathbb{S}_{\bD}(\rT)$ is open for every $\bD \in \prod_{j \in \N} \cD_{S_j}$; this defines a topology on $\mathbb{S}_{\bD}(\rT)$, which we will also call the \emph{logic topology}.
	\end{definition}
	
	\begin{observation}
		For a language $\cL$, tuple $\bS$ of sorts, theory $\rT$, and $\bD \in \prod_{j \in \N} \cD_{S_j}$, the inclusion map $\mathbb{S}_{\bD}(\rT) \to \mathbb{S}_{\bS}(\rT)$ is a topological embedding.
	\end{observation}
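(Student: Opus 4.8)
The plan is to reduce the statement to the soft general-topology fact that a continuous injection from a compact space into a Hausdorff space is a topological embedding. Injectivity of the inclusion $\iota \colon \mathbb{S}_{\bD}(\rT) \to \mathbb{S}_{\bS}(\rT)$ is clear, since distinct types stay distinct. Continuity is immediate from the way the logic topology on $\mathbb{S}_{\bS}(\rT)$ was defined: if $\cO \subseteq \mathbb{S}_{\bS}(\rT)$ is open, then $\iota^{-1}(\cO) = \cO \cap \mathbb{S}_{\bD}(\rT)$ is open in $\mathbb{S}_{\bD}(\rT)$ by definition. Since the previous observation gives that $\mathbb{S}_{\bD}(\rT)$ is compact in its own (weak-$\star$) topology, all that remains is to check that $\mathbb{S}_{\bS}(\rT)$ is Hausdorff; then $\iota$ is a continuous bijection from a compact space onto its image $\iota(\mathbb{S}_{\bD}(\rT))$, which inherits the subspace topology of a Hausdorff space, and such a map is automatically a homeomorphism onto its image.

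To verify that $\mathbb{S}_{\bS}(\rT)$ is Hausdorff, I would first observe that for each formula $\phi \in \cF_{\bS}$ the evaluation map $\ev_\phi \colon \mathbb{S}_{\bS}(\rT) \to \R$ sending $\mu$ to $\mu(\phi)$ is continuous. This again reduces to the definition of the topology: for open $U \subseteq \R$, the set $\ev_\phi^{-1}(U)$ meets each $\mathbb{S}_{\bD'}(\rT)$ in the preimage of $U$ under the restriction of $\ev_\phi$ to $\mathbb{S}_{\bD'}(\rT)$, and this restriction is continuous because the weak-$\star$ topology on $\mathbb{S}_{\bD'}(\rT) \subseteq \cF_{\bS}^\dagger$ is precisely the topology of pointwise evaluation on formulas. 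Hence $\ev_\phi^{-1}(U) \cap \mathbb{S}_{\bD'}(\rT)$ is open for every $\bD'$, so $\ev_\phi^{-1}(U)$ is open in $\mathbb{S}_{\bS}(\rT)$. Now if $\mu \neq \nu$ in $\mathbb{S}_{\bS}(\rT)$, there is a formula $\phi$ with $\mu(\phi) \neq \nu(\phi)$; pulling back disjoint open neighborhoods of $\mu(\phi)$ and $\nu(\phi)$ in $\R$ along $\ev_\phi$ yields disjoint open sets separating $\mu$ and $\nu$.

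Combining these observations, $\iota$ is a continuous injection from the compact space $\mathbb{S}_{\bD}(\rT)$ into the Hausdorff space $\mathbb{S}_{\bS}(\rT)$; it sends closed (hence compact) subsets to compact, hence closed, subsets of its image, so it is a closed map onto its image and therefore a homeomorphism onto its image, i.e.\ a topological embedding. I do not anticipate a genuine obstacle: the only point requiring a little care is that the formula space $\cF_{\bS}$ and the tuple of free variables are the same for every choice of $\bD$, so the functionals $\ev_\phi$ are simultaneously well defined on all of $\mathbb{S}_{\bS}(\rT)$; granting that, the compactness of each individual $\mathbb{S}_{\bD}(\rT)$ supplies the rest. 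One should keep in mind that $\mathbb{S}_{\bS}(\rT)$ itself need not be compact, being only a colimit of the compact pieces, so it is essential that compactness is used at the level of the individual $\mathbb{S}_{\bD}(\rT)$ rather than of the whole space.
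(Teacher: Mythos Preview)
Your proof is correct and follows essentially the same approach as the paper: both verify that $\mathbb{S}_{\bS}(\rT)$ is Hausdorff by separating distinct types via a formula on which they differ, note continuity of the inclusion from the definition of the colimit topology, and conclude by the standard compact-to-Hausdorff argument. The only cosmetic difference is that you first establish continuity of the evaluation maps $\ev_\phi$ and then pull back disjoint intervals from $\R$, whereas the paper writes down the separating open sets directly.
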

	
	\begin{proof}
		Note that $\mathbb{S}_{\bS}(\rT)$ is Hausdorff; indeed, $\mu$ and $\nu$ are two distinct types, then there exists a formula $\phi$ with $\mu(\phi) \neq \nu(\phi)$.  One can check that the sets
		\begin{align*}
			U &= \{\sigma \in \mathbb{S}_{\bS}(\rT): |\sigma(\phi) - \mu(\phi)| < |\sigma(\phi) - \nu(\phi)|\}, \\
			V &= \{\sigma \in \mathbb{S}_{\bS}(\rT): |\sigma(\phi) - \nu(\phi)| < |\sigma(\phi) - \mu(\phi)|\}
		\end{align*}
		are open and they separate $\mu$ and $\nu$.
		
		Continuity of the inclusion map $\mathbb{S}_{\bD}(\rT) \to \mathbb{S}_{\bS}(\rT)$ follows from the definition of open sets in $\mathbb{S}_{\bD}(\rT)$.  Then since $\mathbb{S}_{\bD}(\rT)$ is compact and $\mathbb{S}_{\bS}(\rT)$ is Hausdorff, the map is a topological embedding.
	\end{proof}
	
	\begin{observation} \label{obs:typecontinuity}
		For a language $\cL$, tuple $\bS$ of sorts, theory $\rT$, and topological space $\Omega$, a function $\psi: \mathbb{S}_{\bS}(\rT) \to \Omega$ is continuous if and only if $\psi|_{\mathbb{S}_{\bD}(\rT)}$ is continuous for every $\bD \in \prod_{j \in \N} \cD_{S_j}$.
	\end{observation}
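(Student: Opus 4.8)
The plan is to observe that the logic topology on $\mathbb{S}_{\bS}(\rT)$, as set up in the preceding definition, is precisely the final topology with respect to the family of inclusion maps $\iota_{\bD} \colon \mathbb{S}_{\bD}(\rT) \to \mathbb{S}_{\bS}(\rT)$, for $\bD \in \prod_{j \in \N} \cD_{S_j}$; the claimed equivalence is then nothing more than the universal property of the final (colimit) topology. Nothing beyond the definitions and the preceding observation (that each $\iota_{\bD}$ is a topological embedding, so in particular continuous) is needed.

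For the forward implication, I would argue that if $\psi \colon \mathbb{S}_{\bS}(\rT) \to \Omega$ is continuous, then for each $\bD$ the restriction $\psi|_{\mathbb{S}_{\bD}(\rT)} = \psi \circ \iota_{\bD}$ is a composition of continuous maps, hence continuous.

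For the reverse implication, assume $\psi|_{\mathbb{S}_{\bD}(\rT)}$ is continuous for every $\bD$, and let $W \subseteq \Omega$ be open. For each $\bD$ one has
\[
\psi^{-1}(W) \cap \mathbb{S}_{\bD}(\rT) = \bigl(\psi|_{\mathbb{S}_{\bD}(\rT)}\bigr)^{-1}(W),
\]
which is open in $\mathbb{S}_{\bD}(\rT)$ by hypothesis. By the definition of the logic topology on $\mathbb{S}_{\bS}(\rT)$ — a set being open exactly when its intersection with each $\mathbb{S}_{\bD}(\rT)$ is open — it follows that $\psi^{-1}(W)$ is open, and since $W$ was arbitrary, $\psi$ is continuous.

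There is no real obstacle; the only point deserving a moment's care is that the topology placed on each $\mathbb{S}_{\bD}(\rT)$ in the statement (its intrinsic compact weak-$\star$ topology) agrees with the subspace topology inherited from $\mathbb{S}_{\bS}(\rT)$, which is exactly the content of the preceding observation and is what makes the phrase ``$\psi|_{\mathbb{S}_{\bD}(\rT)}$ is continuous'' unambiguous in both directions of the argument.
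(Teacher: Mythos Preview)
Your proposal is correct and takes essentially the same approach as the paper, which simply says ``This follows from the definition of open sets in $\mathbb{S}_{\bS}(\rT)$.'' You have merely unpacked that one line into the explicit final-topology argument, which is exactly what the definition amounts to.
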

	
	\begin{proof}
		This follows from the definition of open sets in $\mathbb{S}_{\bS}(\rT)$.
	\end{proof}
	
	\subsection{Definable predicates}
	
	Next, we describe definable predicates, which are certain limits of formulas.  It will turn out that definable predicates correspond precisely to continuous functions $\mathbb{S}_{\bS}(\rT) \to \R$, and thus they are a natural completion of the space of formulas in the setting of continuous model theory.  Our approach to the definition will be semantic rather than syntactic, defining these objects immediately in terms of their interpretations.
	
	\begin{definition} \label{def:definablepredicate}
		Let $\mathcal{L}$ be a language and $\rT$ an $\mathcal{L}$-theory.  A \emph{definable predicate relative to $\rT$} is a collection of functions $\phi^{\cM}: \prod_{j \in \N} S_j^{\cM} \to \R$ (for each $\cM \models \rT$) such that for every collection of domains $\mathbf{D} = (D_j)_{j \in \N}$ and every $\epsilon > 0$, there exists a finite $F \subseteq \N$ and an $\mathcal{L}$-formula $\psi(x_j: j \in F)$ such that whenever $\cM \models \rT$ and $\mathbf{X} \in \prod_{j \in \N} D_j^{\cM}$, we have
		\[
		|\phi^{\cM}(\mathbf{X}) - \psi^{\cM}(X_j: j \in F)| < \epsilon.
		\]
	\end{definition}
	
	In other words a definable predicate is an object that can be uniformly approximated by a formula on any product of domains of quantification, where the approximation works uniformly for all models of the theory $\rT$.  This is done relative to $\rT$ because, for instance, in the study of tracial $\mathrm{W}^*$-algebras we do not care if the definable predicate makes sense to evaluate on arbitrary $\mathcal{L}_{\tr}$-structures, only those which actually come from tracial $\mathrm{W}^*$-algebras.
	
	Note that every formula defines a definable predicate.  However, two formulas as defined in the previous section (where the range bounds and moduli of continuity are part of the definition) may reduce to the same definable predicate (especially given the restriction that we work relative to a given theory $\rT$).
	
	The next proposition describes definable predicates as continuous functions on the space of types.   This is an adaptation of \cite[Theorem 9.9]{BYBHU2008} to the setting with domains of quantification.
	
	\begin{proposition} \label{prop:defpredcontinuous}
		Let $\cL$ be a language and $\rT$ an $\cL$-theory.  Let $\phi$ be a collection of functions $\phi^{\cM}: \prod_{j \in \N} S_j^{\cM} \to \R$ for each $\cM \models \rT$.  The following are equivalent:
		\begin{enumerate}[(1)]
			\item $\phi$ is a definable predicate relative to $\rT$.
			\item There exists a continuous $\gamma: \mathbb{S}_{\bS}(\rT) \to \R$ such that $\phi^{\cM}(\bX) = \gamma(\tp^{\cM}(\bX))$ for all $\cM \models \rT$ and $\bX \in \prod_{j \in \N} S_j^{\cM}$.
		\end{enumerate}
	\end{proposition}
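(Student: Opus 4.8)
The plan is to prove the two implications separately; Stone--Weierstrass carries the direction $(2)\Rightarrow(1)$ and a direct $\epsilon$-argument handles $(1)\Rightarrow(2)$. In both directions I invoke Observation \ref{obs:typecontinuity} to reduce continuity claims about $\mathbb{S}_{\bS}(\rT)$ to continuity on each compact piece $\mathbb{S}_{\bD}(\rT)$, and I use the fact that every tuple is confined (since $S_j^{\cM}=\bigcup_{D}D^{\cM}$) so that $\gamma$ is defined on all of $\mathbb{S}_{\bS}(\rT)$.

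For $(1)\Rightarrow(2)$, I first argue that $\phi^{\cM}(\bX)$ depends only on $\tp^{\cM}(\bX)$. The key observation is that a type records which domains of quantification contain each coordinate: the expression $\inf_{y\in D_r}d_{S_j}(x_j,y)$ is an $\cL$-formula, and since $D_r^{\cM}$ is complete, hence closed, its value at $\bX$ is $0$ precisely when $X_j\in D_r^{\cM}$. Thus if $\tp^{\cM}(\bX)=\tp^{\cN}(\bY)$ there is a common $\bD=(D_j)_{j\in\N}$ with $\bX\in\prod_j D_j^{\cM}$ and $\bY\in\prod_j D_j^{\cN}$; applying Definition \ref{def:definablepredicate} with this $\bD$ and arbitrary $\epsilon>0$ produces a formula $\psi$ with finitely many free variables such that $\psi^{\cM}(\bX)=\psi^{\cN}(\bY)$ (equal types agree on formulas), forcing $|\phi^{\cM}(\bX)-\phi^{\cN}(\bY)|<2\epsilon$ and hence equality. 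So $\phi=\gamma\circ\tp$ for a well-defined $\gamma:\mathbb{S}_{\bS}(\rT)\to\R$. For continuity, fix $\bD$: Definition \ref{def:definablepredicate} says precisely that on $\mathbb{S}_{\bD}(\rT)$ the function $\gamma$ is a uniform limit of the maps $\mu\mapsto\mu(\psi)$, each continuous by the definition of the logic topology; a uniform limit of continuous functions is continuous, and Observation \ref{obs:typecontinuity} promotes this to continuity of $\gamma$ on all of $\mathbb{S}_{\bS}(\rT)$.

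For $(2)\Rightarrow(1)$, fix $\bD$ and $\epsilon>0$. Let $\cA\subseteq C(\mathbb{S}_{\bD}(\rT))$ be the set of functions $\mu\mapsto\mu(\psi)$ where $\psi$ ranges over $\cL$-formulas whose free variables form a finite subset of $\{x_j:j\in\N\}$. Since formulas are closed under postcomposition with continuous $F:\R^n\to\R$, $\cA$ is a subalgebra containing the constants; and $\cA$ separates points, because two distinct types disagree on some formula (the same fact underlying the Hausdorffness argument for the embedding $\mathbb{S}_{\bD}(\rT)\hookrightarrow\mathbb{S}_{\bS}(\rT)$). As $\mathbb{S}_{\bD}(\rT)$ is compact Hausdorff, Stone--Weierstrass yields a formula $\psi$ with $|\gamma(\mu)-\mu(\psi)|<\epsilon$ for all $\mu\in\mathbb{S}_{\bD}(\rT)$; rewriting $\mu=\tp^{\cM}(\bX)$ gives exactly the estimate in Definition \ref{def:definablepredicate}.

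The only genuine subtlety is the bookkeeping in $(1)\Rightarrow(2)$: checking that a type determines domain membership, and that the approximating formulas are legitimate (finitely many free variables, with the correct range bounds and moduli of continuity so as to be honest $\cL$-formulas). Once these are in hand, the argument is a routine transcription of \cite[Theorem 9.9]{BYBHU2008} to the colimit topology on $\mathbb{S}_{\bS}(\rT)$.
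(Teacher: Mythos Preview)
Your proof is correct and follows essentially the same strategy as the paper: Stone--Weierstrass for $(2)\Rightarrow(1)$, and uniform approximation by formulas plus Observation \ref{obs:typecontinuity} for $(1)\Rightarrow(2)$. The one organizational difference is that the paper first handles the case where $\phi$ is literally a formula and then takes uniform limits of the resulting continuous functions $\gamma_{\bD,n}$ on each $\mathbb{S}_{\bD}(\rT)$, gluing them at the end; you instead establish type-dependence of $\phi^{\cM}(\bX)$ globally first, then continuity. Your explicit observation that domain membership $X_j\in D^{\cM}$ is recorded by the type (via the formula $\inf_{y\in D}d_{S_j}(x_j,y)$ and completeness of $D^{\cM}$) is a point the paper's gluing step (``$\gamma_{\bD}$ and $\gamma_{\bD'}$ agree on $\mathbb{S}_{\bD}(\rT)\cap\mathbb{S}_{\bD'}(\rT)$'') tacitly relies on, so your write-up is in fact slightly more careful there.
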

	
	\begin{proof}
		(1) $\implies$ (2).  First, suppose that $\phi$ is a formula.  Then by definition of type, $\phi^{\cM}(\bX)$ only depends on the type of $\bX$ in $\cM$, and hence $\phi^{\cM}(\bX) = \gamma(\tp^{\cM}(\bX))$ for some $\gamma: \mathbb{S}_{\bS}(\rT) \to \R$.  For each $\bD \in \prod_{j \in \N} \cD_{S_j}$, the restriction of $\gamma$ to a map $\mathbb{S}_{\bD}(\rT) \to \R$ is continuous by definition of the weak-$\star$ topology.  Hence, by Observation \ref{obs:typecontinuity}, $\gamma$ is a continuous function $\mathbb{S}_{\bS}(\rT) \to \R$.
		
		Now let $\phi$ be a general definable predicate.  Fix $\bD \in \prod_{j \in \N} \cD_{S_j}$.  Then taking $\epsilon = 1/n$ in Definition \ref{def:definablepredicate}, there exists a formula $\phi_{\bD,n}$ depending on finitely many of the variables $x_j$, such that
		\begin{equation} \label{eq:defpredapprox}
			|\phi^{\cM}(\mathbf{X}) - \phi_{\bD,n}^{\cM}(\mathbf{X})| < \frac{1}{n}
		\end{equation}
		for all $\cM \models \rT$ and $\bX \in \prod_{j \in \N} D_j^{\cM}$.  By the previous paragraph, there exists a continuous $\gamma_{\bD,n}: \mathbb{S}_{\bD}(\rT) \to \R$ such that $\phi_{\bD,n}^{\cM}(\bX) = \gamma_{\bD,n}(\tp^{\cM}(\bX))$ for all $\cM \models \rT$ and $\bX \in \prod_{j \in \N} D_j^{\cM}$.  By \eqref{eq:defpredapprox},
		\[
		\sup_{\mu \in \mathbb{S}_{\bD}(\rT)} |\gamma_{\bD,n}(\mu) - \gamma_{\bD,m}(\mu)| \leq \frac{1}{n} + \frac{1}{m},
		\]
		which implies that the sequence $\gamma_{\bD,n}$ converges as $n \to \infty$ to a continuous $\gamma_{\bD}: \mathbb{S}_{\bD}(\rT) \to \R$.  Also, by \eqref{eq:defpredapprox},
		\[
		\phi^{\cM}(\bX) = \gamma_{\bD}(\tp^{\cM}(\bX)))
		\]
		for $\cM \models \rT$ and $\bX \in \prod_{j \in \N} D_j^{\cM}$.  This in turn implies that $\gamma_{\bD}$ and $\gamma_{\bD'}$ agree on $\mathbb{S}_{\bD}(\rT) \cap \mathbb{S}_{\bD'}(\rT)$ for any $\bD$ and $\bD' \in \prod_{j \in \N} \cD_{S_j}$.  Thus, for some function $\gamma: \mathbb{S}_{\bS}(\rT) \to \R$, we have $\gamma_{\bD} = \gamma|_{\bD}$ for $\bD \in \prod_{j \in \N} \cD_{S_j}$.   By Observation \ref{obs:typecontinuity}, $\gamma$ is continuous on $\mathbb{S}_{\bS}(\rT)$.
		
		(2) $\implies$ (1).  Assume there exists $\gamma: \mathbb{S}_{\bS}(\rT) \to \R$ continuous such that $\phi^{\cM}(\bX) = \gamma(\tp^{\cM}(\bX))$ for all $\cM \models \rT$ and $\bX \in \prod_{j \in \N} S_j^{\cM}$.  Fix $\bD \in \prod_{j \in \N} \cD_{S_j}$.  Let $\cA$ be the set of functions $\mathbb{S}_{\bD}(\rT) \to \R$ given by the application of formulas $\phi \in \cF_{\bS}$.  Then $\cA$ is a subalgebra of $C(\mathbb{S}_{\bD}(\rT),\R)$ since formulas are closed under sums, products, and scalar multiplication by real numbers.  Moreover, $\cA$ separates points because by definition two types are the same if they agree on all formulas.  Therefore, since $\gamma|_{\mathbb{S}_{\bD}(\rT)}$ is continuous, the Stone-Weierstrass theorem implies that there exists a formula $\psi$ depending on finitely many of the variables $x_j$ such that $|\phi^{\cM}(\mathbf{X}) - \psi^{\cM}(\bX)| < \epsilon$ whenever $\cM \models \rT$ and $\mathbf{X} \in \prod_{j \in \N} D_j^{\cM}$.
	\end{proof}
	
	\begin{lemma} \label{lem:defpredweakstar}
		If $\bS$ is an $\N$-tuple of types and $\bD \in \prod_{j \in \N} \cD_{S_j}$, then the logic topology on $\mathbb{S}_{\bD}(\rT)$ agrees with the weak-$\star$ topology obtained by viewing $\mathbb{S}_{\bD}(\rT)$ as a subspace of the dual of the vector space of definable predicates.
	\end{lemma}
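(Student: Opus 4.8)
The plan is to establish two inclusions between the two topologies on $\mathbb{S}_{\bD}(\rT)$. First I would fix notation: let $\cP$ denote the real vector space of definable predicates relative to $\rT$ in the variables $(x_j)_{j \in \N}$. This is genuinely a vector space, since a finite sum or real scalar multiple of definable predicates is again one --- approximate each summand by a formula on a given $\prod_{j \in \N} D_j^{\cM}$ and use that formulas are closed under sums and real scalar multiplication. Each type $\tp^{\cM}(\bX)$ extends from $\cF_{\bS}$ to a linear functional on $\cP$ via $\phi \mapsto \phi^{\cM}(\bX)$, so $\mathbb{S}_{\bD}(\rT)$ sits inside $\cP^\dagger$ and inherits the corresponding weak-$\star$ topology; call this the \emph{definable-predicate topology}, to be compared with the logic topology.

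One inclusion is immediate: every formula is a definable predicate, so the sets $\{\mu : a < \mu(\psi) < b\}$ (for $\psi \in \cF_{\bS}$ and reals $a < b$), which form a subbasis for the logic topology on $\mathbb{S}_{\bD}(\rT)$, are open in the definable-predicate topology; hence the logic topology is contained in the definable-predicate topology. The substantive step --- and the only non-formal part of the argument --- is the reverse inclusion, for which I would show that for each $\phi \in \cP$ the evaluation map $\mu \mapsto \mu(\phi)$ is continuous on $\mathbb{S}_{\bD}(\rT)$ for the logic topology. Fix $\phi \in \cP$. By the implication (1) $\Rightarrow$ (2) of Proposition \ref{prop:defpredcontinuous}, there is a continuous $\gamma : \mathbb{S}_{\bS}(\rT) \to \R$ with $\phi^{\cM}(\bX) = \gamma(\tp^{\cM}(\bX))$ for all $\cM \models \rT$ and $\bX \in \prod_{j \in \N} S_j^{\cM}$; restricting to $\bX \in \prod_{j \in \N} D_j^{\cM}$ gives $\mu(\phi) = \gamma(\mu)$ for $\mu \in \mathbb{S}_{\bD}(\rT)$. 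Since $\mathbb{S}_{\bD}(\rT) \hookrightarrow \mathbb{S}_{\bS}(\rT)$ is a topological embedding for the logic topologies (the Observation above) and $\gamma$ is continuous on $\mathbb{S}_{\bS}(\rT)$, the map $\mu \mapsto \mu(\phi)$ is logic-continuous. As these evaluation maps generate the definable-predicate topology, that topology is contained in the logic topology.

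Combining the two inclusions gives the asserted equality. Alternatively, once the first inclusion is in hand, one may finish by a soft argument: the logic topology on $\mathbb{S}_{\bD}(\rT)$ is compact (the Observation above) and the definable-predicate topology is Hausdorff (it refines the logic topology, which already separates points since distinct types disagree on some formula), so the identity map is a continuous bijection from a compact space onto a Hausdorff space and hence a homeomorphism. Either way, the crux is the logic-continuity of $\mu \mapsto \mu(\phi)$ for honestly definable --- as opposed to merely formula-valued --- predicates $\phi$, which is exactly what Proposition \ref{prop:defpredcontinuous} supplies; everything else is formal.
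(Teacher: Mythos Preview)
Your main argument is correct and is essentially the paper's proof: both establish the two inclusions by (i) noting that formulas are definable predicates, and (ii) showing that evaluation at a definable predicate is logic-continuous on $\mathbb{S}_{\bD}(\rT)$. The paper does (ii) directly from the uniform-approximation definition of definable predicate, while you invoke Proposition~\ref{prop:defpredcontinuous}, whose (1) $\Rightarrow$ (2) direction is precisely that argument packaged up; so there is no substantive difference.

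Your ``alternative'' soft argument, however, has the directions reversed and does not work. The first inclusion gives that the definable-predicate topology is \emph{finer} than the logic topology, so the continuous identity map runs from the definable-predicate topology to the logic topology. For the compact-to-Hausdorff trick you would need the \emph{domain} to be compact, i.e.\ the definable-predicate topology to be compact; but all you know is that the \emph{codomain} (logic) topology is compact, and a finer topology on a compact space need not be compact. So this shortcut cannot replace the second inclusion --- you really do need the logic-continuity of $\mu \mapsto \mu(\phi)$ for definable $\phi$, which your main argument correctly supplies.
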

	
	\begin{proof}
		We defined the logic topology as the weak-$\star$ topology generated by the pairing of types with formulas in variables $x_j \in S_j$ for $j \in \N$.  Since every formula gives a definable predicate, the weak-$\star$ topology obtained from the pairing with definable predicates is at least as strong as the logic topology.  On other hand, for each $\bD \in \prod_{j \in \N} \cD_{S_j}$, every definable predicate can be approximated uniformly by $\prod_{j \in \N} D_j^{\cM}$ for all $\cM \models \rT$, and hence the pairing with each definable predicate $\phi$ defines a map $\mathbb{S}_{\bD}(\rT) \to \R$ that is continuous with respect to the logic topology, and hence the logic topology is at least as strong as the weak-$\star$ topology obtained from pairing with definable predicates.
	\end{proof}
	
	Just like formulas, definable predicates are uniformly continuous on any product of domains of quantification.  But to say this properly, we should clarify what ``uniform continuity'' means for a function of infinitely many variables. If $\Omega_j$ is a metric space, then $\prod_{j \in \N} \Omega_j$ with the product topology is metrizable but without a canonical choice of metric.  However, we will say that $\phi: \prod_{j \in \N} \Omega_j \to \R$ is uniformly continuous if for every $\epsilon > 0$, there exists a finite $F \subseteq \N$ and $\delta > 0$, such that
	\[
	d_j(x_j,y_j) < \delta \text{ for } j \in F \implies |\phi(x) - \phi(y)| < \epsilon.
	\]
	In other words, uniform continuity is defined with respect to the product \emph{uniform structure} on $\prod_{j \in \N} \Omega_j$ (see for instance \cite{James1990} for background on uniform structures).
	
	\begin{observation} \label{obs:defpredunifcont}
		If $\phi = (\phi^{\cM})$ is a definable predicate over $\mathcal{L}$ relative to $\rT$, then $\phi$ satisfies the following uniform continuity property:
		
		For every $\mathbf{D} \in \prod_{j \in \N} \mathcal{D}_{S_j}$ and $\epsilon > 0$, there exists a finite $F \subseteq \N$ and $\delta > 0$ such that, for every $\mathcal{M} \models \rT$ and $\mathbf{X}, \mathbf{Y} \in \prod_{j \in \N} D_j^{\cM}$,
		\[
		d^{\cM}(X_j,Y_j) < \delta \text{ for all } j \in F \implies |\phi^{\cM}(\mathbf{X}) - \phi^{\cM}(\mathbf{Y})| < \epsilon.
		\]
		
		Moreover, for every $\mathbf{D} \in \prod_{j \in \N} \mathcal{D}_{S_j}$, there exists a constant $C$ such that $|\phi^{\cM}| \leq C$ for all $\cM \models \rT$.
	\end{observation}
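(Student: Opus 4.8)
The plan is to obtain both assertions directly from the defining approximation property (Definition \ref{def:definablepredicate}), exploiting that ordinary $\cL$-formulas carry range bounds and moduli of continuity which hold uniformly across \emph{all} $\cL$-structures (and in particular across all $\cM \models \rT$).

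For the boundedness claim, fix $\mathbf{D} = (D_j)_{j \in \N} \in \prod_{j \in \N} \mathcal{D}_{S_j}$ and apply Definition \ref{def:definablepredicate} with $\epsilon = 1$: there is a finite $F \subseteq \N$ and a formula $\psi(x_j : j \in F)$ with $|\phi^{\cM}(\mathbf{X}) - \psi^{\cM}(X_j : j \in F)| < 1$ whenever $\cM \models \rT$ and $\mathbf{X} \in \prod_{j \in \N} D_j^{\cM}$. Since $\psi$ is a formula, it has a range bound $N_{\psi,(D_j)_{j \in F}} \in [0,\infty)$ depending only on $\psi$ and the domains $D_j$, $j \in F$, not on $\cM$; hence $|\phi^{\cM}(\mathbf{X})| \leq N_{\psi,(D_j)_{j \in F}} + 1 =: C$, which is the desired uniform bound.

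For the uniform continuity claim, fix $\mathbf{D}$ and $\epsilon > 0$, and again use Definition \ref{def:definablepredicate} to get a finite $F \subseteq \N$ and a formula $\psi(x_j : j \in F)$ with $|\phi^{\cM}(\mathbf{X}) - \psi^{\cM}(X_j : j \in F)| < \epsilon/3$ for all $\cM \models \rT$ and $\mathbf{X} \in \prod_{j \in \N} D_j^{\cM}$. Because $\psi$ is a formula with free variables indexed by the finite set $F$, it is interpreted in every $\cL$-structure with prescribed moduli of continuity $\omega_{\psi,(D_j)_{j \in F},j}$ in each of these variables, valid on the product of the domains $D_j$. Choosing $\delta > 0$ so small that $\sum_{j \in F} \omega_{\psi,(D_j)_{j \in F},j}(\delta) < \epsilon/3$ (possible since each $\omega$ is continuous, increasing, and zero-preserving), and changing the arguments of $\psi$ one coordinate at a time from $X_j$ to $Y_j$---each intermediate tuple still lying in $\prod_{j \in F} D_j^{\cM}$, so that the modulus in the $j$th variable applies---we obtain $|\psi^{\cM}(X_j : j \in F) - \psi^{\cM}(Y_j : j \in F)| < \epsilon/3$ whenever $d^{\cM}(X_j,Y_j) < \delta$ for all $j \in F$. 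The triangle inequality then gives $|\phi^{\cM}(\mathbf{X}) - \phi^{\cM}(\mathbf{Y})| < \epsilon$, as required.

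The only subtle point---and it is a feature of the setup rather than a genuine obstacle---is that the moduli of continuity (and range bounds) of the approximating formula $\psi$ must be uniform over all $\cM \models \rT$. This is exactly what the definition of an $\cL$-structure guarantees: every function and relation symbol is interpreted so as to respect its assigned modulus of continuity and range bound on each domain of quantification, and the corresponding data for compound formulas is obtained by composition, independently of the structure.
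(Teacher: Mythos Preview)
Your proof is correct and follows essentially the same approach as the paper, which simply remarks that the result holds for formulas by construction and extends to definable predicates because uniform continuity and boundedness are preserved under uniform limits. You have spelled out this ``uniform limit'' argument explicitly via a single $\epsilon/3$ (respectively $\epsilon=1$) approximation, which is exactly the content of that principle in this setting.
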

	
	By construction, this result holds for formulas in finitely many $X_j$'s, and it holds for general definable predicates by the principle that uniform continuity and boundedness are preserved under uniform limits.
	
	Another useful property is that definable predicates are closed under the same types of operations as formulas.  In fact, we can use infinitary rather than finitary operations.  Point (1) here is an adaptation of \cite[Proposition 9.3]{BYBHU2008}.
	
	\begin{lemma} \label{lem:defpredoperations} ~
		\begin{enumerate}[(1)]
			\item If $F: \R^{\N} \to \R$ is continuous (where $\R^{\N}$ has the product topology) and $(\phi_j)_{j \in \N}$ are definable predicates $\prod_{j \in \N} S_j \to \R$ in $\mathcal{L}$ relative to $\rT$, then $F((\phi_j)_{j\in\N})$ is a definable predicate.
			\item If $\phi$ is a definable predicate $\prod_{j \in \N} S_j \times \prod_{j \in \N} S_j' \to \R$ in $\mathcal{L}$ relative to $\rT$ and $\mathbf{D}' \in \prod_{j \in \N} \cD_{S_j'}$, then
			\[
			\psi^{\cM}(\mathbf{X},\mathbf{Y}) := \inf_{\mathbf{Y} \in \prod_{j \in \N} (D_j')^{\cM}} \phi(\mathbf{X},\mathbf{Y})
			\]
			is also definable predicate in $\mathcal{L}$ relative to $\rT$.
		\end{enumerate}
	\end{lemma}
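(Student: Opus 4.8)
The plan is to treat the two parts separately, using Proposition \ref{prop:defpredcontinuous} for (1) and a direct syntactic approximation for (2).

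For (1), I would first invoke Proposition \ref{prop:defpredcontinuous} to write $\phi_j^{\cM}(\bX) = \gamma_j(\tp^{\cM}(\bX))$ for continuous functions $\gamma_j \colon \mathbb{S}_{\bS}(\rT) \to \R$. The tuple $\Gamma = (\gamma_j)_{j \in \N} \colon \mathbb{S}_{\bS}(\rT) \to \R^{\N}$ is then continuous when $\R^{\N}$ carries the product topology, since each coordinate is continuous; composing with the continuous $F$, the map $F \circ \Gamma \colon \mathbb{S}_{\bS}(\rT) \to \R$ is continuous and satisfies $(F \circ \Gamma)(\tp^{\cM}(\bX)) = F((\phi_j^{\cM}(\bX))_{j \in \N})$. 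Hence by the converse direction of Proposition \ref{prop:defpredcontinuous}, $F((\phi_j)_{j\in\N})$ is a definable predicate relative to $\rT$. (Alternatively, one can argue straight from Definition \ref{def:definablepredicate}: on a fixed product of domains each $\phi_j$ is bounded by a constant $C_j$ by Observation \ref{obs:defpredunifcont}, so the relevant values of $(\phi_j^{\cM})$ lie in the compact box $K = \prod_j [-C_j, C_j]$; Stone--Weierstrass applied to $C(K)$ yields a polynomial $P$ in finitely many coordinates with $\norm{F|_K - P}_\infty < \epsilon/2$, and replacing those finitely many $\phi_j$ by approximating formulas and using uniform continuity of $P$ on a slightly enlarged box gives the required approximation.)

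For (2), fix $\bD \in \prod_{j\in\N}\cD_{S_j}$ and $\epsilon>0$. Viewing $\phi$ as a definable predicate on $\prod_{j\in\N}S_j \times \prod_{j\in\N}S_j'$ and applying Definition \ref{def:definablepredicate} to the domain tuple $(\bD,\bD')$, I would choose finite sets $F, F' \subseteq \N$ and an $\cL$-formula $\chi(x_j : j \in F,\ y_j : j \in F')$ with $|\phi^{\cM}(\bX,\bY) - \chi^{\cM}(\bX,\bY)| < \epsilon$ whenever $\cM \models \rT$, $\bX \in \prod_j D_j^{\cM}$, and $\bY \in \prod_j (D_j')^{\cM}$. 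Since $\chi$ involves only the variables $y_j$ with $j \in F'$, and each $(D_j')^{\cM}$ is nonempty, $\inf_{\bY \in \prod_j (D_j')^{\cM}} \chi^{\cM}(\bX,\bY) = \psi'^{\cM}(\bX)$, where, writing $F' = \{j_1,\dots,j_m\}$, the formula $\psi' := \inf_{y_{j_1} \in D_{j_1}'} \cdots \inf_{y_{j_m} \in D_{j_m}'} \chi$ depends only on $x_j$ for $j \in F$. Because passing to an infimum is $1$-Lipschitz for the supremum norm, the pointwise bound on $\phi^{\cM} - \chi^{\cM}$ over $\bY \in \prod_j (D_j')^{\cM}$ gives $|\psi^{\cM}(\bX) - \psi'^{\cM}(\bX)| \leq \epsilon$ for all $\cM \models \rT$ and $\bX \in \prod_j D_j^{\cM}$; both quantities are genuine real numbers because $\chi$, being a formula, is bounded on the relevant product of domains. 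As $\bD$ and $\epsilon$ were arbitrary, $\psi$ is a definable predicate relative to $\rT$.

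The only real subtlety, shared by both parts, is that the objects depend on infinitely many variables; it is resolved by the fact that the approximating formulas furnished by Definition \ref{def:definablepredicate} depend on only finitely many of them, so in (1) a continuous function of the $\phi_j$ is pinned down up to $\epsilon$ by finitely many coordinates (compactness of the coordinate box together with Stone--Weierstrass), and in (2) the infimum over the infinite product $\prod_j (D_j')^{\cM}$ collapses to a finite nested infimum once $\phi$ is replaced by $\chi$. I do not expect any serious obstacle beyond the bookkeeping with these finite reductions and with the range bounds and moduli of continuity attached to the resulting formulas, which are inherited from the syntax rules for $F(\cdot)$ and $\inf_{y\in D}(\cdot)$ recorded in \S\ref{sec:modeltheory}.
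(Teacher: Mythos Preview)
Your proposal is correct and follows essentially the same approach as the paper: for (1) the paper simply says it follows from Proposition~\ref{prop:defpredcontinuous} and the fact that continuity is preserved under composition, which is exactly your primary argument; for (2) the paper approximates $\phi$ by a formula $\phi_0$ in finitely many variables and observes that the infimum of $\phi_0$ over $\prod_j (D_j')^{\cM}$ is then a genuine formula (being effectively an infimum over finitely many $y_j$'s) that approximates $\psi$ within $\epsilon$, which is precisely your argument with $\chi$ and $\psi'$.
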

	
	\begin{proof}~
		\begin{enumerate}[(1)]
			\item This follows from \ref{prop:defpredcontinuous} and the fact that continuity is preserved by composition.
			\item Fix $\bD \in \prod_{j \in \N} \cD_{S_j}$ and $\epsilon > 0$.  Then there exist a formula $\phi_0$ whose free variables are a finite subset of the $x_j$'s and $y_j$'s, such that $|\phi^{\cM} - \phi_0^{\cM}| < \epsilon$ on $\prod_{j \in \N} D_j^{\cM} \times \prod_{j \in \N} (D_j')^{\cM}$ for all $\cM \models \rT$.  Note that
			\[
			\psi_0^{\cM}(\mathbf{X}) = \inf_{\mathbf{Y} \in \prod_{j \in \N} D_j^{\cM}} \phi_0^{\cM}(\mathbf{X},\mathbf{Y})
			\]
			also defines a formula because the infimum is effectively over only finitely many $Y_j$'s.  Also, $|\psi_0^{\cM} - \psi^{\cM}| \leq \epsilon$ on $\prod_{j \in \N} D_j^{\cM}$ for all $\cM \models \rT$.  Therefore, $\psi$ is a definable predicate. \qedhere
		\end{enumerate}
	\end{proof}
	
	We conclude with a brief remark on separability since we will use the separability of $\cL_{\tr}$ in the sequel.  For a $\cL$-theory $\rT$, we equip $C(\mathbb{S}_{\bS}(\rT))$ with the locally convex topology generated by the family of seminorms
	\[
	\phi \mapsto \norm{\phi|_{\mathbb{S}_{\bD}(\rT)}}_{C(\mathbb{S}_{\bD}(\rT))}
	\]
	for $\bD \in \prod_{j \in \N} \cD_{S_j}$.  In other words, a net $\phi_i$ to $\phi$ converges in this topology if and only if $\norm{(\phi - \phi_i)|_{\mathbb{S}_{\bD}(\rT)}}_{C(\mathbb{S}_{\bD}(\rT))} \to 0$ for all $\bD$.
	
	\begin{definition}
		A language $\cL$ is \emph{separable} if
		\begin{enumerate}[(1)]
			\item $\cL$ has countably many sorts.
			\item For every $\N$-tuple $\mathbb{S}$ of sorts, the space $C(\mathbb{S}_{\bS}(\varnothing))$ is separable, where $\varnothing$ denotes the empty theory.
		\end{enumerate}
	\end{definition}
	
	\begin{observation} \label{obs:weak*metrizable}
		If $\cL$ is a separable language, $\rT$ is an $\cL$-theory, and $\bD$ is an $\N$-tuple of domains from an $\N$-tuple of sorts $\bS$, then $\mathbb{S}_{\bD}(\rT)$ is metrizable.
	\end{observation}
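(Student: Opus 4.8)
The plan is to reduce to the classical fact that a compact Hausdorff space $K$ is metrizable if and only if $C(K)$, with the sup norm, is separable. We already know (from the compactness observation and the embedding observation above) that $\mathbb{S}_{\bD}(\rT)$ is compact Hausdorff, so it suffices to produce a countable sup-norm-dense subset of $C(\mathbb{S}_{\bD}(\rT))$; equivalently, a countable family of continuous functions on $\mathbb{S}_{\bD}(\rT)$ that separates points, since such a family embeds $\mathbb{S}_{\bD}(\rT)$ into $\R^{\N}$ as a homeomorphism onto its image by compactness.

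First I would treat the case $\rT = \varnothing$. The restriction map $r_{\bD}\colon C(\mathbb{S}_{\bS}(\varnothing)) \to C(\mathbb{S}_{\bD}(\varnothing))$ is continuous when the source carries the locally convex topology defined just before the statement (because the sup norm $\norm{\cdot|_{\mathbb{S}_{\bD}(\varnothing)}}$ over $\mathbb{S}_{\bD}(\varnothing)$ is literally one of the generating seminorms) and the target carries the sup norm. By the definition of a separable language, $C(\mathbb{S}_{\bS}(\varnothing))$ has a countable dense subset $D$ for that locally convex topology, so $r_{\bD}(D)$ is sup-norm dense in $r_{\bD}(C(\mathbb{S}_{\bS}(\varnothing)))$. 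On the other hand, the restrictions to $\mathbb{S}_{\bD}(\varnothing)$ of the $\cL$-formulas in the variables $\bS$ form a point-separating unital subalgebra of $C(\mathbb{S}_{\bD}(\varnothing))$ — they are closed under sums, products, and real scalar multiples, they contain the constants, and two distinct types differ on some formula — and these restrictions lie in $r_{\bD}(C(\mathbb{S}_{\bS}(\varnothing)))$, since every formula is in particular a definable predicate relative to $\varnothing$ and hence continuous on $\mathbb{S}_{\bS}(\varnothing)$ by Proposition \ref{prop:defpredcontinuous}. So Stone--Weierstrass shows $r_{\bD}(C(\mathbb{S}_{\bS}(\varnothing)))$ is sup-norm dense in $C(\mathbb{S}_{\bD}(\varnothing))$. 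Combining, $r_{\bD}(D)$ is a countable sup-norm-dense subset of $C(\mathbb{S}_{\bD}(\varnothing))$, so $\mathbb{S}_{\bD}(\varnothing)$ is metrizable.

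Next I would pass from $\varnothing$ to $\rT$. The set $\mathbb{S}_{\bD}(\rT)$ sits inside $\mathbb{S}_{\bD}(\varnothing)$ and is closed there: if $\mu_i = \tp^{\cM_i}(\bX_i) \in \mathbb{S}_{\bD}(\rT)$ with $\cM_i \models \rT$ and $\cU$ is any ultrafilter, then, exactly as in the proof of the compactness observation, $\lim_{i \to \cU} \mu_i = \tp^{\cM}([\bX_i]_i)$ where $\cM = \prod_{i \to \cU} \cM_i$, and $\cM \models \rT$ by the corollary to {\L}os's theorem; hence $\mathbb{S}_{\bD}(\rT)$ is closed under ultralimits, so it is closed. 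A closed subspace of a compact metrizable space is compact metrizable, which gives the conclusion. (Alternatively: $\mathbb{S}_{\bD}(\rT)$ is a closed subset of the compact Hausdorff, hence normal, space $\mathbb{S}_{\bD}(\varnothing)$, so by Tietze $C(\mathbb{S}_{\bD}(\rT))$ is a continuous image of $C(\mathbb{S}_{\bD}(\varnothing))$ and therefore also separable.)

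I do not expect a serious obstacle here; the only delicate point is the bookkeeping of the two topologies on $C(\mathbb{S}_{\bS}(\varnothing))$ — the locally convex topology generated by the family of seminorms $\phi \mapsto \norm{\phi|_{\mathbb{S}_{\bD}(\varnothing)}}$ versus the sup norm on each $C(\mathbb{S}_{\bD}(\varnothing))$ — together with checking that the separability hypothesis on $\cL$ genuinely descends to each $\mathbb{S}_{\bD}$ via the restriction map and Stone--Weierstrass.
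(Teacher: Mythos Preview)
Your proposal is correct and takes essentially the same approach as the paper: both pass a countable dense family from $C(\mathbb{S}_{\bS}(\varnothing))$ down through restriction maps (using Stone--Weierstrass to ensure dense image) to obtain a countable dense or point-separating family on $\mathbb{S}_{\bD}(\rT)$. The only cosmetic difference is that the paper writes down an explicit metric $d(\mu,\nu) = \sum_n 2^{-n} K_n^{-1} |\mu(\phi_n) - \nu(\phi_n)|$ from the dense sequence, whereas you invoke the abstract criterion (compact Hausdorff with separable $C(K)$, or countable separating family, is metrizable) and factor the passage to $\rT$ through a closed-subspace argument; both routes are standard and interchangeable.
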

	
	\begin{proof}
		By separability of $\cL$, there is a dense sequence $(\phi_n)_{n \in \N}$ in $C(\mathbb{S}_{\bS}(\varnothing))$.  Since the restriction maps $C(\mathbb{S}_{\bS}(\varnothing)) \to C(\mathbb{S}_{\bD}(\varnothing))$ and $C(\mathbb{S}_{\bD}(\varnothing)) \to C(\mathbb{S}_{\bD}(\rT))$ are continuous, $(\phi_n)$ also defines a dense subset in $C(\mathbb{S}_{\bD}(\rT))$.  For each $n$, there exists a constant $K_n$ such that $|\mu(\phi_n)| \leq K_n$ for all $\mu \in \mathbb{S}_{\bD}(\rT)$; this holds because $\phi_n$ can be uniformly approximated on $\bD$ by formulas, which are also uniformly bounded.  Then we may define a metric on $\mathbb{S}_{\bD}(\rT)$ by
		\[
		d(\mu,\nu) = \sum_{n \in \N} \frac{1}{2^n K_n} |\mu(\phi_n) - \nu(\phi_n)|.
		\]
		The verification that this induces the weak-$*$ topology is routine.  The types $\mathbb{S}_{\bD}(\rT)$ induce linear functionals on $C(\mathbf{S}_{\bD}(\rT))$, or in other words, $\mathbb{S}_{\bD}(\rT)$ is contained in the unit ball of the dual of $C(\mathbb{S}_{\bD}(\rT))$ so convergence on a dense subset of $C(\mathbb{S}_{\bD}(\rT))$ is equivalent to convergence on all of $C(\mathbb{S}_{\bD}(\rT))$.
	\end{proof}
	
	\begin{observation} \label{obs:Ltrseparable}
		The language $\cL_{\tr}$ of tracial $\mathrm{W}^*$-algebras is separable.
	\end{observation}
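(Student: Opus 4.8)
The plan is to verify the two clauses of the definition of a separable language. Clause (1) is immediate since $\cL_{\tr}$ has exactly one sort; from now on $\bS$ denotes the constant $\N$-tuple consisting of that sort, and $\bD$ ranges over domain tuples in $\prod_{j \in \N} \cD_{S_j}$. For clause (2) the strategy is to exhibit a \emph{countable} set $\cF_0$ of $\cL_{\tr}$-formulas that is dense in $C(\mathbb{S}_{\bS}(\varnothing))$ for the locally convex topology defined above. By the definition of that topology it is enough to make $\cF_0|_{\mathbb{S}_{\bD}(\varnothing)}$ dense in $C(\mathbb{S}_{\bD}(\varnothing))$ for each fixed $\bD$, together with the observation that finitely many domain tuples $\bD_1, \dots, \bD_k$ are always dominated by a single tuple $\bD^\ast$ (componentwise maximal radii), so that an approximation that is good on $\mathbb{S}_{\bD^\ast}(\varnothing)$ is good on all the $\mathbb{S}_{\bD_i}(\varnothing)$ at once.

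For $\cF_0$ I would take the smallest set of $\cL_{\tr}$-formulas that contains the basic formulas $\re \tr(p)$, $\im \tr(p)$, and $d(p,q)$ for all non-commutative $*$-polynomials $p, q$ with coefficients in $\Q + i\Q$, and that is closed under the operations $(\phi_1, \dots, \phi_n) \mapsto Q(\phi_1, \dots, \phi_n)$ for $Q \in \Q[t_1, \dots, t_n]$ and $\phi \mapsto \sup_{y \in D_r} \phi$, $\phi \mapsto \inf_{y \in D_r} \phi$ for $r \in \Q \cap (0,\infty)$ and each bound variable $y$. Since $\cF_0$ is a countable union of sets, each obtained from the previous one by adjoining countably many formulas, it is countable.

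It then remains to show that $\cF_0|_{\mathbb{S}_{\bD}(\varnothing)}$ is dense in $C(\mathbb{S}_{\bD}(\varnothing))$. The Stone--Weierstrass argument in the proof of Proposition \ref{prop:defpredcontinuous} already shows that the restrictions of \emph{all} $\cL_{\tr}$-formulas are dense in $C(\mathbb{S}_{\bD}(\varnothing))$, so it suffices to approximate an arbitrary formula $\phi$ uniformly on $\mathbb{S}_{\bD}(\varnothing)$ by members of $\cF_0$; I would do this by induction on the construction of $\phi$. In a basic subformula, replacing the complex coefficients of the polynomials inside $\re \tr(\cdot)$, $\im \tr(\cdot)$, or $d(\cdot,\cdot)$ by nearby Gaussian rationals changes the value by a controlled amount, because on $D_r^{\cM}$ a $*$-monomial of degree $k$ is bounded by $r^k$ and the symbols $\re \tr$, $\im \tr$, $d$ are uniformly continuous on domains of quantification with moduli fixed by the language. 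An outer continuous connective $F\colon \R^n \to \R$ applied to subformulas already approximated by elements of $\cF_0$ can be replaced, to within any $\epsilon$, by a rational polynomial $Q$, since those subformulas take values in a fixed compact box on $\mathbb{S}_{\bD}(\varnothing)$ and $\Q[t_1,\dots,t_n]$ is uniformly dense there. Finally, a quantifier $\sup_{y \in D_r}$ with $r$ irrational is replaced by $\sup_{y \in D_{r'}}$ with rational $r' \approx r$, using the uniform continuity in $y$ of the quantified formula and the fact that $D_r^{\cM}$ and $D_{r'}^{\cM}$ are uniformly close. Combining these steps gives density of $\cF_0$, hence separability of $\cL_{\tr}$. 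I expect the main difficulty to lie in this last inductive approximation, and within it the quantifier step, where one needs uniform continuity in the quantified variable uniformly over all $\cM$ and must check carefully that finitely much rational data really approximates every formula on each domain; the remaining verifications are routine bookkeeping with the range bounds and moduli of continuity that are built into $\cL_{\tr}$.
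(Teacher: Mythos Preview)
Your approach is essentially the same as the paper's: exhibit a countable family of formulas built from $*$-polynomials with Gaussian rational coefficients and quantifiers over $D_r$ with rational $r$, then argue density on each $\mathbb{S}_{\bD}$. The paper's proof is a two-sentence sketch that asserts density without further justification; you supply the inductive argument (basic formulas, connectives via Stone--Weierstrass on compact ranges, quantifiers via uniform continuity), and in particular you are explicit about restricting the connectives to rational polynomials, which is needed to keep the family countable but which the paper leaves implicit.
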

	
	\begin{proof}
		Consider $\cL_{\tr}$ formulas obtained using only scalar multiplication by numbers in $\Q[i]$ rather than $\C$ and using only suprema and infima over $D_r$ for $r \in \Q \cap (0,\infty)$.  There are only countably many such formulas, and one can show that these formulas are dense in the space of definable predicates.
	\end{proof}
	
	\subsection{Definable functions}
	
	Although definable functions are often defined only for finite tuples, it is useful for the theory of covering entropy to work with infinite tuples as both the input and the output functions.  The following ``functional'' description of definable functions makes it easy to prove properties relating them with definable predicates and the type space.
	
	\begin{definition} \label{def:definablefunction}
		Let $\bS$ and $\bS'$ be $\N$-tuples of sorts in the language $\cL$.  A \emph{definable function} $f: \prod_{j \in \N} S_j \to \prod_{j \in \N} S_j'$ relative to the $\cL$-theory $\rT$ is a collection of maps $\mathbf{f}^{\cM}: \prod_{j \in \N} S_j^{\cM} \to \prod_{j \in \N} (S_j')^{\cM}$ for $\cM \models \rT$ satisfying the following conditions:
		\begin{enumerate}[(1)]
			\item For each $\bD \in \prod_{j \in \N} \cD_{S_j}$, there exists $\bD' \in \prod_{j \in \N} \cD_{S_j'}$ such that for every $\cM \models \rT$, $\mathbf{f}^{\cM}$ maps $\prod_{j \in \N} D_j^{\cM}$ into $\prod_{j \in \N} (D_j')^{\cM}$.
			\item Whenever $\tilde{\bS}$ is another tuple of sorts and $\phi$ is a definable predicate relative to $\rT$ in the free variables $x_j' \in S_j'$ and $\tilde{x}_j \in \tilde{S}_j$ for $j \in \N$, then $\phi(\mathbf{f}(\mathbf{x}),\tilde{\mathbf{x}})$ is a definable predicate in the variables $\mathbf{x} = (x_j)_{j\in \N}$ and $\tilde{\mathbf{x}} = (\tilde{x}_j)_{j\in\N}$.
		\end{enumerate}
	\end{definition}
	
	The next proposition gives a more down-to-earth characterization of definable functions which can be more easily checked in examples.  This is in fact typically used as the definition \cite[Definition 9.22]{BYBHU2008}.
	
	\begin{proposition} \label{prop:deffunc2}
		Let $\bS$ and $\bS'$ be $\N$-tuples of types in the language $\cL$ and let $\rT$ be an $\cL$-theory.  Let $\mathbf{f}: \prod_{j \in \N} S_j \to \prod_{j \in \N} S_j'$ be a collection of maps $\mathbf{f}^{\cM}: \prod_{j \in \N} S_j^{\cM} \to \prod_{j \in \N} (S_j')^{\cM}$ for $\cM \models \rT$ satisfying (1) of Definition \ref{def:definablefunction}.  Then $\mathbf{f}$ is a definable function if and only if, for each $k \in \N$, the map $\phi_k(\mathbf{x},y) = d(f_k(\mathbf{x}),y)$ is a definable predicate on $\prod_{j \in \N} S_j \times S_k'$.
	\end{proposition}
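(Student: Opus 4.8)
The plan is to prove the two implications separately; the forward direction is essentially immediate from Definition \ref{def:definablefunction}, and all the content lies in the converse.

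For the forward direction, suppose $\mathbf{f}$ is a definable function. Fix $k \in \N$. I would apply condition (2) of Definition \ref{def:definablefunction} with the auxiliary tuple $\tilde{\bS}$ chosen so that $\tilde{S}_1 = S_k'$ (the remaining sorts arbitrary) and with the definable predicate $\phi((x_j')_{j\in\N},(\tilde{x}_j)_{j\in\N}) = d_{S_k'}(x_k',\tilde{x}_1)$, which is a basic formula, hence a definable predicate. Condition (2) then yields that $d(f_k(\mathbf{x}),\tilde{x}_1)$ is a definable predicate in $(\mathbf{x},\tilde{x}_1)$; relabelling $\tilde{x}_1$ as $y$ gives exactly that $\phi_k(\mathbf{x},y) = d(f_k(\mathbf{x}),y)$ is a definable predicate on $\prod_{j\in\N} S_j \times S_k'$.

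For the converse, assume (1) of Definition \ref{def:definablefunction} holds and that each $\phi_k(\mathbf{x},y) = d(f_k(\mathbf{x}),y)$ is a definable predicate; the goal is to verify condition (2). Let $\tilde{\bS}$ be a tuple of sorts and $\phi$ a definable predicate in the variables $x_j' \in S_j'$, $\tilde{x}_j \in \tilde{S}_j$. Fix a tuple of domains $\bD$ for $\mathbf{x}$, a tuple $\tilde{\bD}$ for $\tilde{\mathbf{x}}$, and $\epsilon > 0$; it suffices to produce a formula in finitely many of these variables that is within $\epsilon$ of $\phi(\mathbf{f}(\mathbf{x}),\tilde{\mathbf{x}})$ on $\prod_j D_j^{\cM}\times\prod_j \tilde{D}_j^{\cM}$, uniformly over $\cM\models\rT$. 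Using (1), choose $\bD'$ with $f_j^{\cM}\big(\prod_i D_i^{\cM}\big)\subseteq (D_j')^{\cM}$ for all $\cM$. Since $\phi$ is a definable predicate, fix a formula $\psi$ with free variables among $\{x_j' : j\in F'\}\cup\{\tilde{x}_j : j\in\tilde{F}\}$ (with $F',\tilde{F}\subseteq\N$ finite) such that $|\phi^{\cM}-\psi^{\cM}| < \epsilon/2$ on $\prod_j (D_j')^{\cM}\times\prod_j\tilde{D}_j^{\cM}$; in particular this persists after substituting $x_j' = f_j^{\cM}(\mathbf{X})$. Let $N$ be a range bound for $\psi$ on these domains and let $\delta>0$ be such that $d^{\cM}(Y_j,Z_j)<\delta$ for all $j\in F'$ implies $|\psi^{\cM}((Y_j)_{j\in F'},\cdot) - \psi^{\cM}((Z_j)_{j\in F'},\cdot)| < \epsilon/4$. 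The key construction is, for $\lambda>0$,
\[
\Psi_\lambda^{\cM}(\mathbf{X},\tilde{\mathbf{X}}) = \inf_{(Y_j)_{j\in F'}\in\prod_{j\in F'}(D_j')^{\cM}}\Bigl[\psi^{\cM}\bigl((Y_j)_{j\in F'},(\tilde{X}_j)_{j\in\tilde{F}}\bigr) + \lambda\max_{j\in F'} d^{\cM}\bigl(f_j^{\cM}(\mathbf{X}),Y_j\bigr)\Bigr].
\]
The bracketed expression is a definable predicate in $(\mathbf{X},\tilde{\mathbf{X}},(Y_j)_{j\in F'})$: the first term is a formula, the second is $\lambda$ times a finite maximum of the definable predicates $\phi_j$, and sums of definable predicates are definable predicates; taking the infimum over the $Y_j$ ranging over a product of domains of quantification keeps it definable, all by Lemma \ref{lem:defpredoperations}. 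Thus each $\Psi_\lambda$ is a definable predicate in $(\mathbf{x},\tilde{\mathbf{x}})$.

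The remaining step is to show $\Psi_\lambda \to \psi(\mathbf{f}(\mathbf{x}),\tilde{\mathbf{x}})$ uniformly on $\prod_i D_i^{\cM}\times\prod_j\tilde{D}_j^{\cM}$, uniformly over $\cM\models\rT$. Plugging in $Y_j = f_j^{\cM}(\mathbf{X})$ gives $\Psi_\lambda^{\cM}(\mathbf{X},\tilde{\mathbf{X}})\le\psi^{\cM}(\mathbf{f}(\mathbf{X}),\tilde{\mathbf{X}})$. For the reverse bound, given any admissible $(Y_j)$, set $r = \max_{j\in F'} d^{\cM}(f_j^{\cM}(\mathbf{X}),Y_j)$: if $r<\delta$, the choice of $\delta$ forces the bracket to be at least $\psi^{\cM}(\mathbf{f}(\mathbf{X}),\tilde{\mathbf{X}}) - \epsilon/4$; if $r\ge\delta$, the bracket is at least $-N+\lambda\delta$, which is at least $N\ge\psi^{\cM}(\mathbf{f}(\mathbf{X}),\tilde{\mathbf{X}})$ as soon as $\lambda\ge 2N/\delta$. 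Hence for $\lambda\ge 2N/\delta$ we get $|\Psi_\lambda^{\cM} - \psi^{\cM}(\mathbf{f}(\mathbf{X}),\tilde{\mathbf{X}})|\le\epsilon/4$ uniformly. Combining this with $|\phi^{\cM}(\mathbf{f}(\mathbf{X}),\tilde{\mathbf{X}}) - \psi^{\cM}(\mathbf{f}(\mathbf{X}),\tilde{\mathbf{X}})| < \epsilon/2$ and approximating the definable predicate $\Psi_\lambda$ by a formula to within $\epsilon/4$ on $\bD\times\tilde{\bD}$, we obtain a formula within $\epsilon$ of $\phi(\mathbf{f}(\mathbf{x}),\tilde{\mathbf{x}})$ on that product of domains. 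Since $\bD$, $\tilde{\bD}$, $\epsilon$ were arbitrary, $\phi(\mathbf{f}(\mathbf{x}),\tilde{\mathbf{x}})$ is a definable predicate, which is condition (2).

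I expect the main obstacle to be ensuring the penalization argument works uniformly across all models $\cM\models\rT$ at once; this succeeds precisely because the range bound $N$ and the continuity modulus $\delta$ are attached to the formula $\psi$ itself rather than to any particular structure, so a single $\lambda$ works everywhere. A secondary point needing care is the initial reduction of the general definable predicate $\phi$ to a formula $\psi$ in finitely many variables, which is what lets us invoke the hypothesis for only finitely many of the coordinate functions $f_j$.
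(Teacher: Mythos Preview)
Your proof is correct and follows essentially the same approach as the paper: the forward direction is the same specialization of condition (2), and the converse is the same penalization/inf-convolution argument, with only cosmetic differences (you use $\lambda\max_j d(f_j(\mathbf{x}),y_j)$ with $\lambda$ large where the paper uses $\frac{1}{t}\sum_j d(f_j(\mathbf{x}),y_j)$ with $t$ small). Your bookkeeping of the $\epsilon$'s is slightly more explicit, but the structure and the key observation---that the range bound and modulus of continuity for $\psi$ are uniform over all $\cM\models\rT$, so a single penalty parameter works everywhere---are identical.
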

	
	\begin{proof}
		($\implies$) Let $\tilde{\bS} = \bS'$, and consider the definable predicate $\phi(\mathbf{x}',\tilde{\mathbf{x}}) = d(x_k',\tilde{x}_k)$.  Taking $\tilde{\bS} = \bS'$ in Definition \ref{def:definablefunction} (2), we see that if $\mathbf{f}$ is a definable function, then $\phi(\mathbf{f}(\mathbf{x}),\tilde{\mathbf{x}}) = d(f_k(\mathbf{x}),\tilde{x}_k)$ is a definable predicate.  So substituting $y$ for $\tilde{x}_k$, we have proved the claim.
		
		($\impliedby$) In order to verify (2) of Definition \ref{def:definablefunction}, let $\tilde{\bS}$ be an $\N$-tuple of sorts, and let $\phi(\mathbf{x}',\tilde{\mathbf{x}})$ be a definable predicate on $\prod_{j \in \N} S_j' \times \prod_{j \in \N} \tilde{S}_j$.  We need to show that $\psi(\mathbf{x},\tilde{\mathbf{x}}) = \phi(\mathbf{f}(\mathbf{x}),\tilde{\mathbf{x}})$ is a well-defined definable predicate relative to $\rT$.  Thus, to check Definition \ref{def:definablepredicate}, fix $\bD \in \prod_{j \in \N} \cD_{S_j}$ and $\tilde{\bD} \in \prod_{j \in \N} \cD_{\tilde{S}_j}$ and $\epsilon > 0$.  Since we assumed that Definition \ref{def:definablefunction} (1) holds, there exists $\bD'$ such that $f$ maps $\prod_{j \in \N} D_j$ into $\prod_{j \in \N} D_j'$.
		
		By Definition \ref{def:definablepredicate}, there exists a formula $\eta$ depending on finitely many of the variables $x_j'$ and $\tilde{x}_j$ that approximates $\phi$ within $\epsilon/2$ on $\prod_{j \in \N} (D_j')^{\cM} \times \prod_{j \in \N} (\tilde{D}_j)^{\cM}$.  Let $F'$ be the set of indices $j$ such that $\eta$ depends on $x_j'$.  For $t > 0$, let
		\[
		\psi_t(\mathbf{x},\tilde{\mathbf{x}}) = \inf_{y_j \in D_j': j \in F} \left[ \eta(y_j: j \in F, \tilde{\mathbf{x}}) + \frac{1}{t} \sum_{j \in F} d(\mathbf{f}_j(\mathbf{x}), y_j) \right],
		\]
		which is a definable predicate by our assumption on $\mathbf{f}$ and by Lemma \ref{lem:defpredoperations} (2).
		
		We want to show that $\psi_t$ is close to $\phi$ when $t$ is sufficiently small.  We automatically have $\psi_t^{\cM}(\bX,\tilde{\bX}) \leq \eta^{\cM}(\mathbf{f}(\bX),\tilde{\bX})$ for $\bX \in \prod_{j \in \N} (D_j')^{\cM}$ and $\tilde{\bX} \in \prod_{j \in \N} (\tilde{D}_j)^{\cM}$ when $\cM \models \rT$ since $\mathbf{f}_j(\mathbf{X})$ is a value of $\mathbf{Y}$ participating in the infimum.  To get a bound in the other direction, first observe that since $\eta$ is a formula, $|\eta|$ is bounded on $\prod_{j \in \N} D_j' \times \prod_{j \in \N} \tilde{D}_j$ by some constant $C$.  We then observe using the triangle inequality that
		\[
		\eta^{\cM}(\mathbf{Y},\tilde{\mathbf{X}}) + \frac{1}{t} \sum_{j \in F} d^{\cM}(Y_j,f_j^{\cM}(\mathbf{X})) \geq \eta^{\cM}(\mathbf{f}^{\cM}(\mathbf{X}),\tilde{\mathbf{X}})
		\]
		unless $\sum_{j \in F} d^{\cM}(Y_j,f_j^{\cM}(\mathbf{X})) < 2Ct$, and therefore the infimum is witnessed by $\mathbf{Y}$ such that $\sum_{j \in F} d^{\cM}(Y_j,f_j^{\cM}(\mathbf{X})) < 2Ct$.  Furthermore, by the uniform continuity property of the formula $\eta$ (Observation \ref{obs:defpredunifcont}), there exists $t$ such that if $\sum_{j \in F} d^{\cM}(Y_j,f_j^{\cM}(\mathbf{X})) < 2Ct$, then
		\[
		|\eta^{\cM}(\mathbf{Y},\tilde{\mathbf{X}}) - \eta^{\cM}(\mathbf{f}(\mathbf{X}),\tilde{\mathbf{X}})| < \frac{\epsilon}{2}.
		\]
		It follows that for this value of $t$,
		\[
		\eta^{\cM}(\mathbf{f}(\bX),\tilde{\bX}) - \frac{\epsilon}{2} \leq \psi_t^{\cM}(\mathbf{X},\tilde{\mathbf{X}}) \leq \eta^{\cM}(\mathbf{f}(\mathbf{X}),\tilde{\mathbf{X}})
		\]
		hence also
		\[
		|\psi_t^{\cM}(\mathbf{X},\tilde{\mathbf{X}}) - \psi^{\cM}(\mathbf{X},\tilde{\mathbf{X}})| < \epsilon,
		\]
		for $\cM \models \rT$ and $\bX \in \prod_{j \in \N} D_j^{\cM}$ and $\tilde{\bX} \in \prod_{j \in \N} (\tilde{D}_j)^{\cM}$.  Since $\psi$ can be approximated in this way by definable predicates, it is a definable predicate, which proves the claim of the proposition.
	\end{proof}
	
	\begin{corollary} \label{cor:termdefinable}
		If $f_k: \prod_{j \in \N} S_j \to S_k'$ is a term for each $k \in \N$, then $\mathbf{f} = (f_j)_{j \in \N}$ is a definable function relative to any theory $\rT$.
	\end{corollary}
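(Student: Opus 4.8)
The plan is to verify the criterion of Proposition~\ref{prop:deffunc2}: for each $k \in \N$, the function $\phi_k(\mathbf{x},y) = d(f_k(\mathbf{x}),y)$ is a definable predicate relative to $\rT$. Condition~(1) of Definition~\ref{def:definablefunction} is automatic for terms, since each term $f_k$ has assigned range bounds --- it maps any product of domains $\prod_{j} D_j$ into a single domain $D_k'$ determined by the range bounds of the function symbols composing $f_k$ --- and $f_k$ depends on only finitely many of the coordinates $x_j$. So the real content is showing $\phi_k$ is a definable predicate.

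First I would handle the case where the output sort admits the metric symbol directly: in a general language $d_{S_k'}(f_k(\mathbf{x}), y)$ is a basic formula, since $f_k$ is a term valued in $S_k'$, $y$ is a variable of sort $S_k'$, and $d_{S_k'}$ is a relation symbol $S_k' \times S_k' \to \R$; composing a relation symbol with terms yields a basic formula by the definition of basic formulas in Section~2.3. A basic formula is a formula, and every formula defines a definable predicate (as noted right after Definition~\ref{def:definablepredicate}). Hence $\phi_k$ is a definable predicate in the finitely many free variables on which $f_k$ depends, together with $y$; as a predicate in the full tuple $(\mathbf{x},y)$ it is still definable since it simply ignores the remaining coordinates. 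This establishes the hypothesis of Proposition~\ref{prop:deffunc2} for every $k$, and that proposition then gives that $\mathbf{f}$ is a definable function relative to $\rT$; since $\rT$ was arbitrary, we are done.

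The one point requiring a little care --- and the closest thing to an obstacle --- is that in $\cL_{\tr}$ the metric $d$ is part of the language (it is the symbol interpreted by $\norm{\cdot}_{2,\tau}$), so the argument above applies verbatim; but in a general language one should make sure $d_{S_k'}$ is indeed always available as a relation symbol, which it is by the definition of a language in Section~2.2 (each sort $S$ carries a privileged relation symbol $d_S$). A second routine check is that a definable predicate in finitely many variables, viewed as a predicate in the infinite tuple $(\mathbf{x},y)$ by precomposition with the coordinate projection, still satisfies Definition~\ref{def:definablepredicate}: given $\mathbf{D}$ and $\epsilon$, the approximating formula $\psi$ for the finite-variable predicate works unchanged, since it depends only on the relevant finitely many coordinates. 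With these observations in place the corollary follows immediately from Proposition~\ref{prop:deffunc2}.
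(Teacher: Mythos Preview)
Your proposal is correct and follows essentially the same route as the paper: verify condition (1) of Definition~\ref{def:definablefunction} from the built-in range bounds of terms, then observe that $d_{S_k'}(f_k(\mathbf{x}),y)$ is a formula (indeed a basic formula) and hence a definable predicate, so Proposition~\ref{prop:deffunc2} applies. The extra remarks you make about $d_{S_k'}$ always being a relation symbol and about extending a finite-variable predicate to the full infinite tuple are correct but routine; the paper simply omits them.
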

	
	\begin{proof}
		By definition a term maps a product of domains of quantification into some domain of quantification, which verifies (1) of Definition \ref{def:definablefunction}.  Moreover, for each $k$, $d(f_k(\mathbf{x}),y_k)$ is a formula, hence a definable predicate, so by the previous proposition $\mathbf{f}$ is a definable function.
	\end{proof}
	
	Similar to definable predicates, definable functions are automatically uniformly continuous with respect to $d$ on each product of domains of quantification.  This is a straightforward generalization of \cite[Proposition 9.23]{BYBHU2008}.
	
	\begin{lemma} \label{lem:deffuncunifcont}
		Let $\cL$ be a language, $\rT$ an $\cL$-theory, $\mathbf{S}$ and $\mathbf{S}'$ $\N$-tuples of sorts, and $\mathbf{f}: \prod_{j \in \N} S_j \to \prod_{j \in \N} S_j'$ a definable function.  Then for every $\bD \in \prod_{j \in \N} \cD_{S_j}$ and $F \subseteq \N$ finite and $\epsilon > 0$, there exists $E \subseteq \N$ finite and $\delta > 0$ such that
		\[
		d_{S_i}^{\cM}(X_i,Y_i) < \delta \text{ for all } i \in E \implies d_{S_j'}^{\cM}(f_j^{\cM}(\mathbf{X}),f_j^{\cM}(\mathbf{Y})) < \epsilon \text{ for all } j \in F.
		\]
		whenever $\cM \models \rT$ and $\mathbf{X}, \mathbf{Y} \in \prod_{j \in \N} D_j^{\cM}$.
	\end{lemma}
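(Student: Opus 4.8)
The plan is to reduce the uniform continuity of $\mathbf{f}$ to the uniform continuity of the associated definable predicates $\phi_k(\mathbf{x},y) = d(f_k(\mathbf{x}),y)$ guaranteed by Proposition \ref{prop:deffunc2}, exploiting the elementary observation that $d(f_k^{\cM}(\mathbf{X}),f_k^{\cM}(\mathbf{Y}))$ is precisely the value of $\phi_k$ at the pair $(\mathbf{Y},y)$ when the output slot $y$ is pinned to $f_k^{\cM}(\mathbf{X})$, whereas $\phi_k$ evaluated at $(\mathbf{X},f_k^{\cM}(\mathbf{X}))$ is $0$. So a bound on how much $\phi_k$ can change when only the $\mathbf{x}$-variables are perturbed, with $y$ held fixed, translates directly into the desired modulus of continuity for $f_k$.

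Concretely, I would fix $\bD \in \prod_{j \in \N} \cD_{S_j}$, a finite $F \subseteq \N$, and $\epsilon > 0$. First, invoke Definition \ref{def:definablefunction}~(1) to obtain $\bD' \in \prod_{j \in \N} \cD_{S_j'}$ with $\mathbf{f}^{\cM}$ carrying $\prod_{j \in \N} D_j^{\cM}$ into $\prod_{j \in \N} (D_j')^{\cM}$ for every $\cM \models \rT$. Next, for each $k \in F$, regard $\phi_k$ as a definable predicate in the variables $(x_j)_{j\in\N}$ together with the single extra variable $y$ from sort $S_k'$ (a harmless reindexing of the variable set), and apply the uniform continuity of definable predicates (Observation \ref{obs:defpredunifcont}) on the product of domains $\prod_{j\in\N} D_j \times D_k'$. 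This produces a finite $E_k \subseteq \N$ and $\delta_k > 0$ such that, for all $\cM \models \rT$, all $\mathbf{X},\mathbf{Y} \in \prod_{j\in\N} D_j^{\cM}$, and all $Y \in (D_k')^{\cM}$,
\[
d^{\cM}(X_i,Y_i) < \delta_k \text{ for } i \in E_k \implies |\phi_k^{\cM}(\mathbf{X},Y) - \phi_k^{\cM}(\mathbf{Y},Y)| < \epsilon,
\]
where keeping $y = Y$ fixed is just the $0 < \delta_k$ case of the allowed perturbation. Then I would set $E = \bigcup_{k\in F} E_k$ and $\delta = \min_{k\in F}\delta_k$, which are legitimate since $F$ is finite. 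Given $\mathbf{X},\mathbf{Y}$ with $d^{\cM}(X_i,Y_i) < \delta$ for $i \in E$ and a fixed $k \in F$, take $Y = f_k^{\cM}(\mathbf{X}) \in (D_k')^{\cM}$; since $\phi_k^{\cM}(\mathbf{X},f_k^{\cM}(\mathbf{X})) = 0$ and $\phi_k^{\cM}(\mathbf{Y},f_k^{\cM}(\mathbf{X})) = d_{S_k'}^{\cM}(f_k^{\cM}(\mathbf{X}),f_k^{\cM}(\mathbf{Y}))$, the displayed implication gives $d_{S_k'}^{\cM}(f_k^{\cM}(\mathbf{X}),f_k^{\cM}(\mathbf{Y})) < \epsilon$, which is the conclusion.

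I do not expect a genuine obstacle here: the one point requiring attention is ensuring that the uniform continuity of $\phi_k$ is applied on a product of domains that actually contains the evaluation point $(\mathbf{X}, f_k^{\cM}(\mathbf{X}))$, and this is exactly what Definition \ref{def:definablefunction}~(1) supplies via the choice of $\bD'$. Everything else — the reindexing to accommodate the extra output variable, and passing from the finitely many $(E_k,\delta_k)$ to a single $(E,\delta)$ by union and minimum — is routine bookkeeping, so the lemma follows along the lines of \cite[Proposition 9.23]{BYBHU2008}.
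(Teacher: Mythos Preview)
Your proof is correct and follows essentially the same approach as the paper: both use Proposition~\ref{prop:deffunc2} to get the definable predicates $d(f_k(\mathbf{x}),y)$, apply Observation~\ref{obs:defpredunifcont} to them, and then evaluate at $y = f_k^{\cM}(\mathbf{X})$ to extract the desired modulus. The only cosmetic difference is that the paper first packages the finitely many predicates into a single one via $\phi(\mathbf{x},\mathbf{y}) = \max_{j \in F} d(f_j(\mathbf{x}),y_j)$ and applies uniform continuity once, whereas you apply it to each $\phi_k$ separately and then take $E = \bigcup_k E_k$, $\delta = \min_k \delta_k$.
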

	
	\begin{proof}
		Let $\bD' \in \prod_{j \in \N} \cD_{S_j}$ such that $\mathbf{f}^{\cM}$ maps $\prod_{j \in \N} D_j^{\cM}$ into $\prod_{j \in \N} (D_j')^{\cM}$ for all $\cM \models \rT$.  Fix $\epsilon > 0$ and $F \subseteq \N$ finite.  Then by Lemma \ref{lem:defpredoperations} and Proposition \ref{prop:deffunc2}, the object
		\[
		\phi^{\cM}(\mathbf{X},\mathbf{Y}) = \max_{j \in F} d_{S_j'}^{\cM}(f_j^{\cM}(\mathbf{X}),Y_j)
		\]
		is a definable predicate relative to $\rT$.  Hence, by Observation \ref{obs:defpredunifcont} there exists $E \subseteq \N$ finite and $\delta > 0$ such that
		\[
		d_{S_i}^{\cM}(X_i',X_i) < \delta \text{ for } i \in E \text{ and } d_{S_j'}^{\cM}(Y_j,Y_j') < \delta \text{ for } j \in F \implies |\phi^{\cM}(\bX,\bY) - \phi^{\cM}(\bX',\bY')| < \epsilon
		\]
		for $\mathbf{X}$, $\bX' \in \prod_{j \in \N} D_j^{\cM}$ and $\bY$, $\bY' \in \prod_{j \in \cM} (D_j')^{\cM}$ and $\cM \models \rT$.  Taking $\bY = \mathbf{f}(\bX)$, we see that
		\[
		|\phi^{\cM}(\bX,\mathbf{f}^{\cM}(\bX)) - \phi^{\cM}(\bX',\mathbf{f}^{\cM}(\bX))| = \max_{j \in F} d_{S_j'}^{\cM}(f_j(\bX'), f_j(\bX)) < \epsilon
		\]
		whenever $\cM \models \rT$ and $\bX$, $\bX' \in \prod_{j \in \N} D_j$ and $\max_{i \in E} d_{S_i}^{\cM}(X_i,X_i') < \delta$, which is the desired uniform continuity property.
	\end{proof}
	
	Next, we describe the relationship between definable functions and types.
	
	\begin{lemma}~
		Let $\bS$ and $\bS'$ be $\N$-tuples of sorts, and let $\mathbf{f}: \prod_{j \in \N} S_j \to \prod_{j \in \N} S_j'$ be a definable function relative to $\rT$.
		\begin{enumerate}[(1)]
			\item If $\phi$ is a definable predicate in the variables $x_j' \in S_j'$ for $j \in \N$, then $\phi \circ \mathbf{f}$ is a definable predicate.
			\item If $\cM \models \rT$ and $\bX \in \prod_{j \in \N} S_j^{\cM}$, then $\tp^{\cM}(\mathbf{f}^{\cM}(\bX))$ is uniquely determined by $\tp^{\cM}(\bX)$ and $\mathbf{f}$.
			\item Let $\mathbf{f}_*: \mathbb{S}_{\bS}(\rT) \to \mathbb{S}_{\bS'}(\rT)$ be the map such that $\tp^{\cM}(\mathbf{f}^{\cM}(\bX)) = \mathbf{f}_* \tp^{\cM}(\bX)$.  Then $\mathbf{f}_*$ is continuous with respect to the logic topology.
		\end{enumerate}
	\end{lemma}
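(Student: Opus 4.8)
The plan is to deduce all three parts formally from Proposition \ref{prop:defpredcontinuous} together with the two defining conditions of a definable function; there is no substantial new idea required.

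For (1) I would observe that this is essentially a restatement of condition (2) of Definition \ref{def:definablefunction}: given a definable predicate $\phi$ in the variables $x_j' \in S_j'$, regard it as a definable predicate in the $x_j'$ together with an auxiliary dummy tuple of variables on which it does not depend, apply Definition \ref{def:definablefunction} (2), and discard the auxiliary variables. (Alternatively one can simply quote the $(\impliedby)$ half of the proof of Proposition \ref{prop:deffunc2}, which verifies precisely this approximation.) Thus $\phi \circ \mathbf{f}$ is a definable predicate relative to $\rT$.

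For (2), fix $\cM \models \rT$ and $\bX \in \prod_{j \in \N} S_j^{\cM}$. For every formula $\psi$ in the variables $x_j' \in S_j'$, part (1) shows that $\psi \circ \mathbf{f}$ is a definable predicate, so by Proposition \ref{prop:defpredcontinuous} there is a continuous $\gamma_\psi : \mathbb{S}_{\bS}(\rT) \to \R$ with
\[
\tp^{\cM}(\mathbf{f}^{\cM}(\bX))(\psi) = \psi^{\cM}(\mathbf{f}^{\cM}(\bX)) = (\psi \circ \mathbf{f})^{\cM}(\bX) = \gamma_\psi(\tp^{\cM}(\bX)).
\]
Hence the value of $\tp^{\cM}(\mathbf{f}^{\cM}(\bX))$ on each formula depends only on $\tp^{\cM}(\bX)$ and on $\mathbf{f}$, and since a type is determined by its values on all formulas, $\tp^{\cM}(\mathbf{f}^{\cM}(\bX))$ is determined by $\tp^{\cM}(\bX)$ and $\mathbf{f}$. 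This is exactly what makes $\mathbf{f}_*$ in (3) well defined.

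For (3), I would use Observation \ref{obs:typecontinuity}: it suffices to show $\mathbf{f}_*|_{\mathbb{S}_{\bD}(\rT)}$ is continuous for each $\bD \in \prod_{j \in \N} \cD_{S_j}$. By Definition \ref{def:definablefunction} (1), choose $\bD' \in \prod_{j \in \N} \cD_{S_j'}$ so that $\mathbf{f}^{\cM}$ maps $\prod_j D_j^{\cM}$ into $\prod_j (D_j')^{\cM}$ for all $\cM \models \rT$; then $\mathbf{f}_*$ maps $\mathbb{S}_{\bD}(\rT)$ into $\mathbb{S}_{\bD'}(\rT)$. Since the inclusion $\mathbb{S}_{\bD'}(\rT) \to \mathbb{S}_{\bS'}(\rT)$ is a topological embedding, it is enough to check continuity of $\mathbf{f}_* : \mathbb{S}_{\bD}(\rT) \to \mathbb{S}_{\bD'}(\rT)$, and because the target carries the weak-$\star$ topology this reduces to showing that $\mu \mapsto (\mathbf{f}_*\mu)(\psi)$ is continuous on $\mathbb{S}_{\bD}(\rT)$ for each formula $\psi$ in the $x_j'$. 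But by (2) this map equals $\mu \mapsto \gamma_\psi(\mu)$ with $\gamma_\psi$ as above, which is continuous. The only point demanding any care is the bookkeeping between the global type space $\mathbb{S}_{\bS'}(\rT)$ and the domain-restricted spaces $\mathbb{S}_{\bD'}(\rT)$, handled by Observation \ref{obs:typecontinuity} and the topological-embedding observation; and, in (1), the minor point that a definable predicate in the $x_j'$ may be viewed as one in a larger tuple of variables on which it is constant.
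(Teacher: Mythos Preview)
Your proposal is correct and follows essentially the same route as the paper's proof: part (1) is obtained by viewing $\phi$ as a predicate in an enlarged tuple and invoking Definition \ref{def:definablefunction} (2); part (2) follows since $\phi\circ\mathbf{f}$ being a definable predicate makes its value a function of $\tp^{\cM}(\bX)$; and part (3) reduces to continuity on each $\mathbb{S}_{\bD}(\rT)$, where the weak-$\star$ topology on the target is checked via the pairings $\mu\mapsto(\mathbf{f}_*\mu)(\psi)$. The only cosmetic difference is that the paper phrases (3) in terms of preimages of open sets and invokes Lemma \ref{lem:defpredweakstar} (pairings with definable predicates), whereas you invoke Observation \ref{obs:typecontinuity} and use formulas directly; both are equivalent bookkeeping.
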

	
	\begin{proof}
		(1) Considering another $\N$-tuple $\tilde{\bS}$ of sorts, we may view $\phi(\mathbf{x})$ as a definable predicate in $(\mathbf{x},\tilde{\mathbf{x}})$, and hence by Definition \ref{def:definablefunction}, $\phi \circ \mathbf{f}$ is a definable predicate.
		
		(2) For every definable predicate $\phi$ in $\mathbf{x}'$, $\phi \circ \mathbf{f}$ is a definable predicate, and hence $\phi^{\cM} \circ \mathbf{f}^{\cM}(\bX)$ is uniquely determined by $\tp^{\cM}(\bX)$ for all $\cM \models \rT$ and $\bX \in \prod_{j \in \N} S_j^{\cM}$.  Since this is true for every definable predicate $\phi$ in variables $\mathbf{x}'$, it follows that $\tp^{\cM}(\mathbf{f}(\bX))$ is uniquely determined by $\mathbf{f}$ and $\tp^{\cM}(\bX)$.
		
		(3) Let $\cO$ be an open set in $\mathbb{S}_{\bS'}(\rT)$.  By definition of the topology on $\mathbb{S}_{\bS}(\rT)$, in order to show that $(\mathbf{f}_*)^{-1}(\cO)$ is open, is suffices to show that $(\mathbf{f}_*)^{-1}(\cO) \cap \mathbb{S}_{\bD}(\rT)$ is open for every $\bD \in \prod_{j \in \N} \cD_{S_j}$.
		
		For any such $\bD$, by Definition \ref{def:definablefunction} (1), there exists $\bD' \in \prod_{j \in \N} \cD_{S_j'}$ such that $\mathbf{f}^{\cM}$ maps $\prod_{j \in \N} D_j^{\cM}$ into $\prod_{j \in \N} (D_j')^{\cM}$ for all $\cM \models \rT$.  This implies that $\mathbf{f}_*$ maps $\mathbb{S}_{\bD}(\rT)$ into $\mathbb{S}_{\bD'}(\rT)$.  Hence, $(\mathbf{f}_*)^{-1}(\cO) \cap \mathbb{S}_{\bD}(\rT) = (\mathbf{f}_* |_{\mathbb{S}_{\bD}(\rT)})^{-1}(\cO \cap \mathbb{S}_{\bD'}(\rT))$, so to show that this set is $(\mathbf{f}_*)^{-1}(\cO) \cap \mathbb{S}_{\bD}(\rT)$ is open, it suffices to check continuity of $\mathbf{f}_*$ as a map $\mathbb{S}_{\bD}(\rT) \to \mathbb{S}_{\bD'}(\rT)$.
		
		By Lemma \ref{lem:defpredweakstar}, the topology on $\mathbb{S}_{\bD'}(\rT)$ is generated by the pairings of types with every definable predicate $\phi$ in the variables $\mathbf{x}'$.  If $\phi$ is such a definable predicate, then $\phi \circ \mathbf{f}$ is a definable predicate in $\mathbf{x}$ by (1), and therefore, it is continuous with respect to the logic topology on $\mathbb{S}_{\bD}(\rT)$.  Thus, the map $\mathbf{f}_*: \mathbb{S}_{\bD}(\rT) \to \mathbb{S}_{\bD'}(\rT)$ is continuous as desired.
	\end{proof}
	
	Finally, we verify that definable functions are closed under composition.
	
	\begin{observation}
		Let $\bS$, $\bS'$, and $\bS''$ be $\N$-tuples of sorts in the language $\cL$.  If $\mathbf{f}: \prod_{j \in \N} S_j \to \prod_{j \in \N} S_j'$ and $\mathbf{g}: \prod_{j \in \N} S_j' \to \prod_{j \in \N} S_j''$ are definable functions, then so is $\mathbf{g} \circ \mathbf{f}$.
	\end{observation}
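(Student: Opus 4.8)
The plan is to verify the two conditions in Definition \ref{def:definablefunction} for $\mathbf{g} \circ \mathbf{f}$ directly, chaining together the defining properties of $\mathbf{g}$ and then $\mathbf{f}$.

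First I would handle condition (1), the range-bound condition. Fix $\bD \in \prod_{j \in \N} \cD_{S_j}$. Since $\mathbf{f}$ is a definable function, condition (1) for $\mathbf{f}$ produces $\bD' \in \prod_{j \in \N} \cD_{S_j'}$ such that $\mathbf{f}^{\cM}$ maps $\prod_{j \in \N} D_j^{\cM}$ into $\prod_{j \in \N} (D_j')^{\cM}$ for every $\cM \models \rT$. Applying condition (1) for $\mathbf{g}$ to this $\bD'$ produces $\bD'' \in \prod_{j \in \N} \cD_{S_j''}$ such that $\mathbf{g}^{\cM}$ maps $\prod_{j \in \N} (D_j')^{\cM}$ into $\prod_{j \in \N} (D_j'')^{\cM}$. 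Composing, $(\mathbf{g} \circ \mathbf{f})^{\cM}$ maps $\prod_{j \in \N} D_j^{\cM}$ into $\prod_{j \in \N} (D_j'')^{\cM}$, which is exactly condition (1) for $\mathbf{g} \circ \mathbf{f}$.

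Next, for condition (2), let $\tilde{\bS}$ be an arbitrary $\N$-tuple of sorts and let $\phi$ be a definable predicate relative to $\rT$ in the free variables $x_j'' \in S_j''$ and $\tilde{x}_j \in \tilde{S}_j$ for $j \in \N$. Applying condition (2) for $\mathbf{g}$, with the auxiliary tuple of sorts taken to be $\tilde{\bS}$, the collection $\psi(\mathbf{x}',\tilde{\mathbf{x}}) := \phi(\mathbf{g}(\mathbf{x}'),\tilde{\mathbf{x}})$ is a definable predicate in the variables $x_j' \in S_j'$ and $\tilde{x}_j \in \tilde{S}_j$. But this is precisely the shape of predicate to which condition (2) for $\mathbf{f}$ applies (again with the auxiliary tuple $\tilde{\bS}$), so $\psi(\mathbf{f}(\mathbf{x}),\tilde{\mathbf{x}}) = \phi(\mathbf{g}(\mathbf{f}(\mathbf{x})),\tilde{\mathbf{x}})$ is a definable predicate in $\mathbf{x}$ and $\tilde{\mathbf{x}}$. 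Since $\phi$ and $\tilde{\bS}$ were arbitrary, $\mathbf{g} \circ \mathbf{f}$ satisfies condition (2), and hence is a definable function.

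There is essentially no analytic obstacle here; the only thing to be careful about is the bookkeeping of which tuple of sorts plays the role of the ``output'' tuple and which plays the role of the auxiliary tuple $\tilde{\bS}$ when chaining the two applications of Definition \ref{def:definablefunction}(2). As an alternative to the route above, one could instead verify the hypothesis of Proposition \ref{prop:deffunc2}, namely that $(\mathbf{x},y) \mapsto d(g_k(\mathbf{f}(\mathbf{x})),y)$ is a definable predicate for each $k$; this would follow from part (1) of the preceding lemma applied to $\mathbf{f}$ (so that $\phi \circ \mathbf{f}$ is definable whenever $\phi$ is) together with the fact that $(\mathbf{x}',y) \mapsto d(g_k(\mathbf{x}'),y)$ is a definable predicate. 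The direct route through the definition seems cleanest, though.
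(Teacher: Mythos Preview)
Your proof is correct and essentially identical to the paper's: both verify conditions (1) and (2) of Definition~\ref{def:definablefunction} directly by first applying the defining property of $\mathbf{g}$ and then that of $\mathbf{f}$. The alternative route you mention via Proposition~\ref{prop:deffunc2} is not used in the paper, but your main argument matches it step for step.
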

	
	\begin{proof}
		If $\bD \in \prod_{j \in \N} \cD_{S_j}$, then since $\mathbf{f}$ is definable, there exists $\bD' \in \prod_{j \in \N} \cD_{S_j'}$ such that $\mathbf{f}^{\cM}$ maps $\prod_{j \in \N} D_j^{\cM}$ into $\prod_{j \in \N} (D_j')^{\cM}$ for every $\cM \models \rT$.  Similarly, there exists $\bD'' \in \prod_{j \in \N} \cD_{S_j''}$ such that $\mathbf{g}^{\cM}$ maps $\prod_{j \in \N} (D_j')^{\cM}$ into $\prod_{j \in \N} (D_j'')^{\cM}$.  Hence, $(\mathbf{g} \circ \mathbf{f})^{\cM}$ maps $\prod_{j \in \N} D_j^{\cM}$ into $\prod_{j \in \N} (D_j'')^{\cM}$, so that $\mathbf{g} \circ \mathbf{f}$ satisfies (1) of Definition \ref{def:definablefunction}.
		
		Let $\tilde{\bS}$ be another $\N$-tuple of sorts and let $\phi$ be a definable predicate in the variables $x_j'' \in S_j''$ for $j \in \N$ and $\tilde{x}_j$ for $j \in \N$.  By the definability of $\mathbf{g}$, $\psi(\mathbf{x}',\tilde{\mathbf{x}}) := \phi(\mathbf{g}(\mathbf{x}'),\tilde{\mathbf{x}})$ is also a definable predicate.  Then by definability of $\mathbf{f}$, $\psi(\mathbf{f}(\mathbf{x}), \tilde{\mathbf{x}}) = \phi((\mathbf{g} \circ \mathbf{f})(\mathbf{x}), \tilde{\mathbf{x}})$ is a definable predicate.  Therefore, $\mathbf{g} \circ \mathbf{f}$ satisfies (2) of Definition \ref{def:definablefunction}, so it is a definable function.
	\end{proof}
	
	\subsection{Quantifier-free types and definable predicates} \label{subsec:qftypes}
	
	Quantifier-free formulas, that is, formulas defined without suprema or infima, are the simplest kind of formula and have special significance in our study of tracial $\mathrm{W}^*$-algebras.
	
	\begin{definition}
		\emph{Quantifier-free formulas} are formulas obtained through the application of relations to terms and iterative application of continuous functions $\R^n \to \R$, that is, formulas obtained without using $\sup$ and $\inf$ operations.  If $\bS$ is an $\N$-tuple of sorts, we denote the set of quantifier-free formulas in variables $x_j \in S_j$ for $j \in \N$ by $\cF_{\qf,\bS}$.
	\end{definition}
	
	Quantifier-free types, the space of quantifier-free types, and quantifier-free definable predicates are defined in the same ways as the analogous objects for types, to wit:
	
	\begin{definition}
		Let $\bS$ be an $\N$-tuple of sorts in the language $\cL$.  If $\cM$ is an $\cL$-structure, then the \emph{quantifier-free type} of $\bX \in \prod_{j \in \N} S_j^{\cM}$ is the map
		\[
		\tp_{\qf}^{\cM}(\bX): \cF_{\bS} \to \R: \phi \mapsto \phi^{\cM}(\bX).
		\]
	\end{definition}
	
	\begin{definition}~
		\begin{itemize}
			\item If $\rT$ is an $\cL$-theory and $\bS$ is an $\N$-tuple of sorts, then $\mathbb{S}_{\qf,\bS}(\rT)$ will denote the set of types $\tp_{\qf}^{\cM}(\bX)$ for $\bX \in \prod_{j \in \N} S_j^{\cM}$ and $\cM \models \rT$.
			\item If $\bD \in \prod_{j \in \N} \bD_{\bS}$, then $\mathbb{S}_{\qf,\bD}(\rT)$ will denote the set of types $\tp^{\cM}(\bX)$ for $\cM \models \rT$ and $\bX \in \prod_{j \in \N} D_j^{\cM}$.
			\item We equip $\mathbb{S}_{\qf,\bD}(\rT)$ with the weak-$\star$ topology as a subset of the dual of $\cF_{\qf,\bS}$.
			\item We equip $\mathbb{S}_{\qf,\bS}(\rT)$ with the topology such that $\cO$ is open if and only if $\cO \cap \mathbb{S}_{\qf,\bD}(\rT)$ is open for every $\bD \in \prod_{j \in \N} \cD_{S_j}$.  We call this the \emph{(quantifier-free) logic topology}.
		\end{itemize}
	\end{definition}
	
	\begin{definition}
		Let $\rT$ be an $\cL$-theory and $\bS$ is an $\N$-tuple of sorts.  A \emph{quantifier-free definable predicate} is collection of functions $\phi^{\cM}: \prod_{j \in \N} S_j^{\cM} \to \R$ for $\cM \models \rT$ such that for every $\bD \in \prod_{j \in \N} \cD_{S_j}$ and $\epsilon > 0$, there exists a quantifier-free formula $\psi$ in finitely many of the variables $x_j \in S_j$, such that
		\[
		|\phi^{\cM}(\bX) - \psi^{\cM}(\bX)| < \epsilon
		\]
		for $\bX \in \prod_{j \in \N} D_j^{\cM}$ for $\cM \models \rT$.
	\end{definition}
	
	The following can be verified in the same way as for types, when $\bS$ is an $\N$-tuple of sorts and $\rT$ is an $\cL$-theory:
	\begin{itemize}
		\item For each $\bD \in \prod_{j \in \N} \cD_{S_j}$, the space $\mathbb{S}_{\qf,\bD}(\rT)$ is a compact Hausdorff space.
		\item $\phi$ is a quantifier-free definable predicate if and only if $\phi^{\cM}(\bX) = \omega(\tp^{\cM}(\bX))$ for some continuous $\omega: \mathbb{S}_{\qf,\bS}(\rT) \to \R$.
		\item If $\phi_j$ is a quantifier-free definable predicate for $j \in \N$ and $F: \R^{\N} \to \R$ is continuous, then $F((\phi_j)_{j \in \N})$ is a quantifier-free definable predicate.
	\end{itemize}
	Furthermore, the quantifier-free type space and the type space can be related as follows.
	
	\begin{observation}
		Let $\bS$ be an $\N$-tuple of sorts in $\cL$ and $\rT$ an $\cL$-theory.  Let $\pi: \mathbb{S}_{\bS}(\rT) \to \mathbb{S}_{\qf,\bS}(\rT)$ be the map that sends a type (as a linear map $\cF_{\bS} \to \R$) to its restriction to $\cF_{\qf,\bS}$.  Then $\pi(\mathbb{S}_{\bD}(\rT)) = \mathbb{S}_{\qf,\bD}(\rT)$ for each $\bD \in \prod_{j \in \N} \cD_{S_j}$, and $\pi$ is a topological quotient map.
	\end{observation}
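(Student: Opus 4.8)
The plan is to check the fibrewise image identity directly from the definitions, then verify that $\pi$ is continuous, and finally deduce the quotient property by reducing everything to the compact pieces $\mathbb{S}_{\bD}(\rT)$, where a continuous surjection onto a compact Hausdorff space is automatically a quotient map.

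First I would observe that $\pi(\mathbb{S}_{\bD}(\rT)) = \mathbb{S}_{\qf,\bD}(\rT)$ is immediate from the definitions: if $\mu = \tp^{\cM}(\bX)$ for some $\cM \models \rT$ and $\bX \in \prod_{j \in \N} D_j^{\cM}$, then by construction $\pi(\mu)$ is the restriction of $\mu$ to $\cF_{\qf,\bS}$, which is exactly $\tp_{\qf}^{\cM}(\bX) \in \mathbb{S}_{\qf,\bD}(\rT)$; conversely, every point of $\mathbb{S}_{\qf,\bD}(\rT)$ has the form $\tp_{\qf}^{\cM}(\bX) = \pi(\tp^{\cM}(\bX))$ for such data. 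In particular $\pi$ is surjective and carries $\mathbb{S}_{\bD}(\rT)$ onto $\mathbb{S}_{\qf,\bD}(\rT)$.

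Next I would establish continuity of $\pi$. By Observation \ref{obs:typecontinuity} applied with $\Omega = \mathbb{S}_{\qf,\bS}(\rT)$, it suffices to show that $\pi|_{\mathbb{S}_{\bD}(\rT)}$ is continuous for each $\bD$. This restriction takes values in $\mathbb{S}_{\qf,\bD}(\rT)$, whose weak-$\star$ topology is generated by the pairings with quantifier-free formulas $\phi \in \cF_{\qf,\bS}$; since each such $\phi$ is also a formula, the map $\mu \mapsto \mu(\phi)$ is weak-$\star$ continuous on $\mathbb{S}_{\bD}(\rT)$, so $\pi|_{\mathbb{S}_{\bD}(\rT)} \colon \mathbb{S}_{\bD}(\rT) \to \mathbb{S}_{\qf,\bD}(\rT)$ is continuous. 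Composing with the topological embedding $\mathbb{S}_{\qf,\bD}(\rT) \hookrightarrow \mathbb{S}_{\qf,\bS}(\rT)$ (established exactly as for the full type spaces) gives continuity of $\pi$.

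For the quotient property, the key point is that for each fixed $\bD$ the map $\pi|_{\mathbb{S}_{\bD}(\rT)}$ is a continuous surjection from the compact space $\mathbb{S}_{\bD}(\rT)$ onto the compact Hausdorff space $\mathbb{S}_{\qf,\bD}(\rT)$, hence a closed map, hence a quotient map. Thus for any $\cO \subseteq \mathbb{S}_{\qf,\bS}(\rT)$, the set $\cO \cap \mathbb{S}_{\qf,\bD}(\rT)$ is open in $\mathbb{S}_{\qf,\bD}(\rT)$ if and only if $(\pi|_{\mathbb{S}_{\bD}(\rT)})^{-1}(\cO \cap \mathbb{S}_{\qf,\bD}(\rT)) = \pi^{-1}(\cO) \cap \mathbb{S}_{\bD}(\rT)$ is open in $\mathbb{S}_{\bD}(\rT)$, where the identification of the two preimages uses the image identity from the first step. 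Quantifying over all $\bD$ and unwinding the definitions of the two colimit topologies, $\cO$ is open in $\mathbb{S}_{\qf,\bS}(\rT)$ if and only if $\pi^{-1}(\cO)$ is open in $\mathbb{S}_{\bS}(\rT)$; together with surjectivity this is precisely the statement that $\pi$ is a topological quotient map. The only step demanding care is the bookkeeping required to pass from the fibrewise quotient maps to the global one through the colimit topologies on source and target; there is no genuine analytic obstacle, since the compactness of $\mathbb{S}_{\bD}(\rT)$ does all the real work.
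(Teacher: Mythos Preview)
Your proof is correct and follows essentially the same approach as the paper: verify the fibrewise image identity from the definitions, establish continuity by checking it on each $\mathbb{S}_{\bD}(\rT)$ via the weak-$\star$ topology, use the compact-to-Hausdorff fact to get a quotient map on each fibre, and then pass to the global quotient statement through the colimit definitions of the topologies on both sides. Your write-up is slightly more explicit about the preimage identification $(\pi|_{\mathbb{S}_{\bD}(\rT)})^{-1}(\cO \cap \mathbb{S}_{\qf,\bD}(\rT)) = \pi^{-1}(\cO) \cap \mathbb{S}_{\bD}(\rT)$, but the argument is otherwise identical in structure and content.
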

	
	\begin{proof}
		First, $\pi$ is a continuous map $\mathbb{S}_{\bD}(\rT) \to \mathbb{S}_{\qf,\bD}(\rT)$ by definition of the weak-$\star$ topology.  Then since a set in $\mathbb{S}_{\bS}(\rT)$ is open if and only if its restriction to $\mathbb{S}_{\bD}(\rT)$ is open, and the same holds for the quantifier-free versions, we deduce that $\pi$ is continuous.  It is immediate from the definitions that $\pi(\mathbb{S}_{\bD}(\rT)) = \mathbb{S}_{\qf,\bD}(\rT)$.  Then because $\mathbb{S}_{\bD}(\rT)$ is compact and $\mathbb{S}_{\qf,\bD}(\rT)$ is Hausdorff, $\pi$ defines a topological quotient map $\mathbb{S}_{\bD}(\rT) \to \mathbb{S}_{\qf,\bD}(\rT)$.  Finally, using the definition of open sets in $\mathbb{S}_{\bS}(\rT)$ and $\mathbb{S}_{\qf,\bS}(\rT)$, we deduce that $\cO \subseteq \mathbb{S}_{\qf,\bS}(\rT)$ is open if and only if $\pi^{-1}(\cO)$ is open, hence $\pi$ is a topological quotient map.
	\end{proof}
	
	\begin{remark} \label{rem:qfnorm}
		A convenient feature of $\cL_{\tr}$ is that $\pi^{-1}(\mathbb{S}_{\qf,\mathbf{r}}(\rT_{\tr})) = \mathbb{S}_{\mathbf{r}}(\rT_{\tr})$.  Indeed, suppose that $\cM \models \rT_{\tr}$ and $\bX \in L^\infty(\cM)^{\N}$ with $\tp_{\qf}^{\cM}(\bX) \in \mathbb{S}_{\mathbf{r}}(\rT_{\tr})$.  Then the operator norm of $X_j$ can be recovered from $\tp_{\qf}^{\cM}(\bX)$ through
		\[
		\norm{X_j} = \lim_{k \to \infty} (\re \tr^{\cM}((X_j^*X_j)^k))^{1/2k},
		\]
		hence $\mathbf{X} \in \prod_{j \in \N} D_{r_j}^{\cM}$, so that $\tp^{\cM}(\bX) \in \mathbb{S}_{\mathbf{r}}(\rT_{\tr})$.
	\end{remark}
	
	\subsection{Quantifier-free definable functions in $\mathcal{L}_{\tr}$} \label{subsec:qfdefinablefunctions}
	
	Quantifier-free definable functions are defined analogously to definable functions.
	
	\begin{definition} \label{def:qfdeffunc}
		$\rT$ be an $\cL$-theory, and let $\bS$ and $\bS'$ be $\N$-tuples of sorts.  A \emph{quantifier-free definable function} $\mathbf{f}: \prod_{j \in \N} S_j \to \prod_{j \in \N} S_j'$ is a collection of functions $\mathbf{f}^{\cM}: \prod_{j \in \N} S_j^{\cM} \to \prod_{j \in \N} (S_j')^{\cM}$ for all $\cM \models \rT$ satisfying the following conditions:
		\begin{enumerate}[(1)]
			\item For each $\bD \in \prod_{j \in \N} \cD_{S_j}$, there exists $\bD' \in \prod_{j \in \N} \cD_{S_j'}$ such that for every $\cM \models \rT$, $\mathbf{f}^{\cM}$ maps $\prod_{j \in \N} D_j^{\cM}$ into $\prod_{j \in \N} (D_j')^{\cM}$.
			\item Whenever $\tilde{\bS}$ is another tuple of sorts and $\phi$ is a quantifier-free definable predicate relative to $\rT$ in the free variables $x_j' \in S_j'$ and $\tilde{x}_j \in \tilde{S}_j$ for $j \in \N$, then $\phi(\mathbf{f}(\mathbf{x}),\tilde{\mathbf{x}})$ is a quantifier-free definable predicate in the variables $\mathbf{x} = (x_j)_{j\in \N}$ and $\tilde{\mathbf{x}} = (\tilde{x}_j)_{j\in\N}$.
		\end{enumerate}
	\end{definition}
	
	\begin{example} \label{ex:qfterm}
		If $f_j$ is a term in a finite subset of the variables $x_j$, then $\mathbf{f} = (f_j)_{j \in \N}$ is a quantifier-free definable function relative to any $\cL$-theory $\rT$.  To see this, suppose that $\phi$ is a quantifier-free definable predicate and $\bD$ is a tuple of domains of quantification.  Let $\mathbf{f}$ map $\bD$ into $\bD'$.  As a quantifier-free definable predicate, $\phi$ can, for any given $\epsilon > 0$, be approximated on $\bD'$ by a quantifier-free formula $\psi$ with error  smaller than $\epsilon$ on $\prod_{j \in \N} (D_j')^{\cM}$ for all $\cM \models \rT$.  Then $\psi \circ \mathbf{f}$ is a quantifier-free formula that approximates $\phi \circ \mathbf{f}$ within $\epsilon$ on $\prod_{j \in \N} D_j^{\cM}$ for all $\cM \models \rT$.
	\end{example}
	
	The following facts about quantifier-free definable functions are verified just as in the case of definable functions:
	\begin{itemize}
		\item Quantifier-free definable functions are closed under composition.
		\item If $\phi$ is a quantifier-free definable predicate and $\mathbf{f}$ is a quantifier-free definable function, then $\phi \circ \mathbf{f}$ is a quantifier-free definable predicate.
		\item For each definable function $\mathbf{f}: \prod_{j \in \N} S_j \to \prod_{j \in \N} S_j'$, there is a continuous map $\mathbf{f}_*: \mathbb{S}_{\bS}(\rT) \to \mathbb{S}_{\bS'}(\rT)$ given by $\tp_{\qf}^{\cM}(\mathbf{f}(\bX)) = \mathbf{f}_* \tp_{\qf}^{\cM}(\bX)$.
		\item If $\mathbf{f}$ is a quantifier-free definable function, then for each $k \in \N$, the object $\phi(\mathbf{x},y) = d_{S_k'}(f_k(\mathbf{x}), y)$ is a quantifier-free definable predicate.
		\item Hence, $\mathbf{f}$ is a quantifier-free definable function, then it is a definable function by Proposition \ref{prop:deffunc2}.
		\item Thus, a quantifier-free definable function is uniformly continuous on every product of domains of quantification.
	\end{itemize}
	The proof of ($\impliedby$) in Proposition \ref{prop:deffunc2} uses quantifiers (specifically infima) and thus does not directly adapt to the quantifier-free setting.  This is why we argued that terms are quantifier-free functions directly in Example \ref{ex:qfterm} rather than as in Corollary \ref{cor:termdefinable}.
	
	In the special case of $\cL_{\tr}$ and $\rT_{\tr}$, we have the following characterizations of quantifier-free definable functions.  Recall that $\cL_{\tr}$ has one type $S$ and the domains of quantification are given by $D_r$ for each $r > 0$.  Given $\mathbf{r} = (r_j)_{j \in \N} \in (0,\infty)^{\N}$, we write $\mathbb{S}_{\qf,\mathbf{r}}(\rT_{\tr})$ for the set of quantifier-free types of $\N$-tuples in $\prod_{j \in \N} D_{r_j}$ in $\cL_{\tr}$ relative to $\rT_{\tr}$.  A variant of this theorem was proved in the author's Ph.D. thesis \cite[Proposition 13.6.4]{JekelThesis}.
	
	\begin{theorem} \label{thm:qfdeffunc2}
		Let $\mathbf{f}$ be a collection of functions $\mathbf{f}^{\cM}: L^\infty(\cM)^{\N} \to L^\infty(\cM)^{\N}$ for each $\cM \models \rT_{\tr}$.  Suppose that for every $\mathbf{r} \in (0,\infty)^{\N}$, there exists $\mathbf{r}' \in (0,\infty)^{\N}$ such that $\mathbf{f}^{\cM}$ maps $\prod_{j \in \N} D_{r_j}^{\cM}$ into $\prod_{j \in \N} D_{r_j'}^{\cM}$; assume that for each $\mathbf{r}$ a corresponding $\mathbf{r}'$ has been chosen, which we will refer to below.  Then the following are equivalent.
		\begin{enumerate}[(1)]
			\item $\mathbf{f}$ is a quantifier-free definable function in $\cL_{\tr}$ relative to $\rT_{\tr}$.
			\item For each $k \in \N$, the object $\phi(\mathbf{x},y) = d_{S_k'}(f_k(\mathbf{x}), y)$ is a quantifier-free definable predicate.
			\item For each $k \in \N$, $\mathbf{r} \in (0,\infty)^{\N}$, and quantifier-free type $\mu \in \mathbb{S}_{\qf,\bD_{\mathbf{r}}}(\rT_{\tr})$ and $\epsilon > 0$, there exists a term $g$ and an open neighborhood $\cO$ of $\mu$ in $\mathbb{S}_{\qf,\mathbf{r}}(\rT_{\tr})$ such that, for all $\cM \models \rT$ and $\bX \in \prod_{j \in \N} D_{r_j}^{\cM}$,
			\[
			g^{\cM}(\bX) \in D_{r_k'}
			\]
			and
			\[
			\tp_{\qf}^{\cM}(\bX) \in \cO \implies d^{\cM}(f_k^{\cM}(\bX), g^{\cM}(\bX)) < \epsilon.
			\]
			\item For each $k \in \N$, $\mathbf{r} \in (0,\infty)^{\N}$, and $\epsilon > 0$, there exist $m \in \N$, quantifier-free formulas $\phi_1$, \dots, $\phi_m$, and terms $g_1$, \dots, $g_m$ such that
			\[
			\sum_{j=1}^m \phi_j^{\cM}(\bX) g_j^{\cM}(\bX) \in D_{r_k'} \text{ and } d^{\cM}\left(f_k^{\cM}(\bX), \sum_{j=1}^m \phi_j^{\cM}(\bX) g_j^{\cM}(\bX) \right) < \epsilon
			\]
			whenever $\cM \models \rT$ and $\bX \in \prod_{j \in \N} D_{r_j}^{\cM}$.
		\end{enumerate}
	\end{theorem}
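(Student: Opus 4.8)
The plan is to prove the cycle of implications $(1)\Rightarrow(2)\Rightarrow(3)\Rightarrow(4)\Rightarrow(1)$. The implication $(1)\Rightarrow(2)$ is already recorded in the bulleted list of facts preceding the theorem. For $(4)\Rightarrow(1)$ the first clause of Definition~\ref{def:qfdeffunc} is exactly the standing hypothesis, and for the second clause I would take a quantifier-free definable predicate $\phi(\mathbf{x}',\tilde{\mathbf{x}})$, choose $\mathbf{r}'$ as in the hypothesis so that $\mathbf{f}$ maps $\prod_j D_{r_j}$ into $\prod_j D_{r_j'}$, approximate $\phi$ on $\prod_j D_{r_j'}\times\prod_j\tilde D_j$ by a quantifier-free formula $\eta$ depending on finitely many coordinates $x_j'$ (indices forming a set $F'$) and $\tilde x_j$, use the uniform continuity of $\eta$ (Observation~\ref{obs:defpredunifcont}) to obtain a tolerance $\delta$ in the coordinates $j\in F'$, and then apply $(4)$ with $\epsilon=\delta$ to each $j\in F'$ to replace $f_j$ by a genuine quantifier-free formula $\sum_l\phi_{j,l}g_{j,l}$ (whose values also lie in $D_{r_j'}$); substituting these into $\eta$ yields a quantifier-free formula within $\epsilon$ of $\phi(\mathbf{f}(\mathbf{x}),\tilde{\mathbf{x}})$, so $\phi\circ\mathbf{f}$ is a quantifier-free definable predicate.

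The conceptual heart is $(2)\Rightarrow(3)$. Fix $k$, $\mathbf{r}$, $\mu\in\mathbb{S}_{\qf,\mathbf{r}}(\rT_{\tr})$, and $\epsilon>0$, and realize $\mu=\tp_{\qf}^{\cN}(\bY)$ with $\cN\models\rT_{\tr}$ and $\bY\in\prod_j D_{r_j}^{\cN}$; put $W=f_k^{\cN}(\bY)\in D_{r_k'}^{\cN}$. By $(2)$ there is a continuous $\gamma$ on the joint quantifier-free type space with $d^{\cM}(f_k^{\cM}(\bX),Y)=\gamma(\tp_{\qf}^{\cM}(\bX,Y))$, so $\gamma(\nu_0)=0$ for $\nu_0:=\tp_{\qf}^{\cN}(\bY,W)$, and more generally $\gamma=0$ at $\nu_0$ forces $Y=f_k^{\cM}(\bX)$ whenever $\tp_{\qf}^{\cM}(\bX,Y)=\nu_0$. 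I claim $W\in\mathrm{W}^*(\bY)$. Form the reduced amalgamated free product $\cP=\cN*_{\mathrm{W}^*(\bY)}\cN$ with its two embeddings $\iota_1,\iota_2$, so $\iota_1(\bY)=\iota_2(\bY)=:\bZ$. Since embeddings preserve quantifier-free types, $\tp_{\qf}^{\cP}(\bZ,\iota_i(W))=\nu_0$ for $i=1,2$, whence $f_k^{\cP}(\bZ)=\iota_1(W)=\iota_2(W)$; as this element lies in $\iota_1(\cN)\cap\iota_2(\cN)=\mathrm{W}^*(\bY)$ (a standard property of reduced amalgamated free products), we get $W\in\mathrm{W}^*(\bY)$. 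Now Kaplansky density produces a $*$-polynomial $h$ with $\|h^{\cN}(\bY)\|\le r_k'$ and $\|h^{\cN}(\bY)-W\|_{2}$ small, and a standard polynomial functional-calculus truncation (replacing $h$ by $h\,p(h^*h)$ with $p$ approximating a continuous cutoff of the form $t\mapsto\min(1,r/\sqrt t)$) turns $h$ into a term $g$ with $g^{\cM}(\bX)\in D_{r_k'}^{\cM}$ for all $\cM\models\rT_{\tr}$ and $\bX\in\prod_j D_{r_j}^{\cM}$, while keeping $\|g^{\cN}(\bY)-W\|_2<\epsilon$. Finally, by $(2)$ the quantifier-free definable predicate $d(f_k(\mathbf{x}),g(\mathbf{x}))$ defines a continuous function on $\mathbb{S}_{\qf,\mathbf{r}}(\rT_{\tr})$ which is $<\epsilon$ at $\mu$, hence $<\epsilon$ on an open neighborhood $\cO$ of $\mu$; this is exactly $(3)$.

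For $(3)\Rightarrow(4)$ I would use that $\mathbb{S}_{\qf,\mathbf{r}}(\rT_{\tr})$ is compact and, by separability of $\cL_{\tr}$ (Observation~\ref{obs:weak*metrizable}), metrizable. Given $k,\mathbf{r},\epsilon$, apply $(3)$ with tolerance $\epsilon/2$ at every point to obtain terms $g_\mu\in D_{r_k'}$ and open sets $\cO_\mu$; extract a finite subcover $\cO_{\mu_1},\dots,\cO_{\mu_m}$ with a subordinate partition of unity $\psi_1,\dots,\psi_m$ on the type space; approximate each $\psi_j$ uniformly by a nonnegative quantifier-free formula and renormalize so that the resulting formulas $\phi_1,\dots,\phi_m$ sum to $1$ identically (this uses that traces of $*$-polynomials generate a point-separating subalgebra of $C(\mathbb{S}_{\qf,\mathbf{r}}(\rT_{\tr}))$, i.e.\ Stone--Weierstrass). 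Then $\sum_j\phi_j(\bX)g_{\mu_j}(\bX)$ is a convex combination of elements of $D_{r_k'}$, hence lies in $D_{r_k'}$, and is within $\epsilon$ of $f_k(\bX)$ since each $\phi_j$ is (essentially) supported where $\|f_k(\bX)-g_{\mu_j}(\bX)\|_2<\epsilon/2$. The main obstacle is the membership claim $W\in\mathrm{W}^*(\bY)$ in $(2)\Rightarrow(3)$, for which the amalgamated free product argument is essential; a secondary and merely technical nuisance is arranging the functional-calculus truncation so that the approximating term lands exactly in $D_{r_k'}$ rather than a marginally larger ball.
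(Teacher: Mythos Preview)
Your proof is correct, and the overall cycle and most of the steps parallel the paper's argument, but your route through $(2)\Rightarrow(3)$ is genuinely different and more elaborate than necessary. The paper simply observes that quantifier-free formulas are insensitive to the ambient algebra, so one may realize $\mu$ inside $\cM=\mathrm{W}^*(\bX)$ from the outset; then $f_k^{\cM}(\bX)\in\mathrm{W}^*(\bX)$ holds trivially, and Kaplansky density (together with a universal $\mathrm{C}^*$-algebra lifting to force the global bound $\norm{g}\le r_k'$) finishes immediately. Your amalgamated free product argument is a correct alternative that actually proves the stronger statement $f_k^{\cN}(\bY)\in\mathrm{W}^*(\bY)$ for \emph{every} realization $(\cN,\bY)$ of $\mu$, but that extra strength is not needed here, and the free product is heavier machinery than the one-line reduction to $\cM=\mathrm{W}^*(\bX)$. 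There is also a minor structural difference: the paper does not close the cycle via $(4)\Rightarrow(1)$ but instead proves $(4)\Rightarrow(2)$ by an explicit computation and $(3)\Rightarrow(1)$ directly; your $(4)\Rightarrow(1)$, which works by expanding the approximating formula $\eta$ multilinearly after substituting $x_j'=\sum_l\phi_{j,l}(\mathbf{x})g_{j,l}(\mathbf{x})$, is a legitimate shortcut. In $(3)\Rightarrow(4)$, note that your renormalized $\phi_j$ will not be exactly supported on $\cO_j$ after the Stone--Weierstrass approximation, so the final distance estimate needs a small slop term; the paper handles this by scaling down by $(1+\delta)^{-1}$ rather than attempting exact normalization, but either bookkeeping works.
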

	
	\begin{proof}
		(1) $\implies$ (2) follows as in Proposition \ref{prop:deffunc2}.
		
		(2) $\implies$ (3).  Fix $\mathbf{r}$ and $\mu$ and $\epsilon$.  Let $\bX$ be an $\N$-tuple from a tracial $\mathrm{W}^*$-algebra $\cM$ which has quantifier-free type $\mu$.  Note that if $\phi$ is a quantifier-free formula and $\cN$ is a $\mathrm{W}^*$-subalgebra of $\cM$ containing $\bX$, then $\phi^{\cN}(\bX) = \phi^{\cM}(\bX)$.  Hence, the quantifier-free type of $\bX$ in $\cN$ is the same as the quantifier-free type in $\cM$.  In particular, we can replace $\cM$ with $\cN = \mathrm{W}^*(\bX)$ and thus assume without loss of generality that $\cM = \mathrm{W}^*(\bX)$.
		
		Recall that $\cM$ is faithfully represented on the GNS space $L^2(\cM)$ (see \S \ref{subsec:operatoralgebrasbackground}, the standard representation).  By the Kaplansky density theorem \cite{Kaplansky1953}, \cite[Theorem 5.3.5]{KadisonRingroseI}, the ball of radius $r_k'$ in the $\mathrm{C}^*$-algebra $\mathrm{C}^*(\bX)$ generated by $\bX$ is dense in the ball of radius $r_k'$ in $\cM = \mathrm{W}^*(\bX)$ with respect to the strong operator topology.  Since approximation in the strong operator topology implies approximation in the $2$-norm associated to the trace, it follows that there exists $Z \in \mathrm{C}^*(\bX)$ such that $\norm{Z} \leq r_k'$ and $\norm{Z - Y}_2 = d^{\cM}(Z,Y) < \epsilon / 2$.
		
		Next, we must obtain a term $g$ bounded by $r_k'$ such that $d^{\cM}(g^{\cM}(\bX),Y) < \epsilon$.  Because we want $g$ to be bounded by $r_k'$ on $\prod_{j \in \N} D_{r_j}$ for all $\cM \models \rT_{\tr}$, we view the $*$-polynomials in infinitely many indeterminates as part of a universal $\mathrm{C}^*$-algebra.  For $*$-polynomials $p$ in infinitely many variables $x_j: j \in \N$, let
		\[
		\norm{p}_u = \sup \left\{ \norm{p(\bX)}: \bX \in \prod_{j \in \N} D_{r_j}^{\cM}, \cM \models \rT_{\tr} \right\}.
		\]
		This defines a $\mathrm{C}^*$-norm on $\C\ip{x_j, x_j^*: j \in \N}$.  Let $A$ be the completion of $\C\ip{x_j, x_j^*: j \in \N}$ into a $\mathrm{C}^*$-algebra.  If $\cM \models \rT_{\tr}$ and $\bX \in \prod_{j \in \N} D_j^{\cM}$, then $\norm{p(\bX)} \leq \norm{p}_u$ by definition, so there is a $*$-homomorphism $\pi: A \to \mathrm{C}^*(\bX)$ mapping $x_j \in A$ to $X_j \in \cM$ for each $j \in \N$.  By \cite[II.5.1.5]{Blackadar2006}, there exists $z \in A$ such that $\pi(z) = Z$ and $\norm{z}_A \leq r_k'$.
		
		Now by definition $\C\ip{x_j,x_j^*: j \in \N}$ is dense in $A$.  It follows that every element of the $r_k'$-ball of $A$ can be approximated by non-commutative $*$-polynomials in the $r_k'$-ball.  In particular, there exists some $g \in \C\ip{x_j,x_j^*: j \in \N}$ with $\norm{g - z}_A < \epsilon/2$, and we can also arrange that $\norm{g}_A \leq r_k'$.
		
		Then $g$ is a term such that $\norm{g^{\cN}(\bY)} \leq r_k'$ for all $\bY \in \prod_{j \in \N} D_{r_j}^{\cN}$ for all $\cN \models \rT_{\tr}$ and such that
		\[
		d^{\cM}(f_k^{\cM}(\bX),g^{\cM}(\bX)) < \epsilon.
		\]
		for our particular choice of $\cM = \mathrm{W}^*(\bX)$ with $\tp_{\qf}^{\cM}(\bX) = \mu$.  Now observe that
		\[
		\psi(\mathbf{x}) = d(f_k(\mathbf{x}), g(\mathbf{x}))
		\]
		is a quantifier-free definable predicate since the term $g$ is a quantifier-free definable function.  Let
		\[
		\cO = \{\nu \in \mathbb{S}_{\qf,\mathbf{r}}(\rT_{\tr}): \nu(\psi) < \epsilon \}.
		\]
		Then $\cO \ni \mu$ by our choice of $g$ and $\cO$ is open by continuity of $\psi$.  Moreover, by definition, if $\cN \models \rT$ and $\bY \in \prod_{j \in \N} D_{r_j}$ and $\tp_{\qf}^{\cN}(\bY) \in \cO$, then $d^{\cN}(f_k^{\cN}(\bY),g^{\cN}(\bY)) < \epsilon$.
		
		(3) $\implies$ (4).  Fix $k \in \N$, $\mathbf{r} \in (0,\infty)^{\N}$, and $\delta > 0$.  For each $\mu \in \mathbb{S}_{\mathbf{r}}(\rT_{\tr})$, there exists an open $\cO_\mu \subseteq \mathbb{S}_{\mathbf{r}}(\rT_{\tr})$ and a term $g_\mu$ such that for $\cM \models \rT$ and $\bX \in \prod_{j \in \N} D_{r_j}^{\cM}$,
		\[
		g_\mu^{\cM}(\bX) \in D_{r_k'}
		\]
		and
		\[
		\tp_{\qf}^{\cM}(\bX) \in \cO \implies d^{\cM}(g_\mu^{\cM}(\bX),f_k^{\cM}(\bX)) < \delta.
		\]
		The sets $\cO_\mu$ form an open cover of the compact set $\mathbb{S}_{\qf,\mathbf{r}}(\rT_{\tr})$, and hence there exists $m \in \N$ and $\mu_1$, \dots, $\mu_m$ such that $\cO_{\mu_1}$, \dots, $\cO_{\mu_m}$ cover $\mathbb{S}_{\qf,\mathbf{r}}(\rT_{\tr})$.  Let $\cO_j = \cO_{\mu_j}$, $g_j = g_{\mu_j}$.
		
		Since $\mathbb{S}_{\qf,\mathbf{r}}(\rT_{\tr})$ is a compact Hausdorff space, there exists a continuous partition of unity $\psi_1$, \dots, $\psi_m$ subordinated to the cover $\cO_1$, \dots, $\cO_m$.  In other words, there exist $\gamma_1$, \dots, $\gamma_m \in C(\mathbb{S}_{\qf,\mathbf{r}}(\rT_{\tr}))$ such that $\gamma_j \geq 0$, $\gamma_j|_{\cO_j^c} = 0$, and $\sum_{j=1}^m \gamma_j = 1$.  Therefore, for $\cM \models \rT$ and $\bX \in \prod_{j \in \N} D_{r_j}^{\cM}$,
		\begin{align*}
			d^{\cM}\left( \sum_{j=1}^m \gamma_j(\tp_{\qf}^{\cM}(\bX)) g_j^{\cM}(\bX), f_k^{\cM}(\bX) \right) &= \norm*{ \sum_{j=1}^m \gamma_j(\tp_{\qf}^{\cM}(\bX)) (g_j^{\cM}(\bX) - f_k^{\cM}(\bX)) }_{L^2(\cM)} \\
			&\leq \sum_{j=1}^m \gamma_j(\tp_{\qf}^{\cM}(\bX)) \norm*{g_j^{\cM}(\bX) - f_k^{\cM}(\bX) }_{L^2(\cM)} \\
			&\leq \sum_{j=1}^m \gamma_j(\tp_{\qf}^{\cM}(\bX)) \delta \\
			&= \delta,
		\end{align*}
		and
		\[
		\norm*{ \sum_{j=1}^m \gamma_j(\tp_{\qf}^{\cM}(\bX)) g_j^{\cM}(\bX) }_{L^\infty(\cM)} \leq \sum_{j=1}^m \gamma_j(\tp_{\qf}^{\cM}(\bX)) \norm{g_j^{\cM}(\bX)}_{L^\infty(\cM)} \leq r_k'.
		\]
		Because quantifier-free formulas comprise a dense subset of $C(\mathbb{S}_{\qf}(\rT))$ by the Stone-Weierstrass theorem, there exist quantifier-free formulas $\psi_1$, \dots, $\psi_m$ such that 
		\[
		|\psi_j^{\cM}(\bX) - \gamma_j(\tp^{\cM}(\bX))| \leq \frac{\delta}{m} \text{ for } \cM \models \rT_{\tr} \text{ and } \bX \in \prod_{j \in \N} D_j^{\cM}.
		\]
		It follows that
		\[
		\norm*{ \sum_{j=1}^m \psi_j^{\cM}(\bX) g_j^{\cM}(\bX) }_{L^\infty(\cM)} \leq r_k' \sum_{j=1}^m |\psi_j^{\cM}(\bX)| \leq r_k'(1 + \delta),
		\]
		and
		\[
		\norm*{ \sum_{j=1}^m \gamma_j(\tp_{\qf}^{\cM}(\bX)) g_j^{\cM}(\bX) - \sum_{j=1}^m \psi_j^{\cM}(\bX) g_j^{\cM}(\bX) }_{L^2(\cM)} \leq r_k' \delta.
		\]
		Therefore, let $\phi_j = (1 + \delta)^{-1} \psi_j$.  Then 
		\[
		\norm*{ \sum_{j=1}^m \phi_j^{\cM}(\bX) g_j^{\cM}(\bX) }_{L^\infty(\cM)} \leq (1 + \delta)^{-1} r_k' \sum_{j=1}^m |\psi_j^{\cM}(\bX)| \leq r_k',
		\]
		and
		\[
		\norm*{ \sum_{j=1}^m \phi_j^{\cM}(\bX) g_j^{\cM}(\bX) - \sum_{j=1}^m \psi_j^{\cM}(\bX) g_j^{\cM}(\bX) }_{L^2(\cM)} \leq r_k' (1 - (1 + \delta)^{-1}) = \frac{\delta r_k'}{1 + \delta}.
		\]
		Hence, by the triangle inequality,
		\[
		\norm*{ \sum_{j=1}^m \phi_j^{\cM}(\bX) g_j^{\cM}(\bX) - f^{\cM}(\bX) }_{L^2(\cM)} \leq \delta + \delta r_k' + \frac{\delta r_k'}{1 + \delta}.
		\]
		By choosing $\delta$ sufficiently small, we can guarantee that the right-hand side is smaller than a given $\epsilon > 0$, so the quantifier-free formulas $\phi_1$, \dots, $\phi_m$ have the desired properties for (4).
		
		(4) $\implies$ (2).  Fix $k$ and we will show that $\phi(\mathbf{x},y) = d(f_k(\mathbf{x}),y)$ is a quantifier-free definable predicate.  To this end, fix $\mathbf{r} \in (0,\infty)^{\N}$, $r'> 0$, and $\epsilon > 0$, and we will approximate $\phi$ by a quantifier-free formula on $\prod_{j \in \N} D_j^{\cM} \times D_{r'}$ within $\epsilon$ for $\cM \models \rT_{\tr}$.  Let $m \in \N$ and $\phi_1$, \dots, $\phi_m$ and $g_1$, \dots, $g_m$ be as in (4) for our given $\mathbf{r}$ and $\epsilon$, and let
		\[
		h^{\cM}(\bX) = \sum_{j=1}^m \phi_j^{\cM}(\bX) g_j^{\cM}(\bX).
		\]
		Then for $\cM \models \rT_{\tr}$ and $\bX \in \prod_{j \in \N} D_{r_j}^{\cM}$ and $Y \in D_{r'}$, we have
		\[
		|d^{\cM}(f_k^{\cM}(\bX),Y) - d^{\cM}(h^{\cM}(\bX),Y)| \leq d^{\cM}(f_k^{\cM}(\bX),h^{\cM}(\bX)) < \epsilon.
		\]
		But note that
		\begin{align*}
			d^{\cM}(h^{\cM}(\bX),Y) &= \norm*{\sum_{j=1} \phi_j^{\cM}(\bX) g_j^{\cM}(\bX) - Y}_{L^2(\cM)} \\
			&= \biggl( \sum_{j,k=1}^m \phi_j^{\cM}(\bX) \phi_k^{\cM}(\bX) \re \tr^{\cM}(g_j^{\cM}(\bX)^* g_k^{\cM}(\bX)) \\
			& \qquad - 2 \sum_{j=1}^m \phi_j^{\cM}(\bX) \re \tr^{\cM}(g_j^{\cM} Y) + \re \tr^{\cM}(Y^*Y) \biggr)^{1/2},
		\end{align*}
		which is a quantifier-free formula by inspection.
		
		(3) $\implies$ (1).  Let $\phi$ be a quantifier-free definable predicate, and we will show that $\phi \circ \mathbf{f}$ defines a continuous function on $\mathbb{S}_{\qf,\mathbf{r}}(\rT_{\tr})$ for each $\mathbf{r} \in (0,\infty)^{\N}$ for each $\mathbf{r}$, and hence $\phi \circ \mathbf{f}$ is a quantifier-free definable predicate.  To this end, it suffices to show that for each $\mathbf{r} \in (0,\infty)$, each $\mu \in \mathbb{S}_{\qf,\mathbf{r}}(\rT_{\tr})$, and each $\epsilon > 0$, there exists an neighborhood $\cO$ of $\mu$ in $\mathbb{S}_{\qf,\mathbf{r}}(\rT_{\tr})$ such that
		\[
		|\phi^{\cM} \circ \mathbf{f}^{\cM}(\bX) - \mu(\phi \circ \mathbf{f})| < \epsilon
		\]
		whenever $\cM \models \rT_{\tr}$ and $\tp_{\qf}^{\cM}(\bX) \in \cO$.
		
		Fix $\mathbf{r}$, $\mu$, and $\epsilon$.  Let $\mathbf{r}' \in (0,\infty)^{\N}$ be such that $\mathbf{f}^{\cM}$ maps $\prod_{j \in \N} D_{r_j}^{\cM}$ into $\prod_{j \in \N} D_{r_j'}^{\cM}$ for $\cM \models \rT$.  By the uniform continuity property of definable predicates, there exists $F \subseteq \N$ finite and $\delta > 0$ such that
		\[
		\mathbf{Y}, \mathbf{Y}' \in \prod_{j \in \N} D_{r_j'}^{\cM} \text{ and } \max_{k \in F} d^{\cM}(Y_k,Y_k') < \delta \implies |\phi^{\cM}(\bY) - \phi^{\cM}(\bY')| < \frac{\epsilon}{3}.
		\]
		By (3), for each $k \in F$, choose a term $g_k$ and open $\cO_k \subseteq \mathbb{S}_{\mathbf{r}}(\rT_{\tr})$ such that, for all $\cM \models \rT$ and $\bX \in \prod_{j \in \N} D_{r_j}^{\cM}$,
		\[
		g_k^{\cM}(\bX) \in D_{r_k'}
		\]
		and
		\[
		\tp_{\qf}^{\cM}(\bX) \in \cO_k \implies d^{\cM}(f_k^{\cM}(\bX), g^{\cM}(\bX)) < \delta.
		\]
		For $k \not \in F$, let $g_k = 0$.  Then, by our choice of $\delta$ and $g_k^{\cM}$, for all $\cM \models \rT$ and $\bX \in \prod_{j \in \N} D_{r_j}^{\cM}$,
		\[
		\tp_{\qf}^{\cM}(\bX) \in \bigcap_{k \in F} \cO_k \implies |\phi^{\cM}(\mathbf{f}^{\cM}(\bX)) - \phi^{\cM}(\mathbf{g}^{\cM}(\bX))| < \frac{\epsilon}{3}.
		\]
		Moreover, $\mathbf{g} = (g_k)_{k \in \N}$ is an $\N$-tuple of terms, hence $\mathbf{g}$ is a quantifier-free definable function.  This implies that $\phi \circ \mathbf{g}$ is a quantifier-free definable predicate.  This implies that
		\[
		\cO' := \{\nu \in \mathbb{S}_{\qf,\mathbf{r}}(\rT_{\tr}): |\nu(\phi \circ \mathbf{g}) - \mu(\phi \circ \mathbf{g})| < \epsilon / 3\}
		\]
		is open in $\mathbb{S}_{\qf,\mathbf{r}}(\rT_{\tr})$.  Let
		\[
		\cO := \cO' \cap \bigcap_{k=1}^m \cO_k.
		\]
		Then $\cM \models \rT_{\tr}$ and $\tp_{\qf}^{\cM}(\bX) \in \cO$ implies that
		\begin{multline*}
			|\phi^{\cM} \circ \mathbf{f}^{\cM}(\bX) - \mu(\phi \circ \mathbf{f})| \\
			\leq |\phi^{\cM} \circ \mathbf{f}^{\cM}(\bX) - \phi^{\cM} \circ \mathbf{g}^{\cM}(\bX)| + |\phi^{\cM} \circ \mathbf{g}^{\cM}(\bX) - \mu(\phi \circ \mathbf{g})| + |\mu(\phi \circ \mathbf{g}) - \mu(\phi \circ \mathbf{f})| < \epsilon
		\end{multline*}
		as desired.
	\end{proof}
	
	\begin{example} \label{ex:functionalcalculus}
		Suppose $\rho_j \in C(\R)$, and let $f_j^{\cM}(\bX) = \rho_j(\frac{1}{2}(X_j+X_j^*))$ for $\cM \models \rT$, where $\rho_j(\frac{1}{2}(X_j+X_j^*))$ is defined through functional calculus.  By approximating $\rho_j$ uniformly on $[-r_j,r_j]$ by a polynomial $g_j$ and applying the spectral theorem, we can verify Theorem \ref{thm:qfdeffunc2} (3) and hence conclude that $\mathbf{f}$ is a quantifier-free definable function relative to $\rT_{\tr}$.  Similarly, if $f_j(\mathbf{X}) = X_j \rho_j(X_j^*X_j)$, then $\mathbf{f}$ is a quantifier-free definable function, an observation that we will use in the proof of Proposition \ref{prop:deffuncrealization}.  In this way, continuous functional calculus fits into the larger model-theoretic framework of quantifier-free definable functions.
	\end{example}
	
	Building on Theorem \ref{thm:qfdeffunc2} and Example \ref{ex:functionalcalculus}, we can show that every element of the $\mathrm{W}^*$-algebra can be realized as a quantifier-free definable function applied to the generators.  This fact will be use later on to show that covering entropy remains invariant under change of generators for a tracial $\mathrm{W}^*$-algebra.  This is a version of \cite[Proposition 13.6.6]{JekelThesis} and \cite[Proposition 2.4]{HJNS2021}, and the idea behind the proof is a ``forced limit'' construction (see \cite[\S 3.2]{BYU2010}, \cite[\S 9, definable predicates]{BYBHU2008}) applied to quantifier-free definable functions rather than quantifier-free definable predicates.
	
	\begin{proposition} \label{prop:deffuncrealization}
		If $\mathcal{M} = (M,\tau)$ is a tracial $\mathrm{W}^*$-algebra and $\mathbf{X} \in \prod_{j \in \N} D_{r_j}$ generates $M$ and $\bY \in \prod_{j \in \N} D_{r_j'}^{\cM}$, then there exists a quantifier-free definable function $\mathbf{f}$ in $\cL_{\tr}$ relative to $\rT_{\tr}$ such that $\bY = \mathbf{f}(\mathbf{X})$.  In fact, $\mathbf{f}$ can be chosen so that $f_k^{\cM}$ maps $\prod_{j \in \N} L^\infty(\cM)$ into $\prod_{j \in \N} D_{r_j'}^{\cM}$ for all $\cM \models \rT$.
	\end{proposition}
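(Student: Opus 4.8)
The plan is to build $\mathbf{f}$ coordinate-by-coordinate via a \emph{forced limit} of quantifier-free definable functions, carrying out the forced-limit idea (as used for definable predicates) at the level of functions and building on the approximation scheme in the proof of Theorem~\ref{thm:qfdeffunc2}. It suffices to produce, for each $k \in \N$, a quantifier-free definable function $f_k$ in $\cL_{\tr}$ relative to $\rT_{\tr}$ whose interpretation $f_k^{\cN}$ maps all of $L^\infty(\cN)^{\N}$ into $D_{r_k'}^{\cN}$ for every $\cN \models \rT_{\tr}$ and with $f_k^{\cM}(\mathbf{X}) = Y_k$; then $\mathbf{f} = (f_k)_{k \in \N}$ is a quantifier-free definable function with the required properties by the closure facts listed after Example~\ref{ex:qfterm}. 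Since quantifier-free formulas are evaluated the same way in $M$ and in any $\mathrm{W}^*$-subalgebra containing the relevant tuple, and $\mathbf{X}$ generates $M$, I may assume $\cM = \mathrm{W}^*(\mathbf{X})$. Running exactly the argument of the implication $(2)\implies(3)$ in Theorem~\ref{thm:qfdeffunc2} — Kaplansky density for $\mathrm{C}^*(\mathbf{X}) \subseteq \mathrm{W}^*(\mathbf{X})$, the norm-preserving lift to the universal $\mathrm{C}^*$-algebra of $\C\ip{x_j,x_j^*:j\in\N}$ for the norm $\norm{\cdot}_u$, and density of $*$-polynomials — I obtain for each $n$ a $*$-polynomial $g_n$ in finitely many variables with $\norm{g_n}_u \le r_k'$ and $\norm{g_n^{\cM}(\mathbf{X}) - Y_k}_2 \le 2^{-n-2}$; in particular $\norm{g_n^{\cN}(\mathbf{Z})} \le r_k'$ for all $\cN \models \rT_{\tr}$ and all $\mathbf{Z}$, while $\norm{g_n^{\cM}(\mathbf{X}) - g_{n-1}^{\cM}(\mathbf{X})}_2 < 2^{-n}$ for $n \ge 2$ and $g_n^{\cM}(\mathbf{X}) \to Y_k$ in $\norm{\cdot}_2$.

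Next I introduce two auxiliary truncations, both quantifier-free definable functions. For $c>0$ let $\Theta_c(W) = \min(1, c/\norm{W}_2)\,W$, the truncation onto the $\norm{\cdot}_2$-ball of radius $c$, so $\norm{\Theta_c(W)}_2 \le c$ and $\norm{\Theta_c(W)} \le \norm{W}$; this is quantifier-free definable because $\min(1, c/d(W,0))$ is a quantifier-free definable predicate and the product of a scalar quantifier-free definable predicate with a quantifier-free definable function is again one, as one sees by expanding $d(\beta(\mathbf{x})W(\mathbf{x}),y)^2 = \beta(\mathbf{x})^2\,\re\tr(W^*W) - 2\beta(\mathbf{x})\,\re\tr(W^*y) + \re\tr(y^*y)$ and invoking Theorem~\ref{thm:qfdeffunc2}. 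Let $\Pi(W) = W\rho(W^*W)$ with $\rho(t) = \min(1, r_k'/\sqrt{\max(t,0)})$, the polar (modulus) truncation to operator norm $r_k'$, so that $\Pi(W) = u\min(\lvert W\rvert, r_k')$ for the polar decomposition $W = u\lvert W\rvert$; thus $\norm{\Pi(W)} \le r_k'$ always, $\Pi(W) = W$ whenever $\norm{W} \le r_k'$, and $\Pi$ is quantifier-free definable by Example~\ref{ex:functionalcalculus}. The key lemma is that $\Pi$ is a $\norm{\cdot}_2$-contraction: passing to the self-adjoint dilation $W \mapsto \widetilde W = \begin{pmatrix} 0 & W \\ W^* & 0\end{pmatrix}$ in $M_2(\cN)$ with its normalized trace, one computes $\norm{\widetilde A}_2 = \norm{A}_2$ and $\widetilde{\Pi(W)} = f(\widetilde W)$ for the $1$-Lipschitz function $f(t) = \operatorname{median}(-r_k', t, r_k')$, so $\norm{\Pi(W) - \Pi(V)}_2 = \norm{f(\widetilde W) - f(\widetilde V)}_2 \le \norm{\widetilde W - \widetilde V}_2 = \norm{W - V}_2$ by the standard estimate $\norm{f(A)-f(B)}_2 \le \norm{A-B}_2$ for $1$-Lipschitz $f$ and self-adjoint $A,B$ in a tracial $\mathrm{W}^*$-algebra.

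Now I define quantifier-free definable functions $h_n$ by $h_0 = 0$, $h_1 = \Pi(g_1)$, and $h_n = \Pi\bigl(h_{n-1} + \Theta_{2^{-n+1}}(g_n - h_{n-1})\bigr)$ for $n\ge 2$; each $h_n$ is quantifier-free definable by induction, using closure of quantifier-free definable functions under composition, the algebraic operations, $\Theta_c$ and $\Pi$. By construction $\norm{h_n^{\cN}(\mathbf{Z})} \le r_k'$ for all $\cN$ and $\mathbf{Z}$, so $\Pi$ fixes $h_{n-1}^{\cN}(\mathbf{Z})$, and the contractivity of $\Pi$ gives $\norm{h_n^{\cN}(\mathbf{Z}) - h_{n-1}^{\cN}(\mathbf{Z})}_2 \le \norm{\Theta_{2^{-n+1}}(g_n^{\cN}(\mathbf{Z}) - h_{n-1}^{\cN}(\mathbf{Z}))}_2 \le 2^{-n+1}$ for $n \ge 2$. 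Hence $h_n^{\cN}(\mathbf{Z})$ converges, uniformly over $\cN \models \rT_{\tr}$ and $\mathbf{Z} \in L^\infty(\cN)^{\N}$, to a limit $f_k^{\cN}(\mathbf{Z})$ with $\norm{f_k^{\cN}(\mathbf{Z})} \le r_k'$ (the operator-norm ball being $\norm{\cdot}_2$-closed); then $d(f_k(\mathbf{x}),y)$ is a uniform limit of the quantifier-free definable predicates $d(h_n(\mathbf{x}),y)$, so Theorem~\ref{thm:qfdeffunc2} shows $f_k$ is a quantifier-free definable function mapping $L^\infty(\cN)^{\N}$ into $D_{r_k'}^{\cN}$. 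Finally, at the base point an induction gives $h_n^{\cM}(\mathbf{X}) = g_n^{\cM}(\mathbf{X})$: at step $n$ one has $\norm{g_n^{\cM}(\mathbf{X}) - h_{n-1}^{\cM}(\mathbf{X})}_2 = \norm{g_n^{\cM}(\mathbf{X}) - g_{n-1}^{\cM}(\mathbf{X})}_2 < 2^{-n} \le 2^{-n+1}$, so $\Theta_{2^{-n+1}}$ acts as the identity there, and then $\norm{g_n^{\cM}(\mathbf{X})} \le r_k'$, so $\Pi$ acts as the identity; therefore $f_k^{\cM}(\mathbf{X}) = \lim_n g_n^{\cM}(\mathbf{X}) = Y_k$.

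I expect the genuinely delicate point to be the tension between keeping the partial functions $h_n$ uniformly operator-norm bounded — so that the limit lands in $D_{r_k'}$ and is an honest definable function — and not disturbing the $\norm{\cdot}_2$-convergence at the base point $\mathbf{X}$, where convergence is only in $\norm{\cdot}_2$ and typically not in operator norm; a naive operator-norm rescaling is not $\norm{\cdot}_2$-Lipschitz, which would wreck the telescoping estimate. So isolating the polar truncation $\Pi$ and establishing its $\norm{\cdot}_2$-contractivity via the $2\times 2$ self-adjoint dilation and the operator-Lipschitz bound is the crux; everything else is bookkeeping inside the framework of Theorem~\ref{thm:qfdeffunc2} and Example~\ref{ex:functionalcalculus}.
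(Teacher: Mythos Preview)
Your proof is correct, but the mechanism is genuinely different from the paper's. Both are forced-limit constructions that must reconcile two competing requirements: the partial approximants should stay in the operator-norm ball $D_{r_k'}$, and they should converge in $\norm{\cdot}_2$ to $Y_k$ at the base point.

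The paper handles this with \emph{scalar} weights: it forms a telescoping sum $f_{k,m} = (1-\psi_{k,1})g_{k,1} + \sum_j (\psi_{k,j-1}-\psi_{k,j})g_{k,j} + \psi_{k,m-1}g_{k,m}$, where each $\psi_{k,j} \in [0,1]$ is a quantifier-free predicate that equals $1$ at the base point but vanishes whenever $\norm{g_{k,j}-g_{k,j+1}}_2$ is too large. The operator-norm bound then comes for free from convexity (the weights sum to $1$), and the $\norm{\cdot}_2$-Cauchy condition comes from the cutoff property of $\psi_{k,m}$. This works only on $\prod_j D_{r_j}$, so a final input-side polar cutoff $\mathbf{h}$ is composed on the right to extend globally. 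The argument uses nothing beyond elementary manipulations of quantifier-free formulas.

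You instead apply \emph{operator} truncations at each step: a $\norm{\cdot}_2$-truncation $\Theta_c$ to bound the increment, followed by the polar truncation $\Pi$ to force the result back into $D_{r_k'}$. The price is that you must know $\Pi$ is a $\norm{\cdot}_2$-contraction, which you obtain via the $2\times 2$ self-adjoint dilation and the operator-Lipschitz inequality $\norm{f(A)-f(B)}_2 \le \norm{A-B}_2$ for $1$-Lipschitz $f$ --- a standard but nontrivial fact from the theory of tracial von Neumann algebras. The payoff is that your estimate $\norm{h_n-h_{n-1}}_2 \le 2^{-n+1}$ holds for \emph{all} inputs, so the limit is globally defined and globally bounded with no separate extension step.

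One small correction: your assertion that $\norm{g_n^{\cN}(\bZ)} \le r_k'$ for \emph{all} $\bZ \in L^\infty(\cN)^{\N}$ is false (a nonconstant $*$-polynomial is unbounded); the norm $\norm{\cdot}_u$ from the paper only controls $g_n$ on $\prod_j D_{r_j}$. Fortunately your argument never actually uses the global bound on $g_n$: the operator-norm control of $h_n$ comes from $\Pi$, the Cauchy estimate uses only $\Theta_{2^{-n+1}}$, and at the base point $\bX \in \prod_j D_{r_j}$ so $\norm{g_n^{\cM}(\bX)} \le r_k'$ holds where you need it.
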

	
	\begin{proof}
		Arguing as in (2) $\implies$ (3) of Theorem \ref{thm:qfdeffunc2}, for each $k \in \N$  and $m \in \N$, there exists a non-commutative polynomial $g_{k,m}$ such that
		\[
		\norm{g_{k,m}^{\cN}(\bX')}_{L^\infty(\cN)} \leq r_k'
		\]
		for $\cN \models \rT_{\tr}$ and $\bX' \in \prod_{j \in \N} D_{r_j}^{\cN}$ and
		\[
		d^{\cM}(g_{k,m}^{\cM}(\bX),Y_k) < \frac{1}{2^{m+1}}.
		\]
		Hence also
		\[
		d^{\cM}(g_{k,m}^{\cM}(\bX),g_{k,m+1}^{\cM}(\bX)) < \frac{3}{2^{m+2}} < \frac{1}{2^m}.
		\]
		Let $F_m: \R \to \R$ be the continuous function such that $F_m(t) = 0$ for $t \leq 3/ 2^{m+2}$ and $F_m(t) = 1$ for $t \geq 1/2^m$ and $F_m$ is affine on $[3/2^{m+2},1/2^m]$.  Then
		\[
		\phi_{k,m}(\mathbf{x}) = F_k(d(g_{k,m}(\mathbf{x}),g_{k,m+1}(\mathbf{x})))
		\]
		is a quantifier-free formula.  Moreover, by construction, $\phi_{k,m}^{\cM}(\mathbf{X}) = 1$ for our given $\cM$ and $\bX$, while at the same time $\phi_{k,m}^{\cN}(\bX')$ is zero whenever $\norm{g_{k,m}^{\cN}(\bX') - g_{k,m+1}^{\cN}(\bX'))}_{L^2(\cN)} > 1/2^m$ for any $\cN \models \rT_{\tr}$ and $\bX' \in \prod_{j \in \N} D_{r_j}^{\cN}$.  Let
		\[
		\psi_{k,m} = \phi_{k,1} \phi_{k,2} \dots \phi_{k,m}.
		\]
		Then $\psi_{k,m}$ satisfies the same properties that we just showed for $\phi_{k,m}$ with the additional property that $\psi_{k,m+1} \leq \psi_{k,m}$.
		
		For $\cN \models \rT_{\tr}$ and $\bX' \in L^\infty(\cN)^{\N}$, define
		\begin{align*}
			f_{k,m}^{\cN}(\bX') &:= g_{k,1}^{\cN}(\bX') + \sum_{j=1}^{m-1} \psi_{k,j}^{\cN}(\bX') (g_{k,j+1}^{\cN}(\bX') - g_{k,j}^{\cN}(\bX')) \\
			&= (1 - \psi_{k,1})^{\cN}(\bX') g_{k,1}^{\cN}(\bX') + \sum_{j=2}^{m-1} (\psi_{k,j-1} - \psi_{k,j})^{\cN}(\bX') g_{k,j}^{\cN}(\bX') + \psi_{k,m-1}^{\cN}(\bX') g_{k,m}^{\cN}(\bX').
		\end{align*}
		Then $\mathbf{f}_{\cdot,m} = (f_{k,m})_{k \in \N}$ is a quantifier-free definable function by Theorem \ref{thm:qfdeffunc2} since it is equal to a finite sum of quantifier-free formulas multiplied by terms.  Observe that for $\cN \models \rT_{\tr}$ and $\bX' \in \prod_{j \in \N} D_{r_j}^{\cN}$,
		\[
		\norm{f_{k,m}^{\cN}(\bX')}_{L^\infty(\cN)} \leq (1 - \psi_{k,1})^{\cN}(\bX') r_k' + \sum_{j=2}^{m-1} (\psi_{k,j-1} - \psi_{k,j})^{\cN}(\bX') r_k' + \psi_{k,m-1}^{\cN}(\bX') r_k' = r_k',
		\]
		relying on the fact that $\psi_{k,j} \leq \psi_{k,j-1}$.  Furthermore, for $\bX' \in \prod_{j \in \N} D_{r_j}^{\cN}$, we have
		\[
		\norm{f_{k,m}^{\cN}(\bX') - f_{k,m+1}^{\cN}(\bX')}_{L^2(\cN)} = \psi_{k,m}^{\cN}(\bX') \norm{g_{k,m}^{\cN}(\bX') - g_{k,m+1}^{\cN}(\bX')}_{L^2(\cN)} \leq \frac{1}{2^m}.
		\]
		This implies that for $\mathbf{X}' \in \prod_{j \in \N} D_{r_j}^{\cN}$, the sequence $f_{k,m}^{\cN}(\bX')$ converges as $m \to \infty$ to some $f_k^{\cN}(\bX')$ with
		\[
		\norm{f_k^{\cN}(\bX') - f_{k,m}^{\cN}(\bX')}_{L^2(\cN)} \leq \frac{1}{2^{m-1}}.
		\]
		Furthermore, for our given $\cM$ and $\bX$, we have
		\[
		f_k^{\cM}(\bX) = g_{k,1}^{\cM}(\bX) + \sum_{j=1}^\infty (g_{k+1,j}^{\cM}(\bX) - g_{k,j}^{\cM}(\bX)) = Y_k
		\]
		because we assumed that $g_{k,j}^{\cM}(\bX) \to Y_k$ as $j \to \infty$.
		
		Now this $f_k$ is only well-defined a priori on $\prod_{j \in \N} D_{r_j}$ for our fixed choice of $\mathbf{r}$.  In order to extend it to a global function, we use a cut-off trick based on Example \ref{ex:functionalcalculus}.  Let $\rho_j: \R \to \R$ be the function
		\[
		\rho_j(t) = \begin{cases} 1, & t \leq r_j^2, \\ r_jt^{-1/2}, & t \geq r_j^2. \end{cases}
		\]
		Let $\mathbf{h}$ be given by $h_j^{\cN}(\bX') = X_j' \rho_j((X_j')^*X_j)$.  Then $\mathbf{h}$ is a quantifier-free definable function relative to $\rT_{\tr}$ by Example \ref{ex:functionalcalculus}.  Moreover, if $\cN \models \rT_{\tr}$ and $\bX' \in $, we have
		\begin{multline*}
			\norm{h_j^{\cN}(\mathbf{X}')}_{L^\infty(\cN)}^2 = \norm{h_j^{\cN}(\mathbf{X}')^* h_j^{\cN}(\mathbf{X}')}_{L^\infty(\cN)} \\
			= \norm{\rho_j((X_j')^*X_j')) (X_j')^* X_j' \rho_j((X_j')^* X_j')}_{L^\infty(\cN)} \leq r_j^2,
		\end{multline*}
		since $\rho_j(t)^2 t \leq r_j^2$.  Therefore, $\mathbf{h}^{\cN}$ maps $L^\infty(\cN)^{\N}$ into $\prod_{j \in \N} D_{r_j}^{\cN}$ for all $\cN \models \rT_{\tr}$.  Also, $\mathbf{h}^{\cN}(\bX') = \bX'$ for $\bX' \in \prod_{j \in \N} D_{r_j}^{\cN}$.
		
		Now $\mathbf{f}_{\cdot,m} \circ \mathbf{h}$ is a quantifier-free definable function since it is a composition of quantifier-free definable functions.  Because $f_{k,m}$ converges to $f_k$ uniformly on $\prod_{j \in \N} D_{r_j}$ as $m \to \infty$, we see that $f_{k,m} \circ \mathbf{h}$ converges uniformly to $f_k \circ \mathbf{h}$ globally as $m \to \infty$.  This implies that $\mathbf{f} \circ \mathbf{h}$ is a quantifier-free definable function because quantifier-free functions are closed under limits that are uniform on each product of domains (for instance, using Theorem \ref{thm:qfdeffunc2} (3) or (4)).  Moreover, $\mathbf{f}^{\cM}(\mathbf{h}^{\cM}(\bX)) = \mathbf{f}^{\cM}(\bX) = \bY$ by construction.  Finally, $(\mathbf{f} \circ \mathbf{h})^{\cN}$ maps into $\prod_{j \in \N} D_{r_j'}^{\cN}$ for all $\cN \models \rT_{\tr}$ since $(f_{k,m} \circ \mathbf{h})^{\cN}$ maps $\prod_{j \in \N} D_{r_j}^{\cN}$ into $D_{r_k'}^{\cN}$.
	\end{proof}
	
	\begin{remark} \label{rem:predicateextension}
		We can also deduce from the proof that every continuous function $\gamma$ on $\mathbb{S}_{\qf,\mathbf{r}}(\rT_{\tr})$ extends to a continuous function on $\mathbb{S}_{\qf}(\rT_{\tr})$, namely $\gamma \circ \mathbf{h}_*$ where $\mathbf{h}$ is as in the proof.  In other words, every quantifier-free definable predicate on $\prod_{j \in \N} D_{r_j}$ relative to $\rT_{\tr}$ extends to a global quantifier-free definable predicate.  The same can be said for definable predicates, dropping the word ``quantifier-free'' in this argument.
	\end{remark}
	
	Proposition \ref{prop:deffuncrealization} also leads to a proof of the following fact, which is well-known among $\mathrm{W}^*$-algebraists:
	
	\begin{lemma} \label{lem:lawisomorphism}
		Let $\bX$ be an $\N$-tuple in a tracial $\mathrm{W}^*$-algebra $\cM$ and $\bY$ an $\N$-tuple in a tracial $\mathrm{W}^*$-algebra $\cN$.  Let $\mathrm{W}^*(\bX)$ and $\mathrm{W}^*(\bY)$ be the $\mathrm{W}^*$-subalgebras generated by $\bX$ and $\bY$ with the traces obtained from restricting the traces on $\cM$ and $\cN$ respectively.  Then the following are equivalent:
		\begin{enumerate}[(1)]
			\item $\tp_{\qf}^{\cM}(\bX) = \tp_{\qf}^{\cN}(\bY)$.
			\item There exists a trace-preserving $*$-isomorphism $\sigma: \mathrm{W}^*(\bX) \to \mathrm{W}^*(\bY)$ such that $\sigma(\bX) = \bY$.
		\end{enumerate}
	\end{lemma}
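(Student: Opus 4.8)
The plan is to prove (2)$\implies$(1) by transporting structure along the given isomorphism and to prove (1)$\implies$(2) by invoking the realization result Proposition~\ref{prop:deffuncrealization}. For the easy direction, suppose $\sigma\colon\mathrm{W}^*(\bX)\to\mathrm{W}^*(\bY)$ is a trace-preserving $*$-isomorphism with $\sigma(X_j)=Y_j$ for all $j$. Since every term of $\cL_{\tr}$ is interpreted as a $*$-polynomial, $\sigma$ carries $T^{\cM}(\bX)$ to $T^{\cN}(\bY)$ for every term $T$, and because $\sigma$ intertwines the traces it also intertwines $\re\tr$, $\im\tr$, and the metric $d=\norm{\cdot}_2$; hence every basic quantifier-free formula takes equal values at $\bX$ in $\cM$ and at $\bY$ in $\cN$, and an induction on continuous combinations gives $\phi^{\cM}(\bX)=\phi^{\cN}(\bY)$ for all $\phi\in\cF_{\qf,\bS}$, which is~(1).

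For (1)$\implies$(2), I would first reduce to the case $\cM=\mathrm{W}^*(\bX)$ and $\cN=\mathrm{W}^*(\bY)$: a quantifier-free formula involves only the $*$-algebra operations and the trace, so $\tp_{\qf}^{\cM}(\bX)$ is unchanged when $\cM$ is replaced by $\mathrm{W}^*(\bX)$ (as in the proof of Theorem~\ref{thm:qfdeffunc2}), and similarly for $\bY$. Then Proposition~\ref{prop:deffuncrealization} applies on both sides: every $Z\in\mathrm{W}^*(\bX)$ can be written $Z=f^{\cM}(\bX)$ for a quantifier-free definable function $f$ in $\cL_{\tr}$ relative to $\rT_{\tr}$ (apply the proposition to an $\N$-tuple whose first entry is $Z$, keeping the first output), and every element of $\mathrm{W}^*(\bY)$ has the form $g^{\cN}(\bY)$. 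One then sets $\sigma(f^{\cM}(\bX)):=f^{\cN}(\bY)$. The crux is well-definedness: for quantifier-free definable functions $f,g$ the assignment $\mathbf{x}\mapsto d(f(\mathbf{x}),g(\mathbf{x}))$ is a quantifier-free definable predicate --- it is the basic quantifier-free predicate $d$ precomposed with the quantifier-free definable function $(f,g)$, using the closure properties recorded after Definition~\ref{def:qfdeffunc} --- hence is determined by the quantifier-free type; since $\tp_{\qf}^{\cM}(\bX)=\tp_{\qf}^{\cN}(\bY)$ this gives
\[
d^{\cM}\big(f^{\cM}(\bX),g^{\cM}(\bX)\big)=d^{\cN}\big(f^{\cN}(\bY),g^{\cN}(\bY)\big),
\]
so $f^{\cM}(\bX)=g^{\cM}(\bX)$ if and only if $f^{\cN}(\bY)=g^{\cN}(\bY)$. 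This makes $\sigma$ a well-defined bijection with inverse $g^{\cN}(\bY)\mapsto g^{\cM}(\bX)$, noting $g^{\cM}(\bX)\in\mathrm{W}^*(\bX)$ since it is a bounded $\norm{\cdot}_2$-limit of $*$-polynomials in $\bX$.

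It remains to check that $\sigma$ is a trace-preserving unital $*$-homomorphism, which is routine transport of structure. The operations $+$, $\cdot$, $*$, scalar multiplication, and the constant $1$ are (given by) quantifier-free definable functions closed under composition, so when $Z_i=f_i^{\cM}(\bX)$ one has $Z_1Z_2=(f_1\cdot f_2)^{\cM}(\bX)$, $Z_1+Z_2=(f_1+f_2)^{\cM}(\bX)$, $Z_1^*=(f_1^*)^{\cM}(\bX)$, $1=1^{\cM}(\bX)$, and each $X_j$ is the value of a coordinate-projection term; applying the same identities over $\cN$ shows $\sigma$ respects all of these and $\sigma(X_j)=Y_j$. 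Since $\re\tr$ and $\im\tr$ are quantifier-free, $\re\tr\circ f$ and $\im\tr\circ f$ are quantifier-free definable predicates, determined by the type, so $\tau^{\cM}(f^{\cM}(\bX))=\tau^{\cN}(f^{\cN}(\bY))=\tau^{\cN}(\sigma(f^{\cM}(\bX)))$, giving that $\sigma$ is trace-preserving; this produces the required isomorphism. I do not anticipate a serious obstacle here: once Proposition~\ref{prop:deffuncrealization} is available, the content is entirely a transport of structure, and the only step requiring care is recognizing $\mathbf{x}\mapsto d(f(\mathbf{x}),g(\mathbf{x}))$ as a quantifier-free definable predicate and checking that the reduction to $\cM=\mathrm{W}^*(\bX)$ legitimizes the use of the proposition.
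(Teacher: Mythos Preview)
Your proof is correct and follows essentially the same approach as the paper. The only cosmetic difference is that the paper packages the (1)$\implies$(2) direction as a quotient construction: it views the set $\cF_{\qf}(\rT_{\tr})$ of quantifier-free definable functions as a $*$-algebra, lets $\alpha,\beta$ be the evaluation $*$-homomorphisms at $\bX$ and $\bY$, observes from $\tau_{\cM}\circ\alpha=\tau_{\cN}\circ\beta$ that $\ker\alpha=\ker\beta$, and reads off $\mathrm{W}^*(\bX)\cong\cF_{\qf}(\rT_{\tr})/\ker\alpha=\cF_{\qf}(\rT_{\tr})/\ker\beta\cong\mathrm{W}^*(\bY)$; this is exactly your well-definedness check and transport-of-structure verification compressed into one line.
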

	
	\begin{proof}
		(2) $\implies$ (1).  If such a $*$-isomorphism $\sigma$ exists, then for every $p \in \C\ip{x_i,x_i^*: i \in \N}$, we have $\tau_{\cM}(p(\bX)) = \tau_{\cN}(\sigma(p(\bX))) = \tau_{\cN}(p(\bY))$.  Hence, every atomic formula evaluates to the same thing on $\bX$ and on $\bY$.  Since general quantifier-free formulas are obtained by applying continuous connectives to atomic formulas, it follows by induction on complexity that $\phi^{\cM}(\bX) = \phi^{\cN}(\bY)$ for any quantifier-free formula $\bY$, and hence $\tp_{\qf}^{\cM}(\bX) = \tp_{\qf}^{\cN}(\bY)$.
		
		(1) $\implies$ (2).  Let $\cF_{\qf}(\rT_{\tr})$ be the set of quantifier-free definable functions in $\cL_{\tr}$ with respect to $\rT_{\tr}$.  Since quantifier-free functions are closed under composition, $\cF_{\qf}(\rT_{\tr})$ is a $*$-algebra.  Moreover, the evaluation maps $\alpha: \cF_{\qf}(\rT_{\tr}) \to \cM, f \mapsto f^{\cM}(\bX)$ and $\beta: \cF_{\qf}(\rT_{\tr}) \to \cN, f \mapsto f^{\cN}(\bY)$ are $*$-homomorphisms, and by the previous proposition the images of $\alpha$ and $\beta$ are $\mathrm{W}^*(\bX)$ and $\mathrm{W}^*(\bY)$ respectively.  Since $\re \tr(f)$ and $\im \tr(f)$ are quantifier-free definable predicates, $\tau_{\cM} \circ \alpha(f) = \tr(f)^{\cM}(\bX) = \tr(f)^{\cN}(\bY) = \tau_{\cN} \circ \beta(f)$ for $f \in \cF_{\qf}(\rT_{\tr})$, hence also $\norm{\alpha(f)}_{L^2(\cM)} = \norm{\beta(f)}_{L^2(\cN)}$.  This implies that $\ker \alpha = \ker \beta$.  Therefore, we obtain a $*$-isomorphism $\mathrm{W}^*(\bX) \cong \cF_{\qf}(\rT_{\tr}) / \ker \alpha = \cF_{\qf}(\rT_{\tr}) / \ker \beta \cong \mathrm{W}^*(\bY)$, which is trace-preserving since $\tau_{\cM} \circ \alpha = \tau_{\cN} \circ \beta$.
	\end{proof}
	
	\section{Entropy for types} \label{sec:entropy}
	
	We define a version of Hayes' $1$-bounded entropy for types rather than only quantifier-free types.  Later, in \S \ref{sec:qfentropy}, we will see that Hayes' $1$-bounded entropy of $\cN$ in the presence of $\cM$ (denoted $h^{\cU}(\cN:\cM)$) can be realized as a special case of entropy for a closed subset of the type space.
	
	\subsection{Definition of covering entropy}
	
	If $\cK$ is a subset of the type space $\mathbb{S}(\rT_{\tr})$ and $\mathbf{r} \in (0,\infty)^{\N}$, we define
	\[
	\Gamma_{\mathbf{r}}^{(n)}(\cK) = \left\{\mathbf{X} \in \prod_{j \in \N} D_{r_j}^{M_n(\C)}: \tp^{M_n(\C)}(\bX) \in \cK \right\}.
	\]
	We view this as a microstate space as in Voiculescu's free entropy theory.  We will then define the entropy of $\cO$ through covering numbers of $\Gamma_{\mathbf{r}}^{(n)}(\cO)$ up to unitary conjugation.
	
	\begin{definition}[Orbital covering numbers]
		Given $\Omega \subseteq M_n(\C)^{\N}$ and a finite $F \subseteq \N$ and $\epsilon > 0$, we define $N_{F,\epsilon}^{\orb}(\Omega)$ to be the set of $\mathbf{Y} \in M_n(\C)^{\N}$ such that there exists a unitary $U$ in $M_n(\C)$ and $\mathbf{X} \in \Omega$ such that $\norm{Y_i - UX_iU^*}_2 < \epsilon$ for all $i \in F$.  If $\Omega \subseteq N_{F,\epsilon}^{\orb}(\Omega')$, we say that $\Omega'$ \emph{orbitally $(F,\epsilon)$-covers} $\Omega$.  We denote by $K_{F,\epsilon}^{\orb}(\Omega)$ the minimum cardinality of a set $\Omega'$ that orbitally $(F,\epsilon)$-covers $\Omega$.
	\end{definition}
	
	\begin{definition}
		Fix a non-principal ultrafilter $\cU$ on $\N$.  For a subset $\mathcal{K}$ of the $\mathbb{S}(\rT_{\tr})$ and $F \subseteq I$ finite and $\epsilon > 0$, we define
		\[
		\Ent_{\mathbf{r},F,\epsilon}^{\mathcal{U}}(\cK) = \inf_{\text{open }\cO \supseteq \mathcal{K}} \lim_{n \to \cU} \frac{1}{n^2} \log K_{F,\epsilon}^{\orb}(\Gamma_{\mathbf{r}}^{(n)}(\cO)).
		\]
	\end{definition}
	
	\begin{observation}[Monotonicity] \label{obs:monotonicity}
		Let $\mathcal{K}' \subseteq \mathcal{K} \subseteq \mathbb{S}(\rT_{\tr})$, let $F' \subseteq F \subseteq \N$ finite, let $0 < \epsilon \leq \epsilon'$, and let $\mathbf{r}$, $\mathbf{r}' \in (0,\infty)^{\N}$ with $r_j' \leq r_j$.  Then
		\[
		\Ent_{\mathbf{r}',F',\epsilon'}^{\cU}(\cK') \leq \Ent_{\mathbf{r},F,\epsilon}^{\cU}(\cK).
		\]
		In particular, if $\cO \subseteq \mathbb{S}(\rT_{\tr})$ is open, then
		\[
		\Ent_{\mathbf{r},F,\epsilon}^{\cU}(\cO) = \lim_{n \to \cU} \frac{1}{n^2} \log K_{F,\epsilon}^{\orb}(\Gamma_{\mathbf{r}}^{(n)}(\cO)).
		\]
	\end{observation}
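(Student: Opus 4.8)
The plan is to verify the inequality one parameter at a time---each of the four monotonicities is immediate from the definitions---and then combine them and take infima. First I would record the elementary monotonicity of orbital covering numbers: if $\Omega \subseteq \Omega'$, then any set that orbitally $(F,\epsilon)$-covers $\Omega'$ also orbitally $(F,\epsilon)$-covers $\Omega$; if $F' \subseteq F$, an $(F,\epsilon)$-cover is automatically an $(F',\epsilon)$-cover; and if $\epsilon \le \epsilon'$, an $(F,\epsilon)$-cover is an $(F,\epsilon')$-cover. Hence
\[
K_{F',\epsilon'}^{\orb}(\Omega') \leq K_{F,\epsilon}^{\orb}(\Omega) \qquad \text{whenever } \Omega \subseteq \Omega',\ F' \subseteq F,\ \epsilon \le \epsilon'.
\]

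Next I would note that $r_j' \le r_j$ gives $D_{r_j'}^{M_n(\C)} \subseteq D_{r_j}^{M_n(\C)}$, and that the type $\tp^{M_n(\C)}(\bX)$ of a matrix tuple is intrinsic to $\bX$: it is the fixed map $\phi \mapsto \phi^{M_n(\C)}(\bX)$, independent of which operator-norm balls one regards $\bX$ as lying in. Consequently $\Gamma_{\mathbf{r}'}^{(n)}(\cA) \subseteq \Gamma_{\mathbf{r}}^{(n)}(\cB)$ whenever $\cA \subseteq \cB$, and in particular for $\cA = \cB = \cO$ any open set. Combining this with the previous paragraph, for every open $\cO \subseteq \mathbb{S}(\rT_{\tr})$ and every $n$ we get $K_{F',\epsilon'}^{\orb}(\Gamma_{\mathbf{r}'}^{(n)}(\cO)) \le K_{F,\epsilon}^{\orb}(\Gamma_{\mathbf{r}}^{(n)}(\cO))$, and since ultralimits preserve $\le$, this passes to $\lim_{n\to\cU} \frac{1}{n^2}\log(\cdot)$.

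Finally I would invoke $\cK' \subseteq \cK$: every open $\cO \supseteq \cK$ is also an open neighborhood of $\cK'$, so the infimum defining $\Ent_{\mathbf{r}',F',\epsilon'}^{\cU}(\cK')$ is taken over at least the family of all open $\cO \supseteq \cK$, whence $\Ent_{\mathbf{r}',F',\epsilon'}^{\cU}(\cK') \le \inf_{\cO \supseteq \cK} \lim_{n\to\cU} \frac{1}{n^2}\log K_{F',\epsilon'}^{\orb}(\Gamma_{\mathbf{r}'}^{(n)}(\cO)) \le \inf_{\cO \supseteq \cK} \lim_{n\to\cU} \frac{1}{n^2}\log K_{F,\epsilon}^{\orb}(\Gamma_{\mathbf{r}}^{(n)}(\cO)) = \Ent_{\mathbf{r},F,\epsilon}^{\cU}(\cK)$, the middle step being the pointwise inequality above. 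This gives the displayed inequality. For the ``in particular'' claim, apply it with $\cK = \cK' = \cO$ and the remaining parameters unchanged: $\cO$ is itself an admissible open neighborhood of $\cO$, so $\Ent_{\mathbf{r},F,\epsilon}^{\cU}(\cO) \le \lim_{n\to\cU} \frac{1}{n^2}\log K_{F,\epsilon}^{\orb}(\Gamma_{\mathbf{r}}^{(n)}(\cO))$, and the reverse inequality is automatic because every open $\cO' \supseteq \cO$ satisfies $\Gamma_{\mathbf{r}}^{(n)}(\cO) \subseteq \Gamma_{\mathbf{r}}^{(n)}(\cO')$ and hence has at least as large a covering number. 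I do not expect a genuine obstacle here; the only point deserving care is that $\tp^{M_n(\C)}(\bX)$ does not depend on the radii $\mathbf{r}$, so shrinking them merely shrinks the microstate space without disturbing the constraint that the type lie in $\cO$; the rest is bookkeeping with monotone quantities and the fact that $\lim_{n\to\cU}$ respects inequalities.
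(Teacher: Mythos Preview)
Your argument is correct in substance and matches the paper's (implicit) reasoning: the paper states this as an Observation without proof, since each monotonicity is immediate from the definitions, and you have simply written out those routine verifications. One notational slip: in your displayed inequality $K_{F',\epsilon'}^{\orb}(\Omega') \leq K_{F,\epsilon}^{\orb}(\Omega)$ for $\Omega \subseteq \Omega'$, the roles of $\Omega$ and $\Omega'$ are swapped (as written it would say the larger set has the smaller covering number); the correct form is $K_{F',\epsilon'}^{\orb}(\Omega) \leq K_{F,\epsilon}^{\orb}(\Omega')$, which is exactly what you use in the subsequent application to $\Gamma_{\mathbf{r}'}^{(n)}(\cO) \subseteq \Gamma_{\mathbf{r}}^{(n)}(\cO)$.
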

	
	\begin{definition}[Entropy for types]
		For $\mathcal{K} \subseteq \mathbb{S}(\rT)$, define
		\[
		\Ent_{\mathbf{r}}^{\cU}(\cK) := \sup_{\substack{\text{finite } F \subseteq \N \\ \epsilon > 0} } \Ent_{\mathbf{r},F,\epsilon}^{\cU}(\cK).
		\]
		and
		\[
		\Ent^{\cU}(\cK) := \sup_{\mathbf{r} \in (0,\infty)^{\N}} \Ent_{\mathbf{r}}^{\cU}(\cK).
		\]
		Moreover, if $\mu \in \mathbb{S}(\rT_{\tr})$, we define $\Ent^{\cU}(\mu) = \Ent^{\cU}(\{\mu\})$.
	\end{definition}
	
	
	\subsection{Variational principle}
	
	In this section, we show that the covering entropy defines an upper semi-continuous function on the type space, and then deduce a variational principle for the entropy of a closed set, in the spirit of various results in the theory of entropy and large deviations.
	
	\begin{lemma}[Upper semi-continuity] \label{lem:USC}
		For each $\mathbf{r} \in (0,\infty)^{\N}$, $F \subseteq \N$ finite, and $\epsilon > 0$, the function $\mu \mapsto \Ent_{\mathbf{r},F,\epsilon}^{\cU}(\mu)$ is upper semi-continuous on $\mathbb{S}(\rT_{\tr})$.
	\end{lemma}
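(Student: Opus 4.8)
The plan is to prove the equivalent statement that for every $c \in \R$ the sublevel set $\{\mu \in \mathbb{S}(\rT_{\tr}) : \Ent_{\mathbf{r},F,\epsilon}^{\cU}(\mu) < c\}$ is open. The content of the lemma is essentially a formal consequence of the fact that $\Ent_{\mathbf{r},F,\epsilon}^{\cU}(\mu) = \Ent_{\mathbf{r},F,\epsilon}^{\cU}(\{\mu\})$ is defined as an infimum over open neighborhoods of $\mu$, so that it can be undercut on an entire neighborhood at once; the monotonicity recorded in Observation \ref{obs:monotonicity} is exactly the tool that converts a single good open neighborhood into a bound at every nearby type.

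First I would fix $\mu_0 \in \mathbb{S}(\rT_{\tr})$ with $\Ent_{\mathbf{r},F,\epsilon}^{\cU}(\mu_0) < c$. Since the open subsets of $\mathbb{S}(\rT_{\tr})$ containing the singleton $\{\mu_0\}$ are precisely the open neighborhoods of $\mu_0$, the definition
\[
\Ent_{\mathbf{r},F,\epsilon}^{\cU}(\mu_0) = \inf_{\cO \ni \mu_0 \text{ open}} \lim_{n \to \cU} \frac{1}{n^2} \log K_{F,\epsilon}^{\orb}(\Gamma_{\mathbf{r}}^{(n)}(\cO))
\]
together with the strict inequality $\Ent_{\mathbf{r},F,\epsilon}^{\cU}(\mu_0) < c$ produces an open set $\cO_0 \ni \mu_0$ with $\lim_{n \to \cU} \frac{1}{n^2} \log K_{F,\epsilon}^{\orb}(\Gamma_{\mathbf{r}}^{(n)}(\cO_0)) < c$; by the second part of Observation \ref{obs:monotonicity} this is just $\Ent_{\mathbf{r},F,\epsilon}^{\cU}(\cO_0) < c$. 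Then for any $\mu \in \cO_0$ the inclusion $\{\mu\} \subseteq \cO_0$ and monotonicity (Observation \ref{obs:monotonicity}) give $\Ent_{\mathbf{r},F,\epsilon}^{\cU}(\mu) \leq \Ent_{\mathbf{r},F,\epsilon}^{\cU}(\cO_0) < c$. Hence $\cO_0$ is contained in the sublevel set, which is therefore open; as $c$ was arbitrary, $\mu \mapsto \Ent_{\mathbf{r},F,\epsilon}^{\cU}(\mu)$ is upper semi-continuous.

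I do not anticipate any genuine obstacle here, since the statement follows directly from the structure of the definition as an infimum over open neighborhoods. The only point that deserves a moment's care is that $\mathbb{S}(\rT_{\tr})$ carries the colimit topology of the compact spaces $\mathbb{S}_{\bD}(\rT_{\tr})$, so ``open neighborhood of $\mu_0$'' must be read in that topology; but the argument above uses only that $\cO_0$ is open and contains $\mu_0$, so nothing about the colimit structure actually intervenes. If one prefers an entirely explicit witness one may take $\cO_0 = \{\nu \in \mathbb{S}(\rT_{\tr}) : |\nu(\phi_i) - \mu_0(\phi_i)| < \delta,\ i = 1, \dots, k\}$ for suitable formulas $\phi_1, \dots, \phi_k$ and $\delta > 0$, but this refinement is unnecessary for the present lemma.
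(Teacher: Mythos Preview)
Your proof is correct and is essentially the same argument as the paper's, just phrased in the sublevel-set formulation rather than the paper's ``infimum of upper semi-continuous functions'' formulation: the paper defines $f_{\cO}(\mu)$ to be $\Ent_{\mathbf{r},F,\epsilon}^{\cU}(\cO)$ on $\cO$ and $\infty$ off it, notes each $f_{\cO}$ is upper semi-continuous, and takes the infimum over $\cO$. Unpacking why each $f_{\cO}$ is upper semi-continuous and why the infimum preserves this is exactly your neighborhood argument via Observation~\ref{obs:monotonicity}.
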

	
	\begin{proof}
		For each open $\cO \subseteq \mathbb{S}(\rT_{\tr})$, let
		\[
		f_{\cO}(\mu) = \begin{cases} \Ent_{\mathbf{r},F,\epsilon}^{\cU}(\cO), & \mu \in \cO \\ \infty, & \text{ otherwise.} \end{cases}
		\]
		Since $\cO$ is open, $f_{\cO}$ is upper semi-continuous.  Moreover, $\Ent_{\mathbf{r},F,\epsilon}^{\cU}(\mu)$ is the infimum of $f_{\cO}(\mu)$ over all open $\cO \subseteq \mathbb{S}(\rT_{\tr})$, and the infimum of a family of upper semi-continuous functions is upper semi-continuous.
	\end{proof}
	
	\begin{proposition}[Variational principle] \label{prop:variational}
		Let $\mathcal{K} \subseteq \mathbb{S}(\rT_{\tr})$ and let $\mathbf{r} \in (0,\infty)^{\N}$, $F \subseteq \N$ finite, and $\epsilon > 0$.  Then
		\begin{equation} \label{eq:variational1}
			\sup_{\mu \in \cK} \Ent_{\mathbf{r},F,\epsilon}^{\cU}(\mu) \leq \Ent_{\mathbf{r},F,\epsilon}^{\cU}(\cK) \leq \sup_{\mu \in \operatorname{cl}(\cK) } \Ent_{\mathbf{r},F,\epsilon}^{\cU}(\mu).
		\end{equation}
		Hence,
		\begin{equation} \label{eq:variational2}
			\sup_{\mu \in \cK} \Ent^{\cU}(\mu) \leq \Ent^{\cU}(\cK) \leq \sup_{\mu \in \operatorname{cl}(\cK) } \Ent^{\cU}(\mu).
		\end{equation}
	\end{proposition}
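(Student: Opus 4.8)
The two inequalities in \eqref{eq:variational2} will follow from the corresponding inequalities in \eqref{eq:variational1} by taking the supremum over $\mathbf{r} \in (0,\infty)^{\N}$, finite $F \subseteq \N$, and $\epsilon > 0$, since this supremum commutes with the supremum over $\mu$. So I concentrate on \eqref{eq:variational1} with $\mathbf{r}$, $F$, $\epsilon$ fixed. The left inequality is immediate from the definitions: if $\mu \in \cK$ then every open $\cO \supseteq \cK$ also contains $\mu$, so the family of open sets over which the infimum defining $\Ent_{\mathbf{r},F,\epsilon}^{\cU}(\mu)$ is taken contains the family defining $\Ent_{\mathbf{r},F,\epsilon}^{\cU}(\cK)$; hence $\Ent_{\mathbf{r},F,\epsilon}^{\cU}(\mu) \leq \Ent_{\mathbf{r},F,\epsilon}^{\cU}(\cK)$, and one takes the supremum over $\mu \in \cK$.

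For the right inequality, set $c = \sup_{\mu \in \operatorname{cl}(\cK)} \Ent_{\mathbf{r},F,\epsilon}^{\cU}(\mu)$; there is nothing to prove if $c = +\infty$, so assume $c < \infty$ and fix any real $t > c$. The plan is a compactness-and-finite-subcover argument. Using the definition of $\Ent_{\mathbf{r},F,\epsilon}^{\cU}(\mu)$ together with Observation \ref{obs:monotonicity} (which identifies $\Ent_{\mathbf{r},F,\epsilon}^{\cU}(\cO)$ with $\lim_{n \to \cU} \frac{1}{n^2} \log K_{F,\epsilon}^{\orb}(\Gamma_{\mathbf{r}}^{(n)}(\cO))$ for $\cO$ open), for each $\mu \in \operatorname{cl}(\cK)$ there is an open neighborhood $\cO_{\mu} \ni \mu$ with $\lim_{n \to \cU} \frac{1}{n^2} \log K_{F,\epsilon}^{\orb}(\Gamma_{\mathbf{r}}^{(n)}(\cO_{\mu})) < t$. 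The key observation is that a microstate space only sees the compact ``slice'' $\mathbb{S}_{\mathbf{r}}(\rT_{\tr})$: every $\mathbf{X} \in \prod_j D_{r_j}^{M_n(\C)}$ satisfies $\tp^{M_n(\C)}(\mathbf{X}) \in \mathbb{S}_{\mathbf{r}}(\rT_{\tr})$, so $\Gamma_{\mathbf{r}}^{(n)}(\cV) = \Gamma_{\mathbf{r}}^{(n)}(\cV \cap \mathbb{S}_{\mathbf{r}}(\rT_{\tr}))$ for every $\cV$. Since $\mathbb{S}_{\mathbf{r}}(\rT_{\tr})$ is compact and $\mathbb{S}(\rT_{\tr})$ is Hausdorff, $\mathbb{S}_{\mathbf{r}}(\rT_{\tr})$ is closed, hence $\operatorname{cl}(\cK) \cap \mathbb{S}_{\mathbf{r}}(\rT_{\tr})$ is compact; cover it by the $\cO_{\mu}$, extract a finite subcover $\cO_{\mu_1}, \dots, \cO_{\mu_k}$, and put $\cO = \cO_{\mu_1} \cup \dots \cup \cO_{\mu_k} \cup (\mathbb{S}(\rT_{\tr}) \setminus \mathbb{S}_{\mathbf{r}}(\rT_{\tr}))$. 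Then $\cO$ is open and $\cO \supseteq \cK$, because the first $k$ sets cover $\cK \cap \mathbb{S}_{\mathbf{r}}(\rT_{\tr})$ while the last set covers $\cK \setminus \mathbb{S}_{\mathbf{r}}(\rT_{\tr})$.

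It then remains to estimate $K_{F,\epsilon}^{\orb}(\Gamma_{\mathbf{r}}^{(n)}(\cO))$. Since $\Gamma_{\mathbf{r}}^{(n)}$ of the complement term is empty, $\Gamma_{\mathbf{r}}^{(n)}(\cO) = \bigcup_{i=1}^{k} \Gamma_{\mathbf{r}}^{(n)}(\cO_{\mu_i})$, and orbital covering numbers are subadditive over finite unions, so $K_{F,\epsilon}^{\orb}(\Gamma_{\mathbf{r}}^{(n)}(\cO)) \leq \sum_{i=1}^{k} K_{F,\epsilon}^{\orb}(\Gamma_{\mathbf{r}}^{(n)}(\cO_{\mu_i}))$. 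Combining the elementary bound $\frac{1}{n^2} \log \sum_{i=1}^{k} a_{i,n} \leq \frac{\log k}{n^2} + \max_{i} \frac{1}{n^2} \log a_{i,n}$ with $\frac{\log k}{n^2} \to 0$ and the fact that ultralimits commute with finite maxima, we get $\Ent_{\mathbf{r},F,\epsilon}^{\cU}(\cK) \leq \lim_{n \to \cU} \frac{1}{n^2} \log K_{F,\epsilon}^{\orb}(\Gamma_{\mathbf{r}}^{(n)}(\cO)) \leq \max_{1 \leq i \leq k} \lim_{n \to \cU} \frac{1}{n^2} \log K_{F,\epsilon}^{\orb}(\Gamma_{\mathbf{r}}^{(n)}(\cO_{\mu_i})) < t$, and letting $t \downarrow c$ gives the right inequality in \eqref{eq:variational1}. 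I expect the only genuine obstacle to be this point-set bookkeeping: $\mathbb{S}(\rT_{\tr})$ is not compact, so one cannot directly take a finite subcover of $\operatorname{cl}(\cK)$, and the remedy is to notice that microstate spaces detect only the compact slice $\mathbb{S}_{\mathbf{r}}(\rT_{\tr})$; the remaining ingredients (subadditivity of orbital covering numbers and the absorption of the finite factor $k$ by the $\frac{1}{n^2}\log$ normalization) are routine.
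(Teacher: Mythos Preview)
Your proof is correct and follows essentially the same approach as the paper: monotonicity for the left inequality, and a compactness/finite-subcover argument combined with subadditivity of orbital covering numbers and the fact that $\frac{1}{n^2}\log k \to 0$ for the right inequality. The only difference is cosmetic: the paper takes $\cO = \cO_{\mu_1}\cup\dots\cup\cO_{\mu_k}$ and implicitly uses that $\Gamma_{\mathbf r}^{(n)}$ only sees the slice $\mathbb S_{\mathbf r}(\rT_{\tr})$, whereas you make this explicit by appending $\mathbb S(\rT_{\tr})\setminus\mathbb S_{\mathbf r}(\rT_{\tr})$ to obtain a genuine open neighborhood of $\cK$ in the full type space; your version is arguably the more careful bookkeeping.
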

	
	\begin{proof}
		If $\mu \in \cK$, then by monotonicity (Observation \ref{obs:monotonicity}), $\Ent_{F,\epsilon}^{\cU}(\{\mu\}) \leq \Ent_{F,\epsilon}^{\cU}(\cK)$.  Taking the supremum over $\mu \in \cK$, we obtain the first inequality of \eqref{eq:variational1}.
		
		For the second inequality of \eqref{eq:variational1}, let $C = \sup_{\mu \in \operatorname{cl}(\cK)} \Ent_{F,\epsilon}^{\cU}(\mu)$.  If $C = \infty$, there is nothing to prove.  Otherwise, let $C' > C$.  For each $\mu \in \operatorname{cl}(\cK) \cap \mathbb{S}_{\mathbf{r}}(\rT_{\tr})$, there exists some open neighborhood $\cO_\mu$ of $\mu$ in $\mathbb{S}(\rT_{\tr})$ such that $\Ent_{\mathbf{r},F,\epsilon}^{\cU}(\cO_\mu) < C'$.   Since $\{\cO_\mu\}_{\mu \in \operatorname{cl}(\cK) \cap \mathbb{S}_{\mathbf{r}}(\rT_{\tr})}$ is an open cover of the compact set $\operatorname{cl}(\cK) \cap \mathbb{S}_{\mathbf{r}}(\rT_{\tr})$, there exist $\mu_1$, \dots, $\mu_k \in \operatorname{cl}(\cK) \cap \mathbb{S}_{\mathbf{r}}(\rT_{\tr})$ such that
		\[
		\cK \cap \mathbb{S}_{\mathbf{r}}(\rT_{\tr}) \subseteq \bigcup_{j=1}^k \cO_{\mu_j}.
		\]
		Let $\cO = \bigcup_{j=1}^k \cO_{\mu_j}$.  Then
		\[
		K_{F,\epsilon}^{\orb}(\Gamma_{\mathbf{r}}^{(n)}(\cO)) \leq \sum_{j=1}^k K_{F,\epsilon}^{\orb}(\Gamma_{\mathbf{r}}^{(n)}(\cO_{\mu_j})) \leq k \max_j K_{F,\epsilon}^{\orb}(\Gamma_{\mathbf{r}}^{(n)}(\cO_{\mu_j})).
		\]
		Thus,
		\[
		\frac{1}{n^2} \log K_{F,\epsilon}^{\orb}(\Gamma_{\mathbf{r}}^{(n)}(\cO)) \leq \frac{1}{n^2} \log k + \max_j \frac{1}{n^2} \log K_{F,\epsilon}^{\orb}(\Gamma_{\mathbf{r}}^{(n)}(\cO_{\mu_j})).
		\]
		Taking the limit as $n \to \cU$,
		\[
		\Ent_{\mathbf{r},F,\epsilon}^{\cU}(\operatorname{cl}(\cK)) \leq  \Ent_{\mathbf{r},F,\epsilon}^{\cU}(\cO) \leq \max_j \Ent_{\mathbf{r},F,\epsilon}^{\cU}(\cO_{\mu_j}) \leq C'.
		\]
		Since $C' > C$ was arbitrary,
		\[
		\Ent_{\mathbf{r},F,\epsilon}^{\cU}(\cK) \leq \Ent_{\mathbf{r},F,\epsilon}(\operatorname{cl}(\cK)) \leq C = \sup_{\mu \in \operatorname{cl}(\cK)} \Ent_{\mathbf{r},F,\epsilon}^{\cU}(\mu),
		\]
		completing the proof of \eqref{eq:variational1}.  Taking the supremum over $F$ and $\epsilon$ and $\mathbf{r}$ in \eqref{eq:variational1}, we obtain \eqref{eq:variational2}.
	\end{proof}
	
	\subsection{Invariance under change of coordinates} \label{subsec:invariance}
	
	Next, we prove certain invariance properties of the covering entropy.  First, $\Ent_{\mathbf{r}}^{\cU}(\mu)$ is independent of $\mathbf{r}$ provided that $\mu \in \mathbb{S}_{\mathbf{r}}(\rT_{\tr})$.  Second, if $\mathrm{W}^*(\bX) = \mathrm{W}^*(\bY)$ inside $\cM$, then $\Ent^{\cU}(\tp^{\cM}(\bX)) = \Ent^{\cU}(\tp^{\cM}(\bY))$, which allows us to define $\Ent^{\cU}(\cN,\cM)$ for a $\mathrm{W}^*$-subalgebra $\cN$ inside $\cM$.  Both of these properties are deduced from the following lemma about push-forward under definable functions.  This is closely related to \cite[Lemma A.8 and Theorem A.9]{Hayes2018}.
	
	\begin{proposition}[Monotonicity under push-forward] \label{prop:pushforward}
		Let $\mathbf{f}$ be a definable function relative to $\rT_{\tr}$, let $\mathbf{r} \in (0,\infty)^{\N}$, and let $\mathbf{r}' \in (0,\infty)^{\N}$ be such that $\mathbf{f}$ maps $\prod_{j \in \N} D_{r_j}$ into $\prod_{j \in \N} D_{r_j'}$.  Let $\cK \subseteq \mathbb{S}_{\mathbf{r}}(\rT_{\tr})$ be closed.  Then
		\[
		\Ent_{\mathbf{r}'}^{\cU}(\mathbf{f}_*(\cK)) \leq \Ent_{\mathbf{r}}^{\cU}(\cK).
		\]
	\end{proposition}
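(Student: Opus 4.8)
The plan is to identify $\Ent_{\mathbf{r}'}^{\cU}(\mathbf{f}_*(\cK))$ with the contribution of the images under $\mathbf{f}$ of the microstates for $\cK$. By the variational principle (Proposition \ref{prop:variational}), applied first to the closed set $\mathbf{f}_*(\cK)$ and then to $\cK$, it is enough to treat a single type, since then
\[
\Ent_{\mathbf{r}'}^{\cU}(\mathbf{f}_*(\cK)) \le \sup_{\nu \in \mathbf{f}_*(\cK)} \Ent_{\mathbf{r}'}^{\cU}(\nu) = \sup_{\mu \in \cK} \Ent_{\mathbf{r}'}^{\cU}(\mathbf{f}_*\mu) \le \sup_{\mu \in \cK} \Ent_{\mathbf{r}}^{\cU}(\mu) \le \Ent_{\mathbf{r}}^{\cU}(\cK),
\]
using that $\mathbf{f}_*$ maps $\mathbb{S}_{\mathbf{r}}(\rT_{\tr})$ into $\mathbb{S}_{\mathbf{r}'}(\rT_{\tr})$ by the range hypothesis. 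So fix $\mu \in \mathbb{S}_{\mathbf{r}}(\rT_{\tr})$, a finite $F \subseteq \N$, and $\epsilon > 0$; I will show $\Ent_{\mathbf{r}',F,\epsilon}^{\cU}(\{\mathbf{f}_*\mu\}) \le \Ent_{\mathbf{r}}^{\cU}(\{\mu\})$. Since $\cL_{\tr}$ is separable, $\mathbb{S}_{\mathbf{r}}(\rT_{\tr})$ and $\mathbb{S}_{\mathbf{r}'}(\rT_{\tr})$ are compact metrizable (Observations \ref{obs:weak*metrizable} and \ref{obs:Ltrseparable}), so I may fix a decreasing sequence of open neighborhoods $\cO_1' \supseteq \overline{\cO_2'} \supseteq \cO_2' \supseteq \cdots$ of $\mathbf{f}_*\mu$ with $\bigcap_k \cO_k' = \{\mathbf{f}_*\mu\}$.

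Two preliminary facts are needed. First, by uniform continuity of definable functions (Lemma \ref{lem:deffuncunifcont}) there are a finite $E \subseteq \N$ and $\delta > 0$ so that for $\cM \models \rT_{\tr}$ and $\mathbf{X}, \mathbf{X}' \in \prod_j D_{r_j}^{\cM}$ with $d^{\cM}(X_i, X_i') < 2\delta$ for $i \in E$, one has $d^{\cM}(f_j^{\cM}(\mathbf{X}), f_j^{\cM}(\mathbf{X}')) < \epsilon/2$ for $j \in F$. Second, definable functions are equivariant under $\cL_{\tr}$-isomorphisms: if $\sigma\colon \cM \to \cN$ is such an isomorphism then $\sigma \circ \mathbf{f}^{\cM} = \mathbf{f}^{\cN} \circ \sigma$, because the definable predicate $d(f_k(\mathbf{x}), y)$ vanishes at $(\mathbf{X}, f_k^{\cM}(\mathbf{X}))$ and hence, $\sigma$ preserving types, also at $(\sigma \mathbf{X}, \sigma f_k^{\cM}(\mathbf{X}))$; in particular $\mathbf{f}^{(n)}(U \mathbf{X} U^*) = U \mathbf{f}^{(n)}(\mathbf{X}) U^*$ for unitary $U \in M_n(\C)$, writing $\mathbf{f}^{(n)} = \mathbf{f}^{M_n(\C)}$. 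Consequently, for any open $\cO \ni \mu$ and any orbital $(E,\delta)$-cover $\cP$ of $\Gamma_{\mathbf{r}}^{(n)}(\cO)$, after discarding elements of $\cP$ not within orbital distance $\delta$ on $E$ of $\Gamma_{\mathbf{r}}^{(n)}(\cO)$ and replacing the rest by nearby points of $\Gamma_{\mathbf{r}}^{(n)}(\cO)$ one obtains $\cP_0 \subseteq \Gamma_{\mathbf{r}}^{(n)}(\cO) \subseteq \prod_j D_{r_j}^{M_n(\C)}$ with $|\cP_0| \le |\cP|$ orbitally $(E,2\delta)$-covering $\Gamma_{\mathbf{r}}^{(n)}(\cO)$, and then (using the two facts) $\mathbf{f}^{(n)}(\cP_0)$ orbitally $(F,\epsilon/2)$-covers $\mathbf{f}^{(n)}(\Gamma_{\mathbf{r}}^{(n)}(\cO))$. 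Hence $K_{F,\epsilon/2}^{\orb}(\mathbf{f}^{(n)}(\Gamma_{\mathbf{r}}^{(n)}(\cO))) \le K_{E,\delta}^{\orb}(\Gamma_{\mathbf{r}}^{(n)}(\cO))$.

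The crux — and the step I expect to be hardest — is the lifting statement: \emph{for every open $\cO \ni \mu$ there is $k$ with $\Gamma_{\mathbf{r}'}^{(n)}(\cO_k') \subseteq N_{F,\epsilon/2}^{\orb}(\mathbf{f}^{(n)}(\Gamma_{\mathbf{r}}^{(n)}(\cO)))$ for all $n$ in a set belonging to $\cU$.} Granting this, the previous display plus the triangle inequality for orbital distance gives $K_{F,\epsilon}^{\orb}(\Gamma_{\mathbf{r}'}^{(n)}(\cO_k')) \le K_{E,\delta}^{\orb}(\Gamma_{\mathbf{r}}^{(n)}(\cO))$ for $\cU$-many $n$; applying $\lim_{n \to \cU} n^{-2}\log(\cdot)$ and then $\inf$ over open $\cO \ni \mu$ yields $\Ent_{\mathbf{r}',F,\epsilon}^{\cU}(\{\mathbf{f}_*\mu\}) \le \Ent_{\mathbf{r},E,\delta}^{\cU}(\{\mu\}) \le \Ent_{\mathbf{r}}^{\cU}(\{\mu\})$. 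To prove the lifting statement, suppose it fails; then for every $k$ the set of $n$ admitting some $\mathbf{Z} \in \Gamma_{\mathbf{r}'}^{(n)}(\cO_k')$ outside $N_{F,\epsilon/2}^{\orb}(\mathbf{f}^{(n)}(\Gamma_{\mathbf{r}}^{(n)}(\cO)))$ lies in $\cU$, and a standard level-by-level choice produces ``bad'' $\mathbf{Z}^{(n)} \in \prod_j D_{r_j'}^{M_n(\C)}$ with $\lim_{n \to \cU}\tp^{M_n(\C)}(\mathbf{Z}^{(n)}) \in \bigcap_k \overline{\cO_k'} = \{\mathbf{f}_*\mu\}$. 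Passing to $\cQ = \prod_{n \to \cU} M_n(\C)$ and $\mathbf{Z} = [\mathbf{Z}^{(n)}]$, {\L}o\'s's theorem (Theorem \ref{thm:Los}) gives $\tp^{\cQ}(\mathbf{Z}) = \mathbf{f}_*\mu$.

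To reach a contradiction I produce $\mathbf{W} \in \prod_j D_{r_j}^{\cQ}$ with $\tp^{\cQ}(\mathbf{W}) = \mu$ and $\mathbf{f}^{\cQ}(\mathbf{W}) = \mathbf{Z}$; pushing $\mathbf{W}$ down to representatives $\mathbf{W}^{(n)} \in \prod_j D_{r_j}^{M_n(\C)}$ then gives, for $\cU$-many $n$, both $\tp^{M_n(\C)}(\mathbf{W}^{(n)}) \in \cO$ (by {\L}o\'s, as $\mu \in \cO$) and $\max_{j \in F}\norm{f_j^{(n)}(\mathbf{W}^{(n)}) - Z_j^{(n)}}_{2} < \epsilon/2$ (as $\mathbf{f}$ is computed coordinatewise), contradicting badness of $\mathbf{Z}^{(n)}$. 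Such a $\mathbf{W}$ exists by countable saturation of $\cQ$: the partial type over $\mathbf{Z}$ saying ``$\mathbf{x}$ realizes $\mu$ and $d(f_j(\mathbf{x}), Z_j) = 0$ for all $j$'' uses only countably many formulas and parameters and is finitely satisfiable in $\cQ$, because, with $(\psi_m)$ a countable family determining $\mu$, the object
\[
\Phi(\mathbf{y}) = \inf_{\mathbf{x} \in \prod_j D_{r_j}} \Bigl[ \sum_{m} 2^{-m}\bigl(\abs{\psi_m(\mathbf{x}) - \mu(\psi_m)} \wedge 1\bigr) + \sum_{j} 2^{-j}\bigl(d(f_j(\mathbf{x}), y_j) \wedge 1\bigr) \Bigr]
\]
is a definable predicate (Lemma \ref{lem:defpredoperations}) that vanishes at any realization of $\mu$ in any model, hence at $\mathbf{Z}$ in $\cQ$ since $\tp^{\cQ}(\mathbf{Z}) = \mathbf{f}_*\mu$ and definable predicates depend only on the type; that a realization lies in $\prod_j D_{r_j}^{\cQ}$ and has full type $\mu$ uses Remark \ref{rem:qfnorm}. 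The one genuinely subtle point is exactly this lifting via saturation: a priori a matrix tuple with type near $\mathbf{f}_*\mu$ need not be orbitally close to $\mathbf{f}$ of any microstate for $\mu$, but the quantifier-laden full type $\mathbf{f}_*\mu$ encodes — through the definable predicate $\Phi$ — enough of the surrounding structure to force the lift to exist inside the saturated model $\cQ$.
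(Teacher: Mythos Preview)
Your proof is correct but takes a genuinely different route from the paper's. The paper works directly with an arbitrary closed $\cK$: given a neighborhood $\cO$ of $\cK$, it uses Urysohn's lemma to get a formula $\eta$ separating $\cK$ from $\cO^c$, defines the definable predicate $\psi^{\cM}(\mathbf{Y}) = \inf_{\mathbf{X} \in \prod_j D_{r_j}}\bigl[\eta^{\cM}(\mathbf{X}) + \max_{j' \in F'} d(f_{j'}(\mathbf{X}), Y_{j'})\bigr]$, and takes $\cO' = \{\psi < 2\epsilon'/3\}$ as the needed neighborhood of $\mathbf{f}_*(\cK)$; the inclusion $\Gamma^{(n)}(\cO') \subseteq N_{2\epsilon'/3}\bigl(\mathbf{f}^{(n)}(\Gamma^{(n)}(\cO))\bigr)$ is then immediate from the definition of $\psi$, with no ultraproduct or saturation involved. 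You instead reduce to a singleton via the variational principle and establish the lifting inclusion by contradiction through saturation in $\cQ$. The core device is the same --- your $\Phi$ is essentially the paper's $\psi$ --- but the paper uses it constructively to \emph{define} $\cO'$, whereas you use it to show $\Phi^{\cQ}(\mathbf{Z}) = 0$ and then realize the implicit witness. The paper's approach is shorter and self-contained (it needs neither saturation nor the variational principle as input); yours makes explicit the conceptual point that the \emph{full} type $\mathbf{f}_*\mu$, unlike the quantifier-free type, carries enough information to force a preimage to exist, which is exactly the content of Remarks~\ref{rem: monotonicity} and~\ref{rem:qfnomonotonicity}.
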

	
	\begin{remark} \label{rem: monotonicity}
		The analogous monotonicity property does not hold for the original $1$-bounded entropy $h$ of a quantifier-free type, but it does hold for $1$-bounded entropy in the presence.  The monotonicity property holds for the full type and for the existential type of $\bX$ because those types already encode information about how $\bX$ interacts with the ambient algebra.  For more information, see Remark \ref{rem:qfnomonotonicity}.
	\end{remark}
	
	\begin{proof}
		Let $F' \subseteq \N$ finite and $\epsilon' \in (0,1)$ be given.  Because $\mathbf{f}$ is a definable function, it is uniformly continuous by Lemma \ref{lem:deffuncunifcont}, hence there exists a finite $F \subseteq I$ and $\epsilon > 0$ such that for every $\cM \models \rT_{\tr}$ and $\mathbf{X}$, $\mathbf{Y} \in \prod_{j \in \N} D_{r_j}^{\cM}$,
		\begin{equation} \label{eq:pushforwardestimate}
			\norm{X_j - Y_j}_2 < \epsilon \text{ for all } j \in F \implies \norm{f_{j'}(\mathbf{X}) - f_{j'}(\mathbf{Y})}_2 < \epsilon'/3 \text{ for all } j' \in F'.
		\end{equation}
		
		Let $\cO$ be a neighborhood of $\cK$ in $\mathbb{S}_{\mathbf{r}}(\rT_{\tr})$.  By Urysohn's lemma, there exists a continuous function $\psi: \mathbb{S}_{\mathbf{r}}(\rT_{\tr}) \to [0,1]$ such that $\phi = 0$ on $\cK$ and $\phi = 1$ on $\mathbb{S}_{\mathbf{r}}(\rT_{\tr}) \setminus \cO$.  As in Proposition \ref{prop:defpredcontinuous}, there exists a formula $\eta$ such that $|\eta^{\cM} - \phi^{\cM}| < \epsilon'/3$ on $\prod_{j \in \N} D_{r_j}^{\cM}$.  Next, define $\psi^{\cM}: \prod_{j \in \N} S_{r_j'}^{\cM} \to \R$ by
		\[
		\psi^{\cM}(\mathbf{Y}) = \inf_{\mathbf{X} \in \prod_{j \in \N} D_j^{\cM}} \left( \eta^{\cM}(\mathbf{X}) + \max_{j' \in F'} d^{\cM}(f_{j'}^{\cM}(\mathbf{X}),Y_{j'}) \right),
		\]
		which is a definable predicate relative to $\rT_{\tr}$ by Lemma \ref{lem:defpredoperations}.
		
		Viewing $\psi$ as a continuous function on $\mathbb{S}_{\mathbf{r}'}(\rT_{\tr})$, let $\cO' = \psi^{-1}((-\infty,2\epsilon'/3))$.  Note that $\mathbf{f}_*(\cK) \subseteq \cO'$ since if $\mathbf{Y} = \mathbf{f}^{\cM}(\mathbf{X})$, then we can take this value of $\mathbf{X}$ in the infimum defining $\psi$ and obtain that $\psi^{\cM}(\mathbf{Y}) \leq \epsilon' / 3$.  Meanwhile, if $\cM \models \rT_{\tr}$ and $\mathbf{Y} \in \prod_{j \in \N} D_{r_j'}^{\cM}$ with $\tp^{\cM}(\mathbf{Y}) \in \cO'$, then there exists $\mathbf{X} \in \prod_{j \in \N} D_{r_j}^{\cM}$ with
		\[
		\eta^{\cM}(\mathbf{X}) + \max_{j' \in F'} d^{\cM}(f_{j'}^{\cM}(\mathbf{X}), \mathbf{Y}_{j'}) < \frac{2\epsilon'}{3},
		\]
		which implies that $\tp^{\cM}(\mathbf{X}) \in \cO$ and $\max_{i' \in F'} \norm{f_{i'}^{\cM}(\mathbf{Y}) - \mathbf{X}_{i'}} < 2\epsilon'/3$.  Applying this with $\cM = M_n(\C)$, we obtain
		\[
		\Gamma^{(n)}(\cO') \subseteq N_{2\epsilon'/3}(\mathbf{f}^{M_n(\C)}(\Gamma^{(n)}(\cO))).
		\]
		
		If $\Omega$ is an $(F,\epsilon)$-cover of $\Gamma^{(n)}(\cO)$, then by \eqref{eq:pushforwardestimate} and the fact that $\mathbf{f}$ is equivariant with respect to conjugation of an $\N$-tuple by a fixed unitary, $\mathbf{f}_*(\Omega)$ is an orbital $(F',\epsilon'/3)$-cover of $\mathbf{f}_*(\Gamma^{(n)}(\cO))$, and therefore also an orbital $(F',\epsilon')$-cover of $\Gamma^{(n)}(\cO')$.  It follows that
		\[
		K_{\mathbf{r}',F',\epsilon'}^{\orb}(\Gamma^{(n)}(\cO')) \leq K_{\mathbf{r},F,\epsilon}^{\orb}(\Gamma^{(n)}(\cO)).
		\]
		Hence,
		\[
		\Ent_{\mathbf{r}',F',\epsilon'}^{\cU}(\mathbf{f}_*(\cK)) \leq \Ent_{\mathbf{r}',F',\epsilon'}^{\cU}(\cO') \leq \Ent_{\mathbf{r},F,\epsilon}(\cO).
		\]
		Since $\cO$ was an arbitrary neighborhood of $\cK$, we obtain
		\[
		\Ent_{\mathbf{r}',F',\epsilon'}^{\cU}(\mathbf{f}_*(\cK)) \leq \Ent_{\mathbf{r},F,\epsilon}^{\cU}(\cK) \leq \Ent_{\mathbf{r}}^{\cU}(\cK).
		\]
		Since $F'$ and $\epsilon'$ were arbitary, we conclude that $\Ent_{\mathbf{r}'}^{\cU}(\mathbf{f}_*(\cK)) \leq \Ent_{\mathbf{r}}^{\cU}(\cK)$, as desired.
	\end{proof}
	
	\begin{corollary} \label{cor:independentofr}
		If $\cK$ is a closed subset of $\mathbf{S}_{\mathbf{r}}(\rT_{\tr})$, then $\Ent^{\cU}(\cK) = \Ent_{\mathbf{r}}^{\cU}(\cK)$.
	\end{corollary}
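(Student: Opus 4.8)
The plan is to deduce the identity from two results already available: the monotonicity of $\Ent_{\mathbf{s},F,\epsilon}^{\cU}$ in the radius tuple (Observation~\ref{obs:monotonicity}) and the push-forward inequality of Proposition~\ref{prop:pushforward} applied to the identity map. Since $\Ent^{\cU}(\cK) = \sup_{\mathbf{s} \in (0,\infty)^{\N}} \Ent_{\mathbf{s}}^{\cU}(\cK)$ and $\mathbf{r}$ itself is an admissible tuple, the inequality $\Ent_{\mathbf{r}}^{\cU}(\cK) \le \Ent^{\cU}(\cK)$ is immediate, so the entire content is to prove $\Ent_{\mathbf{s}}^{\cU}(\cK) \le \Ent_{\mathbf{r}}^{\cU}(\cK)$ for every $\mathbf{s} \in (0,\infty)^{\N}$.

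First I would reduce to radius tuples dominating $\mathbf{r}$. Given an arbitrary $\mathbf{s}$, put $\mathbf{t} = (\max(s_j,r_j))_{j \in \N}$. Since $s_j \le t_j$ for all $j$, Observation~\ref{obs:monotonicity} gives $\Ent_{\mathbf{s},F,\epsilon}^{\cU}(\cK) \le \Ent_{\mathbf{t},F,\epsilon}^{\cU}(\cK)$ for every finite $F \subseteq \N$ and $\epsilon > 0$, and taking the supremum over $F$ and $\epsilon$ yields $\Ent_{\mathbf{s}}^{\cU}(\cK) \le \Ent_{\mathbf{t}}^{\cU}(\cK)$. Thus it suffices to show $\Ent_{\mathbf{t}}^{\cU}(\cK) \le \Ent_{\mathbf{r}}^{\cU}(\cK)$ under the extra assumption $t_j \ge r_j$ for all $j$.

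For that I would invoke Proposition~\ref{prop:pushforward} with $\mathbf{f}$ the identity function $(x_j)_{j \in \N}$, which is a definable function relative to $\rT_{\tr}$ by Corollary~\ref{cor:termdefinable}. The crucial point is that $\mathbf{f}$ maps $\prod_{j \in \N} D_{r_j}$ into $\prod_{j \in \N} D_{r_j} \subseteq \prod_{j \in \N} D_{t_j}$, so the pair $(\mathbf{r},\mathbf{t})$ is a legitimate (source, target) pair of radius tuples for $\mathbf{f}$ in the sense of that proposition --- the identity is permitted a strictly larger target radius precisely because its actual image sits inside the smaller ball. Since $\cK \subseteq \mathbb{S}_{\mathbf{r}}(\rT_{\tr})$ is closed and $\mathbf{f}_*$ is the identity on the type space (so $\mathbf{f}_*(\cK) = \cK$), Proposition~\ref{prop:pushforward} gives $\Ent_{\mathbf{t}}^{\cU}(\cK) = \Ent_{\mathbf{t}}^{\cU}(\mathbf{f}_*(\cK)) \le \Ent_{\mathbf{r}}^{\cU}(\cK)$. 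Combining with the previous step, $\Ent_{\mathbf{s}}^{\cU}(\cK) \le \Ent_{\mathbf{r}}^{\cU}(\cK)$ for all $\mathbf{s}$, hence $\Ent^{\cU}(\cK) = \Ent_{\mathbf{r}}^{\cU}(\cK)$.

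I do not expect a serious obstacle, since the heavy lifting is inside Proposition~\ref{prop:pushforward}; the one thing requiring attention is checking that its hypotheses are genuinely satisfied when the target radius tuple is taken strictly larger than the source tuple, and keeping the direction of the push-forward inequality straight (applying it with $\mathbf{f} = \id$ from the \emph{small} radii to the \emph{large} radii, not the reverse --- the truncation map $\mathbf{h}$ of Example~\ref{ex:functionalcalculus} would point the inequality the wrong way). If one preferred not to treat Proposition~\ref{prop:pushforward} as a black box, the same conclusion follows by an elementary covering argument: near $\cK$ the truncation $\mathbf{h}$ moves points arbitrarily little in $\norm{\cdot}_2$ on any prescribed finite index set, because $d(x_j, h_j(x_j))$ is a definable predicate vanishing on $\mathbb{S}_{\mathbf{r}}(\rT_{\tr}) \supseteq \cK$, so $\mathbf{h}$ carries orbital covers of the smaller microstate spaces to orbital covers of the larger ones; but the black-box route is cleaner.
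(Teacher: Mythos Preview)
Your proposal is correct and follows essentially the same approach as the paper's proof: both take an arbitrary radius tuple, pass to its coordinatewise maximum with $\mathbf{r}$ via Observation~\ref{obs:monotonicity}, and then apply Proposition~\ref{prop:pushforward} to the identity map (with source radii $\mathbf{r}$ and target radii the larger tuple) to collapse back down to $\Ent_{\mathbf{r}}^{\cU}(\cK)$. Your additional commentary on verifying the hypotheses and the alternative truncation argument is sound but not needed.
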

	
	\begin{proof}
		By definition, $\Ent^{\cU}(\cK) \geq \Ent_{\mathbf{r}}^{\cU}(\cK)$.  On the other hand, fix some $\mathbf{r}' \in (0,\infty)^{\N}$ and let $\mathbf{r}'' = \max(\mathbf{r}',\mathbf{r})$.  By Observation \ref{obs:monotonicity},
		\[
		\Ent_{\mathbf{r}'}^{\cU}(\cK) \leq \Ent_{\mathbf{r}''}^{\cU}(\cK).
		\]
		Now applying Proposition \ref{prop:pushforward} to the identity map, since $\id$ maps $\prod_{j \in \N} D_{r_j}$ into $\prod_{j \in \N} D_{r_j''}$, it follows that
		\[
		\Ent_{\mathbf{r}''}^{\cU}(\cK) \leq \Ent_{\mathbf{r}}^{\cU}(\cK).
		\]
		Since $\mathbf{r}'$ was arbitrary, $\Ent^{\cU}(\cK) \leq \Ent_{\mathbf{r}}^{\cU}(\cK)$.
	\end{proof}
	
	\begin{corollary} \label{cor:invariance}
		Let $\cM = (M,\tau)$ be a tracial $\mathrm{W}^*$-algebra and $\bX$, $\bY \in M^{\N}$.  If $\bY \in \mathrm{W}^*(\bX)^{\N}$, then
		\[
		\Ent^{\cU}(\tp^{\cM}(\bY)) \leq \Ent^{\cU}(\tp^{\cM}(\bX)).
		\]
		In particular, if $\mathrm{W}^*(\bX) = \mathrm{W}^*(\bY)$, then $\Ent^{\cU}(\tp^{\cM}(\bX)) = \Ent^{\cU}(\tp^{\cM}(\bY))$.
	\end{corollary}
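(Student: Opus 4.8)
The plan is to combine the realization of tuples by quantifier-free definable functions (Proposition \ref{prop:deffuncrealization}) with the monotonicity of covering entropy under push-forward (Proposition \ref{prop:pushforward}).

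First I would fix $\mathbf{r}, \mathbf{r}' \in (0,\infty)^{\N}$ with $\bX \in \prod_{j \in \N} D_{r_j}^{\cM}$ and $\bY \in \prod_{j \in \N} D_{r_j'}^{\cM}$. Passing to the $\mathrm{W}^*$-subalgebra $\cN = \mathrm{W}^*(\bX)$ equipped with the restricted trace, the tuple $\bX$ generates $\cN$ and $\bY \in \prod_{j \in \N} D_{r_j'}^{\cN}$ since $\bY \in \mathrm{W}^*(\bX)^{\N}$. Proposition \ref{prop:deffuncrealization} then provides a quantifier-free definable function $\mathbf{f}$ relative to $\rT_{\tr}$ with $\bY = \mathbf{f}^{\cN}(\bX)$ and with $f_k^{\mathcal{P}}$ mapping $\prod_{j \in \N} L^\infty(\mathcal{P})$ into $D_{r_k'}^{\mathcal{P}}$ for every $\mathcal{P} \models \rT_{\tr}$; in particular $\mathbf{f}$ maps $\prod_{j} D_{r_j}$ into $\prod_j D_{r_j'}$.

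Next I would observe that $\mathbf{f}^{\cM}(\bX) = \mathbf{f}^{\cN}(\bX) = \bY$: the value $f_k^{\cM}(\bX)$ is pinned down by the quantifier-free definable predicate $Z \mapsto d(f_k^{\cM}(\bX), Z)$, and quantifier-free formulas, hence quantifier-free definable predicates, evaluate identically on $\bX$ in $\cN$ and in $\cM$ (as in the proof of Theorem \ref{thm:qfdeffunc2}), so $f_k^{\cM}(\bX) \in \cN$ equals $f_k^{\cN}(\bX)$. Since every quantifier-free definable function is in particular a definable function, there is an induced continuous map $\mathbf{f}_* \colon \mathbb{S}_{\mathbf{r}}(\rT_{\tr}) \to \mathbb{S}_{\mathbf{r}'}(\rT_{\tr})$ with $\mathbf{f}_*\tp^{\cM}(\bX) = \tp^{\cM}(\mathbf{f}^{\cM}(\bX)) = \tp^{\cM}(\bY)$. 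Thus $\mathbf{f}_*(\{\tp^{\cM}(\bX)\}) = \{\tp^{\cM}(\bY)\}$, and since $\{\tp^{\cM}(\bX)\}$ is a closed subset of $\mathbb{S}_{\mathbf{r}}(\rT_{\tr})$, Proposition \ref{prop:pushforward} yields
\[
\Ent_{\mathbf{r}'}^{\cU}(\{\tp^{\cM}(\bY)\}) \leq \Ent_{\mathbf{r}}^{\cU}(\{\tp^{\cM}(\bX)\}).
\]
By Corollary \ref{cor:independentofr} the left side equals $\Ent^{\cU}(\tp^{\cM}(\bY))$ and the right side equals $\Ent^{\cU}(\tp^{\cM}(\bX))$, giving the first inequality. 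The ``in particular'' statement is then immediate: if $\mathrm{W}^*(\bX) = \mathrm{W}^*(\bY)$, then also $\bX \in \mathrm{W}^*(\bY)^{\N}$, so applying the inequality in both directions gives equality.

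I expect the only genuinely delicate point to be the bookkeeping around the domains of quantification and the absoluteness step $\mathbf{f}^{\cM}(\bX) = \mathbf{f}^{\cN}(\bX)$; the substantive content is already contained in Propositions \ref{prop:deffuncrealization} and \ref{prop:pushforward}, so the corollary is essentially a matter of assembling these pieces correctly.
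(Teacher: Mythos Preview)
Your proposal is correct and follows essentially the same approach as the paper: apply Proposition \ref{prop:deffuncrealization} to realize $\bY$ as $\mathbf{f}(\bX)$ for a quantifier-free definable function $\mathbf{f}$, then invoke Proposition \ref{prop:pushforward} (together with Corollary \ref{cor:independentofr}) on the singleton $\{\tp^{\cM}(\bX)\}$, and conclude the equality by symmetry. The paper's proof is terser and does not spell out the absoluteness step $\mathbf{f}^{\cM}(\bX) = \mathbf{f}^{\cN}(\bX)$, but your extra bookkeeping there is a reasonable clarification rather than a different argument.
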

	
	\begin{proof}
		By Proposition \ref{prop:deffuncrealization}, there exists a quantifier-free definable function $\mathbf{f}$ relative to $\rT_{\tr}$ such that $\bY = \mathbf{f}^{\cM}(\bX)$.  Now $\tp^{\cM}(\bY) = \mathbf{f}_* \tp^{\cM}(\bX)$.  Hence, applying Proposition \ref{prop:pushforward} to $\cK = \{\tp^{\cM}(\bX)\}$ (for an appropriate choice of $\mathbf{r}$), we obtain $\Ent^{\cU}(\tp^{\cM}(\bY)) \leq \Ent^{\cU}(\tp^{\cM}(\bX))$.  The second claim follows by symmetry.
	\end{proof}
	
	With this invariance result in hand, it seems natural to define the covering entropy for a separable $\mathrm{W}^*$-subalgebra of $\cM$ as the entropy of any $\N$-tuple of generators.  However, the following definition works even in the non-separable case.
	
	\begin{definition} \label{def:W*entropy}
		If $\cM = (M,\tau)$ is a tracial $\mathrm{W}^*$-algebra and $\cN$ is a $\mathrm{W}^*$-subalgebra, we define
		\[
		\Ent^{\cU}(\cN:\cM) = \sup_{\bX \in L^\infty(\cN)^{\N}} \Ent^{\cU}(\tp^{\cM}(\bX)).
		\]
	\end{definition}
	
	\begin{observation} \label{obs:monotonicity2}
		Let $\cM$ be a tracial $\mathrm{W}^*$-algebra, and let $\cN$ be a $\mathrm{W}^*$-subalgebra.  If $\bX \in L^\infty(\cN)_{\sa}^{\N}$ generates $\cN$, then for any $\bY \in \cN^{\N}$, we have $\Ent^{\cU}(\tp^{\cM}(\bY)) \leq \Ent^{\cU}(\tp^{\cM}(\bX))$ by Corollary \ref{cor:invariance}, and therefore,
		\[
		\Ent^{\cU}(\cN:\cM) = \Ent^{\cU}(\tp^{\cM}(\bX)).
		\]
		Moreover, if $\cP$ is a $\mathrm{W}^*$-subalgebra of $\cN$, then $\Ent^{\cU}(\cP:\cM) \leq \Ent^{\cU}(\cN:\cM)$.
	\end{observation}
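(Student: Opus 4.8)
The plan is to derive both assertions directly from Definition \ref{def:W*entropy} together with the invariance result Corollary \ref{cor:invariance}; no new technique is needed, since the real work was already done in Proposition \ref{prop:deffuncrealization} and Proposition \ref{prop:pushforward}.

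For the first claim, I would start from the fact that, since $\bX$ generates $\cN$, we have $\mathrm{W}^*(\bX) = \cN$, and hence every $\bY \in \cN^{\N}$ lies in $\mathrm{W}^*(\bX)^{\N}$. Corollary \ref{cor:invariance} then yields $\Ent^{\cU}(\tp^{\cM}(\bY)) \leq \Ent^{\cU}(\tp^{\cM}(\bX))$ for every such $\bY$, in particular for every $\bY \in L^\infty(\cN)^{\N}$. Taking the supremum over all $\bY \in L^\infty(\cN)^{\N}$ in Definition \ref{def:W*entropy} gives $\Ent^{\cU}(\cN:\cM) \leq \Ent^{\cU}(\tp^{\cM}(\bX))$. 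The reverse inequality is immediate, because $\bX$ is itself a tuple in $L^\infty(\cN)^{\N}$ (its entries are self-adjoint elements of $\cN$), so it participates in the supremum defining $\Ent^{\cU}(\cN:\cM)$. Combining the two inequalities gives the asserted equality $\Ent^{\cU}(\cN:\cM) = \Ent^{\cU}(\tp^{\cM}(\bX))$.

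For the second claim, I would simply observe that whenever $\cP$ is a $\mathrm{W}^*$-subalgebra of $\cN$ we have $L^\infty(\cP)^{\N} \subseteq L^\infty(\cN)^{\N}$, so the supremum defining $\Ent^{\cU}(\cP:\cM)$ ranges over a subset of the tuples appearing in the supremum defining $\Ent^{\cU}(\cN:\cM)$; monotonicity of the supremum then yields $\Ent^{\cU}(\cP:\cM) \leq \Ent^{\cU}(\cN:\cM)$.

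There is essentially no obstacle here: the content is entirely contained in Corollary \ref{cor:invariance}, and this observation merely repackages it into a statement about $\mathrm{W}^*$-subalgebras. The only point requiring a moment's care is confirming that a self-adjoint generating tuple genuinely belongs to the class of tuples $L^\infty(\cN)^{\N}$ over which the supremum in Definition \ref{def:W*entropy} is taken, which is clear since self-adjoint elements of $\cN$ are in particular elements of $\cN$.
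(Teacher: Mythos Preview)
Your proposal is correct and follows exactly the approach the paper intends: the observation is stated without a separate proof precisely because its content is immediate from Corollary \ref{cor:invariance} and Definition \ref{def:W*entropy}, and you have correctly unpacked that reasoning.
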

	
	\begin{remark}
		Furthermore, it is evident from Definition \ref{def:W*entropy} that $\Ent^{\cU}(\cN:\cM)$ only depends on the set of types in $\cM$ that are realized in $L^\infty(\cN)^{\N}$.  Hence, if two embeddings $\cN \to \cM_1$ and $\cN \to \cM_2$ are elementarily equivalent---meaning that for every definable predicate $\phi$ and $\bX \in L^\infty(\cN)^{\N}$, we have $\phi^{\cM_1}(\bX) = \phi^{\cM_2}(\bX)$---then $\Ent^{\cU}(\cN:\cM_1) = \Ent^{\cU}(\cN: \cM_2)$.
	\end{remark}
	
	\subsection{Entropy and ultraproduct embeddings} \label{subsec:embeddings1}
	
	\begin{lemma}[Ultraproduct realization of types] \label{lem:ultraproductrealization}
		Let $\cQ = \prod_{n \to \cU} M_n(\C)$.  Let $\mu \in \mathbb{S}(\rT_{\tr})$. Then $\Ent^{\cU}(\mu)$ is either nonnegative or it is $-\infty$.  Moreover, $\Ent^{\cU}(\mu) \geq 0$ if and only if there exists $\mathbf{X} \in L^\infty(\cQ)^{\N}$ such that $\tp^{\cQ}(\bX) = \mu$.
	\end{lemma}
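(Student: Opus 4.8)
The plan is to characterize exactly when $\Ent^{\cU}(\mu)\ge 0$ and then to recognize that condition as realizability of $\mu$ in $\cQ$. First I would fix $\mathbf{r}\in(0,\infty)^{\N}$ with $\mu\in\mathbb{S}_{\mathbf{r}}(\rT_{\tr})$ (every type lies in some such set); since $\cL_{\tr}$ is separable (Observation \ref{obs:Ltrseparable}), the space $\mathbb{S}_{\mathbf{r}}(\rT_{\tr})$ is compact, Hausdorff and metrizable (Observation \ref{obs:weak*metrizable}), so $\{\mu\}$ is closed and $\Ent^{\cU}(\mu)=\Ent_{\mathbf{r}}^{\cU}(\mu)$ by Corollary \ref{cor:independentofr}. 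The first assertion is then bookkeeping: for any $\Omega\subseteq M_n(\C)^{\N}$ and any $F,\epsilon$ we have $K_{F,\epsilon}^{\orb}(\Omega)=0$ if $\Omega=\varnothing$ and $K_{F,\epsilon}^{\orb}(\Omega)\ge 1$ otherwise, hence $\frac{1}{n^2}\log K_{F,\epsilon}^{\orb}(\Omega)\in\{-\infty\}\cup[0,\infty)$, and this membership is preserved by taking ultralimits over $n$, infima over open $\cO\supseteq\{\mu\}$, and suprema over $F,\epsilon$, so $\Ent^{\cU}(\mu)\in\{-\infty\}\cup[0,\infty]$. Moreover, since $K_{F,\epsilon}^{\orb}(\Omega)=0\iff\Omega=\varnothing$ regardless of $F,\epsilon$, unwinding the definitions yields
\[
\Ent^{\cU}(\mu)\ge 0\quad\Longleftrightarrow\quad\text{for every open }\cO\ni\mu,\ \Gamma_{\mathbf{r}}^{(n)}(\cO)\ne\varnothing\text{ for }\cU\text{-many }n.
\]

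For the ``if'' half of the realizability equivalence I would start from $\mathbf{X}\in L^{\infty}(\cQ)^{\N}$ with $\tp^{\cQ}(\mathbf{X})=\mu$. Then $\tp_{\qf}^{\cQ}(\mathbf{X})\in\mathbb{S}_{\qf,\mathbf{r}}(\rT_{\tr})$, and since the operator norm is recoverable from the quantifier-free type (Remark \ref{rem:qfnorm}), $\norm{X_j}_{\cQ}\le r_j$, i.e. $X_j\in D_{r_j}^{\cQ}$. Writing $X_j=[X_j^{(n)}]_n$ and truncating each $X_j^{(n)}$ to operator norm $\le r_j$ perturbs it by an $L^{2}$-amount with ultralimit $\max(0,\norm{X_j}_{\cQ}-r_j)=0$, so the class in $\cQ$ is unchanged and now $\mathbf{X}^{(n)}\in\prod_j D_{r_j}^{M_n(\C)}$ for all $n$. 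By \L o\'s's theorem (Theorem \ref{thm:Los}), $\lim_{n\to\cU}\tp^{M_n(\C)}(\mathbf{X}^{(n)})=\mu$ in $\mathbb{S}_{\mathbf{r}}(\rT_{\tr})$, so for each open $\cO\ni\mu$ the set $\{n:\tp^{M_n(\C)}(\mathbf{X}^{(n)})\in\cO\}$ belongs to $\cU$ and each such $n$ gives $\Gamma_{\mathbf{r}}^{(n)}(\cO)\ne\varnothing$; hence $\Ent^{\cU}(\mu)\ge 0$ by the displayed equivalence.

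For the converse I would assume $\Gamma_{\mathbf{r}}^{(n)}(\cO)\ne\varnothing$ for $\cU$-many $n$ whenever $\cO\ni\mu$ is open, and use metrizability to fix a nested neighborhood base $\cO_1\supseteq\cO_2\supseteq\cdots$ of $\mu$ with $\bigcap_k\cO_k=\{\mu\}$; then $S_k:=\{n:\Gamma_{\mathbf{r}}^{(n)}(\cO_k)\ne\varnothing\}\in\cU$ and $S_1\supseteq S_2\supseteq\cdots$. Setting $k(n)=\min\bigl(n,\sup\{k:n\in S_k\}\bigr)$, so that (by the nesting) $k(n)\ge k$ iff $n\ge k$ and $n\in S_k$, I choose $\mathbf{X}^{(n)}\in\Gamma_{\mathbf{r}}^{(n)}(\cO_{k(n)})$ when $k(n)\ge 1$ and $\mathbf{X}^{(n)}=0$ otherwise; then $\mathbf{X}^{(n)}\in\prod_j D_{r_j}^{M_n(\C)}$ for all $n$, so $\mathbf{X}:=[\mathbf{X}^{(n)}]_n\in\prod_j D_{r_j}^{\cQ}\subseteq L^{\infty}(\cQ)^{\N}$. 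For each $k$ we have $\{n:k(n)\ge k\}=\{n\ge k\}\cap S_k\in\cU$, and on this set $\tp^{M_n(\C)}(\mathbf{X}^{(n)})\in\cO_{k(n)}\subseteq\cO_k$, so $\{n:\tp^{M_n(\C)}(\mathbf{X}^{(n)})\in\cO_k\}\in\cU$ for every $k$; since $\{\cO_k\}$ is a neighborhood base at $\mu$ and the type space is Hausdorff, this forces $\lim_{n\to\cU}\tp^{M_n(\C)}(\mathbf{X}^{(n)})=\mu$, whence $\tp^{\cQ}(\mathbf{X})=\mu$ by \L o\'s's theorem.

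The hard part will be this last implication: manufacturing a single tuple in $\cQ$ that realizes the \emph{entire} type $\mu$, given only that each approximating microstate space $\Gamma_{\mathbf{r}}^{(n)}(\cO)$ is non-empty --- with uncountably many open $\cO$ to accommodate simultaneously, a naive diagonalization does not close, and it is precisely the separability of $\cL_{\tr}$ (hence a countable neighborhood base at $\mu$) that makes the diagonal construction succeed. A secondary technical point is confining the approximating sequences to the prescribed operator-norm balls, handled by the $L^{2}$-small truncation together with the fact that the operator norm is encoded in the quantifier-free type.
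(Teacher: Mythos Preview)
Your proof is correct and follows essentially the same route as the paper's: both reduce the dichotomy to the observation that $K_{F,\epsilon}^{\orb}$ is either $0$ or $\ge 1$, both use metrizability of $\mathbb{S}_{\mathbf{r}}(\rT_{\tr})$ to run a countable diagonalization producing a microstate sequence whose types converge to $\mu$, and both invoke \L o\'s for the converse. The only cosmetic differences are your explicit intermediate characterization ``$\Ent^{\cU}(\mu)\ge 0$ iff every $\Gamma_{\mathbf{r}}^{(n)}(\cO)$ is $\cU$-often nonempty'' and your $k(n)$-style diagonal versus the paper's $E_k\setminus E_{k+1}$ partition; one small imprecision is the formula $\max(0,\norm{X_j}_{\cQ}-r_j)$ for the $L^2$-cost of truncation, but the conclusion (that representatives may be chosen in $D_{r_j}^{M_n(\C)}$) is correct and is exactly what the paper uses.
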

	
	\begin{proof}
		Note that $\log K_{F,\epsilon}^{\orb}(\Gamma_{\mathbf{r}}^{(n)}(\cO))$ is either $\geq 0$ or it is $-\infty$.  Therefore, $\Ent_{\mathbf{r}}^{\cU}(\mu)$ is either nonnegative or it is $-\infty$.  It remains to show the second claim of the lemma.
		
		($\implies$) In light of the foregoing argument, if $\Ent^{\cU}(\mu) \geq 0$, then $\Ent_{\mathbf{r}}^{\cU}(\mu) \geq 0$ for some $\mathbf{r}$.  By Observations \ref{obs:weak*metrizable} and \ref{obs:Ltrseparable}, $\mathbb{S}_{\mathbf{r}}(\rT_{\tr})$ is metrizable, hence there is a sequence $(\cO_k)_{k \in \N}$ of neighborhoods of $\mu$ in $\mathbb{S}(\rT)$ such that $\overline{\cO}_{k+1} \subseteq \cO_k$ and $\bigcap_{k \in \N} \cO_k = \{\mu\}$.  For $k \in \N$, let
		\[
		E_k = \{n \in \N: \Gamma_{\mathbf{r}}^{(n)}(\cO_k) \neq \varnothing\}.
		\]
		Now choose $\mathbf{X}^{(n)} \in M_N(\C)^{\N}$ as follows.  For each $n \not \in E_1$, set $\mathbf{X}^{(n)} = 0$.  For each $n \in E_k \setminus E_{k+1}$, let $\mathbf{X}^{(n)}$ be an element of $\Gamma_{\mathbf{r}}^{(n)}(\cO_k)$.  If $n \in \bigcap_{k \in \N} E_k$, that means that $\Gamma^{(n)}(\{\mu\}) \neq \varnothing$, so in this case we may choose $\mathbf{X}^{(n)} \in M_n(\C)^{\N}$ with $\tp^{M_n(\C)}(\mathbf{X}^{(n)}) = \mu$.
		
		Since $\cU$ is an ultrafilter, either $E_k \in \cU$ or $E_k^c \in \cU$.  If we had $E_k^c \in \cU$, then $\lim_{n \to \cU} (1/n^2) \log K_{F,\epsilon}^{\orb}(\Gamma^{(n)}(\cO_k))$ would be $-\infty$ since the set would be empty for $n \in E_k^c$.  Hence, $E_k \in \cU$.  For $n \in E_k$, we have $\tp^{M_n(\C)}(\mathbf{X}^{(n)}) \in \cO_k$.  Therefore, $\lim_{n \to \cU} \tp^{M_n(\C)}(\mathbf{X}^{(n)}) \in \overline{\cO}_k$.  Since this holds for all $k$, $\lim_{n \to \cU} \tp^{M_n(\C)}(\mathbf{X}^{(n)}) = \mu$.  Let $\mathbf{X} = [\mathbf{X}^{(n)}]_{n \in \N} \in L^\infty(\cQ)^{\N}$.  Then
		\[
		\tp^{\cQ}(\mathbf{X}) = \lim_{n \to \cU} \tp^{M_n(\C)}(\mathbf{X}^{(n)}) = \mu.
		\]
		
		($\impliedby$)  Suppose that $\mathbf{X}$ is an element of the ultraproduct with type $\mu$.  Let $r_j = \norm{X_j}_\infty$.   Express $\mathbf{X}$ as $[\mathbf{X}^{(n)}]_{n \in \N}$ for some $\mathbf{X}^{(n)} \in M_n(\C)^{\N}$ with $\norm{X_j^{(n)}} \leq r_j$.  Since the type of $\mathbf{X}^{(n)}$ converges to the type of $\mathbf{X}$, for every neighborhood $\cO$ of $\mu$, there exists $E \in \cU$ such that $\tp^{M_n(\C)}(\mathbf{X}^{(n)}) \in \cO$ for all $n \in E$, and in particular, $\Gamma_{\mathbf{r}}^{(n)}(\cO) \neq \varnothing$ for $n \in E$.  This implies that $\Ent_{F,\epsilon}^{\cU}(\cO) \geq 0$ for every $F$ and $\epsilon$.  Hence, $\Ent^{\cU}(\mu) \geq 0$.
	\end{proof}
	
	Recall that an embedding $\cM \to \cQ$ of tracial $\mathrm{W}^*$-algebras is said to be \emph{elementary} if for every definable predicate $\phi$ and $\bX \in L^\infty(\cM)^{\N}$, we have $\phi^{\cQ}(\bX) = \phi^{\cM}(\bX)$.  This in particular implies that $\cM$ and $\cQ$ are elementarily equivalent, that is, they have the same theory.
	
	\begin{corollary} \label{cor:embeddings}
		Suppose that $\cM$ is a separable tracial $\mathrm{W}^*$-algebra and $\cN \subseteq \cM$ is a $\mathrm{W}^*$-subalgebra.  If $\Ent^{\cU}(\cN:\cM) \geq 0$, then there exists an elementary embedding $\iota: \cM \to \cQ$.
	\end{corollary}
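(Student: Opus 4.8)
The plan is to reduce the existence of an elementary embedding of $\cM$ to realizing a single type of a generating tuple inside $\cQ$ (via Lemma \ref{lem:ultraproductrealization}), and then to upgrade the resulting $*$-isomorphism onto its image into an elementary map using Proposition \ref{prop:deffuncrealization}. Concretely, since $\cM$ is separable I would fix a self-adjoint $\N$-tuple $\bX_0 \in L^\infty(\cM)_{\sa}^{\N}$ that generates $\cM$. By Observation \ref{obs:monotonicity2}, $\Ent^{\cU}(\tp^{\cM}(\bX_0)) = \Ent^{\cU}(\cM:\cM) \geq \Ent^{\cU}(\cN:\cM) \geq 0$. Applying Lemma \ref{lem:ultraproductrealization} to $\mu = \tp^{\cM}(\bX_0)$ then produces $\bX \in L^\infty(\cQ)^{\N}$ with $\tp^{\cQ}(\bX) = \tp^{\cM}(\bX_0)$; in particular the quantifier-free types agree, so Lemma \ref{lem:lawisomorphism} yields a trace-preserving $*$-isomorphism $\iota \colon \cM = \mathrm{W}^*(\bX_0) \to \mathrm{W}^*(\bX) \subseteq \cQ$ with $\iota(\bX_0) = \bX$. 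This $\iota$ is the candidate embedding, and it is injective, being a $*$-isomorphism onto its image.

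It then remains to check that $\iota$ is elementary, i.e.\ $\phi^{\cQ}(\iota(\bY)) = \phi^{\cM}(\bY)$ for every definable predicate $\phi$ and every $\bY \in L^\infty(\cM)^{\N}$ (this subsumes the case of finite tuples and single elements by padding $\bY$ with zeros). Given such a $\bY$, I would use Proposition \ref{prop:deffuncrealization} to write $\bY = \mathbf{f}^{\cM}(\bX_0)$ for a quantifier-free definable function $\mathbf{f}$ relative to $\rT_{\tr}$. Inspecting the construction of $\iota$ in Lemma \ref{lem:lawisomorphism} — it is induced by the evaluation $*$-homomorphisms $f \mapsto f^{\cM}(\bX_0)$ and $f \mapsto f^{\cQ}(\bX)$ on quantifier-free definable functions — one obtains $\iota(\bY) = \iota(\mathbf{f}^{\cM}(\bX_0)) = \mathbf{f}^{\cQ}(\bX)$. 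Since a quantifier-free definable function is in particular a definable function (Proposition \ref{prop:deffunc2}), $\phi \circ \mathbf{f}$ is again a definable predicate, so
\[
\phi^{\cQ}(\iota(\bY)) = (\phi \circ \mathbf{f})^{\cQ}(\bX) = (\phi \circ \mathbf{f})^{\cM}(\bX_0) = \phi^{\cM}(\mathbf{f}^{\cM}(\bX_0)) = \phi^{\cM}(\bY),
\]
where the middle equality applies $\tp^{\cQ}(\bX) = \tp^{\cM}(\bX_0)$ to the definable predicate $\phi \circ \mathbf{f}$. Hence $\iota$ is an elementary embedding.

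I expect the main obstacle to be the bookkeeping in the last paragraph: namely, making precise that the isomorphism $\iota$ produced by Lemma \ref{lem:lawisomorphism} intertwines the evaluation of quantifier-free definable functions (so that $\iota(\mathbf{f}^{\cM}(\bX_0)) = \mathbf{f}^{\cQ}(\bX)$), and that evaluating a quantifier-free definable function at $\bX$ returns the same element whether computed in $\mathrm{W}^*(\bX)$ or in the ambient $\cQ$. Both facts are implicit in the proofs of Lemma \ref{lem:lawisomorphism} and Proposition \ref{prop:deffuncrealization} (the value of a quantifier-free definable function depends only on the quantifier-free type and is unaffected by passing to a $\mathrm{W}^*$-subalgebra containing the arguments), but they should be spelled out carefully rather than used as a black box.
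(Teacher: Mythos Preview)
Your proposal is correct and follows essentially the same approach as the paper's own proof: reduce to a generating tuple via Observation \ref{obs:monotonicity2}, realize its full type in $\cQ$ via Lemma \ref{lem:ultraproductrealization}, build $\iota$ from Lemma \ref{lem:lawisomorphism}, and then verify elementarity by writing an arbitrary $\bY$ as $\mathbf{f}^{\cM}(\bX_0)$ for a quantifier-free definable $\mathbf{f}$ (Proposition \ref{prop:deffuncrealization}) so that $\phi^{\cQ}(\iota(\bY)) = (\phi\circ\mathbf{f})^{\cQ}(\bX) = (\phi\circ\mathbf{f})^{\cM}(\bX_0) = \phi^{\cM}(\bY)$. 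The only cosmetic difference is how the intertwining $\iota(\mathbf{f}^{\cM}(\bX_0)) = \mathbf{f}^{\cQ}(\bX)$ is justified: you appeal to the explicit description of $\iota$ in the proof of Lemma \ref{lem:lawisomorphism}, whereas the paper observes directly that $d(f_j(\mathbf{x}),y_j)$ is a quantifier-free definable predicate, so $d^{\cQ}(f_j^{\cQ}(\bX),\iota(Y_j)) = d^{\cM}(f_j^{\cM}(\bX_0),Y_j) = 0$ since $\iota$ preserves quantifier-free types of tuples from $\cM$---this cleanly dispatches exactly the ``bookkeeping'' concern you flagged.
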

	
	\begin{remark}
		Since the embedding $\iota: \cM \to \cQ$ is elementary, in particular the embeddings $\cN \to \cM$ and $\cN \to \cQ$ are elementarily equivalent, and hence $\Ent^{\cU}(\iota(\cN):\cQ) = \Ent^{\cU}(\cN:\cM)$.
	\end{remark}
	
	\begin{proof}[Proof of Corollary \ref{cor:embeddings}]
		By Observation \ref{obs:monotonicity2}, $\Ent^{\cU}(\cM:\cM) \geq \Ent^{\cU}(\cN:\cM) \geq 0$.  Let $\bX \in L^\infty(\cM)^{\N}$ generate $\cM$.  Then by the previous lemma, there exists $\bX' \in \cQ$ with the same type of $\bX$.  In particular, since $\bX$ and $\bX'$ have the same quantifier-free type, Lemma \ref{lem:lawisomorphism} shows that there is an embedding $\iota: \cM \to \cQ$ with $\iota(\bX) = \bX'$.  To show that $\iota$ is elementary, suppose that $\bY \in L^\infty(\cM)^{\N}$ and $\phi$ is a definable predicate.  By Proposition \ref{prop:deffuncrealization}, there exists a quantifier-free definable function $\mathbf{f}$ such that $\bY = \mathbf{f}^{\cM}(\bX)$.  Since $\mathbf{f}$ is quantifier-free, $d^{\cQ}(\iota(Y_j), f_j^{\cQ}(\bX')) = d^{\cM}(Y_j, f_j^{\cM}(\bX)) = 0$, hence $\iota(\bY) = \mathbf{f}^{\cQ}(\iota(\bX))$.  Therefore, $\phi^{\cQ}(\iota(\bY)) = (\phi \circ \mathbf{f})^{\cQ}(\bX') = (\phi \circ \mathbf{f})^{\cM}(\bX) = \phi^{\cM}(\bY)$, where the middle equality follows because $\tp^{\cQ}(\bX') = \tp^{\cM}(\bX)$, and therefore the embedding is elementary.
	\end{proof}
	
	\subsection{Entropy and Algebraicity} \label{subsec:algebraic}
	
	In this section, we show that $\Ent^{\cU}(\cN:\cM) = \Ent^{\cU}(\acl(\cN): \cM)$, where $\acl(\cN)$ is the \emph{algebraic closure} of continuous model theory.  At present, very little is known about algebraic closures for tracial $\mathrm{W}^*$-algebras.  Nonetheless, it is natural to study how the model-theoretic $1$-bounded entropy behaves under this model-theoretic operation, analogously to how Hayes studied the behavior of $1$-bounded entropy under various $\mathrm{W}^*$-algebraic operations (see \cite[\S 2]{Hayes2018} and \cite[\S 2.3]{HJNS2021}).
	
	First, we explain the definition of algebraic closure.
	
	\begin{definition}[Algebraicity] \label{def:algebraic}
		Let $\cM$ be a structure in some language $\cL$, and let $\cN$ be a substructure.  Let $S$ be a sort in $\cL$.
		\begin{itemize}
			\item A map $\phi: S^{\cM} \to \R$ is a \emph{definable predicate in $\cM$ over $\cN$} if for every $\bD \in \prod_{j \in \N} \cD_{S_j}$ and $\epsilon > 0$, there exists a formula $\psi$ in variables $x_j$ from $S_j$ for $j \in \N$ and $y_j$ from $S_j'$ for $j \in \N$, and there exists $\bY \in \prod_{j \in \N} (S_j')^{\cN}$ such that
			\[
			|\phi(\bX) - \psi^{\cM}(\bX,\bY)| < \epsilon \text{ for all } \bX \in \prod_{j \in \N} D_j^{\cM}.
			\]
			\item If $A \subseteq S^{\cM}$, we say that $A$ is \emph{definable in $\cM$ over $\cN$} if the map $S^{\cM} \to \R: X \mapsto d^{\cM}(X,A)$ is definable in $\cM$ over $\cN$.
			\item If $a \in S^{\cM}$, we say that $a$ is \emph{algebraic over $\cN$} if there exists a compact set $A \subseteq S^{\cM}$ such that $a \in A$ and $A$ is definable in $\cM$ over $\cN$.
		\end{itemize}
	\end{definition}
	
	\begin{remark} \label{rem:definableoverN}
		It will be convenient in our arguments that for tracial $\mathrm{W}^*$-algebras $\cM$ and $\cN$, if a function $\phi: L^\infty(\cM) \to \R$ is definable in $\cM$ over $\cN$, then there exists a definable predicate $\theta$ and $\bY \in L^\infty(\cN)^{\N}$ such that $\phi(X) = \theta(X,\bY)$.  This follows by a forced-limit argument similar to Proposition \ref{prop:deffuncrealization}:  Since $\phi$ is definable in $\cM$ over $\cN$, then for each $k \in \N$, there exists a formula $\theta_k$ and $\bY_k \in L^\infty(\cN)^{\N}$ such that
		\[
		|\phi(X) - \theta_k(X,\bY_k)| < \frac{1}{2^k} \text{ for } X \in D_k^{\cM}.
		\]
		Let $\bY$ by an $\N$-tuple obtained by joining together the $\bY_k$'s into a single tuple, so that $\theta_k$ can be viewed as a definable predicate in $(X,\bY)$.  Similar to the proof of Proposition \ref{prop:deffuncrealization}, there exists a definable predicate $\psi_k$ such that $\psi_k^{\cM}(X,\bY) = 1$ and $\psi_k \cdot (\theta_{k+1} - \theta_k) < 2/2^k$ on $D_k$.  Then
		\[
		\theta := \theta_1 + \sum_{k=1}^\infty \psi_k (\theta_{k+1} - \theta_k)
		\]
		converges uniformly on every domain $D_r$ and satisfies $\phi(X) = \theta^{\cM}(X,\bY)$.
	\end{remark}
	
	\begin{definition}[Algebraic closure]
		Let $\cM$ be an $\cL$-structure and $\cN$ an $\cL$-substructure.  We define $S^{\acl(\cN)}$ to be the set of $a \in S^{\cM}$ that are algebraic in $\cM$ over $\cN$.  We let $\acl(\cN) = (S^{\acl(\cN)})_{S \in \mathcal{S}}$.  (Although we omit $\cM$ from the notation, the algebraic closure a priori depends on the ambient structure $\cM$.)
	\end{definition}
	
	For the properties of algebraic closure, see \cite[\S 10]{BYBHU2008}.  In particular, one can show that if $\cN_1 \subseteq \acl(\cN_2)$, then $\acl(\cN_1) \subseteq \acl(\cN_2)$ (``what is algebraic over the algebraic closure of $\cN_2$ is algebraic over $\cN_2$).  Moreover, one can verify directly from Definition \ref{def:algebraic} that $f$ is a term and $Y_1$, \dots, $Y_k \in \cN$, then $f^{\cM}(Y_1,\dots,Y_k) \in \acl(\cN)$.  By combining these properties, it follows that $\acl(\cN)$ is an $\cL$-substructure of $\cM$.
	
	Thus, in particular, if $\cN \subseteq \cM$ are tracial $\mathrm{W}^*$-algebras, then the algebraic closure $\acl(\cN)$ of $\cN$ in $\cM$ is a tracial $\mathrm{W}^*$-subalgebra of $\cM$ as well.  We will show that $\Ent^{\cU}(\acl(\cN):\cM) = \Ent^{\cU}(\cN:\cM)$.  We first consider the case of adjoining to an $\N$-tuple $\bX$ a single element $Y$ that is algebraic over $\mathrm{W}^*(\bX)$, and this case takes the bulk of the work.
	
	\begin{theorem}
		Let $\bX$ be an $\N$-tuple in $\cM = (M,\tau)$.  Let $Y \in M$ be algebraic over $\mathrm{W}^*(\bX)$.  Then
		\[
		\Ent^{\cU}(\tp^{\cM}(Y,\bX)) = \Ent^{\cU}(\tp^{\cM}(\bX)).
		\]
	\end{theorem}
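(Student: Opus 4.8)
The plan is to prove the two inequalities separately. First I would dispose of the inequality $\Ent^{\cU}(\tp^{\cM}(\bX)) \le \Ent^{\cU}(\tp^{\cM}(Y,\bX))$: the coordinate projection $(W,\bX) \mapsto \bX$ is a definable function (each output coordinate is a variable, hence a term), and since $\{\tp^{\cM}(Y,\bX)\}$ is closed and lies in $\mathbb{S}_{\mathbf{r}'}(\rT_{\tr})$ for a suitable $\mathbf{r}'$, Proposition \ref{prop:pushforward} together with Corollary \ref{cor:independentofr} gives the claim. For the reverse inequality, the first step is to extract an analytic avatar of algebraicity. Since $Y$ is algebraic over $\mathrm{W}^*(\bX)$, Definition \ref{def:algebraic} provides a compact $A \ni Y$ with $W \mapsto d^{\cM}(W,A)$ definable in $\cM$ over $\mathrm{W}^*(\bX)$; by Remark \ref{rem:definableoverN} this function equals $\theta^{\cM}(W,\bZ)$ for a definable predicate $\theta$ and $\bZ \in L^\infty(\mathrm{W}^*(\bX))^{\N}$, and by Proposition \ref{prop:deffuncrealization} we may write $\bZ = \mathbf{g}^{\cM}(\bX)$ for a quantifier-free definable function $\mathbf{g}$. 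Thus $\vartheta(W,\bX) := \theta(W,\mathbf{g}(\bX))$ is a definable predicate, which we may assume $\ge 0$ (replace $\theta$ by $\max(\theta,0)$), with $\vartheta^{\cM}(Y,\bX) = 0$ and $\{W \in L^\infty(\cM) : \vartheta^{\cM}(W,\bX) = 0\} = A$ compact. Write $\mu = \tp^{\cM}(\bX)$.

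The heart of the argument, and the step I expect to be the main obstacle, is the following uniform finiteness statement: for all $r_0 > 0$ and $\epsilon > 0$ there exist $\delta > 0$, $L \in \N$, and a neighborhood $\cO'$ of $\mu$ such that for every $\cN \models \rT_{\tr}$ and every $\bX'$ with $\tp^{\cN}(\bX') \in \cO'$, the set $\{W \in D_{r_0}^{\cN} : \vartheta^{\cN}(W,\bX') \le \delta\}$ is covered by $L$ balls of $\norm{\cdot}_2$-radius $\epsilon$. I would prove this by contradiction. If it fails for some $\epsilon$, then choosing $\delta_k = 1/k$, $L_k = k$, and a decreasing neighborhood base $\cO_k' \downarrow \{\mu\}$ (using metrizability of $\mathbb{S}_{\mathbf{r}}(\rT_{\tr})$, Observations \ref{obs:weak*metrizable} and \ref{obs:Ltrseparable}), one obtains $\cN_k \models \rT_{\tr}$, tuples $\bX_k$ with $\tp^{\cN_k}(\bX_k) \in \cO_k'$, and points $W_{k,1},\dots,W_{k,k+1}$ in $\{\vartheta^{\cN_k}(\cdot,\bX_k) \le 1/k\} \cap D_{r_0}^{\cN_k}$ with pairwise $\norm{\cdot}_2$-distance $> \epsilon$ (since the $\epsilon$-covering number exceeds the packing number). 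In the ultraproduct $\cN^* = \prod_{k \to \cV} \cN_k$ (any nonprincipal $\cV$ on $\N$), \L os's theorem yields a realization $\bX^*$ of $\mu$ and infinitely many $\epsilon$-separated elements of $\{W : \vartheta^{\cN^*}(W,\bX^*) = 0\}$. Amalgamating $\cM$ and $\cN^*$ elementarily over the common type $\mu$ so that $\bX$ and $\bX^*$ are identified, elementarity transports, for each $m$, the finite $\epsilon$-nets of $\{\vartheta^{\cM}(\cdot,\bX) \le 1/m\} \cap D_{r_0}^{\cM}$ (which exist because $A$, hence this sublevel set, is totally bounded): these nets are recorded by first-order sentences with parameters in $\cM$, which persist under the elementary embeddings. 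Hence $\{W : \vartheta(W,\bX) = 0\} \cap D_{r_0}$ is still totally bounded in the amalgam, contradicting the infinite $\epsilon$-separated family. This model-theoretic transfer, and the bookkeeping around amalgamating over a realized type, is where the real work lies.

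Granting the lemma, the final step is a microstate count. Fix $\mathbf{r}'$ bounding $(Y,\bX)$, a finite $F' \ni 0$, and $\epsilon > 0$; let $\mathbf{r}$ be the $\bX$-part of $\mathbf{r}'$ and $r_0 = r_0'$. Apply the lemma at scale $\epsilon/2$ to obtain $\delta, L, \cO'$, and use uniform continuity of the definable predicate $\vartheta$ (Observation \ref{obs:defpredunifcont}) to choose a finite $E \subseteq \N$ and $\delta_0 > 0$ controlling the variation of $\vartheta$ in the $\bX$-variables by $\delta/2$. For each (small enough) neighborhood $\cO'$ of $\mu$ take $\cO = \pi^{-1}(\cO') \cap \{\nu : \nu(\vartheta) < \delta/2\}$, which is open by Lemma \ref{lem:defpredweakstar} and contains $\tp^{\cM}(Y,\bX)$ since $\vartheta^{\cM}(Y,\bX) = 0$. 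Given an orbital $(E \cup (F' \setminus \{0\}), \min(\epsilon,\delta_0))$-cover of $\Gamma_{\mathbf{r}}^{(n)}(\cO')$ with cover elements taken inside the microstate space (at the cost of a harmless halving of the radius, needed so the lemma applies to them), each $(Y^{(n)},\bX^{(n)}) \in \Gamma_{\mathbf{r}'}^{(n)}(\cO)$ can, after a unitary conjugation, be arranged so that $\bX^{(n)}$ is $\min(\epsilon,\delta_0)$-close on $E \cup (F'\setminus\{0\})$ to a cover element $\bX^{(n)}_{(s)}$; then $\vartheta^{M_n}(Y^{(n)},\bX^{(n)}_{(s)}) < \delta$, so applying the lemma in $M_n(\C) \models \rT_{\tr}$ (with $\tp^{M_n}(\bX^{(n)}_{(s)}) \in \cO'$) places $Y^{(n)}$ within $\epsilon/2$ of one of $L$ points determined by $\bX^{(n)}_{(s)}$ alone. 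This yields $K_{F',\epsilon}^{\orb}(\Gamma_{\mathbf{r}'}^{(n)}(\cO)) \le L \cdot K_{E \cup (F'\setminus\{0\}),\min(\epsilon,\delta_0)/2}^{\orb}(\Gamma_{\mathbf{r}}^{(n)}(\cO'))$; dividing by $n^2$, taking $\lim_{n \to \cU}$ (the $\log L$ term vanishes), and then the infimum over $\cO'$ gives $\Ent_{\mathbf{r}',F',\epsilon}^{\cU}(\tp^{\cM}(Y,\bX)) \le \Ent^{\cU}(\mu)$. Taking the supremum over $F'$ (it suffices to use $F' \ni 0$, by Observation \ref{obs:monotonicity}), over $\epsilon$, and over $\mathbf{r}'$ completes the proof.
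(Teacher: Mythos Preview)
Your proof is correct, and the overall architecture matches the paper's: both dispatch the easy inequality via pushforward (Proposition~\ref{prop:pushforward}) and then show that the $(Y,\bX)$-microstate space is orbitally covered by a fixed finite multiple of any cover of the $\bX$-microstate space. The difference is in how that multiple is produced. You establish a \emph{uniform finiteness lemma} (the $\delta$-sublevel set of $\vartheta(\cdot,\bX')$ has $\epsilon$-covering number $\le L$ whenever $\tp(\bX')$ lies in some neighborhood of $\mu$) by contradiction: build an ultraproduct with arbitrarily large $\epsilon$-separated families in the zero set, then amalgamate elementarily with $\cM$ over the common type $\mu$ so that finite nets of $A$, recorded as formulas with parameters $Y_1,\dots,Y_k,\bX$, transfer and give a contradiction. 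The paper bypasses this entirely by internalizing the finite net as a formula: it fixes an $\epsilon'/4$-net $Y_1,\dots,Y_k$ of $A$ inside $\cM$, writes
\[
\beta(\mathbf{x}) \;=\; \inf_{y_1,\dots,y_k}\ \sup_{y \in D_r}\ \min\bigl(\epsilon'/4 - \psi(y,\mathbf{x}),\ \min_i d(y,y_i) - \epsilon'/2\bigr),
\]
notes $\beta^{\cM}(\bX) \le 0$, and simply includes the clause $\beta(\bX') < \epsilon'/16$ in the definition of the neighborhood $\cO'$. Every microstate $\bX'$ then automatically carries witness points $Y_1'(\bX'),\dots,Y_k'(\bX')$, and uniform continuity of the inner formula $\alpha$ transfers these witnesses across the $\bX$-cover---so no ultraproducts, no amalgamation, no separate lemma. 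Your route is more model-theoretic in spirit and gives a reusable lemma; the paper's is shorter and entirely elementary. One small wording issue: the sublevel set $\{\vartheta^{\cM}(\cdot,\bX) \le 1/m\}$ is not totally bounded (it is a full $1/m$-neighborhood of $A$ in an infinite-dimensional ball); what you actually use, and what does hold, is that for the fixed $\epsilon$ it admits a finite $\epsilon$-net once $1/m$ is small relative to $\epsilon$.
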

	
	The inequality $\Ent^{\cU}(\tp^{\cM}(\bX)) \leq \Ent^{\cU}(\tp^{\cM}(Y,\bX))$ follows from Proposition \ref{prop:pushforward}, so we only need to prove the opposite inequality.
	
	The idea of the argument is that $Y$ comes from a definable compact set $A$.  We can cover $A$ by some finite number $k$ of $\epsilon$-balls.  Transferring this to the microstate approximations would tell us that for each matrix approximation $\bX'$ for $\bX$, the possible matrix approximations for $Y$ can be covered by $k$ many $\epsilon$-balls.  So the covering number for the microstate space of $(Y,\bX)$ would be at most $k$ times that of $\bX$; the factor of $k$ is negligible in the large-$n$ limit because we will take the logarithm and divide by $n^2$.
	
	\begin{proof}
		$\Ent^{\cU}(\tp^{\cM}(\bX)) \leq \Ent^{\cU}(\tp^{\cM}(Y,\bX))$ holds by Proposition \ref{prop:pushforward}.
		
		By algebraicity of $Y$ and Remark \ref{rem:definableoverN}, there exists a compact $A \subseteq \cM$, a definable predicate $\phi$ relative to $\rT_{\tr}$, and $\bX' \in \mathrm{W}^*(\bX)^{\N}$ such that $Y \in A$ and $d^{\cM}(Z,A) = \phi^{\cM}(Z,\bX')$.  Since $\bX' = \mathbf{f}(\bX)$ for some quantifier-free definable function $\mathbf{f}$, we have
		\[
		d^{\cM}(Z,A) = \phi^{\cM}(Z,\mathbf{f}(\bX)) = \psi^{\cM}(Z,\bX),
		\]
		where $\psi$ is the definable predicate given by composing $\psi$ with $\mathbf{f}$ in the coordinates $2$, $3$, \dots.
		
		Fix $\mathbf{r} = (r_j)_{j \in \N}$ such that $X_j \in D_{r_j}$ and fix $r$ such that $Z \in D_r$.  We want to show that
		\[
		\sup_{(F',\epsilon')} \inf_{\cO \ni \tp^{\cM}(Y,\bX)} \lim_{n \to \cU} \frac{1}{n^2} \log K_{F',\epsilon'}^{\orb}(\Gamma_{r,\mathbf{r}}^{(n)}(\cO')) \leq \sup_{(F,\epsilon)} \inf_{\cO \ni \tp^{\cM}(Y,\bX)} \lim_{n \to \cU} \frac{1}{n^2} \log K_{F,\epsilon}^{\orb}(\Gamma_{\mathbf{r}}^{(n)}(\cO)) 
		\]
		Here we regard $\N$ as starting at $1$, and we index the tuple $(Y,\bX)$ by $\{0\} \sqcup \N$, where the $0$ index corresponds to $Y$.  Fix $F' \subseteq \{0\} \sqcup \N$ finite and $\epsilon' > 0$.  Since enlarging $F'$ would only increase the quantity inside the $\sup_{(F',\epsilon')}$, assume without loss of generality that $F'$ contains the index $0$ corresponding to the variable $Y$, hence $F' = \{0\} \sqcup F_1$ for some $F_1 \subseteq \N$.
		
		By compactness of $A$, there exists $k \in \N$ and there exist $Y_1$, \dots, $Y_k \in A$ such that the $\epsilon'/4$-balls centered at $Y_1$, \dots, $Y_k$ cover $A$.  This implies that every point within a distance of $\epsilon'/4$ from $A$ is within a distance of $\epsilon'/2$ from one of the points $Y_1$, \dots, $Y_k$, and therefore
		\[
		\sup_{Y \in D_r^{\cM}} \min\Bigl(\epsilon'/4 - \psi^{\cM}(Y,\bX), \min(d^{\cM}(Y,Y_1),\dots,d^{\cM}(Y,Y_k))-\epsilon'/2 \Bigr) \leq 0.
		\]
		Choose $t_j \in (0,\infty)$ such that $Y_j \in D_{t_j}$.  Let $\alpha$ and $\beta$ be the definable predicates
		\begin{align*}
			\alpha(y_1,\dots,y_k,\mathbf{x}) &=\sup_{y \in D_r^{\cM}} \min(\epsilon'/4 - \psi(y,\mathbf{x}), \min(d(y,y_1),\dots,d(y,y_k)) - \epsilon'/2), \\
			\beta(\mathbf{x}) &= \inf_{y_1 \in D_{t_1}} \dots \inf_{y_k \in D_{t_k}} \alpha(y_1,\dots,y_k, \mathbf{x}),
		\end{align*}
		so that $\alpha^{\cM}(Y_1,\dots,Y_k,\bX) \leq 0$ and $\beta^{\cM}(\bX) \leq 0$.
		
		By uniform continuity (Observation \ref{obs:defpredunifcont}), there exists $F_2 \subseteq \N$ finite and $\delta > 0$ such that for all $\cN \models \rT_{\tr}$, all $Y_1' \in D_{t_1}$, \dots, $Y_k' \in D_{t_k}$, and all $\bX'$, $\bX'' \in \prod_{j \in \N} D_{r_j}^{\cN}$, we have
		\[
		\max_{j \in F_2} d^{\cN}(X_j',X_j'') < \delta \implies |\alpha^{\cN}(Y_1',\dots,Y_k',\bX') - \alpha^{\cN}(Y_1',\dots,Y_k',\bX'')| < \frac{\epsilon'}{16}.
		\]
		
		Fix a neighborhood $\cO$ of $\tp^{\cM}(\bX)$, and let
		\[
		\cO' = \{\tp^{\cN}(Y',\bX'): \cN \models \rT_{\tr}, \psi^{\cN}(Y',\bX') < \epsilon'/8, \beta^{\cN}(\bX) < \epsilon'/16, \tp^{\cN}(\bX') \in \cO \},
		\]
		which is a neighborhood of $\tp^{\cM}(Y,\bX)$.  Let $\epsilon = \min(\delta,\epsilon')$ and $F = F_1 \cup F_2$.  We claim that
		\[
		K_{F',\epsilon'}^{\orb}(\Gamma_{r,\mathbf{r}}(\cO')) \leq k \, K_{F,\epsilon}^{\orb}(\Gamma_{\mathbf{r}}(\cO)).
		\]
		
		There exists an orbital $(F,\epsilon)$-cover $\Omega$ of $\Gamma_{\mathbf{r}}^{(n)}(\cO)$ with $\Omega \subseteq \Gamma_{\mathbf{r}}^{(n)}(\cO)$ and $|\Omega| \leq K_{F,\epsilon/2}(\Gamma_{\mathbf{r}}^{(n)}(\cO))$.  Indeed, we can let $\Omega_0$ be any orbital $(F,\epsilon/2)$-cover of $\Gamma_{\mathbf{r}}^{(n)}(\cO)$ not necessarily contained in $\Gamma_{\mathbf{r}}^{(n)}(\cO)$ and let $\Omega$ contain one point $\Gamma_{\mathbf{r}}^{(n)}(\cO) \cap N_{F,\epsilon/2}(\bY)$ for each $\bY \in \Omega'$ where the intersection is nonempty.
		
		For each $\bX' \in \Gamma_{\mathbf{r}}^{(n)}(\cO)$, we have
		\[
		\beta^{M_n(\C)}(\bX') < \frac{\epsilon'}{16},
		\]
		and therefore, there exist $Y_1' \in D_{r_1}^{M_n(\C)}$, \dots, $Y_k' \in D_{r_k}^{M_n(\C)}$ such that
		\[
		\alpha^{M_n(\C)}(Y_1',\dots,Y_k',\bX') < \frac{\epsilon'}{16}.
		\]
		Choose for each $\bX' \in \Omega$ a corresponding $Y_1'(\bX')$, \dots, $Y_k'(\bX')$, and let
		\[
		\Omega' = \{(Y_1'(\bX'),\bX'), \dots, (Y_k'(\bX'),\bX'): \bX' \in \Omega\}.
		\]
		We claim that $\Omega'$ is an orbital $(F',\epsilon')$-cover of $\Gamma_{r,\mathbf{r}}^{(n)}(\cO')$.  Let $(Y'',\bX'') \in \Gamma_{r,\mathbf{r}}^{(n)}(\cO')$.  Then $\bX'' \in \Gamma_{\mathbf{r}}(\cO)$.  Therefore, there exists a unitary $U$ and $\bX' \in \Omega$ such that $U \bX'' U^* \in N_{F,\epsilon}(\bX')$.  Let $Y_1' = Y_1'(\bX')$, \dots, $Y_k' = Y_k'(\bX')$, and note that because $d^{M_n(\C)}(X_j',X_j'') < \delta$ for $j \in F_2$, we have
		\[
		|\alpha^{M_n(\C)}(Y_1',\dots,Y_k',U\bX''U^*) - \alpha^{M_n(\C)}(Y_1',\dots,Y_k',\bX')| < \frac{\epsilon'}{16},
		\]
		hence
		\[
		\alpha^{M_n(\C)}(Y_1',\dots,Y_k',U\bX''U^*) < \frac{\epsilon'}{16} + \frac{\epsilon'}{16} = \frac{\epsilon'}{8}.
		\]
		By definition of $\alpha$, this means that
		\begin{multline*}
			\sup_{Y' \in D_r^{M_n}(\C)} \min\Bigl(\epsilon'/4 - \psi^{M_n(\C)}(Y',U\bX'' U^*), \\ \min(d^{M_n(\C)}(Y',Y_1'),\dots,d^{M_n(\C)}(Y',Y_k')) - \epsilon'/2 \Bigr) < \frac{\epsilon'}{8}.
		\end{multline*}
		Now because $(Y'',\bX'') \in \Gamma_{r,\mathbf{r}}^{(n)}(\cO')$, we have
		\[
		\psi^{M_n(\C)}(UY''U^*,U\bX''U^*) = \psi^{M_n(\C)}(Y'',\bX'') < \epsilon' / 8,
		\]
		it follows that
		\[
		\frac{\epsilon'}{4} - \psi^{M_n(\C)}(UY''U^*,U\bX'' U^*) > \frac{\epsilon'}{8}
		\]
		and therefore
		\[
		\min(d^{M_n(\C)}(UY''U^*,Y_1'),\dots,d^{M_n(\C)}(UY''U^*,Y_k')) - \frac{\epsilon'}{2} < \frac{\epsilon'}{8},
		\]
		hence $d(UY''U^*,Y_i) < \epsilon'/2 + \epsilon'/8 < \epsilon'$ for some $i \in \{1,\dots,k\}$.  Therefore, overall
		\[
		U \bX'' U^* \in N_{F,\epsilon}(\bX') \subseteq N_{F_1,\epsilon'}(\bX') \text{ and } UY''U^* \in N_{\epsilon'}(Y_i'),
		\]
		and thus $(Y'',\bX'') \in N_{F',\epsilon'}(Y_i',\bX')$, which shows that $\Gamma_{r,\mathbf{r}'}^{(n)}(\cO') \subseteq N_{F',\epsilon'}^{\orb}(\Omega')$.
		
		We conclude that
		\[
		K_{F',\epsilon'}(\Gamma_{r,\mathbf{r}}^{(n)}(\cO')) \leq k \,  K_{F,\epsilon/2}(\Gamma_{\mathbf{r}}^{(n)}(\cO)).
		\]
		Hence, applying $\lim_{n \to \cU} (1/n^2) \log$ to both sides,
		\[
		\Ent_{(r,\mathbf{r}'),F',\epsilon'}^{\cU}(\cO') \leq \Ent_{\mathbf{r},F,\epsilon}^{\cU}(\cO).
		\]
		Because for every $\cO$ there exists such an $\cO'$, we obtain
		\[
		\Ent_{(r,\mathbf{r}),F',\epsilon'}^{\cU}(\tp^{\cM}(Y,\bX)) \leq \Ent_{\mathbf{r},F,\epsilon/2}(\tp^{\cM}(\bX)).
		\]
		Then because for every $(F',\epsilon')$ there exists such an $(F,\epsilon)$, we get
		\[
		\Ent_{r,\mathbf{r}}^{\cU}(\tp^{\cM}(Y,\bX)) \leq \Ent_{\mathbf{r}}^{\cU}(\tp^{\cM}(\bX)).
		\]
		Taking the supremum over $\mathbf{r}$ and $r$ completes the proof.
	\end{proof}
	
	\begin{corollary}
		Let $\cM$ be a tracial $\mathrm{W}^*$-algebra and $\cN$ a tracial $\mathrm{W}^*$-subalgebra.  Then
		\[
		\Ent^{\cU}(\acl(\cN):\cM) = \Ent^{\cU}(\cN:\cM).
		\]
	\end{corollary}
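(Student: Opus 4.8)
The plan is to reduce to the preceding theorem, which handles a single element algebraic over $\mathrm{W}^*(\bX)$, and then bootstrap from one element to an arbitrary countable tuple of algebraic elements. The inequality $\Ent^{\cU}(\cN:\cM) \leq \Ent^{\cU}(\acl(\cN):\cM)$ is immediate from Observation \ref{obs:monotonicity2}, since $\acl(\cN)$ is a tracial $\mathrm{W}^*$-subalgebra of $\cM$ containing $\cN$. For the reverse inequality it suffices, by Definition \ref{def:W*entropy}, to fix an arbitrary $\bY = (Y_j)_{j \in \N} \in L^\infty(\acl(\cN))^{\N}$ and prove $\Ent^{\cU}(\tp^{\cM}(\bY)) \leq \Ent^{\cU}(\cN:\cM)$. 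By Remark \ref{rem:definableoverN}, for each $j$ there are a definable predicate $\theta_j$ relative to $\rT_{\tr}$, a tuple $\bW_j \in L^\infty(\cN)^{\N}$, and a compact $A_j \ni Y_j$ with $d^{\cM}(\,\cdot\,, A_j) = \theta_j^{\cM}(\,\cdot\,, \bW_j)$, so $Y_j$ is algebraic over $\mathrm{W}^*(\bW_j)$. Let $\bX \in L^\infty(\cN)^{\N}$ be a single $\N$-tuple enumerating the entries of all of the $\bW_j$'s. Since $\mathrm{W}^*(\bW_j) \subseteq \mathrm{W}^*(Y_1,\dots,Y_{j-1},\bX)$ and the algebraic closure is monotone, $Y_j$ is algebraic over $\mathrm{W}^*(Y_1,\dots,Y_{j-1},\bX)$ for every $j$. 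Applying the preceding theorem inductively in $m$, and using Corollary \ref{cor:invariance} to disregard the ordering of coordinates, gives
\[
\Ent^{\cU}(\tp^{\cM}(Y_1,\dots,Y_m,\bX)) = \Ent^{\cU}(\tp^{\cM}(\bX)) \qquad \text{for every } m \in \N.
\]

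The main work is then to pass to the limit $m \to \infty$, i.e.\ to show $\Ent^{\cU}(\tp^{\cM}(\bY,\bX)) \leq \Ent^{\cU}(\tp^{\cM}(\bX))$; this is a genuine issue rather than a consequence of Corollary \ref{cor:invariance}, because $\bY$ need not lie in $\mathrm{W}^*(\bX)^{\N}$ (algebraic closures are strictly larger than generated algebras in general). Write $\bZ = (\bY,\bX)$ as a single $\N$-tuple and, for $m \in \N$, let $\bZ_m = (Y_1,\dots,Y_m,\bX)$, also an $\N$-tuple, obtained from $\bZ$ by the coordinate projection (hence quantifier-free definable, by Example \ref{ex:qfterm}) that forgets $Y_{m+1}, Y_{m+2},\dots$; the previous display gives $\Ent^{\cU}(\tp^{\cM}(\bZ_m)) = \Ent^{\cU}(\tp^{\cM}(\bX))$. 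Now fix $\mathbf{r} \in (0,\infty)^{\N}$, a finite $F \subseteq \N$, and $\epsilon > 0$ as in the definition of $\Ent^{\cU}(\tp^{\cM}(\bZ))$, and choose $m$ so large that $F$ involves no coordinate among the forgotten $Y_{m+1},Y_{m+2},\dots$. Given a basic open neighborhood $\cO_m$ of $\tp^{\cM}(\bZ_m)$, cut out by finitely many formulas in the variables of $\bZ_m$, those same formulas (read as formulas in the variables of $\bZ$ not involving the forgotten coordinates) cut out a neighborhood $\cO$ of $\tp^{\cM}(\bZ)$, and every matrix microstate in $\Gamma_{\mathbf{r}}^{(n)}(\cO)$ restricts on the $\bZ_m$-coordinates to a microstate in $\Gamma_{\mathbf{r}'}^{(n)}(\cO_m)$, where $\mathbf{r}'$ is the corresponding restriction of $\mathbf{r}$. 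Since orbital covering numbers in the coordinates of $F$ do not increase when coordinates are forgotten, this yields $K_{F,\epsilon}^{\orb}(\Gamma_{\mathbf{r}}^{(n)}(\cO)) \leq K_{F,\epsilon}^{\orb}(\Gamma_{\mathbf{r}'}^{(n)}(\cO_m))$ for all $n$; taking the infimum over $\cO_m$, then $\lim_{n \to \cU} n^{-2}\log$, then the supremum over $\mathbf{r}, F, \epsilon$, gives $\Ent^{\cU}(\tp^{\cM}(\bZ)) \leq \Ent^{\cU}(\tp^{\cM}(\bX))$. I expect this infinite-tuple passage---verifying that the microstate spaces and orbital covering numbers behave correctly when all but finitely many of the adjoined algebraic coordinates are forgotten---to be the main obstacle; everything finite is supplied by the preceding theorem.

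To finish, since $\bY \in \mathrm{W}^*(\bZ)^{\N}$, Corollary \ref{cor:invariance} gives $\Ent^{\cU}(\tp^{\cM}(\bY)) \leq \Ent^{\cU}(\tp^{\cM}(\bZ)) \leq \Ent^{\cU}(\tp^{\cM}(\bX))$, and $\Ent^{\cU}(\tp^{\cM}(\bX)) \leq \Ent^{\cU}(\cN:\cM)$ because $\bX \in L^\infty(\cN)^{\N}$. Taking the supremum over all $\bY \in L^\infty(\acl(\cN))^{\N}$ gives $\Ent^{\cU}(\acl(\cN):\cM) \leq \Ent^{\cU}(\cN:\cM)$, which together with the easy inequality completes the proof.
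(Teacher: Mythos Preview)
Your proposal is correct and follows essentially the same approach as the paper's proof: reduce to a countable tuple $\bX$ in $\cN$ over which each $Y_j$ is algebraic (the paper phrases this as choosing generators for a separable $\cN_0 \subseteq \cN$, you phrase it as enumerating the parameter tuples from Remark~\ref{rem:definableoverN}), apply the preceding theorem inductively to get $\Ent^{\cU}(\tp^{\cM}(Y_1,\dots,Y_m,\bX)) = \Ent^{\cU}(\tp^{\cM}(\bX))$ for each $m$, and then pass to the full tuple $(\bY,\bX)$ by observing that any fixed $(\mathbf{r},F,\epsilon)$ only sees finitely many $Y$-coordinates, so the relevant microstate covering numbers are controlled by those for $(Y_1,\dots,Y_m,\bX)$ via the preimage neighborhood. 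The paper's version of the limit step is phrased slightly more cleanly (it notes that $\Ent_{\mathbf{r},F,\epsilon}^{\cU}(\cO') = \Ent_{\mathbf{r}',F,\epsilon}^{\cU}(\cO)$ when $\cO'$ is the preimage of $\cO$ under the coordinate restriction), but the content is identical.
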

	
	\begin{proof}
		The inequality $\Ent^{\cU}(\acl(\cN):\cM) \geq \Ent^{\cU}(\cN:\cM)$ holds by Observation \ref{obs:monotonicity2}.
		
		On the other hand, suppose that $\bY$ is an $\N$-tuple in $\acl(\cN)$.  Using Remark \ref{rem:definableoverN}, each $Y_k$ is algebraic over some separable $\mathrm{W}^*$-subalgebra of $\cN$.  Let $\cN_0 \subseteq \cN$ be the join of all these subalgebras, so that $\cN_0$ is separable and $\bY$ is algebraic over $\cN_0$.  Let $\bX \in L^\infty(\cN_0)^{\N}$ generate $\cN_0$.  Since $Y_1$ is algebraic over $\cN_0$, we have
		\[
		\Ent^{\cU}(\tp^{\cM}(\bX,Y_1)) = \Ent^{\cU}(\tp^{\cM}(\bX)).
		\]
		Similarly, since $Y_2$ is algebraic over $\mathrm{W}^*(\bX,Y_1)$, we have
		\[
		\Ent^{\cU}(\tp^{\cM}(\bX,Y_1,Y_2)) = \Ent^{\cU}(\tp^{\cM}(\bX)).
		\]
		Continuing inductively, for $k \in \N$,
		\[
		\Ent^{\cU}(\tp^{\cM}(\bX,Y_1,\dots,Y_k)) = \Ent^{\cU}(\tp^{\cM}(\bX)).
		\]
		Now to analyze $\Ent^{\cU}(\tp^{\cM}(\bX,\bY))$, suppose $\mathbf{r} \in (0,\infty)^{\cN\sqcup \N}$ and $\epsilon > 0$ and $F \subseteq \N \sqcup \N$ is finite.  Then $F \subseteq \N \sqcup \{1,\dots,k\}$ for some $k \in \N$.  For every neighborhood $\cO$ of $\tp^{\cM}(\bX,Y_1,\dots,Y_k)$, there is a corresponding neighborhood $\cO'$ of $\tp^{\cM}(\bX,\bY)$ given as the preimage of $\cO$ under map restricting the type of an $\N \sqcup \N$-tuple to the type of the $\N \sqcup \{1,\dots,k\}$-subtuple.  Since $F \subseteq \N \sqcup \{1,\dots,k\}$, then
		\[
		\Ent_{\mathbf{r},F,\epsilon}^{\cU}(\cO') = \Ent_{\mathbf{r}',F,\epsilon}^{\cU}(\cO)
		\]
		where $\mathbf{r}'$ is the restriction of $\mathbf{r}$ to $\N \sqcup \{1,\dots,k\}$.  This implies that
		\begin{multline*}
			\Ent_{\mathbf{r},F,\epsilon}^{\cU}(\tp^{\cM}(\bX,\bY)) \\
			\leq \Ent_{\mathbf{r}',F,\epsilon}^{\cU}(\tp^{\cM}(\bX,Y_1,\dots,Y_k)) \leq \Ent^{\cU}(\tp^{\cM}(\bX,Y_1,\dots,Y_k)) = \Ent^{\cU}(\tp^{\cM}(\bX)).
		\end{multline*}
		Since $\mathbf{r}$, $F$, and $\epsilon$ were arbitrary, $\Ent^{\cU}(\tp^{\cM}(\bX,\bY)) \leq \Ent^{\cU}(\tp^{\cM}(\bX))$.  Also, $\Ent^{\cU}(\tp^{\cM}(\bY)) \leq \Ent^{\cU}(\tp^{\cM}(\bX,\bY))$ by Corollary \ref{cor:invariance}.  Since $\bY$ was an arbitrary $\N$-tuple in $\acl(\cN)$, we obtain $\Ent^{\cU}(\acl(\cN):\cM) \leq \Ent^{\cU}(\cN:\cM)$ as desired.
	\end{proof}
	
	\section{Entropy for quantifier-free and existential types} \label{sec:qfentropy}
	
	In this section, we explain how Hayes' $1$-bounded entropy (or covering entropy for non-commutative laws) relates to the entropy for types in this paper.  Specifically, the $1$-bounded entropy for laws corresponds is the version for quantifier-free types and the $1$-bounded entropy of $\cN$ in the presence of a larger $\mathrm{W}^*$-algebra $\cM$ is the version for existential types.
	
	\subsection{Entropy for quantifier-free types}
	
	We begin with the quantifier-free version, essentially the same as orbital version of $h(\cM)$ in \cite[Appendix A]{Hayes2018}.
	
	\begin{definition}[Entropy for quantifier-free types]
		For $\cK \subseteq \mathbb{S}_{\qf}(\rT_{\tr})$ and $\mathbf{r} \in (0,\infty)^{\N}$, we define
		\[
		\Gamma_{\mathbf{r}}^{(n)}(\cK) = \left\{\bX \in \prod_{j \in \N} D_{r_j}^{M_n(\C)}: \tp_{\qf}^{M_n(\C)}(\bX) \in \cK \right\}.
		\]
		Then we define for $F \subseteq \N$ finite and $\epsilon > 0$,
		\[
		\Ent_{\qf,\mathbf{r},F,\epsilon}^{\cU}(\cK) = \inf_{\cO \supseteq \cK \text{ open in } \mathbb{S}_{\mathbf{r}}(\rT_{\tr})} \lim_{n \to \cU} \frac{1}{n^2} \log K_{F,\epsilon}^{\orb}(\Gamma_{\mathbf{r}}^{(n)}(\cO)),
		\]
		and we set
		\[
		\Ent_{\qf}^{\cU}(\cK) = \sup_{\mathbf{r}, F, \epsilon} \Ent_{\qf,\mathbf{r},F,\epsilon}^{\cU}(\cK).
		\]
		For $\mu \in \mathbb{S}_{\qf}(\rT_{\tr})$, let $\Ent_{\qf}^{\cU}(\mu) = \Ent_{\qf}^{\cU}(\{\mu\})$.
	\end{definition}
	
	Some earlier works such as \cite{Hayes2018} phrased the definition of $\Ent_{\qf}^{\cU}(\mu)$ in terms of particular open sets $\cO_k$ (for instance, those defined by looking at moments of order up to $k$ being within some distance $1/k$ of the moments of $\mu$).  This does not change the definition because of the following lemma.
	
	\begin{lemma} \label{lem:sequenceofneighborhoods}
		Let $\mathbf{r} \in (0,\infty)^{\N}$.  Let $\cK \subseteq \mathbb{S}_{\mathbf{r}}(\rT_{\tr})$.  Let $(\cO_k)_{k \in \N}$ be a sequence of open subsets of $\mathbb{S}(\rT_{\tr})$ such that $\overline{\cO}_{k+1} \subseteq \cO_k$ and $\bigcap_{k=1}^\infty \cO_k = \cK$.  Then
		\[
		\Ent_{\mathbf{r},F,\epsilon}^{\cU}(\cK) = \lim_{k \to \infty} \Ent_{\mathbf{r},F,\epsilon}^{\cU}(\cO_k) = \inf_{k \in \N} \Ent_{\mathbf{r},F,\epsilon}^{\cU}(\cO_k).
		\]
		The same holds with $\mathbb{S}(\rT_{\tr})$ and $\Ent^{\cU}$ replaced by their quantifier-free versions.
	\end{lemma}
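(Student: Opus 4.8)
The plan is to establish the two inequalities $\Ent_{\mathbf{r},F,\epsilon}^{\cU}(\cK) \le \inf_{k} \Ent_{\mathbf{r},F,\epsilon}^{\cU}(\cO_k)$ and $\inf_{k} \Ent_{\mathbf{r},F,\epsilon}^{\cU}(\cO_k) \le \Ent_{\mathbf{r},F,\epsilon}^{\cU}(\cK)$, and to note separately that the sequence $(\Ent_{\mathbf{r},F,\epsilon}^{\cU}(\cO_k))_{k \in \N}$ is nonincreasing, so that its infimum is also its limit. The first inequality and the monotonicity of the sequence both follow at once from Observation \ref{obs:monotonicity}: each $\cO_k$ is an open set containing $\cK$, and $\cO_{k+1} \subseteq \overline{\cO}_{k+1} \subseteq \cO_k$.

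The content is therefore in the inequality $\inf_{k} \Ent_{\mathbf{r},F,\epsilon}^{\cU}(\cO_k) \le \Ent_{\mathbf{r},F,\epsilon}^{\cU}(\cK)$, which I would prove by showing that every open neighborhood $\cO$ of $\cK$ in $\mathbb{S}(\rT_{\tr})$ already contains $\cO_k \cap \mathbb{S}_{\mathbf{r}}(\rT_{\tr})$ for some $k$. Since $\mathbb{S}_{\mathbf{r}}(\rT_{\tr})$ is compact and embeds topologically into $\mathbb{S}(\rT_{\tr})$, the sets $C_k := \overline{\cO}_k \cap \mathbb{S}_{\mathbf{r}}(\rT_{\tr})$ form a decreasing sequence of compact sets, and the hypothesis $\overline{\cO}_{k+1} \subseteq \cO_k$ together with $\cK \subseteq \mathbb{S}_{\mathbf{r}}(\rT_{\tr})$ gives $\bigcap_k C_k = \bigcap_k \cO_k \cap \mathbb{S}_{\mathbf{r}}(\rT_{\tr}) = \cK$. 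Given $\cO \supseteq \cK$ open, the sets $C_k \setminus \cO = C_k \cap \cO^c$ are compact, decreasing, with empty intersection (because $\cK \subseteq \cO$), so some $C_k \setminus \cO = \varnothing$, i.e.\ $\cO_k \cap \mathbb{S}_{\mathbf{r}}(\rT_{\tr}) \subseteq C_k \subseteq \cO$. Because $\Gamma_{\mathbf{r}}^{(n)}(\cO')$ tests only whether the type of a tuple in $\prod_{j} D_{r_j}^{M_n(\C)}$ lies in $\cO'$, and such a type automatically lies in $\mathbb{S}_{\mathbf{r}}(\rT_{\tr})$, this containment yields $\Gamma_{\mathbf{r}}^{(n)}(\cO_k) \subseteq \Gamma_{\mathbf{r}}^{(n)}(\cO)$ for every $n$, hence $K_{F,\epsilon}^{\orb}(\Gamma_{\mathbf{r}}^{(n)}(\cO_k)) \le K_{F,\epsilon}^{\orb}(\Gamma_{\mathbf{r}}^{(n)}(\cO))$; applying $\lim_{n \to \cU} \frac{1}{n^2}\log(\cdot)$ and using the formula for open sets in Observation \ref{obs:monotonicity} gives $\Ent_{\mathbf{r},F,\epsilon}^{\cU}(\cO_k) \le \lim_{n \to \cU} \frac{1}{n^2}\log K_{F,\epsilon}^{\orb}(\Gamma_{\mathbf{r}}^{(n)}(\cO))$. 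Taking the infimum over all open $\cO \supseteq \cK$ produces $\inf_k \Ent_{\mathbf{r},F,\epsilon}^{\cU}(\cO_k) \le \Ent_{\mathbf{r},F,\epsilon}^{\cU}(\cK)$.

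The quantifier-free version is proved by the identical argument, with $\mathbb{S}(\rT_{\tr})$, $\mathbb{S}_{\mathbf{r}}(\rT_{\tr})$, and $\Ent_{\mathbf{r},F,\epsilon}^{\cU}$ replaced throughout by $\mathbb{S}_{\qf}(\rT_{\tr})$, $\mathbb{S}_{\qf,\mathbf{r}}(\rT_{\tr})$, and $\Ent_{\qf,\mathbf{r},F,\epsilon}^{\cU}$, using that $\mathbb{S}_{\qf,\mathbf{r}}(\rT_{\tr})$ is compact and embeds topologically into $\mathbb{S}_{\qf}(\rT_{\tr})$. I expect no serious obstacle here; the only point meriting care is the bookkeeping about topologies --- keeping straight that closures $\overline{\cO}_k$ are taken in the colimit space $\mathbb{S}(\rT_{\tr})$ while the compactness is extracted from the slice $\mathbb{S}_{\mathbf{r}}(\rT_{\tr})$ --- together with the observation that, because $\Gamma_{\mathbf{r}}^{(n)}$ only ever sees types in $\mathbb{S}_{\mathbf{r}}(\rT_{\tr})$, it suffices to produce $k$ with $\cO_k \cap \mathbb{S}_{\mathbf{r}}(\rT_{\tr}) \subseteq \cO$ rather than $\cO_k \subseteq \cO$.
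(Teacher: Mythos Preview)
Your proof is correct and follows essentially the same approach as the paper: both use monotonicity (Observation \ref{obs:monotonicity}) for one inequality, and for the other, both exploit compactness of $\mathbb{S}_{\mathbf{r}}(\rT_{\tr})$ to show that any open $\cO \supseteq \cK$ contains $\cO_k \cap \mathbb{S}_{\mathbf{r}}(\rT_{\tr})$ for some $k$. The only difference is cosmetic---the paper phrases the compactness step as extracting a finite subcover of $\mathbb{S}_{\mathbf{r}}(\rT_{\tr}) \setminus \cO$ by the nested open sets $\overline{\cO}_k^c$, whereas you phrase it via the decreasing compact sets $C_k \setminus \cO$ having empty intersection; you are also a bit more explicit about why the containment $\cO_k \cap \mathbb{S}_{\mathbf{r}}(\rT_{\tr}) \subseteq \cO$ suffices at the level of microstate spaces.
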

	
	\begin{proof}
		By Observation \ref{obs:monotonicity},
		\[
		\Ent_{\mathbf{r},F,\epsilon}^{\cU}(\cK) \leq \Ent_{\mathbf{r},F,\epsilon}^{\cU}(\cO_{k+1}) \leq \Ent_{\mathbf{r},F,\epsilon}^{\cU}(\cO_k),
		\]
		so that
		\[
		\Ent_{\mathbf{r},F,\epsilon}^{\cU}(\cK) \leq \inf_{k \in \N} \Ent_{\mathbf{r},F,\epsilon}^{\cU}(\cO_k) = \lim_{k \to \infty} \Ent_{\mathbf{r},F,\epsilon}^{\cU}(\cO_k).
		\]
		For the inequality in the other direction, fix $\cO \supseteq \cK$ open.  Then $\mathbb{S}_{\mathbf{r}}(\rT_{\tr}) \setminus \cO$ is closed and disjoint from $k$.  Moreover, it is contained in $\cK^c = \bigcup_{k \in \N} \cO_k^c = \bigcup_{k \in \N} \overline{\cO}_k^c$.  By compactness, there is a finite subcollection of $\overline{\cO}_k^c$'s that covers $\mathbb{S}_{\mathbf{r}}(\rT_{\tr}) \setminus \cO$.  The $\cO_k$'s are nested, so there exists some $k$ such that $\mathbb{S}_{\mathbf{r}}(\rT_{\tr}) \setminus \cO \subseteq \overline{\cO_k}^c$, hence $\cO_k \cap \mathbb{S}_{\mathbf{r}}(\rT_{\tr}) \subseteq \cO$.  Therefore,
		\[
		\inf_{k \in \N} \Ent_{\mathbf{r},F,\epsilon}^{\cU}(\cO_k) \leq \Ent_{\mathbf{r},F,\epsilon}^{\cU}(\cO).
		\]
		Since $\cO$ was arbitrary,
		\[
		\inf_{k \in \N} \Ent_{\mathbf{r},F,\epsilon}^{\cU}(\cO_k) \leq \Ent_{\mathbf{r},F,\epsilon}^{\cU}(\cK).
		\]
		The argument for the quantifier-free case is identical.
	\end{proof}
	
	This lemma also allows us to relate the entropy $\Ent_{\qf}^{\cU}$ for quantifier-free types directly to the entropy for types $\Ent^{\cU}$.
	
	\begin{lemma} \label{lem:qfentropyrelationship}
		Let $\pi_{\qf}: \mathbb{S}(\rT_{\tr}) \to \mathbb{S}_{\qf}(\rT_{\tr})$ be the canonical restriction map.  Let $\cK \subseteq \mathbb{S}_{\qf}(\rT_{\tr})$ be closed.  Then
		\[
		\Ent_{\qf}^{\cU}(\cK) = \Ent^{\cU}(\pi_{\qf}^{-1}(\cK)).
		\]
	\end{lemma}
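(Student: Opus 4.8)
The plan is to prove the two inequalities $\Ent_{\qf}^{\cU}(\cK) \ge \Ent^{\cU}(\pi_{\qf}^{-1}(\cK))$ and $\Ent_{\qf}^{\cU}(\cK) \le \Ent^{\cU}(\pi_{\qf}^{-1}(\cK))$ separately, after reducing both sides to a fixed cutoff. Since $\Ent_{\qf}^{\cU}(\cK) = \sup_{\mathbf{r},F,\epsilon} \Ent_{\qf,\mathbf{r},F,\epsilon}^{\cU}(\cK)$ and $\Ent^{\cU}(\pi_{\qf}^{-1}(\cK)) = \sup_{\mathbf{r},F,\epsilon} \Ent_{\mathbf{r},F,\epsilon}^{\cU}(\pi_{\qf}^{-1}(\cK))$, it suffices to establish $\Ent_{\qf,\mathbf{r},F,\epsilon}^{\cU}(\cK) = \Ent_{\mathbf{r},F,\epsilon}^{\cU}(\pi_{\qf}^{-1}(\cK))$ for every $\mathbf{r} \in (0,\infty)^{\N}$, finite $F \subseteq \N$, and $\epsilon > 0$. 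The observation that ties the two together is that for $\bX \in \prod_j D_{r_j}^{M_n(\C)}$ one has $\tp_{\qf}^{M_n(\C)}(\bX) = \pi_{\qf}(\tp^{M_n(\C)}(\bX))$, so for any open $\cO' \subseteq \mathbb{S}_{\qf}(\rT_{\tr})$ the quantifier-free microstate space $\Gamma_{\mathbf{r}}^{(n)}(\cO')$ coincides with the full-type microstate space $\Gamma_{\mathbf{r}}^{(n)}(\pi_{\qf}^{-1}(\cO'))$. Consequently the orbital covering numbers agree, and $\Ent_{\qf,\mathbf{r},F,\epsilon}^{\cU}(\cK) \ge \Ent_{\mathbf{r},F,\epsilon}^{\cU}(\pi_{\qf}^{-1}(\cK))$ is immediate, since each $\pi_{\qf}^{-1}(\cO')$ with $\cO' \supseteq \cK$ open is an open neighborhood of $\pi_{\qf}^{-1}(\cK)$.

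The reverse inequality is the real content: given an arbitrary open $\cO \supseteq \pi_{\qf}^{-1}(\cK)$ in $\mathbb{S}(\rT_{\tr})$, I must produce an open $\cO' \supseteq \cK$ in $\mathbb{S}_{\qf}(\rT_{\tr})$ with $\Gamma_{\mathbf{r}}^{(n)}(\cO') \subseteq \Gamma_{\mathbf{r}}^{(n)}(\cO)$ for all $n$ — in other words, a cofinal family of \emph{saturated} open neighborhoods of $\pi_{\qf}^{-1}(\cK)$ suffices. The first step is to localize to the $\mathbf{r}$-ball: $\Gamma_{\mathbf{r}}^{(n)}$ only ever sees matrix tuples whose entries have operator norm at most $\mathbf{r}$, whose types therefore lie in $\mathbb{S}_{\mathbf{r}}(\rT_{\tr})$; by Remark \ref{rem:qfnorm} this is a closed subspace of $\mathbb{S}(\rT_{\tr})$ equal to $\pi_{\qf}^{-1}(\mathbb{S}_{\qf,\mathbf{r}}(\rT_{\tr}))$, and the same formula shows $\mathbb{S}_{\qf,\mathbf{r}}(\rT_{\tr})$ is closed in $\mathbb{S}_{\qf}(\rT_{\tr})$. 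So replacing $\cO$ by $\cO \cap \mathbb{S}_{\mathbf{r}}(\rT_{\tr})$ changes no microstate space, and the problem becomes one about saturated neighborhoods of $\pi_{\qf}^{-1}(\cK) \cap \mathbb{S}_{\mathbf{r}}(\rT_{\tr})$ inside the \emph{compact metrizable} space $\mathbb{S}_{\mathbf{r}}(\rT_{\tr})$.

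Now $\pi_{\qf}$ restricts to a continuous surjection $\mathbb{S}_{\mathbf{r}}(\rT_{\tr}) \to \mathbb{S}_{\qf,\mathbf{r}}(\rT_{\tr})$ of compact Hausdorff spaces, hence a closed map. Therefore $V := \mathbb{S}_{\qf,\mathbf{r}}(\rT_{\tr}) \setminus \pi_{\qf}\bigl(\mathbb{S}_{\mathbf{r}}(\rT_{\tr}) \setminus \cO\bigr)$ is open in $\mathbb{S}_{\qf,\mathbf{r}}(\rT_{\tr})$; it contains $\cK \cap \mathbb{S}_{\qf,\mathbf{r}}(\rT_{\tr})$ (if $\pi_{\qf}(\mu) \in \cK$ then $\mu \in \pi_{\qf}^{-1}(\cK) \subseteq \cO$), and by construction $\pi_{\qf}^{-1}(V) \cap \mathbb{S}_{\mathbf{r}}(\rT_{\tr}) \subseteq \cO$. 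Using that $\mathbb{S}_{\qf,\mathbf{r}}(\rT_{\tr})$ is closed in $\mathbb{S}_{\qf}(\rT_{\tr})$ and that $\Gamma_{\mathbf{r}}^{(n)}$ only depends on the intersection with $\mathbb{S}_{\qf,\mathbf{r}}(\rT_{\tr})$, one extends $V$ to an open set $\cO'$ of $\mathbb{S}_{\qf}(\rT_{\tr})$ with $\cO' \cap \mathbb{S}_{\qf,\mathbf{r}}(\rT_{\tr}) = V$ and $\cO' \supseteq \mathbb{S}_{\qf}(\rT_{\tr}) \setminus \mathbb{S}_{\qf,\mathbf{r}}(\rT_{\tr})$, so that $\cO' \supseteq \cK$. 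Then $\Gamma_{\mathbf{r}}^{(n)}(\cO') = \Gamma_{\mathbf{r}}^{(n)}(V) = \Gamma_{\mathbf{r}}^{(n)}(\pi_{\qf}^{-1}(V)) \subseteq \Gamma_{\mathbf{r}}^{(n)}(\cO)$, comparing covering numbers and taking the infimum over $\cO$ gives $\Ent_{\qf,\mathbf{r},F,\epsilon}^{\cU}(\cK) \le \Ent_{\mathbf{r},F,\epsilon}^{\cU}(\pi_{\qf}^{-1}(\cK))$, and taking the supremum over $\mathbf{r}, F, \epsilon$ completes the proof.

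I expect the only genuine difficulty to be bookkeeping: keeping straight the ``logic'' (colimit) topologies on $\mathbb{S}(\rT_{\tr})$ and $\mathbb{S}_{\qf}(\rT_{\tr})$ versus the honestly compact pieces $\mathbb{S}_{\mathbf{r}}(\rT_{\tr})$, $\mathbb{S}_{\qf,\mathbf{r}}(\rT_{\tr})$, and invoking Remark \ref{rem:qfnorm} at the right moments so that these pieces are closed and $\pi_{\qf}$ carries one onto the other as a closed map. The purely topological fact used — a closed surjection of compact Hausdorff spaces admits a cofinal family of saturated open neighborhoods of any saturated closed set — is routine, and everything else rests on the already-observed equality of quantifier-free and full microstate spaces under $\pi_{\qf}$.
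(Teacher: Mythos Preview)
Your proof is correct, and it takes a genuinely different route from the paper's. The paper fixes $\mathbf{r}$, uses metrizability of $\mathbb{S}_{\qf,\mathbf{r}}(\rT_{\tr})$ (coming from separability of $\cL_{\tr}$) to produce a decreasing sequence $\cO_k$ of open neighborhoods of $\cK_{\mathbf{r}} = \cK \cap \mathbb{S}_{\qf,\mathbf{r}}(\rT_{\tr})$ with $\overline{\cO}_{k+1} \subseteq \cO_k$ and $\bigcap_k \cO_k = \cK_{\mathbf{r}}$, and then applies Lemma~\ref{lem:sequenceofneighborhoods} on both the quantifier-free side and the full-type side (via $\pi_{\qf}^{-1}(\cO_k)$) to identify both entropies with $\inf_k \Ent_{\mathbf{r},F,\epsilon}^{\cU}(\pi_{\qf}^{-1}(\cO_k))$. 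You instead argue directly that saturated open sets are cofinal among neighborhoods of $\pi_{\qf}^{-1}(\cK) \cap \mathbb{S}_{\mathbf{r}}(\rT_{\tr})$, using only that $\pi_{\qf}\colon \mathbb{S}_{\mathbf{r}}(\rT_{\tr}) \to \mathbb{S}_{\qf,\mathbf{r}}(\rT_{\tr})$ is a closed map between compact Hausdorff spaces. Your argument is a bit more elementary in that it needs neither separability of the language nor the auxiliary Lemma~\ref{lem:sequenceofneighborhoods}; the paper's version has the advantage of reusing machinery already in place and staying within its sequential idiom. One small point worth making explicit in your write-up: the extension of $V$ to a global open $\cO'$ uses that $\mathbb{S}_{\qf,\mathbf{r}}(\rT_{\tr}) \hookrightarrow \mathbb{S}_{\qf}(\rT_{\tr})$ is a topological embedding with closed image (compact into Hausdorff), so one first writes $V = W \cap \mathbb{S}_{\qf,\mathbf{r}}(\rT_{\tr})$ for some open $W$ and then takes $\cO' = W \cup \bigl(\mathbb{S}_{\qf}(\rT_{\tr}) \setminus \mathbb{S}_{\qf,\mathbf{r}}(\rT_{\tr})\bigr)$.
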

	
	\begin{proof}
		Fix $\mathbf{r} \in (0,\infty)^{\cN}$, and let $\cK_{\mathbf{r}} = \cK \cap \mathbb{S}_{\qf,\mathbf{r}}(\rT_{\tr})$.  Since $\mathbb{S}_{\qf,\mathbf{r}}(\rT_{\tr})$ is metrizable, there exists a sequence of open sets $\cO_k$ in $\mathbb{S}_{\qf,\mathbf{r}}(\rT_{\tr})$ such that $\overline{\cO}_{k+1} \subseteq \cO_k$ and $\bigcap_{k \in \N} \cO_k = \cK_{\mathbf{r}}$ (and these can be extended to open sets in $\mathbb{S}_{\qf}(\rT_{\tr})$ since the inclusion of $\mathbb{S}_{\qf,\mathbf{r}}(\rT_{\tr})$ is a topological embedding).  Now $\pi_{\qf}^{-1}(\cO_k)$ is open in $\mathbb{S}_{\mathbf{r}}(\rT_{\tr})$ and $\overline{\pi_{\qf}^{-1}(\cO_{k+1})} \subseteq \pi_{\qf}^{-1}(\overline{\cO}_{k+1}) \subseteq \pi_{\qf}^{-1}(\cO_k)$ and $\bigcap_{k \in \N} \pi_{\qf}^{-1}(\cO_k) = \pi_{\qf}^{-1}(\cK_{\mathbf{r}})$.  Note that $\Gamma_{\mathbf{r}}^{(n)}(\cO_k) = \Gamma_{\mathbf{r}}^{(n)}(\pi_{\qf}^{-1}(\cO_k))$.  Thus, using the previous lemma,
		\begin{align*}
			\Ent_{\qf,\mathbf{r},F,\epsilon}^{\cU}(\cK) &= \Ent_{\qf,\mathbf{r},F,\epsilon}^{\cU}(\cK_{\mathbf{r}}) \\
			&= \inf_{k \in \N} \Ent_{\qf,\mathbf{r},F,\epsilon}(\cO_k) \\
			&= \inf_{k \in \N} \Ent_{\mathbf{r},F,\epsilon}(\pi_{\qf}^{-1}(\cO_k)) \\
			&= \Ent_{\mathbf{r},F,\epsilon}^{\cU}(\pi_{\qf}^{-1}(\cK_{\mathbf{r}})) \\
			&= \Ent_{\mathbf{r},F,\epsilon}^{\cU}(\pi_{\qf}^{-1}(\cK)).
		\end{align*}
		Taking the supremum over $\mathbf{r}$, $F$, and $\epsilon$ completes the argument.
	\end{proof}
	
	In particular, by combining this with the variational principle (Proposition \ref{prop:variational}), we obtain the following corollary.
	
	\begin{corollary} \label{cor:qfvariational}
		Let $\pi_{\qf}: \mathbb{S}(\rT_{\tr}) \to \mathbb{S}_{\qf}(\rT_{\tr})$ be the restriction map.  If $\mu \in \mathbb{S}_{\qf}(\rT_{\tr})$, then
		\[
		\Ent_{\qf}^{\cU}(\mu) = \sup_{\nu \in \pi_{\qf}^{-1}(\mu)} \Ent^{\cU}(\nu).
		\]
	\end{corollary}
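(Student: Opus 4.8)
The plan is to deduce the corollary purely formally from two results already in hand: Lemma \ref{lem:qfentropyrelationship}, which identifies the quantifier-free covering entropy of a closed set with the entropy of its $\pi_{\qf}$-preimage, and the variational principle of Proposition \ref{prop:variational}. No new microstate estimates are needed; everything has been packaged already.

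First I would check that $\{\mu\}$ is a closed subset of $\mathbb{S}_{\qf}(\rT_{\tr})$, so that Lemma \ref{lem:qfentropyrelationship} applies to $\cK = \{\mu\}$. Each $\mathbb{S}_{\qf,\bD}(\rT_{\tr})$ is compact Hausdorff, the inclusion $\mathbb{S}_{\qf,\bD}(\rT_{\tr}) \hookrightarrow \mathbb{S}_{\qf}(\rT_{\tr})$ is a topological embedding (the quantifier-free analogue of the corresponding fact for full type spaces), and $\{\mu\} \cap \mathbb{S}_{\qf,\bD}(\rT_{\tr})$ is therefore closed in $\mathbb{S}_{\qf,\bD}(\rT_{\tr})$ for every $\bD$; by definition of the logic topology on $\mathbb{S}_{\qf}(\rT_{\tr})$ this makes $\{\mu\}$ closed. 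Lemma \ref{lem:qfentropyrelationship} then gives
\[
\Ent_{\qf}^{\cU}(\mu) = \Ent^{\cU}(\pi_{\qf}^{-1}(\mu)).
\]

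Next I would observe that $\pi_{\qf}^{-1}(\mu)$ is a closed subset of $\mathbb{S}(\rT_{\tr})$, since $\pi_{\qf}$ is continuous (as noted when $\pi_{\qf}$ was shown to be a topological quotient map). Hence $\operatorname{cl}(\pi_{\qf}^{-1}(\mu)) = \pi_{\qf}^{-1}(\mu)$, and applying \eqref{eq:variational2} of Proposition \ref{prop:variational} with $\cK = \pi_{\qf}^{-1}(\mu)$ the two inequalities collapse to
\[
\Ent^{\cU}(\pi_{\qf}^{-1}(\mu)) = \sup_{\nu \in \pi_{\qf}^{-1}(\mu)} \Ent^{\cU}(\nu).
\]
Chaining this with the identity from the previous paragraph yields the statement. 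I do not anticipate a real obstacle here: the substantive work is already contained in Lemma \ref{lem:qfentropyrelationship} (which matches quantifier-free microstate covering numbers to type microstate covering numbers via $\Gamma_{\mathbf{r}}^{(n)}(\cO_k) = \Gamma_{\mathbf{r}}^{(n)}(\pi_{\qf}^{-1}(\cO_k))$ and Lemma \ref{lem:sequenceofneighborhoods}) and in Proposition \ref{prop:variational}. The only point worth a sentence of care is the closedness of $\pi_{\qf}^{-1}(\mu)$, which ensures that the upper bound in the variational principle is not weakened by passing to the closure; this is immediate from continuity of $\pi_{\qf}$.
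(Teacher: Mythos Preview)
Your proposal is correct and follows essentially the same approach as the paper, which simply states that the corollary is obtained by combining Lemma \ref{lem:qfentropyrelationship} with the variational principle (Proposition \ref{prop:variational}). Your added care in verifying that $\{\mu\}$ is closed in $\mathbb{S}_{\qf}(\rT_{\tr})$ and that $\pi_{\qf}^{-1}(\mu)$ is closed in $\mathbb{S}(\rT_{\tr})$ is appropriate and fills in details the paper leaves implicit.
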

	
	This also implies that the quantifier-free entropy of $\tp_{\qf}^{\cM}(\bX)$ only depends on $\mathrm{W}^*(\bX)$, which is an important property of $1$-bounded entropy previously established by Hayes in \cite[Theorem A.9]{Hayes2018}.
	
	\begin{corollary}
		Let $\cM = (M,\tau)$ be a tracial $\mathrm{W}^*$-algebra.  Let $\bX$, $\bY \in M^{\N}$.  If $\mathrm{W}^*(\bX) = \mathrm{W}^*(\bY)$, then
		\[
		\Ent_{\qf}^{\cU}(\tp_{\qf}^{\cM}(\bX)) = \Ent_{\qf}^{\cU}(\tp_{\qf}^{\cM}(\bY)). 
		\]
	\end{corollary}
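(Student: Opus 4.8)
The plan is to reduce the statement to the corresponding fact for \emph{full} types, which we have already proved (Corollary \ref{cor:invariance}), by passing through the relationship between quantifier-free entropy and entropy for types established in Lemma \ref{lem:qfentropyrelationship}. Write $\mu_{\bX} = \tp_{\qf}^{\cM}(\bX)$ and $\mu_{\bY} = \tp_{\qf}^{\cM}(\bY)$, and let $\pi_{\qf}\colon \mathbb{S}(\rT_{\tr}) \to \mathbb{S}_{\qf}(\rT_{\tr})$ be the restriction map. Since $\{\mu_{\bX}\}$ and $\{\mu_{\bY}\}$ are closed, Lemma \ref{lem:qfentropyrelationship} reduces the claim to showing $\Ent^{\cU}(\pi_{\qf}^{-1}(\mu_{\bX})) = \Ent^{\cU}(\pi_{\qf}^{-1}(\mu_{\bY}))$. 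Because $\mathrm{W}^*(\bX) = \mathrm{W}^*(\bY)$, Proposition \ref{prop:deffuncrealization} supplies quantifier-free definable functions $\mathbf{f}$ and $\mathbf{g}$ relative to $\rT_{\tr}$ with $\bY = \mathbf{f}^{\cM}(\bX)$ and $\bX = \mathbf{g}^{\cM}(\bY)$; these induce continuous pushforward maps $\mathbf{f}_*, \mathbf{g}_*$ on $\mathbb{S}(\rT_{\tr})$ that commute with $\pi_{\qf}$ and with the corresponding pushforwards on $\mathbb{S}_{\qf}(\rT_{\tr})$ (and $\mathbf{f}_*\mu_{\bX} = \mu_{\bY}$, $\mathbf{g}_*\mu_{\bY} = \mu_{\bX}$ at the quantifier-free level).

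The key step, and the place where a little care is needed, is to check that $\mathbf{f}_*$ and $\mathbf{g}_*$ restrict to mutually inverse bijections between the fibers $\pi_{\qf}^{-1}(\mu_{\bX})$ and $\pi_{\qf}^{-1}(\mu_{\bY})$. For this I would use that $\mathbf{g}\circ\mathbf{f}$ is again a quantifier-free definable function, so for each $k$ the map $\psi_k(\mathbf{x}) = d\big((\mathbf{g}\circ\mathbf{f})_k(\mathbf{x}), x_k\big)$ is a quantifier-free definable predicate. Since $(\mathbf{g}\circ\mathbf{f})^{\cM}(\bX) = \mathbf{g}^{\cM}(\bY) = \bX$, we have $\psi_k^{\cM}(\bX) = 0$, and because a quantifier-free definable predicate depends only on the quantifier-free type, $\psi_k^{\cM'}(\bX') = 0$ for every $\cM' \models \rT_{\tr}$ and every $\bX'$ with $\tp_{\qf}^{\cM'}(\bX') = \mu_{\bX}$; that is, $(\mathbf{g}\circ\mathbf{f})^{\cM'}(\bX') = \bX'$. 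Hence $\mathbf{g}_*\mathbf{f}_*\nu = \nu$ for all $\nu \in \pi_{\qf}^{-1}(\mu_{\bX})$, and symmetrically $\mathbf{f}_*\mathbf{g}_*\nu = \nu$ for all $\nu \in \pi_{\qf}^{-1}(\mu_{\bY})$. Combined with the commutation with $\pi_{\qf}$, this gives $\mathbf{f}_*\big(\pi_{\qf}^{-1}(\mu_{\bX})\big) = \pi_{\qf}^{-1}(\mu_{\bY})$ and $\mathbf{g}_*\big(\pi_{\qf}^{-1}(\mu_{\bY})\big) = \pi_{\qf}^{-1}(\mu_{\bX})$. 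I expect this bookkeeping to be the main (though routine) obstacle; everything else is a direct appeal to cited results.

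To finish, I would invoke the monotonicity of entropy under pushforward. Set $\mathbf{r} = (\norm{X_j})_{j\in\N}$ and $\mathbf{s} = (\norm{Y_j})_{j\in\N}$; by Remark \ref{rem:qfnorm} the closed sets $\pi_{\qf}^{-1}(\mu_{\bX})$ and $\pi_{\qf}^{-1}(\mu_{\bY})$ lie in $\mathbb{S}_{\mathbf{r}}(\rT_{\tr})$ and $\mathbb{S}_{\mathbf{s}}(\rT_{\tr})$ respectively. Choosing range bounds $\mathbf{r}'$ with $s_j \leq r_j'$ so that $\mathbf{f}$ maps $\prod_j D_{r_j}$ into $\prod_j D_{r_j'}$, Proposition \ref{prop:pushforward} gives $\Ent_{\mathbf{r}'}^{\cU}\big(\mathbf{f}_*(\pi_{\qf}^{-1}(\mu_{\bX}))\big) \leq \Ent_{\mathbf{r}}^{\cU}(\pi_{\qf}^{-1}(\mu_{\bX}))$, and Corollary \ref{cor:independentofr} removes the dependence on the radii, yielding $\Ent^{\cU}(\pi_{\qf}^{-1}(\mu_{\bY})) \leq \Ent^{\cU}(\pi_{\qf}^{-1}(\mu_{\bX}))$. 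Interchanging the roles of $\bX$ and $\bY$ (and of $\mathbf{f}$ and $\mathbf{g}$) gives the reverse inequality, so the two are equal, and the theorem follows from Lemma \ref{lem:qfentropyrelationship}. (Alternatively, one could run essentially the same argument through the variational principle of Corollary \ref{cor:qfvariational}, transporting full types in $\pi_{\qf}^{-1}(\mu_{\bX})$ to $\pi_{\qf}^{-1}(\mu_{\bY})$ by $\mathbf{f}_*$ and applying Corollary \ref{cor:invariance} fiberwise.)
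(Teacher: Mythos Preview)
Your proposal is correct and follows essentially the same approach as the paper: reduce via Lemma \ref{lem:qfentropyrelationship} to the fibers $\pi_{\qf}^{-1}(\mu_{\bX})$ and $\pi_{\qf}^{-1}(\mu_{\bY})$, use Proposition \ref{prop:deffuncrealization} to obtain quantifier-free definable $\mathbf{f},\mathbf{g}$, argue via the quantifier-free predicate $d((\mathbf{g}\circ\mathbf{f})_k(\mathbf{x}),x_k)$ that $\mathbf{f}_*,\mathbf{g}_*$ are mutually inverse on these fibers, and then apply Proposition \ref{prop:pushforward} (with Remark \ref{rem:qfnorm} to locate the fibers in some $\mathbb{S}_{\mathbf{r}}$). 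The only cosmetic difference is that you invoke Corollary \ref{cor:independentofr} explicitly to strip the radii, whereas the paper absorbs this into the statement of Proposition \ref{prop:pushforward}.
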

	
	\begin{proof}
		By Proposition \ref{prop:deffuncrealization}, there exist quantifier-free definable functions $\mathbf{f}$ and $\mathbf{g}$ such that $\mathbf{f}^{\cM}(\bX) = \bY$ and $\mathbf{g}^{\cM}(\bY) = \bX$.  If $\mu \in \pi_{\qf}^{-1}(\tp_{\qf}^{\cM}(\bX))$, then $\mathbf{f}_* \mu \in \pi_{\qf}^{-1}(\tp_{\qf}^{\cM}(\bY))$ since $\pi_{\qf} \circ \mathbf{f}_* = \mathbf{f}_* \circ \pi_{\qf}$.  Similarly, $\mathbf{g}_*$ maps $\pi_{\qf}^{-1}(\tp_{\qf}^{\cM}(\bY))$ into $\pi_{\qf}^{-1}(\tp_{\qf}^{\cM}(\bX))$.
		
		Since $d(\mathbf{f} \circ \mathbf{g}(\mathbf{x}),\mathbf{x})$ is a quantifier-free definable predicate, its value only depends on the quantifier-free type of the input, and thus $\mathbf{g}^{\cN} \circ \mathbf{f}^{\cN}(\bZ) = \bZ$ whenever $\cN \models \rT_{\tr}$ and $\tp_{\qf}^{\cN}(\bZ) = \tp_{\qf}^{\cM}(\bX)$.  In particular, if $\mu \in \pi_{\qf}^{-1}(\tp_{\qf}^{\cM}(\bX))$, then $\mathbf{g}_* \mathbf{f}_* \mu = \mu$.  The same holds for $\mathbf{g} \circ \mathbf{f}$.  Hence, $\mathbf{f}$ and $\mathbf{g}$ define mutually inverse maps between $\pi_{\qf}^{-1}(\tp_{\qf}^{\cM}(\bX))$ and $\pi_{\qf}^{-1}(\tp_{\qf}^{\cM}(\bY))$. Note also that $\pi_{\qf}^{-1}(\tp_{\qf}^{\cM}(\bX))$ is contained in $\mathbb{S}_{\mathbf{r}}(\rT_{\tr})$ for some $\mathbf{r}$ by Remark \ref{rem:qfnorm}. Therefore, by Proposition \ref{prop:pushforward},
		\[
		\Ent^{\cU}(\pi_{\qf}^{-1}(\tp_{\qf}^{\cU}(\bX))) = \Ent^{\cU}(\pi_{\qf}^{-1}(\tp_{\qf}^{\cU}(\bY))),
		\]
		which implies the claimed result by Lemma \ref{lem:qfentropyrelationship}.
	\end{proof}
	
	Furthermore, since the quantifier-free type does not depend on the ambient tracial $\mathrm{W}^*$-algebra $\cM$, it follows that if $\bX$ and $\bY$ in \emph{different} tracial $\mathrm{W}^*$-algebras generate isomorphic tracial $\mathrm{W}^*$-algebras, then their quantifier-free types have the same entropy.  Hence, it is consistent to define for a separable tracial $\mathrm{W}^*$-algebra $\cM$,
	\[
	\Ent_{\qf}^{\cU}(\cM) = \Ent_{\qf}^{\cU}(\tp_{\qf}^{\cM}(\bX)),
	\]
	where $\bX$ is an $\N$-tuple of generators for $\cM$ (for the definition of $\Ent_{\qf}^{\cU}(\cM)$ in the case of non-separable $\cM$, see Remark \ref{rem:nonseparableqfentropy} below).  However, Remark \ref{rem:qfnomonotonicity} shows that there is no quantifier-free analog of monotonicity under pushforward (Proposition \ref{prop:pushforward}).
	
	\subsection{Existential types}
	
	Now we turn our attention to existential types.
	
	\begin{definition}
		An \emph{existential formula} in a language $\cL$ is a formula of the form
		\[
		\phi(\mathbf{x}) = \inf_{y_1 \in D_1, \dots, y_k \in D_k} \psi(\mathbf{x},y_1,\dots,y_k),
		\]
		where $\psi$ is a quantifier-free formula and $D_1$, \dots, $D_k$ are domains of quantification in the appropriate sorts.  Similarly, we say that $\phi$ is an \emph{existential definable predicate relative to $\rT$} if
		\[
		\phi^{\cM}(\bX) = \inf_{\bY \in \prod_{j \in \N} D_j^{\cM}} \psi^{\cM}(\bX,\bY)
		\]
		for $\cM \models \rT$, where $\psi$ is a quantifier-free definable predicate.
	\end{definition}
	
	\begin{observation}
		Any existential definable predicate can be approximated uniformly on each product of domains of quantification by an existential formula.
	\end{observation}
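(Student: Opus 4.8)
The plan is simply to unwind the definitions and observe that passing a uniform $\epsilon$-approximation through an infimum costs nothing. Write the given existential definable predicate as $\phi^{\cM}(\bX) = \inf_{\bY \in \prod_{j \in \N} D_j^{\cM}} \psi^{\cM}(\bX,\bY)$ for some quantifier-free definable predicate $\psi$ and a fixed tuple of domains $(D_j)_{j \in \N}$. To verify the approximation property, I would fix a tuple of domains $\mathbf{D}' = (D_j')_{j \in \N}$ for the free variables $\mathbf{x}$ together with an $\epsilon > 0$, and aim to produce an existential formula that is within $\epsilon$ of $\phi$ on $\prod_{j \in \N} (D_j')^{\cM}$, uniformly over all $\cM \models \rT$.

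The key step is to apply the definition of quantifier-free definable predicate to $\psi$ using the combined product of domains $\mathbf{D}' \times (D_j)_j$: this yields a quantifier-free formula $\tilde\psi$ depending on only finitely many of the variables $x_j$ and $y_j$ --- say the quantified variables occurring in it are indexed by a finite set $F \subseteq \N$ --- with $|\psi^{\cM}(\bX,\bY) - \tilde\psi^{\cM}(\bX,\bY)| < \epsilon$ whenever $\cM \models \rT$, $\bX \in \prod_j (D_j')^{\cM}$, and $\bY \in \prod_j D_j^{\cM}$. From the elementary bound $|\inf_a f(a) - \inf_a g(a)| \le \sup_a |f(a) - g(a)|$ I then obtain $|\phi^{\cM}(\bX) - \inf_{\bY} \tilde\psi^{\cM}(\bX,\bY)| \le \epsilon$ on $\prod_j (D_j')^{\cM}$.

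It remains only to recognize $\inf_{\bY \in \prod_j D_j^{\cM}} \tilde\psi^{\cM}(\bX,\bY)$ as the interpretation of an existential formula. Since $\tilde\psi$ involves the quantified variables $y_j$ only for $j \in F$, the variables $y_j$ for $j \notin F$ are inert, so this infimum equals $\inf_{Y_j \in D_j^{\cM}\,:\,j \in F} \tilde\psi^{\cM}(\bX,(Y_j)_{j \in F})$, which is exactly the interpretation of the existential formula $\inf_{y_j \in D_j\,:\,j \in F} \tilde\psi$. This is the desired approximant, completing the argument.

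There is no genuine obstacle here; the only two points that require a moment's care are that the approximation of $\psi$ must be uniform over the free \emph{and} the quantified variables simultaneously (which is precisely what the definition of quantifier-free definable predicate delivers, once one feeds it the combined product of domains $\mathbf{D}' \times (D_j)_j$), and that collapsing the infimum over the infinite tuple $\bY$ down to the finitely many variables actually occurring in $\tilde\psi$ is what makes the result a bona fide existential \emph{formula} rather than merely an infimum of a formula over infinitely many domains of quantification.
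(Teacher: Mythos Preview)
Your proposal is correct and is precisely the intended argument; the paper states this observation without proof, and your approach mirrors exactly the proof of Lemma~\ref{lem:defpredoperations}(2), merely with ``quantifier-free'' inserted in the appropriate places so that the approximating formula $\tilde\psi$ is quantifier-free and hence $\inf_{y_j \in D_j:\, j \in F} \tilde\psi$ is existential.
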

	
	\begin{definition}
		Let $\cM$ be an $\cL$-structure, $\bS$ an $\N$-tuple of sorts, and $\bX \in \prod_{j \in \N} S_j^{\cM}$.  Let $\cF_{\exists,\bS}$ denote the space of existential formulas.  The \emph{existential type} $\tp_{\exists}^{\cM}(\bX)$ is the map
		\[
		\tp_{\exists}^{\cM}(\bX): \cF_{\exists,\bS} \to \R, \phi \mapsto \phi^{\cM}(\bX).
		\]
		If $\rT$ is an $\cL$-theory, we denote the set of existential types that arise in models of $\rT$ by $\mathbb{S}_{\exists,\bS}(\rT)$.
	\end{definition}
	
	The topology for existential types, however, is not simply the weak-$\star$ topology on $\mathbb{S}_{\exists,\bD}(\rT)$ for each tuple of domains.  Rather, we define neighborhoods of a type $\mu = \tp^{\cM}(\bX)$ using sets of the form $\{\nu: \nu(\phi) < \mu(\phi) + \epsilon\}$.  The idea is that if $\phi^{\cM}(\bX) = \inf_{\bY \in \prod_{j \in \N} D_j^{\cM}} \psi^{\cM}(\bX,\bY)$ for some quantifier-free definable predicate $\phi$, then $\mu(\phi) \leq c$ means that there exists $\bY$ such that $\psi^{\cM}(\bX,\bY) < c + \delta$ for any $\delta > 0$.  Thus, a neighborhood corresponds to types $\nu$ where there exists $\bY$ that gets within $\epsilon$ of the infimum achieved by $\mu$.
	
	\begin{definition} \label{def:existentialtopology}
		Let $\rT$ be an $\cL$-theory, $\bS$ an $\N$-tuple of sorts, and $\bD \in \prod_{j \in \N} \cD_{S_j}$.  We say that $\cO \subseteq \mathbb{S}_{\exists,\bD}(\rT)$ is open if for every $\mu \in \cO$, there exist existential formulas $\phi_1$, \dots, $\phi_k$ and $\epsilon_1$, \dots, $\epsilon_k > 0$ such that
		\[
		\{\nu \in \mathbb{S}_{\exists,\bD}(\rT): \nu(\phi_j) < \mu(\phi_j) + \epsilon_j \text{ for } j = 1, \dots, k\} \subseteq \cO.
		\]
		Moreover, we say that $\cO \subseteq \mathbb{S}_{\exists,\bS}(\rT)$ is open if $\cO \cap \mathbb{S}_{\exists,\bD}(\rT)$ is open in $\mathbb{S}_{\exists,\bD}(\rT)$ for all $\bD \in \prod_{j \in \N} \cD_{S_j}$.
	\end{definition}
	
	\begin{observation}~
		\begin{itemize}
			\item Any set of the form $\{\nu: \nu(\phi_1) < c_1, \dots, \nu(\phi_k) < c_k\}$, where $\phi_1$, \dots, $\phi_k$ are existential definable predicates, is open in $\mathbb{S}_{\exists,\bS}(\rT)$.
			\item The same holds if $\phi_j$ is an existential definable predicate rather than existential formula, since it can be uniformly approximated by existential formulas on each product of domains of quantification, hence existential definable predicates may be used in Definition \ref{def:existentialtopology} without changing the definition.
			\item The inclusion $\mathbb{S}_{\exists,\bD}(\rT) \to \mathbb{S}_{\exists,\bS}(\rT)$ is a topological embedding since each of the basic open sets in $\mathbb{S}_{\exists,\bD}(\rT)$ given by $\nu(\phi_j) < \mu(\phi_j) + \epsilon_j$ for $j = 1$, \dots, $k$ extends to an open set in $\mathbb{S}_{\exists,\bS}(\rT)$.
			\item The restriction map $\mathbb{S}_{\bS}(\rT) \to \mathbb{S}_{\exists,\bS}(\rT)$ is continuous.
		\end{itemize}
	\end{observation}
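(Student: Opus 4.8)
The four assertions are all topological bookkeeping around Definition \ref{def:existentialtopology}, and the plan is to prove them in the order (1), (3), (2), (4), since (3) uses (1). For (1), given $\mu$ in the set $\{\nu : \nu(\phi_j) < c_j \text{ for } j = 1,\dots,k\}$ with the $\phi_j$ existential formulas, I would set $\epsilon_j := c_j - \mu(\phi_j) > 0$; then for any $\bD$ with $\mu \in \mathbb{S}_{\exists,\bD}(\rT)$ the set $\{\nu \in \mathbb{S}_{\exists,\bD}(\rT): \nu(\phi_j) < \mu(\phi_j) + \epsilon_j,\ j \le k\}$ is exactly the trace of the given set on $\mathbb{S}_{\exists,\bD}(\rT)$ and is a basic neighborhood of $\mu$ in the sense of Definition \ref{def:existentialtopology}. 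Hence the given set is open in each $\mathbb{S}_{\exists,\bD}(\rT)$, so open in $\mathbb{S}_{\exists,\bS}(\rT)$. For (3), continuity of $\mathbb{S}_{\exists,\bD}(\rT) \hookrightarrow \mathbb{S}_{\exists,\bS}(\rT)$ is immediate from the definition of open sets of $\mathbb{S}_{\exists,\bS}(\rT)$ (restrict to the single domain tuple $\bD$); and the inclusion is open onto its image because a basic open set $\{\nu \in \mathbb{S}_{\exists,\bD}(\rT): \nu(\phi_j) < \mu(\phi_j) + \epsilon_j\}$ is the intersection with $\mathbb{S}_{\exists,\bD}(\rT)$ of the set $\{\nu \in \mathbb{S}_{\exists,\bS}(\rT): \nu(\phi_j) < \mu(\phi_j) + \epsilon_j\}$, which is open by (1); a general open set of $\mathbb{S}_{\exists,\bD}(\rT)$ is a union of such and extends accordingly.

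For (2), one direction is free: an existential formula is an existential definable predicate, so replacing ``existential formula'' by ``existential definable predicate'' in Definition \ref{def:existentialtopology} can only enlarge the supply of admissible neighborhoods. For the other direction, suppose $\mu(\phi_j) < c_j$ for existential definable predicates $\phi_1,\dots,\phi_k$, and fix $\bD$ with a realization $\bX \in \prod_j D_j^{\cM}$ of $\mu$. Using the preceding Observation that each $\phi_j$ is uniformly approximable on $\prod_j D_j$ by an existential formula, I would choose $\delta$ with $3\delta < \min_j(c_j - \mu(\phi_j))$ and existential formulas $\tilde\phi_j$ with $|\phi_j^{\cN}(\bZ) - \tilde\phi_j^{\cN}(\bZ)| < \delta$ for all $\cN \models \rT$ and $\bZ \in \prod_j D_j^{\cN}$. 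Then $|\nu(\phi_j) - \nu(\tilde\phi_j)| \le \delta$ for every $\nu \in \mathbb{S}_{\exists,\bD}(\rT)$, so the basic neighborhood $\{\nu : \nu(\tilde\phi_j) < \mu(\tilde\phi_j) + \delta,\ j \le k\}$ of $\mu$ lies inside $\{\nu : \nu(\phi_j) < c_j\}$; this exhibits the latter set as open, and symmetric reasoning gives that the two topologies coincide.

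For (4), write $r : \mathbb{S}_{\bS}(\rT) \to \mathbb{S}_{\exists,\bS}(\rT)$ for the restriction map. Since the logic topology on $\mathbb{S}_{\bS}(\rT)$ is defined domainwise and $r$ carries $\mathbb{S}_{\bD}(\rT)$ into $\mathbb{S}_{\exists,\bD}(\rT)$, it suffices to check that $r|_{\mathbb{S}_{\bD}(\rT)} : \mathbb{S}_{\bD}(\rT) \to \mathbb{S}_{\exists,\bD}(\rT)$ is continuous for each $\bD$. The topology on $\mathbb{S}_{\exists,\bD}(\rT)$ is generated by subbasic sets $\{\nu : \nu(\phi) < c\}$ with $\phi$ an existential formula; as $\phi$ is in particular a formula, $\mu \mapsto \mu(\phi)$ is weak-$\star$ continuous on $\mathbb{S}_{\bD}(\rT)$, so $\{\mu \in \mathbb{S}_{\bD}(\rT): \mu(\phi) < c\}$ is open, as needed. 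The only step where care is required is (2): one must keep the uniform approximation of $\phi_j$ on the product of domains (not merely pointwise) and respect the one-sidedness of the neighborhoods in Definition \ref{def:existentialtopology} when absorbing the error $\delta$; everything else is a direct unwinding of the definitions.
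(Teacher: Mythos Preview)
Your proposal is correct and follows precisely the approach the paper indicates: the paper states this Observation without a separate proof, embedding the justifications (``since\dots'') directly in the bullet points, and your argument simply fleshes out those hints---using the $\epsilon_j = c_j - \mu(\phi_j)$ trick for (1), uniform approximation by existential formulas for (2), extension of basic open sets for (3), and weak-$\star$ continuity of formula evaluation for (4). There is nothing to add.
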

	
	\begin{remark}
		Like the Zariski topology on the space of ideals in a commutative ring, the topology on $\mathbb{S}_{\exists,\bS}(\rT)$ is often non-Hausdorff.  For instance, the closure of a point is given by
		\[
		\overline{\{\mu\}} = \{\nu: \nu(\phi) \geq \mu(\phi) \text{ for all } \phi \in \cF_{\exists,\bS}\}.
		\]
		Meanwhile, the intersection of all neighborhoods of $\mu$ is given by
		\begin{equation} \label{eq:nbhd}
			\mathcal{K}_\mu = \{\nu: \nu(\phi) \leq \mu(\phi) \text{ for all } \phi \in \cF_{\exists,\bS}\}.
		\end{equation}
		We say that $\nu$ \emph{extends} $\mu$ if $\nu(\phi) \leq \mu(\phi)$ for all existential formulas $\phi$, which is equivalent to saying that for $\phi \in \mathcal{F}_{\exists,\bS}$, we have $\mu(\phi) = 0$ implies that $\nu(\phi) = 0$ (since $\max(\phi - c,0)$ is an existential formula if $\phi$ is).  Then $\{\mu\} = \cK_\mu$ if and only if it does have any proper extension, or it is \emph{maximal}.  These closed points correspond to existential types from existentially closed models (see \cite[\S 6.2]{Goldbring2021enforceable}), and such maximal existential types in $\mathbb{S}_{\exists,\bD}(\rT)$ form a compact Hausdorff space.  However, our present goal is to work with general tracial $\mathrm{W}^*$-algebras, not only those that are existentially closed.
	\end{remark}
	
	\subsection{Entropy for existential types}
	
	Here we define the entropy for existential types which corresponds to Hayes' entropy of $\cN$ in the presence of $\cM$.  We explain our definition in this subsection, and in the next one we relate it with Hayes' definition.
	
	\begin{definition}
		For $\cK \subseteq \mathbb{S}_{\exists}(\rT_{\tr})$, let
		\[
		\Gamma_{\mathbf{r}}^{(n)}(\cK) = \{ \bX \in \prod_{j \in \N} D_{r_j}^{M_n(\C)}: \tp_{\exists}^{M_n(\C)}(\bX) \in \cK\},
		\]
		and define for $\mathbf{r} \in (0,\infty)^{\N}$, $F \subseteq \N$, finite, and $\epsilon > 0$,
		\[
		\Ent_{\exists,\mathbf{r},F,\epsilon}^{\cU}(\cK) = \inf_{\cO \supseteq \cK \text{ open}} \lim_{n \to \cU} \frac{1}{n^2} \log K_{F,\epsilon}^{\orb}(\Gamma_{\mathbf{r}}^{(n)}(\cO)).
		\]
		Then let
		\[
		\Ent_{\exists}^{\cU}(\cK) = \sup_{\mathbf{r},F,\epsilon} \Ent_{\exists,\mathbf{r},F,\epsilon}^{\cU}(\cK).
		\]
	\end{definition}
	
	Because of the non-Hausdorff nature of $\mathbb{S}_{\exists}(\rT_{\tr})$, we will be content to focus on the existential entropy for an individual existential type rather than for a closed set of existential types.
	
	\begin{lemma} \label{lem:existentialversusfull}
		Let $\mu \in \mathbb{S}_{\exists}(\rT_{\tr})$, and let $\cK_\mu$ be given by \eqref{eq:nbhd}.  Let $\pi: \mathbb{S}(\rT_{\tr}) \to \mathbb{S}_{\exists}(\rT_{\tr})$ be the canonical restriction map.  Then
		\[
		\Ent_{\exists}^{\cU}(\mu) = \Ent^{\cU}\left(\pi^{-1}\left( \cK_\mu \right) \right) =  \sup_{\nu \in \pi^{-1}(\cK_\mu)} \Ent^{\cU}(\nu).
		\]
	\end{lemma}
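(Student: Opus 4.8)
The plan is to reduce the second equality to the variational principle (Proposition \ref{prop:variational}) and to prove the first equality via two inequalities, one essentially formal and one requiring a compactness argument tailored to the existential type space. First I would check that $\pi^{-1}(\cK_\mu)$ is closed in $\mathbb{S}(\rT_{\tr})$: every existential formula $\phi$ is in particular a formula, so $\sigma \mapsto \sigma(\phi)$ is continuous on each $\mathbb{S}_{\bD}(\rT_{\tr})$, hence $\{\sigma : \sigma(\phi) \le \mu(\phi)\}$ is closed, and by \eqref{eq:nbhd} we have $\pi^{-1}(\cK_\mu) = \bigcap_{\phi \in \cF_{\exists,\bS}} \{\sigma : \sigma(\phi) \le \mu(\phi)\}$. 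Then Proposition \ref{prop:variational} gives $\Ent^{\cU}(\pi^{-1}(\cK_\mu)) = \sup_{\nu \in \pi^{-1}(\cK_\mu)} \Ent^{\cU}(\nu)$, which is the second equality.

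For the inequality $\Ent^{\cU}(\pi^{-1}(\cK_\mu)) \le \Ent_{\exists}^{\cU}(\mu)$, I would note that for any open $\cO \ni \mu$ in $\mathbb{S}_{\exists}(\rT_{\tr})$ the set $\pi^{-1}(\cO)$ is open in $\mathbb{S}(\rT_{\tr})$ (continuity of the restriction map) and contains $\pi^{-1}(\cK_\mu)$, since $\cK_\mu$ is the intersection of all neighborhoods of $\mu$ and therefore $\cK_\mu \subseteq \cO$; moreover the corresponding microstate spaces literally coincide, $\Gamma_{\mathbf{r}}^{(n)}(\pi^{-1}(\cO)) = \Gamma_{\mathbf{r}}^{(n)}(\cO)$, because $\tp^{M_n(\C)}(\bX) \in \pi^{-1}(\cO)$ if and only if $\tp_{\exists}^{M_n(\C)}(\bX) \in \cO$. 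Since the sets $\pi^{-1}(\cO)$ form a subfamily of the open supersets of $\pi^{-1}(\cK_\mu)$, taking infima gives $\Ent_{\mathbf{r},F,\epsilon}^{\cU}(\pi^{-1}(\cK_\mu)) \le \Ent_{\exists,\mathbf{r},F,\epsilon}^{\cU}(\mu)$ for all $\mathbf{r}, F, \epsilon$, and then suprema give the inequality.

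The hard part, and the main obstacle, is the reverse inequality $\Ent_{\exists}^{\cU}(\mu) \le \Ent^{\cU}(\pi^{-1}(\cK_\mu))$. Fixing $\mathbf{r}, F, \epsilon$, it suffices to show that every open $\cO' \supseteq \pi^{-1}(\cK_\mu)$ in $\mathbb{S}(\rT_{\tr})$ admits an open neighborhood $\cO$ of $\mu$ in $\mathbb{S}_{\exists}(\rT_{\tr})$ with $\Gamma_{\mathbf{r}}^{(n)}(\cO) \subseteq \Gamma_{\mathbf{r}}^{(n)}(\cO')$ for every $n$ (the left side formed with existential types, the right with full types), for then $K_{F,\epsilon}^{\orb}(\Gamma_{\mathbf{r}}^{(n)}(\cO)) \le K_{F,\epsilon}^{\orb}(\Gamma_{\mathbf{r}}^{(n)}(\cO'))$, and applying $\lim_{n \to \cU} \tfrac{1}{n^2}\log$, then taking the infimum over $\cO'$ and the supremum over $\mathbf{r},F,\epsilon$, yields the conclusion. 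To build $\cO$ I would use that $\mathbb{S}_{\mathbf{r}}(\rT_{\tr}) \setminus \cO'$ is a closed, hence compact, subset of $\mathbb{S}_{\mathbf{r}}(\rT_{\tr})$ that is disjoint from $\pi^{-1}(\cK_\mu)$; for each $\nu$ in it we have $\pi(\nu) \notin \cK_\mu$, so by \eqref{eq:nbhd} there is an existential formula $\phi_\nu$ with $\nu(\phi_\nu) = \pi(\nu)(\phi_\nu) > \mu(\phi_\nu)$, and with $c_\nu = \tfrac12(\mu(\phi_\nu) + \nu(\phi_\nu))$ the sets $\{\sigma \in \mathbb{S}_{\mathbf{r}}(\rT_{\tr}) : \sigma(\phi_\nu) > c_\nu\}$ are open in the logic topology and cover $\mathbb{S}_{\mathbf{r}}(\rT_{\tr}) \setminus \cO'$. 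Extracting a finite subcover indexed by $\nu_1,\dots,\nu_m$, set $\cO = \{\rho \in \mathbb{S}_{\exists}(\rT_{\tr}) : \rho(\phi_{\nu_i}) < c_{\nu_i} \text{ for } i = 1, \dots, m\}$, which is open in $\mathbb{S}_{\exists}(\rT_{\tr})$ and contains $\mu$ since $\mu(\phi_{\nu_i}) < c_{\nu_i}$. Then if $\bX \in \prod_{j} D_{r_j}^{M_n(\C)}$ satisfies $\tp_{\exists}^{M_n(\C)}(\bX) \in \cO$, the inequalities $\phi_{\nu_i}^{M_n(\C)}(\bX) < c_{\nu_i}$ place the full type $\tp^{M_n(\C)}(\bX) \in \mathbb{S}_{\mathbf{r}}(\rT_{\tr})$ outside every member of the finite subcover, hence inside $\cO'$, which gives the required inclusion of microstate spaces. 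This works precisely because the value of an existential formula on a matrix tuple is read off from its existential type and because $\mathbb{S}_{\mathbf{r}}(\rT_{\tr})$ is compact; the delicate point is reconciling neighborhoods in the Hausdorff full type space with neighborhoods in the non-Hausdorff existential type space at the level of matrix microstate spaces.
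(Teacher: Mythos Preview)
Your argument is correct. The second equality and the inequality $\Ent^{\cU}(\pi^{-1}(\cK_\mu)) \le \Ent_{\exists}^{\cU}(\mu)$ are handled exactly as in the paper. For the reverse inequality, however, you take a genuinely different route: given an open $\cO' \supseteq \pi^{-1}(\cK_\mu)$, you use compactness of $\mathbb{S}_{\mathbf{r}}(\rT_{\tr}) \setminus \cO'$ directly to extract finitely many existential formulas $\phi_{\nu_i}$ and produce a single existential neighborhood $\cO$ of $\mu$ with $\Gamma_{\mathbf{r}}^{(n)}(\cO) \subseteq \Gamma_{\mathbf{r}}^{(n)}(\cO')$. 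The paper instead invokes separability of the space of existential definable predicates on $\prod_j D_{r_j}$ to build a countable nested base $(\cO_k)_{k\in\N}$ of existential neighborhoods of $\mu$ with $\bigcap_k \cO_k = \cK_\mu$ and $\overline{\pi^{-1}(\cO_{k+1})} \subseteq \pi^{-1}(\cO_k)$, and then appeals to Lemma~\ref{lem:sequenceofneighborhoods}. Your approach is more self-contained---it avoids both separability and Lemma~\ref{lem:sequenceofneighborhoods}, relying only on compactness of $\mathbb{S}_{\mathbf{r}}(\rT_{\tr})$ and the fact that existential formulas are continuous on the full type space---while the paper's approach recycles an earlier lemma and makes the neighborhood base explicit.
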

	
	\begin{proof}
		Fix $\mathbf{r} \in (0,\infty)^{\N}$, $F \subseteq \N$ finite, and $\epsilon > 0$.  If $\cO$ is a neighborhood of $\mu$ in $\mathbb{S}_{\exists}(\rT_{\tr})$, then it contains $\cK_\mu$, and hence $\pi^{-1}(\cO)$ is a neighborhood of $\pi^{-1}(\cK_\mu)$ in $\mathbb{S}(\rT_{\tr})$.  Moreover, $\Gamma_{\mathbf{r}}^{(n)}(\cO) = \Gamma_{\mathbf{r}}^{(n)}(\pi^{-1}(\cO))$, hence
		\[
		\Ent_{\mathbf{r},F,\epsilon}^{\cU}\left(\pi^{-1}\left(\cK_\mu \right) \right) \leq \Ent_{\exists,\mathbf{r},F,\epsilon}^{\cU}(\mu).
		\]
		
		It remains to show the reverse inequality. Since the space of definable predicates on $\prod_{j \in \N} D_{r_j}$ relative to $\rT_{\tr}$ is separable with respect to the uniform metric, so is the space of existential definable predicates.  Let $(\phi_j)_{j \in \N}$ be a sequence of existential definable predicates that are dense in this space.  Let
		\[
		\cO_k = \left\{\nu \in \mathbb{S}_{\exists,\mathbf{r}}(\rT_{\tr}): \nu(\phi_j) < \mu(\phi_j) + \frac{1}{k} \text{ for } j \leq k \right\}.
		\]
		Note that
		\[
		\bigcap_{k \in \N} \cO_k = \{\nu \in \mathbb{S}_{\exists,\mathbf{r}}(\rT_{\tr}): \nu(\phi_k) \leq \mu(\phi_k) \text{ or } k \in \N\} = \cK_\mu.
		\]
		Moreover,
		\[
		\overline{\pi^{-1}(\cO_{k+1})} \subseteq \left\{\nu \in \mathbb{S}(\rT_{\tr}): \nu(\phi_j) \leq \mu(\phi_j) + \frac{1}{k+1} \text{ for } j \leq k + 1 \right\} \subseteq \pi^{-1}(\cO_k).
		\]
		Therefore, by Lemma \ref{lem:sequenceofneighborhoods} applied to $\pi^{-1}(\cO_k)$, we have
		\[
		\Ent_{\exists,\mathbf{r},F,\epsilon}^{\cU}(\mu) \leq \inf_{k \in \N} \Ent_{\exists,\mathbf{r},F,\epsilon}^{\cU}(\cO_k) = \inf_{k \in \N} \Ent_{\mathbf{r},F,\epsilon}^{\cU}(\pi^{-1}(\cO_k)) = \Ent_{\mathbf{r},F,\epsilon}^{\cU}(\pi^{-1}(\cK_\mu)),
		\]
		where the last equality follows from the density of $\{\phi_k: k \in \N\}$.  Thus, $\Ent_{\exists,\mathbf{r},F,\epsilon}^{\cU}(\mu) = \Ent_{\mathbf{r},F,\epsilon}^{\cU}(\pi^{-1}(\cK_\mu))$.  Taking the supremum over $\mathbf{r}$, $F$, and $\epsilon$ yields the first asserted equality $\Ent_{\exists}^{\cU}(\mu) = \Ent^{\cU}(\pi^{-1}(\cK_\mu))$.  The second equality follows from the applying the variational principle (Proposition \ref{prop:variational}) to the closed set $\pi^{-1}(\cK_\mu)$.
	\end{proof}
	
	Like the entropy for full types, the entropy for existential types satisfies a certain monotonicity under pushforwards.  First, to clarify the meaning of pushforward, note that if $\mathbf{f}$ is a quantifier-free definable function and $\phi$ is an existential definable predicate, say
	\[
	\phi^{\cM}(\bX) = \inf_{\mathbf{Y} \in \prod_{j \in \N} D_{r_j'}^{\cM}} \psi^{\cM}(\bX,\bY) \text{ for } \cM \models \rT_{\tr},
	\]
	where $\psi$ is a quantifier-free definable predicate, then
	\[
	(\phi \circ \mathbf{f})^{\cM}(\bX) = \inf_{\mathbf{Y} \in \prod_{j \in \N} D_{r_j'}^{\cM}} \psi^{\cM}(\mathbf{f}(\bX),\bY)
	\]
	is also an existential definable predicate.  Hence, there is a well-defined pushforward map $\mathbf{f}_*: \mathbb{S}_{\exists}(\rT_{\tr}) \to \mathbb{S}_{\exists}(\rT_{\tr})$ given by $\mathbf{f}_* \mu(\phi) = \mu(\phi \circ \mathbf{f})$.  Furthermore, $\mathbf{f}_*$ is continuous with respect to the topology on $\mathbb{S}_{\exists}(\rT_{\tr})$ for the same reason that $\phi \circ \mathbf{f}$ is an existential definable predicate whenever $\phi$ is an existential definable predicate and $\mathbf{f}$ is a quantifier-free definable function.
	
	The following lemma can be proved directly in a similar way to Proposition \ref{prop:pushforward}, as was essentially done by Hayes in \cite{Hayes2018}; compare also the proof of Proposition \ref{prop:presenceequivalence} below.  However, as one of our main goals is to illuminate the model-theoretic nature of the existential entropy, we will give an argument to deduce this from Proposition \ref{prop:pushforward}.
	
	\begin{lemma} \label{lem:existentialpushforward}
		Let $\mu \in \mathbb{S}_{\exists}(\rT_{\tr})$ and let $\mathbf{f}$ be a quantifier-free definable function relative to $\rT_{\tr}$.  Then
		\[
		\Ent_{\exists}^{\cU}(\mathbf{f}_* \mu) \leq \Ent_{\exists}^{\cU}(\mu).
		\]
	\end{lemma}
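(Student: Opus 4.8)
The plan is to reduce to the corresponding statement for \emph{full} types via Lemma~\ref{lem:existentialversusfull} and then use saturation of $\cQ$ to build preimages under $\mathbf{f}$. Write $\pi\colon \mathbb{S}(\rT_{\tr}) \to \mathbb{S}_{\exists}(\rT_{\tr})$ for the restriction map and, for $\nu \in \mathbb{S}_{\exists}(\rT_{\tr})$, let $\cK_\nu = \{\sigma : \sigma(\phi) \le \nu(\phi) \text{ for every existential } \phi\}$ as in \eqref{eq:nbhd}, so that $\Ent_{\exists}^{\cU}(\mu) = \Ent^{\cU}(\pi^{-1}(\cK_\mu))$ and $\Ent_{\exists}^{\cU}(\mathbf{f}_*\mu) = \Ent^{\cU}(\pi^{-1}(\cK_{\mathbf{f}_*\mu}))$. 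The first step is to record the naturality identity $\pi\circ\mathbf{f}_* = \mathbf{f}_*\circ\pi$, where the left-hand $\mathbf{f}_*$ is the pushforward on full types and the right-hand one is the pushforward on existential types; this is immediate from $\mathbf{f}_*\nu(\phi) = \nu(\phi\circ\mathbf{f})$ together with the fact (noted just before the lemma) that $\phi\circ\mathbf{f}$ is an existential definable predicate whenever $\phi$ is. Since the existential $\mathbf{f}_*$ carries $\cK_\mu$ into $\cK_{\mathbf{f}_*\mu}$, naturality gives the easy inclusion $\mathbf{f}_*(\pi^{-1}(\cK_\mu)) \subseteq \pi^{-1}(\cK_{\mathbf{f}_*\mu})$; and applying Proposition~\ref{prop:pushforward} (with Corollary~\ref{cor:independentofr}) to the closed set $\pi^{-1}(\cK_\mu) \subseteq \mathbb{S}_{\mathbf{r}}(\rT_{\tr})$, where $\mathbf{r}$ is the operator-norm bound read off from $\mu$ via Remark~\ref{rem:qfnorm}, yields $\Ent^{\cU}(\mathbf{f}_*(\pi^{-1}(\cK_\mu))) \le \Ent^{\cU}(\pi^{-1}(\cK_\mu)) = \Ent_{\exists}^{\cU}(\mu)$.

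It remains to see that this upper bound also dominates $\Ent^{\cU}(\pi^{-1}(\cK_{\mathbf{f}_*\mu}))$, for which the key claim is: every $\sigma \in \pi^{-1}(\cK_{\mathbf{f}_*\mu})$ with $\Ent^{\cU}(\sigma) \ge 0$ lies in $\mathbf{f}_*(\pi^{-1}(\cK_\mu))$. To prove it, I would fix $\cN\models\rT_{\tr}$ and $\bX_0$ with $\tp_{\exists}^{\cN}(\bX_0) = \mu$, use Lemma~\ref{lem:ultraproductrealization} to write $\sigma = \tp^{\cQ}(\bW)$, and then realize in $\cQ$ the partial type over the parameters $(W_k)_{k}$ asserting $d(f_k(\mathbf{v}), W_k) = 0$ for all $k$ together with $\phi(\mathbf{v}) \le \mu(\phi)$ for all existential $\phi$. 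By $\aleph_1$-saturation of the countable matrix ultraproduct $\cQ$ it suffices to approximately satisfy each finite subtype, say the conditions indexed by a finite $F\subseteq\N$ and finitely many existential $\phi_i = \inf_{\mathbf{z}_i}\chi_i(\cdot,\mathbf{z}_i)$ with $\chi_i$ quantifier-free. For this I would use the auxiliary existential definable predicate
\[
\Theta(\mathbf{w}) = \inf_{\mathbf{v}}\ \inf_{\mathbf{z}_1}\cdots\inf_{\mathbf{z}_m}\Bigl[\max_{k\in F} d\bigl(f_k(\mathbf{v}),w_k\bigr) + K\sum_{i=1}^m\bigl(\chi_i(\mathbf{v},\mathbf{z}_i) - \mu(\phi_i)\bigr)^+\Bigr],
\]
with $K$ a large constant and all infima over operator-norm balls; it is genuinely existential because $\mathbf{f}$ is quantifier-free definable, so the bracketed quantity is quantifier-free in $(\mathbf{w},\mathbf{v},\mathbf{z}_i)$. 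Substituting $\mathbf{f}(\mathbf{x})$ for $\mathbf{w}$, evaluating at $\bX_0$, taking $\mathbf{v} = \bX_0$ and optimizing each $\mathbf{z}_i$ against $\phi_i^{\cN}(\bX_0) = \mu(\phi_i)$ gives $(\Theta\circ\mathbf{f})^{\cN}(\bX_0) = 0$, i.e.\ $(\mathbf{f}_*\mu)(\Theta) = 0$; since $\sigma \in \pi^{-1}(\cK_{\mathbf{f}_*\mu})$ means exactly that $\tp_{\exists}^{\cQ}(\bW)$ extends $\mathbf{f}_*\mu$, we get $\Theta^{\cQ}(\bW) = 0$, which unwinds to the desired approximate realizability. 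The resulting $\mathbf{V}\in\cQ^{\N}$ satisfies $\mathbf{f}^{\cQ}(\mathbf{V}) = \bW$ and has $\tp_{\exists}^{\cQ}(\mathbf{V})$ extending $\mu$, so $\tau := \tp^{\cQ}(\mathbf{V}) \in \pi^{-1}(\cK_\mu)$ and $\mathbf{f}_*\tau = \tp^{\cQ}(\bW) = \sigma$.

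With the claim in hand the proof closes quickly: the entropy of a closed set equals the supremum of $\Ent^{\cU}$ over its points (Proposition~\ref{prop:variational}) and the points of entropy $-\infty$ do not affect that supremum, so the easy inclusion and the claim together give $\Ent^{\cU}(\pi^{-1}(\cK_{\mathbf{f}_*\mu})) = \Ent^{\cU}(\mathbf{f}_*(\pi^{-1}(\cK_\mu)))$, and then
\[
\Ent_{\exists}^{\cU}(\mathbf{f}_*\mu) = \Ent^{\cU}(\pi^{-1}(\cK_{\mathbf{f}_*\mu})) = \Ent^{\cU}(\mathbf{f}_*(\pi^{-1}(\cK_\mu))) \le \Ent^{\cU}(\pi^{-1}(\cK_\mu)) = \Ent_{\exists}^{\cU}(\mu).
\]
One can also bypass the first equality: for $\sigma = \mathbf{f}_*\tau$ with $\tau = \tp^{\cQ}(\mathbf{V})$ one has $\bW = \mathbf{f}^{\cQ}(\mathbf{V}) \in \mathrm{W}^*(\mathbf{V})$, so Corollary~\ref{cor:invariance} gives $\Ent^{\cU}(\sigma) \le \Ent^{\cU}(\tau) \le \Ent_{\exists}^{\cU}(\mu)$ directly. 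I expect the main obstacle to be the preimage construction — in particular confirming that $\Theta$ is an honest existential definable predicate relative to $\rT_{\tr}$ and that realizing a type in the countably many variables $\mathbf{v}$ over the countably many parameters $(W_k)_k$ is legitimate in $\cQ$ — while the bookkeeping over the domain parameters $\mathbf{r},\mathbf{r}'$ and the degenerate $-\infty$ cases should be routine but need care.
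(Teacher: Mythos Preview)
Your proposal is correct and follows essentially the same route as the paper: reduce to full types via Lemma~\ref{lem:existentialversusfull}, apply Proposition~\ref{prop:pushforward} to $\pi^{-1}(\cK_\mu)$, and then establish the preimage property using an auxiliary existential predicate (your $\Theta$ is the paper's $\psi$ up to cosmetic changes). The only difference is that the paper proves the full set equality $\pi^{-1}(\cK_{\mathbf{f}_*\mu}) = \mathbf{f}_*(\pi^{-1}(\cK_\mu))$ via compactness in an arbitrary model, whereas you restrict to types with $\Ent^{\cU}(\sigma)\ge 0$, realize them in $\cQ$, and invoke $\aleph_1$-saturation there --- a harmless shortcut since the $-\infty$ points do not affect the variational supremum.
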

	
	\begin{proof}
		Let $\pi: \mathbb{S}(\rT_{\tr}) \to \mathbb{S}_{\exists}(\rT_{\tr})$ be the restriction map.  Let
		\[
		\cK = \cK_\mu = \{\nu \in \mathbb{S}_{\exists}(\rT_{\tr}): \nu(\phi) \leq \mu(\phi) \text{ for existential } \phi\},
		\]
		and similarly, let $\cK' = \cK_{\mathbf{f}_*\mu}$.  By Lemma \ref{lem:existentialversusfull},
		\begin{align*}
			\Ent_{\exists}^{\cU}(\mu) &= \Ent_{\exists}^{\cU}(\cK) = \Ent^{\cU}(\pi^{-1}(\cK)) \\
			\Ent_{\exists}^{\cU}(\mathbf{f}_*\mu) &= \Ent_{\exists}^{\cU}(\cK') = \Ent^{\cU}(\pi^{-1}(\cK')).
		\end{align*}
		Meanwhile, by Proposition \ref{prop:pushforward}, Corollary \ref{cor:independentofr}, and Remark \ref{rem:qfnorm},
		\[
		\Ent^{\cU}(\mathbf{f}_* \pi^{-1}(\cK)) \leq \Ent^{\cU}(\pi^{-1}(\cK)).
		\]
		Therefore, it suffices to show that $\pi^{-1}(\cK') = \mathbf{f}_*(\pi^{-1}(\cK))$.
		
		By continuity of the pushforward on the space of existential types, it follows that $\mathbf{f}_*(\cK) \subseteq \cK'$, and hence
		\[
		\mathbf{f}_*(\pi^{-1}(\cK)) \subseteq \pi^{-1}(\mathbf{f}_*(\cK)) \subseteq \pi^{-1}(\cK').
		\]
		
		To prove the reverse inclusion, fix $\nu \in \pi^{-1}(\cK')$.  Fix $\mathbf{r}$ such that $\cK \subseteq \mathbb{S}_{\exists,\mathbf{r}}(\rT_{\tr})$ and $\mathbf{r}'$ such that $\mathbf{f}$ maps $\prod_{j \in \N} D_{r_j}^{\cM}$ into $\prod_{j \in \N} D_{r_j'}^{\cM}$ for $\cM \models \rT_{\tr}$.  For $F \subseteq \N$ finite and $\phi_1$, \dots, $\phi_k$ existential definable predicates, consider the definable predicate
		\[
		\psi^{\cM}(\bY) = \inf_{\bX \in \prod_{j \in \N} D_{r_j'}^{\cM}} \left[ \sum_{j \in F} d^{\cM}(f_j^{\cM}(\bX),Y_j) + \sum_{j=1}^k \max(0,\phi_j^{\cM}(\bX)-\mu(\phi_j)) \right].
		\]
		Then $\psi$ is an existential definable predicate:  Indeed, if $\phi_i^{\cM}(\bX) = \inf_{\bZ \in \prod_{j \in D_{r_{i,j}}}} \eta_i^{\cM}(\bX,\bZ)$, where $\eta$ is quantifier-free then
		\[
		\psi^{\cM}(\bY) = \inf_{\bX \in \prod_{j \in \N} D_{r_j'}^{\cM}} \inf_{\substack{\bZ_i \in \prod_{j \in \N} D_{r_{i,j}}^{\cM} \\ \text{for } i = 1, \dots, k}} \left[ \sum_{j \in F} d^{\cM}(f_j^{\cM}(\bX),Y_j) + \sum_{i=1}^k \max(0,\eta_i^{\cM}(\bX,\bZ_i)-\mu(\phi_i)) \right].
		\]
		Since $\nu \in \pi^{-1}(\cK')$, it follows that
		\[
		\nu(\psi) \leq \mathbf{f}_*\mu(\psi) = 0;
		\]
		this last equality holds because if $\tp_{\exists}^{\cN}(\bX') = \mu$ and $\bY' = \mathbf{f}_*(\bX')$, then $\psi^{\cN}(\bY') = 0$ since $\bX'$ participates in the infimum defining $\psi^{\cN}(\bY')$.
		
		Unwinding the definition of $\nu(\psi) \leq 0$, we have shown that for every $\epsilon > 0$ and $F \subseteq \N$ finite and $\phi_1$, \dots, $\phi_k$ existential definable predicates, there exist $\cM \models \rT_{\tr}$ and $\bY \in L^\infty(\cM)^{\N}$ and $\bX \in L^\infty(\cM)^{\N}$ with $\tp^{\cM}(\bY) = \nu$ and
		\[
		\sum_{j \in F} d^{\cM}(f_j^{\cM}(\bX),Y_j) + \sum_{j=1}^k \max(0,\phi_j^{\cM}(\bX) - \mu(\phi_j)) < \epsilon.
		\]
		Using an ultraproduct argument (or equivalently using the compactness theorem in continuous model theory, \cite[Theorem 5.8]{BYBHU2008}, \cite[Corollary 2.16]{BYU2010}), there exists some $\cM$ and $\bX$ and $\bY$ such that $\tp^{\cM}(\bY) = \nu$ and
		\[
		d^{\cM}(Y_j, f_j^{\cM}(\bX)) = 0 \text{ and } \phi^{\cM}(\bX) \leq \mu(\phi) \text{ for all existential definable predicates } \phi.
		\]
		This implies that $\tp_{\exists}^{\cM}(\bX) \in \cK_\mu = \cK$, hence $\tp^{\cM}(\bX) \in \pi^{-1}(\cK)$.  Therefore, $\nu = \tp^{\cM}(\bY) = \mathbf{f}_* \tp^{\cM}(\bX) \in \mathbf{f}_*(\pi^{-1}(\cK))$ as desired.
	\end{proof}
	
	The next corollary follows from Lemma \ref{lem:existentialpushforward} and Proposition \ref{prop:deffuncrealization}.
	
	\begin{corollary} \label{cor:existentialmonotonicity}
		If $\cM \models \rT_{\tr}$ and $\bX$, $\bY \in L^\infty(\cM)^{\N}$ and $\mathrm{W}^*(\bY) \subseteq \mathrm{W}^*(\bX)$, then
		\[
		\Ent_{\exists}^{\cU}(\tp^{\cM}(\bY)) \leq \Ent_{\exists}^{\cU}(\tp^{\cM}(\bX)).
		\]
		In particular, if $\mathrm{W}^*(\bX) = \mathrm{W}^*(\bY)$, then $\Ent_{\exists}^{\cU}(\tp^{\cM}(\bY)) = \Ent_{\exists}^{\cU}(\tp^{\cM}(\bX))$.
	\end{corollary}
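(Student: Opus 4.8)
The plan is to deduce this from Proposition \ref{prop:deffuncrealization} (realization by a quantifier-free definable function) together with Lemma \ref{lem:existentialpushforward} (monotonicity of existential entropy under pushforward by quantifier-free definable functions), in exact parallel with how Corollary \ref{cor:invariance} was deduced from Proposition \ref{prop:deffuncrealization} and Proposition \ref{prop:pushforward}.

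First I would reduce to the ambient algebra $\mathrm{W}^*(\bX)$. Since $\bX$ generates $\mathrm{W}^*(\bX)$ and $\bY \in \mathrm{W}^*(\bY)^{\N} \subseteq \mathrm{W}^*(\bX)^{\N}$, applying Proposition \ref{prop:deffuncrealization} to the tracial $\mathrm{W}^*$-algebra $\mathrm{W}^*(\bX)$ produces a quantifier-free definable function $\mathbf{f}$ relative to $\rT_{\tr}$ with $\bY = \mathbf{f}^{\mathrm{W}^*(\bX)}(\bX)$. Because $\mathbf{f}$ is quantifier-free, the object $(\mathbf{x},y)\mapsto d(f_k(\mathbf{x}),y)$ is a quantifier-free definable predicate, whose value on a tuple depends only on that tuple's quantifier-free type; since the quantifier-free type of $(\bX,\bY)$ computed in $\mathrm{W}^*(\bX)$ coincides with the one computed in $\cM$, it follows that $d^{\cM}(f_k^{\cM}(\bX),Y_k)=0$ for every $k$, i.e. $\bY = \mathbf{f}^{\cM}(\bX)$.

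Next I would identify $\tp_{\exists}^{\cM}(\bY)$ as the pushforward of $\tp_{\exists}^{\cM}(\bX)$. For any existential definable predicate $\phi$, the composition $\phi \circ \mathbf{f}$ is again an existential definable predicate (as recorded in the paragraph preceding Lemma \ref{lem:existentialpushforward}), so $\phi^{\cM}(\bY)=\phi^{\cM}(\mathbf{f}^{\cM}(\bX))=(\phi\circ\mathbf{f})^{\cM}(\bX)$; hence $\tp_{\exists}^{\cM}(\bY)=\mathbf{f}_*\tp_{\exists}^{\cM}(\bX)$. Lemma \ref{lem:existentialpushforward} then yields
\[
\Ent_{\exists}^{\cU}(\tp_{\exists}^{\cM}(\bY)) = \Ent_{\exists}^{\cU}(\mathbf{f}_*\tp_{\exists}^{\cM}(\bX)) \leq \Ent_{\exists}^{\cU}(\tp_{\exists}^{\cM}(\bX)),
\]
which is the asserted inequality. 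For the ``in particular'' clause, when $\mathrm{W}^*(\bX)=\mathrm{W}^*(\bY)$ one also has $\mathrm{W}^*(\bX)\subseteq\mathrm{W}^*(\bY)$, so the inequality holds in both directions and equality follows.

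There is essentially no hard step here: the whole content sits inside Proposition \ref{prop:deffuncrealization} and Lemma \ref{lem:existentialpushforward}. The one place requiring a moment's care — and the only thing I expect a referee might want spelled out — is the passage from $\bY = \mathbf{f}^{\mathrm{W}^*(\bX)}(\bX)$ to $\bY = \mathbf{f}^{\cM}(\bX)$, where it is essential that $\mathbf{f}$ is \emph{quantifier-free} definable rather than merely definable, so that its interpretation is insensitive to replacing a tracial $\mathrm{W}^*$-algebra by a $\mathrm{W}^*$-subalgebra containing the relevant tuple.
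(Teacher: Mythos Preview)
Your proposal is correct and follows exactly the route the paper indicates: the paper simply states that the corollary follows from Lemma \ref{lem:existentialpushforward} and Proposition \ref{prop:deffuncrealization}, which is precisely the combination you invoke. The extra care you take in passing from $\bY = \mathbf{f}^{\mathrm{W}^*(\bX)}(\bX)$ to $\bY = \mathbf{f}^{\cM}(\bX)$ via the quantifier-free nature of $\mathbf{f}$ is a welcome clarification of a step the paper leaves implicit (the same implicit step occurs in the proof of Corollary \ref{cor:invariance}).
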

	
	\begin{remark} \label{rem:qfnomonotonicity}
		The monotonicity property fails for the quantifier-free entropy.  For instance, let $\cM$ be the von Neumann algebra of the free group $\mathbb{F}_2$ and $\cR$ the hyperfinite $\mathrm{II}_1$ factor.  Then $\Ent_{\qf}^{\cU}(\cM) = \infty$ but $\Ent_{\qf}^{\cU}(\cM \overline{\otimes} \cR) = 0$ (by the same reasoning as in Corollary \ref{cor:Jungproperty}).  The proof of Lemma \ref{lem:existentialmonotonocity} breaks down because if $\pi: \mathbb{S}(\rT_{\tr}) \to \mathbb{S}_{\qf}(\rT_{\tr})$ is the restriction map, then $\pi^{-1}(\mathbf{f}_* \mu) \neq \mathbf{f}_*(\pi^{-1}(\mu))$ in general.  Given a $\bY$ with $\tp_{\qf}^{\cM}(\bY) = \mathbf{f}_* \mu$, in order to show the existence of some $\bX$ with $\mathbf{f}^{\cM}(\bX) \approx \bY$, we would have to use an existential formula in $\bY$.
	\end{remark}
	
	Now come to the definition of existential entropy for $\cN \subseteq \cM$, which we will show in \S \ref{subsec:inthepresence} is equivalent to Hayes' $h(\cN:\cM)$.
	
	\begin{definition} \label{def:inthepresence}
		Let $\cM$ be a tracial $\mathrm{W}^*$-algebra and $\cN \subseteq \cM$ a $\mathrm{W}^*$-subalgebra.  Then define
		\[
		\Ent_{\exists}^{\cU}(\cN: \cM) := \sup_{\bX \in L^\infty(\cN)^{\N}} \Ent_{\exists}^{\cU}(\tp_{\exists}^{\cM}(\bX)).
		\]
	\end{definition}
	
	The following is immediate from Corollary \ref{cor:existentialmonotonicity}.
	
	\begin{corollary} \label{cor:existentialgenerators}
		Let $\cM$ be a tracial $\mathrm{W}^*$-algebra and $\cN \subseteq \cM$ a $\mathrm{W}^*$-subalgebra.  If $\bX \in L^\infty(\cN)^{\N}$ generates $\cN$, then $\Ent_{\exists}^{\cU}(\cN:\cM) = \Ent_{\exists}^{\cU}(\tp_{\exists}^{\cM}(\bX))$.
	\end{corollary}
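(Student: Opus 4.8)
The plan is to simply unwind Definition \ref{def:inthepresence} and apply the monotonicity already recorded in Corollary \ref{cor:existentialmonotonicity}. Recall that
\[
\Ent_{\exists}^{\cU}(\cN:\cM) = \sup_{\bY \in L^\infty(\cN)^{\N}} \Ent_{\exists}^{\cU}(\tp_{\exists}^{\cM}(\bY)),
\]
so the inequality $\Ent_{\exists}^{\cU}(\cN:\cM) \geq \Ent_{\exists}^{\cU}(\tp_{\exists}^{\cM}(\bX))$ is immediate upon taking $\bY = \bX$ in the supremum, which is legitimate since $\bX$ itself lies in $L^\infty(\cN)^{\N}$ by hypothesis.

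For the reverse inequality, I would fix an arbitrary $\bY \in L^\infty(\cN)^{\N}$. Since $\bX$ generates $\cN$ as a $\mathrm{W}^*$-algebra, each entry of $\bY$ lies in $\mathrm{W}^*(\bX) = \cN$, and hence $\mathrm{W}^*(\bY) \subseteq \mathrm{W}^*(\bX)$. Corollary \ref{cor:existentialmonotonicity} then gives $\Ent_{\exists}^{\cU}(\tp_{\exists}^{\cM}(\bY)) \leq \Ent_{\exists}^{\cU}(\tp_{\exists}^{\cM}(\bX))$. Taking the supremum over all such $\bY$ yields $\Ent_{\exists}^{\cU}(\cN:\cM) \leq \Ent_{\exists}^{\cU}(\tp_{\exists}^{\cM}(\bX))$, and combining the two inequalities completes the argument. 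This is the exact analogue, in the existential setting, of the reasoning in Observation \ref{obs:monotonicity2} for full types, with Corollary \ref{cor:existentialmonotonicity} playing the role that Corollary \ref{cor:invariance} plays there.

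I do not expect any genuine obstacle here: the only step requiring a word of justification is the inclusion $\mathrm{W}^*(\bY) \subseteq \mathrm{W}^*(\bX)$, which is precisely the statement that $\bX$ generates $\cN$, and the only notational point to flag is that $\Ent_{\exists}^{\cU}(\tp_{\exists}^{\cM}(\,\cdot\,))$ is read as $\Ent_{\exists}^{\cU}$ applied to the singleton existential type, consistently with how it was defined. All the substantive work has already been done in Corollary \ref{cor:existentialmonotonicity}, which in turn rests on Lemma \ref{lem:existentialpushforward} and Proposition \ref{prop:deffuncrealization}.
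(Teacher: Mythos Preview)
Your proposal is correct and matches the paper's approach exactly: the paper simply states that the corollary is immediate from Corollary \ref{cor:existentialmonotonicity}, and you have spelled out precisely those details.
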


	\begin{lemma} \label{lem:existentialmonotonocity}
		Let $\cM_1 \subseteq \cM_2 \subseteq \cM_3$ be a tracial $\mathrm{W}^*$-algebras.  Then
		\[
		\Ent_{\exists}^{\cU}(\cM_1:\cM_3) \leq \Ent_{\exists}^{\cU}(\cM_2:\cM_3) \text{ and } \Ent_{\exists}^{\cU}(\cM_1:\cM_3) \leq \Ent_{\exists}^{\cU}(\cM_1:\cM_2).
		\]
	\end{lemma}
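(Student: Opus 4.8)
The plan is to treat the two inequalities separately, as they have slightly different flavors. The first inequality, $\Ent_{\exists}^{\cU}(\cM_1:\cM_3) \leq \Ent_{\exists}^{\cU}(\cM_2:\cM_3)$, I expect to be essentially immediate from Definition \ref{def:inthepresence}: since $\cM_1 \subseteq \cM_2$, we have $L^\infty(\cM_1)^{\N} \subseteq L^\infty(\cM_2)^{\N}$, and in both cases the relevant existential types have the form $\tp_{\exists}^{\cM_3}(\bX)$ computed inside the common ambient algebra $\cM_3$. Thus the supremum defining $\Ent_{\exists}^{\cU}(\cM_2:\cM_3)$ is taken over a larger index set than the one defining $\Ent_{\exists}^{\cU}(\cM_1:\cM_3)$, and the inequality follows.

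For the second inequality, $\Ent_{\exists}^{\cU}(\cM_1:\cM_3) \leq \Ent_{\exists}^{\cU}(\cM_1:\cM_2)$, I would fix $\bX \in L^\infty(\cM_1)^{\N}$ and compare $\tp_{\exists}^{\cM_3}(\bX)$ with $\tp_{\exists}^{\cM_2}(\bX)$. The key observation is that $\tp_{\exists}^{\cM_3}(\bX)$ \emph{extends} $\tp_{\exists}^{\cM_2}(\bX)$ in the sense of the remark following \eqref{eq:nbhd}: for any existential definable predicate, say $\phi^{\cM}(\bX) = \inf_{\bY \in \prod_{j \in \N} D_j^{\cM}} \psi^{\cM}(\bX,\bY)$ with $\psi$ quantifier-free, the infimum computing $\phi^{\cM_3}(\bX)$ ranges over a set of tuples $\bY$ that contains the one computing $\phi^{\cM_2}(\bX)$ (because $D_j^{\cM_2} \subseteq D_j^{\cM_3}$ and $\psi$ evaluated on a tuple is independent of the ambient algebra), so $\phi^{\cM_3}(\bX) \leq \phi^{\cM_2}(\bX)$. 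Consequently $\cK_{\tp_{\exists}^{\cM_3}(\bX)} \subseteq \cK_{\tp_{\exists}^{\cM_2}(\bX)}$, where $\cK_\mu$ is as in \eqref{eq:nbhd}: if $\sigma(\phi) \leq \tp_{\exists}^{\cM_3}(\bX)(\phi)$ for every existential $\phi$, then a fortiori $\sigma(\phi) \leq \tp_{\exists}^{\cM_2}(\bX)(\phi)$.

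From here the argument closes quickly. Applying the restriction map $\pi \colon \mathbb{S}(\rT_{\tr}) \to \mathbb{S}_{\exists}(\rT_{\tr})$ preserves the inclusion, so $\pi^{-1}(\cK_{\tp_{\exists}^{\cM_3}(\bX)}) \subseteq \pi^{-1}(\cK_{\tp_{\exists}^{\cM_2}(\bX)})$, and $\Ent^{\cU}$ is monotone under inclusion of subsets of the type space by Observation \ref{obs:monotonicity}. Combining this with Lemma \ref{lem:existentialversusfull},
\[
\Ent_{\exists}^{\cU}(\tp_{\exists}^{\cM_3}(\bX)) = \Ent^{\cU}\!\left(\pi^{-1}(\cK_{\tp_{\exists}^{\cM_3}(\bX)})\right) \leq \Ent^{\cU}\!\left(\pi^{-1}(\cK_{\tp_{\exists}^{\cM_2}(\bX)})\right) = \Ent_{\exists}^{\cU}(\tp_{\exists}^{\cM_2}(\bX)).
\]
Taking the supremum over $\bX \in L^\infty(\cM_1)^{\N}$ and invoking Definition \ref{def:inthepresence} gives $\Ent_{\exists}^{\cU}(\cM_1:\cM_3) \leq \Ent_{\exists}^{\cU}(\cM_1:\cM_2)$, completing the proof.

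I do not anticipate a serious obstacle: the only steps requiring care are the correct identification of the extension relation between the two existential types (which hinges on the fact that existential definable predicates can be approximated uniformly by existential formulas and that the defining infima are genuinely over larger balls in $\cM_3$) and the observation that the monotonicity in Observation \ref{obs:monotonicity}, though phrased for the building-block quantities $\Ent_{\mathbf{r},F,\epsilon}^{\cU}$, passes to $\Ent^{\cU}$ after taking suprema over $\mathbf{r}$, $F$, $\epsilon$. Both are routine once the extension picture is set up correctly.
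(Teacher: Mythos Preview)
Your proof is correct and follows essentially the same approach as the paper: the first inequality is immediate from the definition, and the second rests on the observation that $\tp_{\exists}^{\cM_3}(\bX)$ extends $\tp_{\exists}^{\cM_2}(\bX)$ because the infima defining existential formulas range over larger sets in $\cM_3$. The only cosmetic difference is that the paper concludes directly by noting that every open neighborhood of $\tp_{\exists}^{\cM_2}(\bX)$ in the existential topology is automatically a neighborhood of $\tp_{\exists}^{\cM_3}(\bX)$ (so the infimum defining $\Ent_{\exists}^{\cU}$ is over a larger collection), whereas you route the same fact through the inclusion $\cK_{\tp_{\exists}^{\cM_3}(\bX)} \subseteq \cK_{\tp_{\exists}^{\cM_2}(\bX)}$ and Lemma~\ref{lem:existentialversusfull}; both are equivalent unpackings of the extension relation.
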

	
	\begin{proof}
		The first inequality is immediate from Definition \ref{def:inthepresence}.  For the second inequality, note that for every $\bX \in L^\infty(\cM_1)^{\N}$, for every existential formula $\phi$, we have $\phi^{\cM_3}(\bX) \leq \phi^{\cM_2}(\bX)$ since the first is the infimum over a larger set than the second.  In other words, $\tp_{\exists}^{\cM_3}(\bX)$ is an extension of $\tp_{\exists}^{\cM_2}(\bX)$, and hence every neighborhood of $\tp_{\exists}^{\cM_2}(\bX)$ is also a neighborhood of $\tp_{\exists}^{\cM_3}(\bX)$.  This implies that $\Ent_{\exists}^{\cU}(\tp_{\exists}^{\cM_3}(\bX)) \leq \Ent_{\exists}^{\cU}(\tp_{\exists}^{\cM_2}(\bX))$.  Since this holds for all $\bX \in L^\infty(\cM)^{\N}$, we obtain $\Ent_{\exists}^{\cU}(\cM_1:\cM_3) \leq \Ent_{\exists}^{\cU}(\cM_1:\cM_2)$.
	\end{proof}
	
	Next, we show that the quantifier-free entropy can be expressed in terms of the existential entropy.
	
	\begin{lemma} \label{lem:qfversusexistential}
		Let $\cM$ be a separable tracial $\mathrm{W}^*$-algebra.  Then
		\[
		\Ent_{\qf}^{\cU}(\cM) = \Ent_{\exists}^{\cU}(\cM:\cM).
		\]
	\end{lemma}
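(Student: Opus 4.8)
The plan is to fix, using separability of $\cM$, an $\N$-tuple $\bX \in L^\infty(\cM)^{\N}$ that generates $\cM$, and to write $\mu_{\qf} = \tp_{\qf}^{\cM}(\bX)$ and $\mu_{\exists} = \tp_{\exists}^{\cM}(\bX)$. By the definition of $\Ent_{\qf}^{\cU}$ for a separable algebra, $\Ent_{\qf}^{\cU}(\cM) = \Ent_{\qf}^{\cU}(\mu_{\qf})$, and by Corollary \ref{cor:existentialgenerators}, $\Ent_{\exists}^{\cU}(\cM:\cM) = \Ent_{\exists}^{\cU}(\mu_{\exists})$. Writing $\pi_{\qf}$ and $\pi_{\exists}$ for the restriction maps from $\mathbb{S}(\rT_{\tr})$ to $\mathbb{S}_{\qf}(\rT_{\tr})$ and to $\mathbb{S}_{\exists}(\rT_{\tr})$ respectively, and $\cK_{\mu_{\exists}}$ for the set from \eqref{eq:nbhd}, Lemma \ref{lem:qfentropyrelationship} applied to the closed singleton $\{\mu_{\qf}\}$ and Lemma \ref{lem:existentialversusfull} give
\[
\Ent_{\qf}^{\cU}(\mu_{\qf}) = \Ent^{\cU}\!\left(\pi_{\qf}^{-1}(\{\mu_{\qf}\})\right), \qquad \Ent_{\exists}^{\cU}(\mu_{\exists}) = \Ent^{\cU}\!\left(\pi_{\exists}^{-1}(\cK_{\mu_{\exists}})\right).
\]
Hence the lemma reduces to the identity of subsets of $\mathbb{S}(\rT_{\tr})$
\[
\pi_{\qf}^{-1}(\{\mu_{\qf}\}) = \pi_{\exists}^{-1}(\cK_{\mu_{\exists}}).
\]

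For the inclusion $\supseteq$, note that every quantifier-free formula is in particular existential (the degenerate case of no quantified variables). If $\nu \in \pi_{\exists}^{-1}(\cK_{\mu_{\exists}})$, i.e.\ $\nu(\phi) \leq \mu_{\exists}(\phi)$ for every existential $\phi$, then applying this to a quantifier-free $\psi$ and to $-\psi$ — both existential — and using that $\nu$, being a full type, is a \emph{linear} functional on all formulas, we get $\nu(\psi) = \psi^{\cM}(\bX)$; thus $\pi_{\qf}(\nu) = \mu_{\qf}$. For $\subseteq$, let $\nu = \tp^{\cN}(\bY)$ be a full type with $\tp_{\qf}^{\cN}(\bY) = \mu_{\qf}$, and fix an existential formula $\phi(\mathbf{x}) = \inf_{y_1 \in D_{s_1}, \dots, y_k \in D_{s_k}} \psi(\mathbf{x}, y_1, \dots, y_k)$ with $\psi$ quantifier-free. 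Given $\epsilon > 0$, choose a near-optimal witness $W_1 \in D_{s_1}^{\cM}, \dots, W_k \in D_{s_k}^{\cM}$ with $\psi^{\cM}(\bX, W_1, \dots, W_k) < \phi^{\cM}(\bX) + \epsilon$. Since $\bX$ generates $\cM$, Proposition \ref{prop:deffuncrealization}, applied to the $\N$-tuple obtained by padding $(W_1, \dots, W_k)$ with zeros, produces a quantifier-free definable function $\mathbf{g}$ with $g_i^{\cM}(\bX) = W_i$ and, by its last clause, $g_i^{\cP}$ mapping $L^\infty(\cP)^{\N}$ into $D_{s_i}^{\cP}$ for every $\cP \models \rT_{\tr}$. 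Then $\chi(\mathbf{x}) := \psi(\mathbf{x}, g_1(\mathbf{x}), \dots, g_k(\mathbf{x}))$ is a quantifier-free definable predicate, so its value depends only on the quantifier-free type; hence $\chi^{\cN}(\bY) = \chi^{\cM}(\bX) = \psi^{\cM}(\bX, W_1, \dots, W_k) < \phi^{\cM}(\bX) + \epsilon$. As $g_i^{\cN}(\bY) \in D_{s_i}^{\cN}$, the tuple $(g_1^{\cN}(\bY), \dots, g_k^{\cN}(\bY))$ competes in the infimum defining $\phi^{\cN}(\bY)$, so $\phi^{\cN}(\bY) \leq \chi^{\cN}(\bY) < \phi^{\cM}(\bX) + \epsilon$; letting $\epsilon \to 0$ gives $\nu(\phi) = \phi^{\cN}(\bY) \leq \phi^{\cM}(\bX) = \mu_{\exists}(\phi)$. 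Since $\phi$ was arbitrary, $\pi_{\exists}(\nu) \in \cK_{\mu_{\exists}}$, completing the proof of the set identity and hence of the lemma.

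Apart from the bookkeeping with the already-established relations among $\Ent^{\cU}$, $\Ent_{\qf}^{\cU}$, and $\Ent_{\exists}^{\cU}$, the substantive step is the inclusion $\subseteq$: it expresses that the existential type of a generating tuple is determined by its quantifier-free type, and this is exactly where Proposition \ref{prop:deffuncrealization} is needed, since every potential witness for an existential quantifier lies in $\mathrm{W}^*(\bX)$ and is therefore a quantifier-free definable function of $\bX$. The only point requiring care is that the witnessing definable functions $g_i$ be chosen to respect the domains of quantification $D_{s_i}$ appearing in $\phi$, which is precisely the refinement supplied by the final sentence of Proposition \ref{prop:deffuncrealization}.
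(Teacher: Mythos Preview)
Your proof is correct, and while it shares the same essential ingredient with the paper's argument---Proposition~\ref{prop:deffuncrealization}, used to realize witnesses to existential formulas as quantifier-free definable functions of the generating tuple---it is packaged differently. The paper works directly at the level of neighborhoods: it shows that for each fixed $\mathbf{r},F,\epsilon$, every basic neighborhood of $\tp_{\exists}^{\cM}(\bX)$ in $\mathbb{S}_{\exists,\mathbf{r}}(\rT_{\tr})$ contains $\pi^{-1}(\cO)$ for some neighborhood $\cO$ of $\tp_{\qf}^{\cM}(\bX)$ in $\mathbb{S}_{\qf,\mathbf{r}}(\rT_{\tr})$, and conversely, so the two infima over neighborhoods agree. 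Your approach is more structural: you lift both sides to the full type space via Lemma~\ref{lem:qfentropyrelationship} and Lemma~\ref{lem:existentialversusfull}, reducing the statement to the clean set-theoretic identity $\pi_{\qf}^{-1}(\{\mu_{\qf}\}) = \pi_{\exists}^{-1}(\cK_{\mu_{\exists}})$. This is arguably tidier, since it reuses machinery already in place and isolates the content as a statement about types rather than about entropies. The paper's route, by contrast, is slightly more self-contained and avoids passing through the full type space. In either case the substantive step is the inclusion you single out at the end: that the existential type of a \emph{generating} tuple is recoverable from its quantifier-free type, which is exactly what Proposition~\ref{prop:deffuncrealization} (with its domain-preservation clause) provides.
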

	
	\begin{proof}
		Suppose $\bX \in L^\infty(\cM)^{\N}$ generates $\cM$.  Fix $\mathbf{r}$ such that $\bX \in \prod_{j \in \N} D_{r_j}^{\cM}$.  Let $\pi: \mathbb{S}_{\exists}(\rT_{\tr}) \to \mathbb{S}_{\qf}(\rT_{\tr})$ be the restriction map.  It suffices to show that $\Ent_{\exists,\mathbf{r},F,\epsilon}(\tp_{\exists}^{\cM}(\bX)) = \Ent_{\qf,\mathbf{r},F,\epsilon}^{\cU}(\tp_{\qf}^{\cM}(\bX))$ for all $\mathbf{r}$, $F$, and $\epsilon$, which in turn will follow if we prove that every neighborhood $\cO'$ of $\tp_{\exists}^{\cM}(\bX)$ in $\mathbb{S}_{\exists,\mathbf{r}}(\rT_{\tr})$ contains $\pi^{-1}(\cO)$ for some neighborhood $\cO$ of $\tp_{\qf}^{\cM}(\bX)$ in $\mathbb{S}_{\qf,\mathbf{r}}(\rT_{\tr})$ and vice versa.
		
		Let $\cO$ be a neighborhood of $\tp_{\qf}^{\cM}(\bX)$.  By the definition / properties of the weak-$\star$ topology, there exist some quantifier-free definable predicates $\phi_1$, \dots, $\phi_k$ and intervals $(a_k, b_k)$ such that
		\[
		\tp_{\qf}^{\cM}(\bX) \in \{\nu \in \mathbb{S}_{\qf,\mathbf{r}}(\rT_{\tr}): \nu(\phi_i) \in (a_i,b_i) \text{ for } i = 1, \dots, k\} \subseteq \cO.
		\]
		Then $\phi_i$ and $-\phi_i$ are both existential definable predicates, hence
		\[
		\pi^{-1}(\cO) \supseteq \cO' := \{\nu \in \mathbb{S}_{\exists,\mathbf{r}}(\rT_{\tr}): \nu(\phi_i) < b_i, \nu(-\phi_k) < -a_i \text{ for } i = 1, \dots, k\}.
		\]
		
		Conversely, let $\cO'$ be a neighborhood of $\tp_{\exists}^{\cM}(\bX)$ in $\mathbb{S}_{\exists,\mathbf{r}}(\bX)$.  Then there exists existential definable predicates $\phi_1$, \dots, $\phi_k$ and $c_1$, \dots, $c_k \in \R$ such that
		\[
		\tp_{\exists}^{\cM}(\bX) \in \{\nu \in \mathbb{S}_{\exists,\mathbf{r}}(\rT_{\tr}): \nu(\phi_i) < c_i \text{ for } i = 1,\dots, k \} \subseteq \cO'.
		\]
		Suppose that
		\[
		\phi_i^{\cN}(\bX') = \inf_{\bY' \in \prod_{j \in \N} D_{r_{i,j}}^{\cN}} \psi_i^{\cN}(\bX',\bY')
		\]
		for all $\cN \models \rT_{\tr}$ and $\bX' \in L^\infty(\cN)^{\N}$, where $\psi_i$ is quantifier-free.  Because $\phi_i^{\cM}(\bX) < c_i$, there exists $\bY_i \in \prod_{j \in \N} D_{r_{i,j}}^{\cM}$ with $\psi_i^{\cM}(\bX,\bY_i) < c_i$.  By Proposition \ref{prop:deffuncrealization}, there exists a quantifier-free definable function $\mathbf{f}_i$ such that $\bY_i = \mathbf{f}_i^{\cM}(\bX)$ and $\mathbf{f}_i^{\cN}$ maps into $\prod_{j \in \N} D_{r_{i,j}}^{\cN}$ for all $\cN \models \rT_{\tr}$.  Let
		\[
		\eta_i^{\cN}(\bX') = \psi_i^{\cN}(\bX',\mathbf{f}^{\cN}(\bX')) \geq \phi_i^{\cN}(\bX').
		\]
		Then $\eta_i$ is quantifier-free.  Thus,
		\[
		\tp_{\qf}^{\cM}(\bX) \in \cO := \{\nu \in \mathbb{S}_{\qf,\mathbf{r}}(\rT_{\tr}): \nu(\eta_i) < c_i \},
		\]
		and
		\[
		\pi^{-1}(\cO) \subseteq \{\nu \in \mathbb{S}_{\exists,\mathbf{r}}(\rT_{\tr}): \nu(\phi_i) < c_i \text{ for } i = 1,\dots, k \} \subseteq \cO'
		\]
		as desired.
	\end{proof}
	
	\begin{remark} \label{rem:nonseparableqfentropy}
		Therefore, it is natural to define $\Ent_{\qf}^{\cU}(\cM)$ for general (not necessarily separable $\cM$) by $\Ent_{\qf}^{\cU}(\cM) := \Ent_{\exists}^{\cU}(\cM:\cM)$.
	\end{remark}
	
	\subsection{Existential entropy and entropy in the presence} \label{subsec:inthepresence}
	
	Let us finally explain why the existential entropy defined here agrees with (the ultrafilter version of) Hayes' $1$-bounded entropy of $\cN$ in the presence of $\cM$ in \cite{Hayes2018}.  The definition is given in terms of Voiculescu's microstate spaces for some $\bX$ in the presence of $\bY$ from \cite{VoiculescuFE3}.
	
	\begin{definition}[Hayes \cite{Hayes2018}] \label{def:Hayespresence}
		Let $\cM$ be a tracial $\mathrm{W}^*$-algebra.  Let $I$ and $J$ be arbitrary index sets and let $\bX \in L^\infty(\cM)^I$ and $\bY \in L^\infty(\cM)^J$.  Let $\mathbf{r} \in (0,\infty)^I$ and $\mathbf{r}' \in (0,\infty)^J$ such that $\norm{X_j} \leq r_j$ and $\norm{Y_j} \leq r_j'$.  Let $\mathbb{S}_{\mathbf{r},\mathbf{r}',\qf}(\rT_{\tr})$ be the set of quantifier-free types of tuples from $\prod_{i \in I} D_{r_i} \times \prod_{j \in J} D_{r_j'}$ equipped with the weak-$\star$ topology.  Let $p: M_n(\C)^{I \sqcup J} \to M_n(\C)^{I}$ be the canonical coordinate projection.  Then we define
		\[
		h_{\mathbf{r},\mathbf{r}'}^{\cU}(\bX:\bY) := \sup_{\epsilon > 0} \sup_{F \subseteq I \text{ finite}} \inf_{\cO \ni \tp_{\qf}^{\cM}(\bX,\bY)} \lim_{n \to \cU} \frac{1}{n^2} \log K_{F,\epsilon}^{\orb}(p[\Gamma_{\mathbf{r},\mathbf{r}'}^{(n)}(\cO)]),
		\]
		where $\cO$ ranges over all neighborhoods of $\tp_{\qf}^{\cM}(\bX,\bY)$ in $\mathbb{S}_{\mathbf{r},\mathbf{r}',\qf}(\rT_{\tr})$.
	\end{definition}
	
	Here we use arbitrary index sets $I$ and $J$ rather than $\N$ because we do not assume that $\cM$ is separable.  This is a technical issue we will have to consider when proving that our definition using $\N$-tuples agrees with Hayes.'  Apart from that, the idea of the proof is that a matrix tuple $\bX'$ is in the projection $p[\Gamma_{\mathbf{r},\mathbf{r}'}^{(n)}(\cO)]$ if and only if \emph{there exists} some $\bY'$ such that $\tp_{\qf}^{M_n(\C)}(\bX',\bY') \in \cO$.  If $\bX', \bY'$ being in $\Gamma_{\mathbf{r},\mathbf{r}'}^{(n)}(\cO)$ can be detected by a quantifier-free formula being less than some $c$ (using Urysohn's lemma), then $\bX'$ being in $p[\Gamma_{\mathbf{r},\mathbf{r}'}^{(n)}(\cO)]$ can be detected by an existential formula.
	
	\begin{proposition} \label{prop:presenceequivalence}
		In the setup of Definition \ref{def:Hayespresence}, we have $h_{\mathbf{r},\mathbf{r}'}^{\cU}(\bX:\bY) = \Ent_{\exists}^{\cU}(\mathrm{W}^*(\bX):\mathrm{W}^*(\bX,\bY))$.
	\end{proposition}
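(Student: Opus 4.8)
The plan is to first treat the case $I = J = \N$, where $\bX$ generates $\cN := \mathrm{W}^*(\bX)$ and the ambient algebra is $\cM_0 := \mathrm{W}^*(\bX,\bY)$; Corollary \ref{cor:existentialgenerators} then identifies the right-hand side with $\Ent_{\exists}^{\cU}(\tp_{\exists}^{\cM_0}(\bX))$. Using the standard fact that Hayes' quantity $h_{\mathbf{r},\mathbf{r}'}^{\cU}(\bX:\bY)$ is independent of the admissible operator-norm bounds $\mathbf{r},\mathbf{r}'$ --- and that non-admissible bounds contribute $-\infty$ on the existential side, since an existential type determines the operator norms from below via $\re\tr((x_j^*x_j)^k)$ --- the proposition reduces to showing, for one admissible $\mathbf{r}$ and for every finite $F \subseteq \N$ and $\epsilon > 0$, that
\[
\inf_{\cO}\;\lim_{n\to\cU}\frac{1}{n^2}\log K_{F,\epsilon}^{\orb}\bigl(p[\Gamma_{\mathbf{r},\mathbf{r}'}^{(n)}(\cO)]\bigr) \;=\; \inf_{\cO'}\;\lim_{n\to\cU}\frac{1}{n^2}\log K_{F,\epsilon}^{\orb}\bigl(\Gamma_{\mathbf{r}}^{(n)}(\cO')\bigr),
\]
where $\cO$ ranges over quantifier-free neighborhoods of $\tp_{\qf}^{\cM_0}(\bX,\bY)$, $\cO'$ over existential neighborhoods of $\tp_{\exists}^{\cM_0}(\bX)$, and the second $\Gamma$ is the existential microstate space. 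I will establish this by producing, for any neighborhood of one kind, a neighborhood of the other kind whose microstate set is nested in the first; no $\epsilon$-fattening is needed, so the covering-number comparisons are immediate.

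For ``$\ge$'', given $\cO'$, shrink it to a set $\{\mu : \mu(\psi_i) < c_i,\ i \le k\}$ with each $\psi_i$ an existential formula $\inf_{\mathbf{y}\in D_{\mathbf{s}_i}}\chi_i(\mathbf{x},\mathbf{y})$ ($\chi_i$ quantifier-free in finitely many variables) and $\psi_i^{\cM_0}(\bX) < c_i$. Pick a witness $\bW_i \in \prod_j D_{s_{i,j}}^{\cM_0}$ with $\chi_i^{\cM_0}(\bX,\bW_i) < c_i$; by Proposition \ref{prop:deffuncrealization} there is a quantifier-free definable function $\mathbf{f}_i$ with $\bW_i = \mathbf{f}_i^{\cM_0}(\bX,\bY)$ and with $\mathbf{f}_i^{\cN'}$ mapping into $\prod_j D_{s_{i,j}}^{\cN'}$ for all $\cN' \models \rT_{\tr}$. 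Then $\eta_i := \chi_i(\mathbf{x},\mathbf{f}_i(\mathbf{x},\mathbf{y}))$ is a quantifier-free definable predicate with $\eta_i^{\cM_0}(\bX,\bY) < c_i$ and, for every $n$, $\eta_i^{M_n(\C)}(\bX',\bY') \ge \psi_i^{M_n(\C)}(\bX')$ on the relevant domains. Hence $\cO := \{\nu : \nu(\eta_i) < c_i,\ i \le k\}$ is a quantifier-free neighborhood of $\tp_{\qf}^{\cM_0}(\bX,\bY)$ with $p[\Gamma_{\mathbf{r},\mathbf{r}'}^{(n)}(\cO)] \subseteq \Gamma_{\mathbf{r}}^{(n)}(\cO')$, which yields ``$\ge$'' after taking $K_{F,\epsilon}^{\orb}$, the ultralimit, and the infima.

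For ``$\le$'', given $\cO$, use that $\mathbb{S}_{\mathbf{r},\mathbf{r}',\qf}(\rT_{\tr})$ is compact Hausdorff, together with Urysohn's lemma and Stone--Weierstrass, to find a quantifier-free formula $\chi \ge 0$ in finitely many variables with $\chi(\tp_{\qf}^{\cM_0}(\bX,\bY)) = 0$ and $\{\nu : \nu(\chi) \le 1/2\} \subseteq \cO$. Set $\psi(\mathbf{x}) := \inf_{\mathbf{y}\in D_{\mathbf{r}'}}\chi(\mathbf{x},\mathbf{y})$ (an existential formula, effectively a finite infimum since $\chi$ uses finitely many variables), so $\psi^{\cM_0}(\bX) = 0$, and let $\cO' := \{\mu : \mu(\psi) < 1/2\}$, an existential neighborhood of $\tp_{\exists}^{\cM_0}(\bX)$. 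If $\bX' \in \Gamma_{\mathbf{r}}^{(n)}(\cO')$, then $\psi^{M_n(\C)}(\bX') < 1/2$, so some $\bY''$ with the relevant coordinates in $D_{r_j'}^{M_n(\C)}$ has $\chi^{M_n(\C)}(\bX',\bY'') < 1/2$; extending $\bY''$ by zeros to a full tuple $\bY' \in \prod_j D_{r_j'}^{M_n(\C)}$ leaves $\chi$ unchanged, so $\tp_{\qf}^{M_n(\C)}(\bX',\bY') \in \cO$ and $\bX' = p(\bX',\bY') \in p[\Gamma_{\mathbf{r},\mathbf{r}'}^{(n)}(\cO)]$. Thus $\Gamma_{\mathbf{r}}^{(n)}(\cO') \subseteq p[\Gamma_{\mathbf{r},\mathbf{r}'}^{(n)}(\cO)]$, giving ``$\le$'' and hence the displayed equality, which proves the proposition when $I = J = \N$.

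For general index sets, observe that $K_{F,\epsilon}^{\orb}$ sees only the finitely many coordinates in $F$ and that each basic weak-$\star$ neighborhood of $\tp_{\qf}^{\cM}(\bX,\bY)$ constrains only finitely many coordinates, so $h_{\mathbf{r},\mathbf{r}'}^{\cU}(\bX:\bY)$ is the supremum of the corresponding quantities over countable subtuples $\bX|_{I_0}$, $\bY|_{J_0}$; likewise, by Definition \ref{def:inthepresence} and Corollary \ref{cor:existentialmonotonicity}, $\Ent_{\exists}^{\cU}(\cN:\cM_0)$ is the supremum over $\N$-tuples $\bZ$ from $\cN$ of $\Ent_{\exists}^{\cU}(\mathrm{W}^*(\bZ):\mathrm{W}^*(\bX|_{I_0},\bY|_{J_0}))$ for a suitable countable $I_0,J_0$. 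Matching these suprema via the countable case completes the argument. I expect the main obstacle to be not the core translation --- which is essentially forced once Proposition \ref{prop:deffuncrealization} provides definable witnesses for the infima in existential formulas --- but keeping the reduction to countable index sets and the various norm bounds ($\mathbf{r}$, $\mathbf{r}'$, $\mathbf{s}_i$) mutually consistent throughout.
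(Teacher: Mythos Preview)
Your argument for the case $I = J = \N$ is correct and follows essentially the same strategy as the paper: one direction uses Proposition~\ref{prop:deffuncrealization} to produce quantifier-free witnesses for the existential infima, and the other uses Urysohn's lemma to manufacture an existential formula from a quantifier-free neighborhood. In fact your $\N$-case is slightly cleaner than the paper's, since by invoking Corollary~\ref{cor:existentialgenerators} you may take $\bZ = \bX$ and avoid the $\epsilon$-fattening and uniform-continuity step that the paper needs when comparing against an arbitrary $\bZ \in \mathrm{W}^*(\bX)^{\N}$. One cosmetic slip: your labels ``$\ge$'' and ``$\le$'' are interchanged. In the paragraph you label ``$\ge$'' you produce, for each existential neighborhood $\cO'$, a quantifier-free $\cO$ with $p[\Gamma_{\mathbf{r},\mathbf{r}'}^{(n)}(\cO)] \subseteq \Gamma_{\mathbf{r}}^{(n)}(\cO')$; this gives $K_{F,\epsilon}^{\orb}(p[\Gamma(\cO)]) \le K_{F,\epsilon}^{\orb}(\Gamma(\cO'))$ and hence $\inf_{\cO}(\cdots) \le \inf_{\cO'}(\cdots)$, i.e.\ the ``$\le$'' direction of your displayed equality. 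The other paragraph is likewise mislabeled.

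The genuine gap is in your final paragraph. The assertion that $h_{\mathbf{r},\mathbf{r}'}^{\cU}(\bX:\bY)$ equals the \emph{supremum} of $h_{\mathbf{r}|_{I_0},\mathbf{r}'|_{J_0}}^{\cU}(\bX|_{I_0}:\bY|_{J_0})$ over countable $I_0, J_0$ is false as stated: enlarging $J_0$ adds constraints to the neighborhoods $\cO$ and hence \emph{decreases} the inner infimum, so the supremum over $J_0$ overshoots $h^{\cU}(\bX:\bY)$. (Concretely, if some $Y_{j_0}$ forces $\mathrm{W}^*(\bX,Y_{j_0})$ to have small $1$-bounded entropy while $\mathrm{W}^*(\bX)$ alone does not, then dropping $j_0$ from $J_0$ strictly increases the countable-subtuple quantity.) The correct reduction has a sup--inf structure in the index sets, not a sup--sup, and matching it with the $\Ent_{\exists}$ side then requires exactly the kind of argument you are trying to avoid. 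The paper sidesteps this by working with arbitrary $I,J$ throughout: for $h \le \Ent_{\exists}$ it fixes finite $F \subseteq I$, embeds $\bX|_F$ into an $\N$-tuple via an injection $\alpha: F \to \N$, and compares directly with $\Ent_{\exists}^{\cU}(\cN:\cM)$; for $\Ent_{\exists} \le h$ it starts from an arbitrary $\bZ \in \mathrm{W}^*(\bX)^{\N}$, writes $\bZ = \mathbf{f}(\bX)$ with $\mathbf{f}$ depending on countably many coordinates, and uses uniform continuity of $\mathbf{f}$ to pass from an existential neighborhood of $\tp_{\exists}^{\cM}(\bZ)$ back to a quantifier-free neighborhood of $\tp_{\qf}^{\cM}(\bX,\bY)$. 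You should either replicate that direct argument or, if you insist on reducing to the countable case, carefully state and prove the correct sup--inf identity for $h$ over finite subtuples.
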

	
	We remark at the start of the proof that all the facts we proved about definable predicates and functions work for arbitrary index sets, so long as they do not invoke metrizability of the type space.  We also leave some details to the reader for the sake of space.
	
	\begin{proof}
		We may assume without loss of generality that $\cM = \mathrm{W}^*(\bX,\bY)$ since restricting to a smaller $\mathrm{W}^*$-algebra does not change the quantifier-free type of $(\bX,\bY)$.
		
		First, let us show that $h_{\mathbf{r},\mathbf{r}'}^{\cU}(\bX:\bY) \leq \Ent_{\exists}^{\cU}(\mathrm{W}^*(\bX):\cM)$.  Let $F \subseteq I$ finite and $\epsilon > 0$.  First, to deal with changing index sets from $I$ to $\N$, let $\alpha: F \to \N$ be an injective function and let $\mathbf{f}$ be the quantifier-free definable function that sends an $I$-tuple $\bX'$ to the $\N$-tuple obtained by putting $X_j'$ into the $\alpha(j)$th entry for $j \in F$ and fills the other entries with zeros.  Let $\bZ = \mathbf{f}^{\cM}(\bX)$, fix some $\rT \in (0,\infty)^{\N}$ with $\bZ \in \prod_{j \in \N} D_{t_j}^{\cN}$, and let $\cO$ be a neighborhood of $\mu = \tp_{\exists}^{\cM}(\bZ)$ in $\mathbb{S}_{\exists,\rT}(\rT_{\tr})$.  Then there exist existential definable predicates $\phi_1$, \dots, $\phi_k$ and $\epsilon_1$, \dots, $\epsilon_k > 0$ such that
		\[
		\{\nu \in \mathbb{S}_{\exists,\rT}(\rT_{\tr}): \nu(\phi_j) \leq \mu(\phi_j) + \epsilon_j \text{ for } j = 1, \dots, k\} \subseteq \cO.
		\]
		There exist quantifier-free definable predicates $\psi_1$, \dots, $\psi_k$ such that
		\[
		\phi_j^{\cN}(\bZ') = \inf_{\bW' \in \prod_{i \in \N} D_{t_{i,j}}^{\cN}} \psi_j^{\cN}(\bZ',\bW') \text{ for all } \cN \text{ and } \bZ' \in L^\infty(\cN)^{\N}.
		\]
		Moreover, for our particular $\cM$ and $\bZ$, there exists $\bW_j \in \prod_{i \in \N} D_{t_{i,j}}^{\cM}$ such that
		\[
		\psi_j^{\cM}(\bZ,\bW_j) < \mu(\phi_j) + \epsilon_j.
		\]
		By Proposition \ref{prop:deffuncrealization}, $\bW_j = \mathbf{g}_j(\bX,\bY)$ for some quantifier-free definable function $\mathbf{g}_j$.  Let $\psi_j(\mathbf{f},\mathbf{g}_j)$ denote the quantifier-free definable predicate defined for $I \sqcup J$-tuples by applying $\mathbf{f}$ to the $I$-tuple and $\mathbf{g}_j$ to the $I \sqcup J$-tuple and then applying $\psi_j$.  Then
		\[
		\cO' := \bigcap_{j=1}^k \{\sigma \in \mathbb{S}_{\qf,\mathbf{r},\mathbf{r}'}(\rT_{\tr}): \sigma(\psi_j(\mathbf{f},\mathbf{g}_j)) < \mu(\phi_j) + \epsilon_j \}
		\]
		is a neighborhood of $\tp_{\qf}^{\cM}(\bX,\bY)$ in $\mathbb{S}_{\qf,\mathbf{r},\mathbf{r}'}(\rT_{\tr})$ such that
		\[
		p[\Gamma_{\mathbf{r},\mathbf{r}'}^{(n)}(\cO')] \subseteq (\mathbf{f}^{M_n(\C)})^{-1}[\Gamma_{\rT}^{(n)}(\cO)].
		\]
		Therefore,
		\[
		K_{F,\epsilon}^{\orb}(p[\Gamma_{\mathbf{r},\mathbf{r}'}^{(n)}(\cO')]) \leq K_{\alpha(F),\epsilon}^{\orb}(\Gamma_{\rT}^{(n)}(\cO)).
		\]
		Because for every such $\cO$, there exists such an $\cO'$, we obtain that
		\[
		\inf_{\cO' \ni \tp_{\qf}^{\cM}(\bX,\bY)} \lim_{n \to \cU} \frac{1}{n^2} \log K_{F,\epsilon}^{\orb}(p[\Gamma_{\mathbf{r},\mathbf{r}'}^{(n)}(\cO')]) \leq \Ent_{\exists,\alpha(F),\epsilon}^{\cU}(\tp_{\exists}^{\cM}(\mathbf{f}(\bX))) \leq \Ent_{\exists}^{\cU}(\mathrm{W}^*(\bX): \cM).
		\]
		Since $F$ and $\epsilon$ were arbitrary, we are done with the first inequality.
		
		To prove the second inequality, we must show that for all $\bZ \in \mathrm{W}^*(\bX)^{\N}$, we have $\Ent_{\exists}(\tp_{\exists}^{\cM}(\bZ)) \leq h_{\mathbf{r},\mathbf{r}'}(\bX: \bY)$.  Fix $\bZ$, let $\rT \in (0,\infty)^{\N}$ with $\norm{Z_j} \leq t_j$, and write $\bZ = \mathbf{f}^{\cM}(\bX)$ for some quantifier-free definable function $\mathbf{f}$ depending on countably many coordinates of $\bX$.  Let $\cO'$ be a neighborhood of $\tp_{\qf}^{\cM}(\bX,\bY)$.  Note that $\cO'$ contains a neighborhood of $\mu$ that depends only on finitely many coordinates of $\bX$ and $\bY$.  By Urysohn's lemma and Remark \ref{rem:predicateextension}, there exists a quantifier-free definable predicate $\psi$ with values in $[0,1]$ (depending on only finitely many coordinates) such that $\psi^{\cM}(\bX,\bY) = 0$ and
		\[
		\{\sigma \in \mathbb{S}_{\qf,\mathbf{r},\mathbf{r}'}(\rT_{\tr}): \sigma(\psi) < 1 \} \subseteq \cO'.
		\]
		Fix $F \subseteq \N$ finite and $\epsilon \in (0,2)$, and consider the existential formula
		\[
		\phi^{\cN}(\bZ') = \inf_{\bX' \in \prod_{i \in I} D_{r_i}^{\cN}} \inf_{\bY' \in \prod_{j \in J} D_{r_j'}^{\cN}} \left( \sum_{k \in F} d^{\cN}(\mathbf{f}_k^{\cN}(\bX'), \bZ_k') + \psi^{\cN}(\bX',\bY') \right).
		\]
		Because $\mathbf{f}$ and $\psi$ only depend on countably many coordinates, the infima can be expressed using only countably many variables, so this expression is a valid existential definable predicate.  Moreover, note that for $\bZ' \in \prod_{k \in \N} D_{t_k}^{M_n(\C)}$, we have
		\[
		\phi^{M_n(\C)}(\bZ') < \frac{\epsilon}{2} \implies \bZ' \in N_{\epsilon/2}(\mathbf{f}^{M_n(\C)} \circ p(\Gamma_{\mathbf{r},\mathbf{r}'}^{(n)})(\cO')).
		\]
		Let $\cO = \{\mu \in \mathbb{S}_{\exists,\rT}(\rT_{\tr}): \mu(\phi) < \epsilon/2\}$.  By applying the uniform continuity of $\mathbf{f}$ (Lemma \ref{lem:deffuncunifcont}) for the given $\epsilon/2$ and $F$ in the target space, we obtain a corresponding $F' \subseteq I$ and $\delta > 0$ such that
		\[
		K_{\epsilon,F}^{\orb}(\Gamma_{\rT}^{(n)}(\cO)) \leq K_{\epsilon/2,F}^{\orb}(\mathbf{f}^{M_n(\C)} \circ p(\Gamma_{\mathbf{r},\mathbf{r}'}^{(n)})(\cO')) \leq K_{\delta,F'}^{\orb}(p(\Gamma_{\mathbf{r},\mathbf{r}'}^{(n)}(\cO'))).
		\]
		Applying the definitions of the appropriate limits, suprema, and infima shows that $\Ent^{\cU}(\tp_{\exists}^{\cM}(\bZ)) \leq h_{\mathbf{r},\mathbf{r}'}(\bX: \bY)$.
	\end{proof}
	
	\subsection{Applications to ultraproduct embeddings}
	
	\begin{theorem} \label{thm:main2}
		Let $\cN \subseteq \cM$ be separable tracial $\mathrm{W}^*$-algebras, and let $\cQ = \prod_{n \to \cU} M_n(\C)$. Suppose that $\Ent_{\exists}^{\cU}(\cN:\cM) \geq 0$.  Then for every $c < \Ent_{\exists}^{\cU}(\cN:\cM)$, there exists an embedding $\iota: \cM \to \cQ$ such that $\Ent_{\exists}^{\cU}(\iota(\cN):\cQ) \geq \Ent^{\cU}(\iota(\cN):\cQ) > c$.
	\end{theorem}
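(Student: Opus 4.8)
The plan is to produce the embedding $\iota$ by first realizing in $\cQ$ a full type $\nu$ of the generators of $\cN$ that has entropy $>c$ and existentially extends the existential type of those generators, and then using that existential-extension property, together with saturation of $\cQ$, to grow the realization of $\nu$ into a realization of the quantifier-free type of a generating tuple of all of $\cM$.

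First I would fix a self-adjoint generating tuple $\bX \in L^\infty(\cN)_{\sa}^{\N}$ of $\cN$ and a self-adjoint tuple $\bY \in L^\infty(\cM)_{\sa}^{\N}$ such that $(\bX,\bY)$ generates $\cM$, and set $\mu = \tp_{\exists}^{\cM}(\bX)$, so that $\Ent_{\exists}^{\cU}(\cN:\cM) = \Ent_{\exists}^{\cU}(\mu)$ by Corollary \ref{cor:existentialgenerators}. By Lemma \ref{lem:existentialversusfull}, $\Ent_{\exists}^{\cU}(\mu) = \sup_{\nu \in \pi^{-1}(\cK_\mu)} \Ent^{\cU}(\nu)$, with $\pi : \mathbb{S}(\rT_{\tr}) \to \mathbb{S}_{\exists}(\rT_{\tr})$ the restriction map, so since $c < \Ent_{\exists}^{\cU}(\cN:\cM)$ I can choose $\nu \in \pi^{-1}(\cK_\mu)$ with $\Ent^{\cU}(\nu) > c$. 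As $\Ent^{\cU}$ is valued in $\{-\infty\} \cup [0,\infty]$ while $c \in \R$, necessarily $\Ent^{\cU}(\nu) \geq 0$, so Lemma \ref{lem:ultraproductrealization} gives $\bZ \in L^\infty(\cQ)^{\N}$ with $\tp^{\cQ}(\bZ) = \nu$. Since a quantifier-free formula and its negative are both existential, $\nu \in \pi^{-1}(\cK_\mu)$ forces $\nu$ and $\mu$ to agree on all quantifier-free formulas, i.e.\ $\tp_{\qf}^{\cQ}(\bZ) = \tp_{\qf}^{\cM}(\bX)$.

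Next I would extend this to a realization of the quantifier-free type of $(\bX,\bY)$. Fix $\mathbf{s} \in (0,\infty)^{\N}$ with $\norm{Y_j} \leq s_j$, and consider, over the parameter $\bZ \in \cQ$, the countable partial type asserting that $\mathbf{w} \in \prod_{j \in \N} D_{s_j}$ and $\psi^{\cQ}(\bZ,\mathbf{w}) = \psi^{\cM}(\bX,\bY)$ for every quantifier-free formula $\psi$. The crucial claim is that each finite piece of this partial type is realized in $\cQ$: given quantifier-free formulas $\psi_1,\dots,\psi_m$ and constants $c_i := \psi_i^{\cM}(\bX,\bY)$, the formula $\chi(\mathbf{x}) = \inf_{\mathbf{w} \in \prod_j D_{s_j}} \max_{i \leq m} |\psi_i(\mathbf{x},\mathbf{w}) - c_i|$ is existential and satisfies $\chi^{\cM}(\bX) = 0$ (take $\mathbf{w} = \bY$); since $\nu$ existentially extends $\mu$, $\chi^{\cQ}(\bZ) = \nu(\chi) \leq \mu(\chi) = \chi^{\cM}(\bX) = 0$, hence $\chi^{\cQ}(\bZ) = 0$, and because $\chi$ has only finitely many quantifiers, \L os's theorem (Theorem \ref{thm:Los}) lets me realize its infimum in $\cQ$, yielding $\mathbf{w}^* \in \prod_j D_{s_j}^{\cQ}$ with $\psi_i^{\cQ}(\bZ,\mathbf{w}^*) = c_i$ for all $i$. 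By $\aleph_1$-saturation of $\cQ = \prod_{n \to \cU} M_n(\C)$ --- or, in the paper's style, a diagonal ultraproduct argument as in the proof of Lemma \ref{lem:ultraproductrealization} --- the whole partial type is realized by some $\bW \in \prod_j D_{s_j}^{\cQ}$, so $\tp_{\qf}^{\cQ}(\bZ,\bW) = \tp_{\qf}^{\cM}(\bX,\bY)$.

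Finally, since $(\bZ,\bW)$ and $(\bX,\bY)$ have the same quantifier-free type and $(\bX,\bY)$ generates $\cM$, Lemma \ref{lem:lawisomorphism} produces a trace-preserving embedding $\iota : \cM \to \cQ$ with $\iota(\bX) = \bZ$ and $\iota(\bY) = \bW$, whence $\iota(\cN) = \mathrm{W}^*(\bZ)$. As $\bZ = \iota(\bX)$ is a self-adjoint generating tuple of $\iota(\cN)$, Observation \ref{obs:monotonicity2} gives $\Ent^{\cU}(\iota(\cN):\cQ) = \Ent^{\cU}(\tp^{\cQ}(\bZ)) = \Ent^{\cU}(\nu) > c$. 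Moreover $\Ent_{\exists}^{\cU}(\iota(\cN):\cQ) \geq \Ent^{\cU}(\iota(\cN):\cQ)$, since for any inclusion $\cA \subseteq \cB$ and any tuple $\mathbf{V} \in L^\infty(\cA)^{\N}$ the full type $\tp^{\cB}(\mathbf{V})$ restricts to $\tp_{\exists}^{\cB}(\mathbf{V})$, hence lies in $\pi^{-1}(\cK_{\tp_{\exists}^{\cB}(\mathbf{V})})$, so Lemma \ref{lem:existentialversusfull} yields $\Ent_{\exists}^{\cU}(\tp_{\exists}^{\cB}(\mathbf{V})) \geq \Ent^{\cU}(\tp^{\cB}(\mathbf{V}))$ and thus $\Ent_{\exists}^{\cU}(\cA:\cB) \geq \Ent^{\cU}(\cA:\cB)$ after taking suprema. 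The main obstacle is the extension step: one must extract from the single datum ``$\nu \in \cK_\mu$'' precisely the existential statements about $\bX$ needed to re-realize, on top of the already-chosen $\bZ$, the quantifier-free type of a full generating tuple of $\cM$, and to carry this out inside $\cQ$ itself rather than in an abstract elementary extension --- which is exactly the purpose of the existential-type machinery developed in Section \ref{sec:qfentropy}.
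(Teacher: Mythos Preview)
Your proof is correct and follows essentially the same strategy as the paper: choose a full type $\nu\in\pi^{-1}(\cK_\mu)$ with $\Ent^{\cU}(\nu)>c$ via Lemma~\ref{lem:existentialversusfull}, realize it in $\cQ$ via Lemma~\ref{lem:ultraproductrealization}, and then use the existential-extension property $\nu\in\cK_\mu$ to find a companion tuple $\bW$ in $\cQ$ so that $(\bZ,\bW)$ has the same quantifier-free type as a generating tuple of $\cM$.

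The only substantive difference is in how the extension step is executed. The paper compresses the entire quantifier-free type of $(\bX,\bY)$ into a \emph{single} nonnegative definable predicate $\phi$ vanishing exactly at $\tp_{\qf}^{\cM}(\bX,\bY)$ (using metrizability of $\mathbb{S}_{\qf,\mathbf{r},\mathbf{r}'}(\rT_{\tr})$), forms the one existential predicate $\psi(\mathbf{x})=\inf_{\mathbf{w}}\phi(\mathbf{x},\mathbf{w})$, and then constructs $\bY'=[\bY^{(n)}]$ by hand from the fact that $\psi^{M_n(\C)}(\bX^{(n)})\to 0$. You instead verify finite satisfiability of the partial type via one existential formula per finite fragment and then invoke $\aleph_1$-saturation of $\cQ$. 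These are the same idea in different packaging: the paper's explicit construction is precisely a bare-hands proof of the saturation instance you cite, and your parenthetical ``diagonal ultraproduct argument'' is exactly what the paper writes out. The paper's version is self-contained; yours is shorter if one is willing to quote saturation.
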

	
	\begin{proof}
		Let $\bX$ be an $\N$-tuple of generators for $\cN$.  Let $\pi: \mathbb{S}(\rT_{\tr}) \to \mathbb{S}_{\exists}(\rT_{\tr})$ be the restriction map.  By Lemma \ref{lem:existentialversusfull},
		\[
		c < \Ent_{\exists}^{\cU}(\tp_{\exists}^{\cM}(\bX)) = \sup_{\mu \in \pi^{-1}(\cK_{\tp_{\exists}^{\cM}(\bX)})} \Ent^{\cU}(\mu),
		\]
		so there exists a type $\mu$ such that $\pi(\mu) \in \cK_{\tp_{\exists}^{\cM}(\bX)}$ and $\Ent^{\cU}(\mu) > c$.  By Lemma \ref{lem:ultraproductrealization}, there exists $\bX' \in \cQ$ with $\tp^{\cQ}(\bX') = \mu$.  As in Corollary \ref{cor:embeddings}, there exists an embedding $\iota: \cN \to \cQ$ with $\iota(\bX) = \bX'$.  Observe that
		\[
		\Ent_{\exists}^{\cU}(\iota(\cN):\cQ) = \Ent_{\exists}^{\cU}(\tp_{\exists}^{\cQ}(\bX')) \geq \Ent^{\cU}(\tp^{\cQ}(\bX')) = \Ent^{\cU}(\iota(\cN):\cQ) = \Ent^{\cU}(\mu) > c,
		\]
		where we apply in order Corollary \ref{cor:existentialgenerators}, Lemma \ref{lem:existentialversusfull}, Observation \ref{obs:monotonicity2}, and the choice of $\mu$ and $\bX'$.
		
		It only remains to show that $\iota$ extends to an embedding of $\cM$.  Let $\bY \in L^\infty(\cM)^{\N}$ be a set of generators.  Let $\mathbf{r}$ and $\mathbf{r}'$ be such that $\bX \in \prod_{j \in \N} D_{r_j}^{\cM}$ and $\bY \in \prod_{j \in \N} D_{r_j'}^{\cM}$.  Since the quantifier-free type space $\mathbb{S}_{\qf,\mathbf{r},\mathbf{r}'}(\rT_{\tr})$ for $\N \sqcup \N$-tuples is metrizable, there exists a nonnegative continuous function on $\mathbb{S}_{\qf,\mathbf{r},\mathbf{r}'}(\rT_{\tr})$ that equals zero at and only at the point $\tp_{\qf}^{\cM}(\bX,\bY)$.  By Remark \ref{rem:predicateextension}, this continuous function extends to a global quantifier-free definable predicate $\phi$.  Let $\psi$ be the existential predicate given by
		\[
		\psi^{\cN}(\bZ) = \inf_{\bW \in \prod_{j \in \N} D_{r_j'}^{\cN}} \phi^{\cN}(\bZ,\bW)
		\]
		for $\cN \models \rT_{\tr}$ and $\bZ \in L^\infty(\cN)^{\N}$.  Thus, $\psi^{\cM}(\bX) \leq \phi^{\cM}(\bX,\bY) = 0$.
		
		Because $\psi$ is existential and $\tp_{\exists}^{\cQ}(\bX') \in \cK_{\tp_{\exists}^{\cM}(\bX)}$, we have $\psi^{\cQ}(\bX') \leq \psi^{\cM}(\bX) = 0$.  We may write $\bX' = [\bX^{(n)}]_{n \in \N}$ where $\bX^{(n)} \in \prod_{j \in \N} D_{r_j}^{M_n(\C)}$ (this follows for instance from the construction of $\bX'$ through Lemma \ref{lem:ultraproductrealization}).  Then
		\[
		\lim_{n \to \cU} \psi^{M_n(\C)}(\bX^{(n)}) = \psi^{\cQ}(\bX') = 0,
		\]
		hence there exists $\bY^{(n)} \in \prod_{j \in \N} D_{r_j'}^{M_n(\C)}$ such that
		\[
		\lim_{n \to \cU} \phi^{M_n(\C)}(\bX^{(n)},\bY^{(n)}) = 0.
		\]
		Let $\bY' = [\bY^{(n)}]_{n \in \N} \in \prod_{j \in \N} D_{r_j'}^{\cQ}$.  Then $\phi^{\cQ}(\bX',\bY') = 0$, and therefore, $\tp_{\qf}^{\cQ}(\bX',\bY') = \tp_{\qf}^{\cM}(\bX,\bY)$.  Hence, by Lemma \ref{lem:lawisomorphism}, there exists an embedding $\iota': \cM \to \cQ$ with $\iota'(\bX,\bY) = (\bX',\bY')$.  This is the desired extension of $\iota$.
	\end{proof}
	
	\begin{remark}
		Note that $\Ent_{\exists}^{\cU}(\iota(\cN):\cQ) \leq \Ent_{\exists}^{\cU}(\cN:\cM)$ for any such embedding $\iota$.  Thus, the point of the theorem is that \emph{some} $\iota$ can be chosen to make this inequality close to an equality.  It is not obvious that there is an existential type in $\cQ$ extending the $\tp_{\exists}^{\cM}(\bX)$ with close to the same amount of entropy of $\mu$.  The key ingredient is the variational principle (Proposition \ref{prop:variational}) applied through Lemma \ref{lem:existentialversusfull}, which gives us not only an existential type $\tp_{\exists}^{\cQ}(\bX')$ extending $\tp_{\exists}^{\cM}(\bX)$ with large entropy, but even the full type $\tp^{\cQ}(\bX')$ with large entropy.
	\end{remark}
	
	In particular, the theorem shows that if $\Ent_{\qf}^{\cU}(\cM) > 0$, then there exists an embedding of $\cM$ into $\cQ$ with $\Ent^{\cU}(\cM:\cQ) > 0$, and hence also $h(\cM:\cQ) = \Ent_{\exists}^{\cU}(\cM:\cQ) > 0$.

	\section{Remarks on conditional entropy} \label{sec:conditional}
	
	In this section, we sketch how the previous results could be adapted to the setting of entropy relative to a $\mathrm{W}^*$-subalgebra.  However, we will not give the arguments in detail because we will not be giving any new applications of the conditional version of entropy.  Our goal is mainly to complete our translation between the different flavors of microstate spaces in free entropy theory and the different flavors of types in the conditional setting.
	
	Hayes' original definition of $1$-bounded entropy used microstate spaces relative to a fixed microstate sequence for some self-adjoint element with diffuse spectrum.  He then showed that this was equivalent to the $1$-bounded entropy defined through unitary orbits (the definition that we have used so far in this paper).  As remarked in \cite[\S 4.1]{HJKE2021}, the same reasoning shows that orbital $1$-bounded entropy is equivalent to $1$-bounded entropy relative to fixed microstates for any diffuse amenable $\mathrm{W}^*$-subalgebra $\cP$ of $\cM$.  In fact, one can formulate the definition of $1$-bounded entropy of $\cM$ relative to any $\mathrm{W}^*$-subalgebra $\cA$ with a fixed choice of microstates $\bY^{(n)}$ with $\lim_{n \to \cU} \tp_{\qf}^{M_n(\C)}(\bY^{(n)}) = \tp_{\qf}^{\cM}(\bY)$.  Unlike the case where $\cA$ is amenable, the $1$-bounded entropy relative to $\cA$ may, as far as we know, depend on the choice of microstates for $\cA$, and in general it will not coincide with the orbital $1$-bounded entropy.  Nonetheless, relative $1$-bounded entropy for general $\cA$ has a natural motivation in terms of ultraproduct embeddings:  Fixing $\cA \subseteq \cN \subseteq \cM$ and an embedding $\iota: \cA \to \cQ := \prod_{n \to \cU} M_n(\C)$, a relative $1$-bounded entropy $h(\cN:\cM | \cA, \iota)$ would quantify the amount of embeddings of $\iota': \cN \to \cQ$ that extend $\iota$ and which admit some extension $\iota'': \cM \to \cQ$.
	
	Just as we have interpreted the entropy in the presence as corresponding to existential types in the model-theoretic framework, relative entropy naturally corresponds to types over $\cA$.  Types over $\cA$ represent types in a language $\cL_{\tr,\cA}$ described as follows.  Let $\cA$ be a separable tracial $\mathrm{W}^*$-algebra.  Let $\cL_{\tr,\cA}$ be the language obtained by adding to $\cL_{\tr}$ a constant symbol $\alpha(a) \in D_{\norm{a}}$ for each $a \in \cA$.
	
	Let $\rT_{\tr,\cA}$ be the $\cL_{\tr,\cA}$ theory obtained from $\rT_{\tr}$ by adding the (infinite family of) axioms
	\begin{itemize}
		\item $\alpha(a+b) = \alpha(a) + \alpha(b)$ for each $a, b \in \cA$.
		\item $\alpha(\lambda a) = \lambda \alpha(a)$ for $a \in \cA$ and $\lambda \in \C$.
		\item $\alpha(ab) = \alpha(a) \alpha(b)$ for $a, b \in \cA$.
		\item $\alpha(a^*) = \alpha(a)^*$ for $a \in \cA$.
		\item $\alpha(1) = 1$.
		\item $\re \tr \alpha(a) = \tau_{\cA}(a)$ where $\tau_{\cA}$ is the given trace on the tracial $\mathrm{W}^*$-algebra $\cA$.
	\end{itemize}
	We leave it as an exercise to the reader to verify that every model of $\rT_{\tr,\cA}$ is given by a tracial $\mathrm{W}^*$-algebra $\cM$ together with an embedding (unital, trace-preserving $*$-homomorphism) $\alpha: \cA \to \cM$, and conversely every such embedding defines a model of $\rT_{\tr,\cA}$.  Given a tracial $\mathrm{W}^*$-algebra $\cM$ and an inclusion $\alpha: \cA \to \cM$, the $\cL_{\tr,\cA}$-type of a tuple $\bX$ is also known as the \emph{type of $\bX$ over $\cA$} and denoted $\tp^{\cM}(\bX / \cA)$.
	
	Next, we want to define versions of entropy for quantifier-free, full, and existential types over $\cA$, using covering numbers for microstate spaces corresponding to neighborhoods of the type over $\cA$.  Unfortunately, we cannot use neighborhoods in the space of $\cL_{\tr,\cA}$-types $\mathbb{S}_{\cA}(\rT_{\tr,\cA})$ because the matrix algebra $M_n(\C)$ could never be a model of $\rT_{\tr,\cA}$ since it cannot contain a copy of $\cA$ unless $\cA$ is finite-dimensional.  In other words, the issue is that we must work with approximate embeddings $\alpha_n: \cA \to M_n(\C)$ rather than literal embeddings, since the latter may not exist.  Thus, we will look at $\cL_{\tr,\cA}$ structures that satisfy $\rT_{\tr}$ but not necessarily $T_{\tr,\cA}$, which are tracial von Neumann algebras together with a function $\alpha: \cA \to \cM$ that is not necessarily is a $*$-homomorphism or even linear but does satisfy $\norm{\alpha(a)} \leq \norm{a}$ for $a \in \cA$.  We will denote by $\mathbb{S}_{\cA}(T_{\tr})$ the set of $\cL_{\tr,\cA}$-types that arise from models of $T_{\tr}$, so that $\mathbb{S}_{\cA}(\rT_{\tr}) \supseteq S_{\cA}(\rT_{\tr,\cA})$.
	
	Given a sequence of functions $\alpha_n: \cA \to M_n(\C)$ and $\cO \subseteq S_{\cA}(T_{\tr})$, we define the microstate space
	\[
	\Gamma_{\mathbf{r}}^{(n)}(\cO \mid \alpha_n) = \{\bX \in M_n(\C)^{\N}: \tp^{M_n(\C),\alpha_n}(\bX) \in \cO \},
	\]
	where $\tp^{M_n(\C),\alpha_n}(\bX)$ is the $\cL_{\cA}$ type of $\bX$ in the $\cL_{\cA}$ structure given by $M_n(\C)$ and $\alpha_n: \cA \to M_n(\C)$.  We are interested only in the case when $(\alpha_n)_{n\in \N}$ defines a trace-preserving $*$-homomorphism $\alpha: \cA \to \cQ = \prod_{n \to \cU} M_n(\C)$.  Then for a closed set $\cK \subseteq \mathbb{S}_{\cA}(\rT_{\tr,\cA}) \subseteq \mathbb{S}_{\cA}(\rT_{\tr})$, we define
	\[
	\Ent_{\mathbf{r},F,\epsilon}^{\cU}(\cK \mid \alpha) = \inf_{\cO \supseteq \cK \text{ open}} \lim_{n \to \cU} \frac{1}{n^2} \log K_{F,\epsilon}(\Gamma_{\mathbf{r}}^{(n)}(\cO \mid \alpha),
	\]
	where the infimum is over all open neighborhoods of $\cK$ in $S_{\cA}(\rT_{\tr})$, and then let $\Ent^{\cU}(\cK \mid \alpha)$ be the supremum over $\mathbf{r}$, $F$, and $\epsilon$.
	
	As the notation above suggests, it turns out that this quantity only depends on the embedding $\alpha: \cA \to \cQ$, not on the particular lift $(\alpha_n)_{n \in \N}$.  To see this, suppose $\beta_n$ is another such lift, so that for every $a \in \cA$ we have $d^{M_n(\C)}(\alpha_n(a),\beta_n(a)) \to 0$ as $n \to \cU$.  Using Urysohn's lemma, taking a smaller neighborhood if necessary, we can assume the neighborhood $\cO$ is given by $\phi < \delta$ for some nonnegative formula $\phi(x_1,x_2,\dots)$ in $\cL_{\tr,\cA}$.  Then $\phi$ can be equivalently viewed as an $\cL_{\tr}$ formula in the variables $x_j$ together with additional variables corresponding to the elements of $\cA$.  By uniform continuity of the formulas, $|\phi(\bX,\alpha_n(a))_{a \in \cA}) - \phi(\bX,(\beta_n(a))_{a \in \cA})| < \delta/2$ for $n$ in a small enough neighborhood of $\cU$.  Thus, if the neighborhood $\cO'$ is given by $\phi < \delta/2$, we get $\Gamma_{\mathbf{r}}^{(n)}(\cO' \mid \beta_n) \subseteq \Gamma_{\mathbf{r}}^{(n)}(\cO \mid \alpha_n)$.  The argument is finished by taking the appropriate infima over $\cO$ and limits.\footnote{For the analog of this argument in the existential case, we would work only with the case when $\cK= \cK_\mu$ for a single existential $\cL_{\tr,\cA}$-type.  The only issue adapting the above argument to the existential case is in finding, for a given a neighborhood $\cO$ of $\cK_\mu$, a sub-neighborhood of the form $\phi^{-1}((-\infty,\epsilon))$ for an existential formula $\phi$.  Since the space of existential types is not Hausdorff, we cannot apply Urysohn's lemma, but rather must work with the existential formulas directly to construct such a neighborhood.}
	
	We remark that the approximate embedding $\alpha_n: \cA \to M_n(\C)$ can be thought of as a choice of microstates for \emph{every} element of $\cA$.  But, as in Hayes original description of relative $1$-bounded entropy, we could instead fix a generating set $\bA$ for $\cA$, fix microstates $\bA^{(n)}$ for that \emph{generating set}, and define microstate spaces of matrix tuples $\bX$ such that the $\cL_{\tr}$-type of $(\cA^{(n)},\bX)$ is in a certain neighborhood $\cO$ of the set $\cK$.  It is a technical exercise to show that these definitions are equivalent, the key point being that every element of $a$ can be expressed as a quantifier-free definable function of the generating tuple $\bA$.
	
	Most of the properties we showed for $\Ent^{\cU}$ adapt to the relative version with the same method of proof.   For instance, it satisfies the analog of the variational principle (Proposition \ref{prop:variational}) and monotonicity under pushforward (\ref{prop:pushforward}).  Thus, given $\cA \subseteq \cN \subseteq \cM$ and an embedding $\alpha: \cA \to \cQ$, we can define $\Ent^{\cU}(\cN: \cM \mid \alpha)$ as the supremum of $\Ent^{\cU}(\tp^{\cM,\alpha}(\bX))$ for tuples $\bX$ from $\cN$.  Analogously to Lemma \ref{lem:ultraproductrealization}, if $\Ent^{\cU}(\cM \mid \alpha) \geq 0$, then there is an embedding of $\cN$ into $\cQ$ that restricts to $\alpha$ on $\cA$ and extends to an elementary embedding of $\cM$.  The quantifier-free and existential version of conditional entropy are defined in a similar way, and the relationship between them works the same way as it does for the unconditional version.

	%
	%
	
	\bibliographystyle{plain}
	\bibliography{modelmicrostate.bib}

\end{document}